\documentclass[11pt]{amsart}
\usepackage{amscd,amsmath,amssymb,amsfonts,verbatim}
\usepackage[cmtip, all]{xy}
\usepackage{MnSymbol}
\usepackage{comment}

\setlength{\textwidth}{5.8in}             
\setlength{\textheight}{9.2in}
\setlength{\topmargin}{-0.0in}

\setlength{\oddsidemargin}{.25in}
\setlength{\evensidemargin}{.25in}


%

\newtheorem{thm}{Theorem}[section]
\newtheorem{prop}[thm]{Proposition}
\newtheorem{lem}[thm]{Lemma}
\newtheorem{cor}[thm]{Corollary}

\theoremstyle{definition}

\newtheorem{defn}[thm]{Definition}
\theoremstyle{remark}
\newtheorem{remk}[thm]{Remark}
\newtheorem{remks}[thm]{Remarks}

\newtheorem{exm}[thm]{Example}
\newtheorem{exms}[thm]{Examples}
\newtheorem{notat}[thm]{Notation}
\numberwithin{equation}{section}

{\hfill$\square$\end{defn}}
{\hfill$\square$\end{remk}}
{\hfill$\square$\end{remks}}
{\hfill$\square$\end{exm}}
{\hfill$\square$\end{exms}}
{\hfill$\square$\end{notat}}

\newcommand{\thmref}{Theorem~\ref}
\newcommand{\propref}{Proposition~\ref}
\newcommand{\corref}{Corollary~\ref}

\newcommand{\lemref}{Lemma~\ref}

\newcommand{\sA}{{\mathcal A}}
\newcommand{\sB}{{\mathcal B}}
\newcommand{\sC}{{\mathcal C}}

\newcommand{\sF}{{\mathcal F}}
\newcommand{\sG}{{\mathcal G}}
\newcommand{\sH}{{\mathcal H}}
\newcommand{\sI}{{\mathcal I}}

\newcommand{\sK}{{\mathcal K}}

\newcommand{\sM}{{\mathcal M}}

\newcommand{\sO}{{\mathcal O}}

\newcommand{\sV}{{\mathcal V}}
\newcommand{\sW}{{\mathcal W}}

\newcommand{\F}{{\mathbb F}}

\renewcommand{\H}{{\mathbb H}}

\newcommand{\N}{{\mathbb N}}
\renewcommand{\P}{{\mathbb P}}
\newcommand{\Q}{{\mathbb Q}}

\newcommand{\Z}{{\mathbb Z}}

\newcommand{\fm}{{\mathfrak m}}

\newcommand{\ff}{{\mathfrak f}}

\newcommand{\Irr}{{\rm Irr}}
\newcommand{\Ker}{{\rm Ker}}

\newcommand{\CH}{{\rm CH}}

\newcommand{\surj}{\twoheadrightarrow}
\newcommand{\inj}{\hookrightarrow}
\newcommand{\red}{{\rm red}}

\newcommand{\Div}{{\rm Div}}
\newcommand{\Hom}{{\rm Hom}}

\newcommand{\Spec}{{\rm Spec \,}}

\newcommand{\Tr}{{\rm Tr}}

\newcommand{\ab}{\rm ab}

\newcommand{\divf}{{\rm div}}

\newcommand{\supp}{{\rm supp}\,}
\newcommand{\sHom}{{\mathcal{H}{om}}}

\newcommand{\id}{{\operatorname{id}}}

\newcommand{\Sch}{{\operatorname{\mathbf{Sch}}}}

\newcommand{\<}{\langle}
\renewcommand{\>}{\rangle}

\newcommand{\Sm}{{\mathbf{Sm}}}

\newcommand{\Ab}{{\mathbf{Ab}}}

\newcommand{\et}{{\text{\'et}}}

\newcommand{\ds}{{/\kern-3pt/}}

\newcommand{\res}{{\operatorname{res}}}

\renewcommand{\log}{{\operatorname{log}}}

\newcommand{\Br}{{\operatorname{Br}}}

\newcommand{\tr}{{\operatorname{tr}}}

\newcommand{\un}{\underline}
\newcommand{\ov}{\overline}

\newcommand{\tuborg}{\left\{\begin{array}{ll}}
\newcommand{\sluttuborg}{\end{array}\right.}

\newcommand{\zar}{{\rm zar}}
\newcommand{\nis}{{\rm nis}}
\newcommand{\tp}{{\rm top}}

\newcommand{\tor}{{\rm tor}}

\newcommand{\dlog}{{\rm dlog}}

\newcommand{\fr}{{\rm tor}}
\newcommand{\cf}{{\rm cf}}
\newcommand{\pf}{{\rm pf}}
\newcommand{\ev}{{\rm ev}}

\newcommand{\tm}{{\rm t}}

\newcommand{\wt}{\widetilde}
\newcommand{\wh}{\widehat}
\newcommand{\cont}{{\rm cont}}
\newcommand{\Res}{{\rm Res}}

\newcommand{\coker}{{\rm Coker}}
\newcommand{\Fil}{{\rm fil}}
\newcommand{\p}{{\bf{p}}}
\newcommand{\n}{{\un{n}}}

\newcommand{\Tab}{{\mathbf {Tab}}}


\newcounter{elno}

\newcounter{elno-abc}   

\newcounter{elno-abc-prime}

                       \begin{document}

\title{Class field theory for curves over local fields}
\author{Amalendu Krishna, Subhadip Majumder}
\address{Department of Mathematics, Indian Institute of Science,  
Bangalore, 560012, India.}
\email{amalenduk@iisc.ac.in}
\address{Tata Institute of Fundamental Research, Homi Bhabha
  Road, Mumbai, 400005, India.}
\email{majumder@math.tifr.res.in}


\keywords{Milnor $K$-theory, class field theory, curves over local fields}        

\subjclass[2010]{Primary 14C35; Secondary , 19F05}

\maketitle

\begin{quote}\emph{Abstract.}
  We establish a ramified class field theory for smooth projective curves over local
  fields. As key steps in the proof, we obtain new results in the class field theory
  for 2-dimensional local fields of positive characteristic, and
  prove a duality theorem for the logarithmic Hodge-Witt cohomology on
  affine curves over local fields.
\end{quote}
\setcounter{tocdepth}{1}
\tableofcontents

\section{Introduction}\label{sec:Intro}
The class field theory for smooth projective curves over local fields
is now well understood following the works of Bloch \cite{Bloch-cft},
Saito \cite{Saito-JNT}, Kato-Saito \cite{Kato-Saito-Ann} and Yoshida
\cite{Yoshida03}. The prime-to-characteristic case of the class field theory for
open smooth curves was studied by Hiranouchi
(cf. \cite{Hiranouchi-1}, \cite{Hiranouchi-2}). The objective of this paper is
to extend the class field theory results for smooth projective curves over
local fields to open curves in full generality. We achieve this by proving
new results in the class field theory of 2-local fields of positive charateristics,
and extending the duality theorem of Kato-Saito \cite{Kato-Saito-Ann}
for the p-adic {\'e}tale cohomology of smooth projective curves over local fields
to open curves over such fields. We describe the main results below.

\subsection{Main results}\label{sec:MR}
We fix a local field $k$ of characteristic $p > 0$ and let $X$ be a connected,
smooth and projective curve over $k$. Let $D \subset X$ be an effective divisor on
$X$ with complement $X^o$. The class group with modulus $C(X,D)$ and
the class group $C(X^o)$ were introduced by Hiranouchi
\cite{Hiranouchi-2}{\footnote{We use a slightly different definition of $C(X^o)$
without affecting the reciprocity homomorphism.}}
both of which coincide with the class group $SK_1(X)$, used in \cite{Saito-JNT}
when $D$ is empty. Hiranouchi also defined the abelian {\'e}tale fundamental group
with modulus $\pi^{\ab}_1(X,D)$ and constructed the reciprocity homomorphisms
$\rho_{X^o} \colon C(X^o) \to \pi^{\ab}_1(X^o)$ and $\rho_{X|D} \colon
C(X,D) \to \pi^{\ab}_1(X,D)$. All of these are recalled in \S~\ref{sec:RCurve}.
One can endow $C(X^o)$ and $C(X,D)$ with the structure of
topological abelian groups such that $\rho_{X^o}$ and $\rho_{X|D}$ are continuous.
We let $C(X,D)_0$ denote the kernel of the norm map
$C(X,D) \to k^\times$ (cf. \S~\ref{sec:RCurve}) and define $C(X^o)_0$ similarly. 
We prove the following.

\begin{thm}\label{thm:Main-2}
  For the reciprocity map $\rho_{X|D}$, we have the following.
  \begin{enumerate}
  \item
    $\rho_{X|D} \colon {C(X,D)}/n \to {\pi^{\ab}_1(X,D)}/n$ is a monomorphism
    for every integer $n \ge 1$.
  \item
    $\Ker(\rho_{X|D})$ is the maximal divisible subgroup of $C(X,D)$.
  \item
    The image of the map $\rho_{X|D} \colon C(X,D)_0 \to \pi^{\ab}_1(X,D)$ is not
    necessarily finite.   
  \item
    There exists an integer $r \ge 0$ depending only on $X$ and
    an exact sequence
    \[
      0 \to ({\Q}/{\Z})^r \to {\pi^{\ab}_1(X,D)}^\star \to C(X,D)^\star_\fr \to 0.
    \]
  \end{enumerate}
\end{thm}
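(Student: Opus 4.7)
The plan is to reduce each assertion to the case of the projective curve $X$ (where the theorems of Saito, Kato--Saito and Yoshida apply) together with new local input at the closed points of $D$. The bridge between the two levels is a Wiesend-type commutative diagram whose rows are exact sequences comparing $C(X,D)$ and $\pi^{\ab}_1(X,D)$ with their counterparts $C(X)$ and $\pi^{\ab}_1(X)$; the discrepancy is controlled by local Milnor $K$-groups $K^M_2(k_{X,x})$ and local ramification filtrations at the points $x \in D$. The indispensable new tools are the class field theory for the $2$-local fields $k_{X,x}$ in positive characteristic and the duality theorem for logarithmic Hodge--Witt cohomology on affine curves, both announced in the abstract.

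For part (1), I would fix $n \ge 1$ and tensor this diagram with $\Z/n$. The vertical map at the projective end is $\rho_X/n$, which is injective by Kato--Saito--Yoshida. The vertical map at the local end compares the appropriate quotient of $K^M_2(k_{X,x})$ with a quotient of $\pi^{\ab}_1(k_{X,x})$ cut out by the ramification filtration corresponding to the multiplicity of $x$ in $D$; its injectivity modulo $n$ is precisely the new $2$-local class field theorem of the paper, where the $p$-primary part is the essential difficulty. A snake-lemma chase then gives injectivity of $\rho_{X|D}/n$. Part (2) is a formal consequence: an element of $\ker(\rho_{X|D})$ is divisible by every $n$ by (1), hence lies in the maximal divisible subgroup; conversely every divisible element maps to a divisible element of $\pi^{\ab}_1(X,D)$, which is profinite and therefore has trivial maximal divisible subgroup.

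For (3), I would exhibit a concrete family of examples, for instance $X = \P^1_k$ with $D$ supported on one $k$-rational point whose multiplicity is allowed to grow. Unlike the projective case, where $\rho_X(C(X)_0)$ is finite by Saito--Bloch, the $p$-adic wild ramification filtration at a point of $D$ contributes an unbounded component to $\pi^{\ab}_1(X,D)_0$; the quantitative version is already visible on the $2$-local side by virtue of the new class field theorem and transports upward through the same diagram used for (1).

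Finally, (4) is obtained by Pontryagin-dualizing $\rho_{X|D}$. Parts (1) and (2) together imply that the dual map $\pi^{\ab}_1(X,D)^\star \to C(X,D)^\star$ has image exactly the torsion subgroup $C(X,D)^\star_\fr$, giving the surjection on the right. The kernel is identified via the duality for logarithmic Hodge--Witt cohomology on $X^o$, suitably refined to incorporate the modulus $D$: it is computed by a group which, after the $D$-dependence has been absorbed into $C(X,D)^\star_\fr$, reduces to a quotient of $H^1_{\et}(X;\Q/\Z)$, which by Yoshida's theorem for the projective curve has the form $(\Q/\Z)^r$ with $r$ depending only on $X$. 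The main obstacles are the $p$-primary case of step (1) and the Hodge--Witt duality on affine curves over local fields; these are precisely the new technical contributions that drive the rest of the argument.
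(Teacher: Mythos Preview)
Your overall architecture---reduce to the projective curve $X$ plus local input at the points of $D$, with the new $2$-local class field theory and Hodge--Witt duality supplying that input---is exactly the strategy of the paper. For part~(1) in the $p$-primary case the paper routes the argument through an \'etale realization $C_m(X,D) \hookrightarrow C^{\et}_m(X,D)$ and the duality theorem (\thmref{thm:Duality-Gen}), but the proof of that duality is itself a local--global comparison of precisely the Wiesend type you describe, with the key local step being \corref{cor:Local-inj-6}. So on (1) and (3) you are in agreement with the paper, modulo packaging.

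There is, however, a genuine gap in your deduction of~(2). From~(1) you correctly conclude that $\Ker(\rho_{X|D}) \subseteq \bigcap_n nC(X,D)$, but the step ``hence lies in the maximal divisible subgroup'' is false in general: the intersection $\bigcap_n nG$ of an abelian group need not be divisible. What the paper uses here (\lemref{lem:Rec-ker-0}, then the argument of \cite[Thm.~5.1]{Saito-JNT}) is the additional fact that the image of $\rho^0_{X|D} \colon C(X,D)_0 \to \pi^{\ab}_1(X,D)_0$ is torsion of \emph{finite exponent}, say $M$; this follows because $\Ker(\pi^{\ab}_1(X,D) \to \pi^{\ab}_1(X))$ has finite exponent (ramification filtration bounds plus finiteness of $\mu_\infty(k(x))$). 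Once you know this, $\Ker(\rho^0_{X|D}) = M \cdot C(X,D)_0$, and then~(1) applied with $nM$ in place of $n$ shows this subgroup is divisible. Your proposal omits this step entirely.

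Your argument for~(4) is also not quite right. Parts~(1) and~(2) alone do not formally yield that the image of $\rho^\vee_{X|D}$ is exactly $C(X,D)^\star_\fr$, nor do they identify the kernel. The paper instead dualizes the same local--global ladder (diagram~\eqref{eqn:Dual-R-0-1}): the point is that the local vertical arrow is an \emph{isomorphism} by \propref{prop:Dual-iso} (Kato's existence theorem for $2$-local fields, refined to the ramification filtration), not merely injective, so a diagram chase transports both the kernel $(\Z/p^m)^r$ and the surjectivity from the known projective case (\cite[Cor.~5.1]{Hiranouchi-2}) to the modulus case. You should invoke this local isomorphism directly rather than attempting to extract~(4) from~(1) and~(2).
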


\begin{thm}\label{thm:Main-3}
  For the reciprocity map $\rho_{X^o}$, we have the following.
  \begin{enumerate}
  \item
    $\rho_{X^o} \colon {C(X^o)}/n \to {\pi^{\ab}_1(X^o)}/n$ is a monomorphism
    for every integer $n \ge 1$.
  \item
    
    $\Ker(\rho_{X^o})$ is the maximal divisible subgroup of $C(X^o)$.
  \item
    The image of the map $\rho_{X^o} \colon C(X^o)_0 \to \pi^{\ab}_1(X^o)$ is not
    necessarily finite.
    \item
    There is an exact sequence (with $r \ge 0$ as in \thmref{thm:Main-2})
    \[
      0 \to ({\Q}/{\Z})^r \to {\pi^{\ab}_1(X^o)}^\star \to C(X^o)^\star_\fr \to 0.
    \]
 \end{enumerate}
\end{thm}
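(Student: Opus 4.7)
The plan is to deduce \thmref{thm:Main-3} from \thmref{thm:Main-2} by passage to the limit over effective divisors $D \subset X$ supported on $X \setminus X^o$, letting $D$ grow to infinity. The first step is to establish the identifications $\pi^{\ab}_1(X^o) = \varprojlim_{D} \pi^{\ab}_1(X,D)$ and $C(X^o) = \varprojlim_{D} C(X,D)$, with surjective transition maps, and to check that the reciprocity maps assemble compatibly, so that $\rho_{X^o}$ is the inverse limit of the $\rho_{X|D}$. The first identification reflects that the maximal abelian étale cover of $X^o$ is exhausted by covers with ramification bounded by $D$; the second is built into Hiranouchi's definitions recalled in \S~\ref{sec:RCurve}.

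The substantive claim is part (4). Pontryagin duality converts the inverse systems of the previous step into filtered direct systems of discrete abelian groups, and the short exact sequences from \thmref{thm:Main-2}(4) form a direct system of short exact sequences. By exactness of filtered colimits in $\Ab$, applying $\varinjlim_{D}$ yields
\[
0 \to \varinjlim_{D}\, (\Q/\Z)^r \to \varinjlim_{D}\, \pi^{\ab}_1(X,D)^\star \to \varinjlim_{D}\, C(X,D)^\star_\fr \to 0.
\]
Since $r$ depends only on $X$, the transition maps on the $(\Q/\Z)^r$-summands can be arranged to be identities, so the leftmost term is $(\Q/\Z)^r$. The middle and right terms are $\pi^{\ab}_1(X^o)^\star$ and $C(X^o)^\star_\fr$ by the duality between $\varprojlim$ and $\varinjlim$, yielding the desired exact sequence.

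Parts (1) and (2) follow from the corresponding assertions of \thmref{thm:Main-2} by a Mittag--Leffler argument: the surjectivity of the transition maps ensures $\varprojlim^1$ vanishes and $(-)/n$ commutes with $\varprojlim_D$, so the injectivity of $\rho_{X^o}/n$ is inherited from each $\rho_{X|D}/n$. The kernel of $\rho_{X^o}$ identifies with the inverse limit of the kernels of the $\rho_{X|D}$, each of which is the maximal divisible subgroup of $C(X,D)$, and one verifies that this inverse limit is the maximal divisible subgroup of $C(X^o)$. For part (3), the non-finiteness of the image of $C(X,D)_0 \to \pi^{\ab}_1(X,D)$ from \thmref{thm:Main-2}(3), applied to a suitable $D$, transfers via the compatible diagram to non-finiteness of the image of $\rho_{X^o}$ on $C(X^o)_0$.

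The principal obstacle is verifying the naturality of the integer $r$ in the transition maps, together with the Mittag--Leffler condition for the subsystems $\{{}_n C(X,D)\}_D$ and $\{{}_n \pi^{\ab}_1(X,D)\}_D$ needed to intertwine $(-)/n$ with the inverse limit. A further subtle point is that the torsion-subgroup functor $(-)^\star_\fr$ must commute with the filtered colimit on the Pontryagin dual side, which holds because filtered colimits of torsion abelian groups are torsion.
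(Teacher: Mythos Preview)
Your proposal rests on the identification $C(X^o) = \varprojlim_{D} C(X,D)$, but this is false in the paper's setup: the inverse limit is precisely the group $\wh{C}(X^o)$ of Definition~\ref{defn:ICG-open}, and the paper explicitly distinguishes it from $C(X^o)$ (which is defined as a cokernel involving the full groups $K_2(K^h_x)$, not their quotients by $\Fil_{n_x}$). The canonical map $C(X^o) \to \wh{C}(X^o)$ is neither injective nor surjective in general; see~\eqref{eqn:ICG-open-0}, where even the induced map modulo $n$ is only asserted to be an isomorphism when $p \nmid n$. So your entire scheme---the Mittag--Leffler arguments for parts (1) and (2), and the identification of $\Ker(\rho_{X^o})$ with $\varprojlim_D \Ker(\rho_{X|D})$---is addressing $\wh{C}(X^o)$ rather than $C(X^o)$. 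What you have sketched is essentially the strategy for \thmref{thm:Main-5}, and indeed the paper proves that theorem along the lines you describe (see Lemmas~\ref{lem:Rec-lim-0}--\ref{lem:Rec-ker-2} and \thmref{thm:Rec-lim-3}), with the pro-vanishing of $\{{}_{p^m}C(X,D_\n)\}_\n$ supplying the Mittag--Leffler input.

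For \thmref{thm:Main-3} itself the paper argues directly. Part~(1) for $n=p^m$ is \thmref{thm:Rec-ker-8}: one realizes $C(X^o)/p^m$ as $H^1_{\nis}(X, j_!W_m\Omega^2_{X^o,\log})$, compares via~\eqref{eqn:Rec-ker-8-3} with the Kato--Saito duality for $X$, and reduces to the local injectivity of $\rho_{A^h_x}$ (\corref{cor:Local-inj-2}). Part~(4) is \thmref{thm:Dual-R-1}, proved by a similar diagram chase (see~\eqref{eqn:Dual-R-1-0} and~\eqref{eqn:Dual-R-0-5}) comparing kernels and cokernels of $(\rho^m_{X^o})^\vee$ and $(\rho^m_X)^\vee$, the latter being known by \cite{Hiranouchi-2}. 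Your colimit approach to part~(4) would also need the nontrivial fact $\wh{C}(X^o)^\star_\fr \cong C(X^o)^\star_\fr$, which the paper establishes only as a consequence of the exact sequence, not as an input to it.
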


As a consequence of item (4) of the above results, one obtains that there is a
one-to-one correspondence between the following two sets.
\begin{enumerate}
\item
  Finite abelian covers of $X^o$ (resp. with ramification
  bounded by $D$ away from $X^o$) in which not all closed points of $X^o$
  are completely split.
\item
  Open subgroups of finite index in $C(X^o)$ (resp. $C(X,D)$).
\end{enumerate}

The items (1), (2) and (4) of the above theorems
are extensions of what one knows in the class field
theory for $X$.
On the other hand, the items (3) of the above two results are in contrast
with the case of $X$. The prime-to-$p$ parts of (2)
in Theorems~\ref{thm:Main-2} and ~\ref{thm:Main-3} as well as
the $r = 0$ case of item (4) of \thmref{thm:Main-3}
were shown by Hiranouchi \cite{Hiranouchi-2}.

\vskip .2cm

We let $\wh{C}(X^o) = {\varprojlim}_D C(X,D)$, where the limit is taken over
all effective divisors on $X$ supported on the complement of $X^o$.
One knows that $\rho_{X^o}$ actually factors through
$C(X^o) \to \wh{C}(X^o) \xrightarrow{\wh{\rho}_{X^o}} \pi^{\ab}_1(X^o)$.
Some authors use $\wh{C}(X^o)$ as the definition of the class group of $X^o$.
The next result proves the analogue of \thmref{thm:Main-3} for $\wh{\rho}_{X^o}$.

\begin{thm}\label{thm:Main-5}
  For the reciprocity map $\wh{\rho}_{X^o} \colon \wh{C}(X^o) \to \pi^{\ab}_1(X^o)$,
  we have the following.
  \begin{enumerate}
  \item
    $\wh{\rho}_{X^o} \colon {\wh{C}(X^o)}/n \to {\pi^{\ab}_1(X^o)}/n$ is a monomorphism
    for every integer $n \ge 1$.
  \item
   $\Ker(\wh{\rho}_{X^o})$ is the maximal divisible subgroup of $C(X^o)$.
    \item
    There is an exact sequence (with $r \ge 0$ as in \thmref{thm:Main-2})
    \[
      0 \to ({\Q}/{\Z})^r \to {\pi^{\ab}_1(X^o)}^\star \to \wh{C}(X^o)^\star_\fr \to 0.
    \]
  \end{enumerate}
  \end{thm}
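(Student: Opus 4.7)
The strategy is to deduce Theorem~\ref{thm:Main-5} from Theorem~\ref{thm:Main-2} by passing to the inverse limit along the directed system $\{D\}$ of effective divisors on $X$ supported on $X \setminus X^o$. We use the identification $\wh{C}(X^o) = \varprojlim_D C(X,D)$, the factorization $\wh{\rho}_{X^o} = \varprojlim_D \rho_{X|D}$, and the fact that $\pi^{\ab}_1(X^o) = \varprojlim_D \pi^{\ab}_1(X,D)$, the latter following from the Galois-theoretic interpretation of $\pi^{\ab}_1(X,D)$ as classifying abelian covers of $X^o$ with ramification bounded by $D$. A preliminary input is that the transition maps in both inverse systems are surjective, so both satisfy the Mittag-Leffler condition.

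For (1), since $\pi^{\ab}_1(X^o)$ is profinite with finite quotients $\pi^{\ab}_1(X,D)/n$ and surjective transitions, $\pi^{\ab}_1(X^o)/n = \varprojlim_D \pi^{\ab}_1(X,D)/n$. Given $\alpha \in \wh{C}(X^o)$ whose image lies in $n\pi^{\ab}_1(X^o)$, Theorem~\ref{thm:Main-2}(1) applied componentwise yields $\alpha_D \in nC(X,D)$ for every $D$; to upgrade this to $\alpha \in n\wh{C}(X^o)$ one lifts the $n$-th roots compatibly by a Mittag-Leffler argument on the $n$-torsion subgroups $\{C(X,D)[n]\}_D$. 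For (2), $\ker(\wh{\rho}_{X^o}) = \varprojlim_D \ker(\rho_{X|D})$ is an inverse limit of divisible groups by Theorem~\ref{thm:Main-2}(2), and the same Mittag-Leffler analysis shows the limit is divisible; maximality is automatic, since $\wh{C}(X^o)/\ker(\wh{\rho}_{X^o})$ embeds into the profinite group $\pi^{\ab}_1(X^o)$, which has no nonzero divisible subgroup.

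For (3), pass to Pontryagin duals. Since $\pi^{\ab}_1(X^o)$ is profinite, $\pi^{\ab}_1(X^o)^\star = \varinjlim_D \pi^{\ab}_1(X,D)^\star$. Taking the direct limit of the exact sequences
\[
0 \to (\Q/\Z)^r \to \pi^{\ab}_1(X,D)^\star \to C(X,D)^\star_\fr \to 0
\]
from Theorem~\ref{thm:Main-2}(4) (which remains exact because directed colimits of abelian groups are exact and the constant factor $(\Q/\Z)^r$ survives the limit) produces an analogous three-term exact sequence with $\varinjlim_D C(X,D)^\star_\fr$ on the right. Pontryagin duality for $\wh{C}(X^o) = \varprojlim_D C(X,D)$ identifies $\wh{C}(X^o)^\star$ with $\varinjlim_D C(X,D)^\star$, and since torsion commutes with direct limits, the right-hand term is precisely $\wh{C}(X^o)^\star_\fr$.

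The main obstacle is the Mittag-Leffler step for the $n$-torsion systems $\{C(X,D)[n]\}_D$ needed for (1) and (2): the transition maps on $n$-torsion need not be surjective a priori, so one must exploit the divisibility of $\ker(\rho_{X|D})$ from Theorem~\ref{thm:Main-2}(2) and the topology on $C(X,D)$ to show $\varprojlim^1_D C(X,D)[n] = 0$. A secondary check is that the continuous Pontryagin duality used in (3) is compatible with the topologies placed on $C(X,D)$ and $\wh{C}(X^o)$.
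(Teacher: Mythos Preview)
Your overall strategy---deduce everything from \thmref{thm:Main-2} by passing to the limit---is the right one, and your argument for (3) via the direct limit of the dual exact sequences is essentially what the paper does (the paper proves the sequence for $C(X^o)^\star_\fr$ directly and then identifies it with $\wh{C}(X^o)^\star_\fr$ using density of the image of $C(X^o)$, but this amounts to the same thing). One minor correction: the quotients $\pi^{\ab}_1(X,D)/p^m$ are \emph{not} finite in general (their duals $\Fil_D H^1_\et(K,\Z/p^m)$ are infinite once $D$ is large), though the identification $\pi^{\ab}_1(X^o)/n = \varprojlim_D \pi^{\ab}_1(X,D)/n$ still holds by Pontryagin duality for profinite abelian groups.

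The genuine gap is in (1) and (2), precisely at the point you flag. You need $\varprojlim^1_D C(X,D)[n] = 0$, and your proposed fix---divisibility of $\Ker(\rho_{X|D})$ plus the topology---does not obviously produce this. Divisibility of the kernel gives surjectivity of the transition maps $\Ker(\rho_{X|D'}) \to \Ker(\rho_{X|D})$ (this is the paper's \lemref{lem:Rec-ker-2}), but it says nothing about the $n$-torsion of $C(X,D)$ itself, and the snake lemma does not propagate surjectivity to torsion without knowing the kernels of the transition maps are themselves $n$-divisible. The paper resolves this by treating the $p$-part and the prime-to-$p$ part separately, using structural facts about Milnor $K$-theory that go beyond \thmref{thm:Main-2}. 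For $n=p^m$ it proves (\lemref{lem:Rec-lim-0}) that the pro-system $\{{}_{p^m}C(X,D_{\un{n}})\}_{\un{n}}$ is actually \emph{zero}, by reducing to the vanishing of the pro-$p$-torsion of $K_2$ of truncated polynomial rings over the residue fields; this immediately gives ${\wh{C}(X^o)}/{p^m} \hookrightarrow \varprojlim_\n C_m(X,D_\n)$. For $(n,p)=1$ it shows (\lemref{lem:Rec-lim-2}) that ${C(X,D)}/n$ is independent of $D$ (since $\Fil_1 K_2(K^h_x)$ is uniquely $n$-divisible), so the limit is trivially controlled. For the prime-to-$p$ divisibility of $\Ker(\wh{\rho}_{X^o})$ in (2), the paper uses that the kernel of $\Ker(\rho_{X|D_{\un{n}}}) \to \Ker(\rho_{X|D_{\un{1}}})$ is $p$-primary (the wild part), hence uniquely $n$-divisible, and then a two-out-of-three argument on $\varprojlim$. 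None of these inputs follows from \thmref{thm:Main-2} alone, so your proof sketch is incomplete as stated.
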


\vskip .2cm

In addition to the class field theory for $X$, the key ingredients in the proofs of
Theorems~\ref{thm:Main-2} and ~\ref{thm:Main-3} are the next two results.
The first is concerning the class field theory of
2-local fields. To state it, we fix some notations. Let $K$ be a Henselian discrete
valuation field of characteristic $p > 0$ whose residue field is a local field.
Let $G^{\ab}_K$ denote the abelianized absolute Galois group of $K$ and let
$K_*(K)$ denote the Milnor $K$-theory of $K$.
Kato \cite{Kato80-1} defined a reciprocity homomorphism
$\rho_K \colon K_2(K) \to G^{\ab}_K$. Unlike the case of local fields,
this map has a non-trivial kernel. However, Kato showed that $K_2(K)$
is endowed with a topology with respect to which $\rho_K$ is continuous
and the dual map $\rho^\vee_K \colon (G^{\ab}_K)^\star \to K_2(K)^\star$ is bijective.
We prove the following result about $\rho_K$.

\begin{thm}\label{thm:Main-1}
  For every integer $n \ge 1$, we have the following.
  \begin{enumerate}
    \item
  $\rho_K \colon {K_2(K)}/n \to {G^{\ab}_K}/n$
  is a monomorphism.
\item
  $\rho_K \colon ~_n K_2(K) \to ~_n G^{\ab}_K$ is an epimorphism.
\end{enumerate}
\end{thm}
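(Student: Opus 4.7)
The plan is to deduce both items from Kato's theorem that $\rho_K^\vee\colon (G_K^{\ab})^\star \to K_2(K)^\star$ is bijective. Since both $K_2(K)$ and $G_K^{\ab}$ decompose compatibly into $\ell$-primary parts, it suffices to prove (1) and (2) for $n = \ell^r$, and we treat $\ell \ne p$ and $\ell = p$ separately.

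For $\ell \ne p$, Merkurjev-Suslin gives $K_2(K)/\ell^r \cong H^2(K, \mu_{\ell^r}^{\otimes 2})$, and Kato's duality for $2$-local fields provides a perfect pairing of finite groups
\[
K_2(K)/\ell^r \times H^1(K, \Z/\ell^r) \to H^3(K, \mu_{\ell^r}^{\otimes 2}) \cong \Z/\ell^r,
\]
induced by $\rho_K$ via $(x, \chi) \mapsto \chi(\rho_K(x))$. As $H^1(K, \Z/\ell^r) = \Hom(G_K^{\ab}, \Z/\ell^r)$ and both sides are finite, this forces $\rho_K$ to induce an isomorphism $K_2(K)/\ell^r \xrightarrow{\sim} G_K^{\ab}/\ell^r$. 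Inverse-limiting in $r$ gives an isomorphism on pro-$\ell$ completions, yielding both (1) and (2) for $\ell \ne p$.

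For $\ell = p$, the Bloch-Kato-Gabber isomorphism identifies $K_2(K)/p^r$ with the logarithmic Hodge-Witt sheaf $\nu_r(2)_K$, and Artin-Schreier-Witt gives $H^1(K, \Z/p^r) \cong W_r(K)/(F-1)W_r(K)$. The reciprocity $\rho_K$ is encoded by an explicit Witt-differential pairing $\nu_r(2)_K \times W_r(K)/(F-1) \to \Z/p^r$ whose perfectness with respect to Kato's topology on $K_2(K)/p^r$ is the key input. From this, (1) follows because the bijectivity of $\rho_K^\vee$ on $p^r$-torsion characters, together with separation of points by continuous $\Z/p^r$-valued characters, forces the induced map $K_2(K)/p^r \to G_K^{\ab}/p^r$ to be injective: any $x$ with $\rho_K(x) \in p^r G_K^{\ab}$ is annihilated by every $\chi\circ \rho_K$ and hence by every continuous $\phi\colon K_2(K)\to \Z/p^r$. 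For (2), each $\sigma \in {}_{p^r}G_K^{\ab}$ corresponds via Artin-Schreier-Witt to a Witt vector class $w$, which we realize by an explicit $p^r$-torsion Milnor symbol $\xi_w \in {}_{p^r}K_2(K)$ with $\rho_K(\xi_w) = \sigma$.

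The main obstacle is the $p$-primary case of (2) for $r > 1$: explicitly constructing, for each $p^r$-torsion character of $G_K^{\ab}$, a Milnor symbol of the correct torsion order and verifying compatibility with $\rho_K$ via the Witt-differential pairing. The plan is to induct on $r$, using the $r = 1$ Artin-Schreier case as the base and the exact sequences relating $W_r$ with $W_{r-1}$, together with a careful analysis of the Brylinski-Kato filtration on $K_2(K)$ and the compatibility of $\rho_K$ with residue maps in towers of cyclic $p$-extensions.
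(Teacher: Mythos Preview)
Your proposal contains genuine gaps in both parts, and you have misidentified where the difficulty lies.

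\textbf{Part (1), $p$-primary case.} Your argument is circular. You write that injectivity of $K_2(K)/p^r \to G_K^{\ab}/p^r$ follows from Kato's bijection $\rho_K^\vee$ \emph{together with} ``separation of points by continuous $\Z/p^r$-valued characters'' on $K_2(K)/p^r$. But since Kato's theorem says every continuous character of $K_2(K)$ is of the form $\chi\circ\rho_K$, separation by such characters is literally the statement that $\rho_K(x)\in p^r G_K^{\ab}$ implies $x\in p^r K_2(K)$, which is exactly the injectivity you are trying to prove. The Kato topology on $K_2(K)/p^r$ is not known a priori to be Hausdorff; establishing this separation is the whole content of the theorem (and is equivalent to Fesenko's result that $\Ker(\rho_K)$ is divisible, which the paper explicitly flags as difficult). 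The paper's route is different: it identifies $\rho_K^{n+1}$ with a concrete pairing between $\Omega^2_{A|n+1}$ and $H^1_\fm(A(n))$, proves perfectness via local Grothendieck duality combined with Pontryagin duality for the \emph{adic} (not Kato) topology on $\ff$-vector spaces, and then inducts on $m$ using the Kato filtration. This is the substantive new input.

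\textbf{Part (2), $\ell\ne p$.} An isomorphism $K_2(K)/\ell^r \xrightarrow{\sim} G_K^{\ab}/\ell^r$ for all $r$ does \emph{not} imply surjectivity of ${}_{\ell^r}K_2(K)\to{}_{\ell^r}G_K^{\ab}$. Counterexample: the quotient map $\Q_\ell \to \Q_\ell/\Z_\ell$ induces the isomorphism $0\to 0$ on all $\ell^r$-quotients, yet on $\ell^r$-torsion it is $0\to \Z/\ell^r$. The paper's argument (\propref{prop:Local-ker-coker-0}) is genuinely different: it passes to the quotient $K_2(K)/\Fil_1 K_2(K)$ (using that $\Fil_1$ is uniquely $n$-divisible for $p\nmid n$), compares with the residue field via $\partial_K$, and invokes Tate's theorem that $\Fil_0/\Fil_1 \cong \mu_\infty(\ff)$ modulo its divisible part is finite, together with a cardinality argument on duals.

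\textbf{Part (2), $p$-primary case.} You have the difficulty backwards. Since $K$ is an affine $\F_p$-scheme, $H^1(K,\Q_p/\Z_p)$ is $p$-divisible (Artin--Schreier surjectivity), so $G_K^{\ab}\{p\}=0$ and ${}_{p^r}G_K^{\ab}=0$. Thus (2) for $n=p^r$ is vacuous, and your proposed ``main obstacle'' --- constructing explicit $p^r$-torsion Milnor symbols hitting prescribed $\sigma$ --- is aimed at an empty target. The actual hard $p$-primary content is all in part (1).
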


An immediate consequence of \thmref{thm:Main-1} is that $\coker(\rho_K)$ is
torsion-free. It also implies that $\Ker(\rho_K)$ is
divisible, a difficult result of Fesenko \cite{Fesenko}. 
On the other hand, $\coker(\rho_K)$ is not divisible (cf. \propref{prop:Coker-p}),
a phenomenon which is in contrast with the class field theory of local fields. 

\vskip .2cm

We now describe the second ingredient, a duality theorem.
We let $m \ge 1$ be an integer and let $W_m\Omega^0_{X|D,\log}$ denote the complex
$[Z_1W_m\Fil_D W_m\sO_X \xrightarrow{1-C} \Fil_D W_m\sO_X]$ on $X_\et$,
where the sheaf $W_m\Fil_D W_m\sO_X$ was introduced in
\cite[\S~3]{Kerz-Saito-ANT} and $C$ is the Cartier operator (cf. \S~\ref{sec:TDR}).
For $i \ge 1$, we let $W_m\Omega^i_{X|D, \log}$ be the logarithmic Hodge-Witt sheaf on
$X_\et$ introduced in \cite[\S~1]{JSZ}. We endow the (hyper)cohomologies of
$W_m\Omega^0_{X|D,\log}$ and $W_m\Omega^i_{X|D, \log}$ with the structure of 
topological abelian groups.
For a topological abelian group $G$, let $G^\pf$ denote the profinite
completion of $G$. We prove the following.

\begin{thm}\label{thm:Main-4}
  For every $i \ge 0$, there is a perfect pairing of topological abelian groups
  \[
    H^i_\et(X, W_m\Omega^2_{X|D, \log})^\pf \times
    H^{2-i}_\et(X, W_m\Omega^0_{X|D,\log}) \to {\Z}/{p^m}.
  \]
  \end{thm}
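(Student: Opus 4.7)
The plan is to reduce this perfect pairing to the Kato-Saito duality for $X$ (the $D = 0$ case of \cite{Kato-Saito-Ann}), by factoring the modulus version through a localization at the support of $D$ and then applying a five-lemma. The essential new ingredient, beyond what is classical, will be \thmref{thm:Main-1}: a local Kato duality at each closed point of $\supp(D)$.

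First I would construct the pairing itself. There is a cup product
\[
  W_m\Omega^2_{X|D,\log} \otimes^L W_m\Omega^0_{X|D,\log} \longrightarrow W_m\Omega^2_{X,\log},
\]
landing in the unmodulated logarithmic sheaf precisely because the Cartier term $1-C$ appearing in the complex $W_m\Omega^0_{X|D,\log}$ is what cuts out the logarithmic part of $W_m\Omega^2$. Composing with the Kato-Saito trace $H^2_\et(X, W_m\Omega^2_{X,\log}) \to \Z/p^m$ then gives the desired global pairing.

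Next, for $i \in \{0,2\}$ I would use short exact sequences of the form
\[
  0 \to W_m\Omega^i_{X|D,\log} \to W_m\Omega^i_{X,\log} \to \sF^i_D \to 0,
\]
where $\sF^i_D$ is a sheaf supported on $\supp(D)$ whose stalks at $x \in D$ are built from the Milnor $K$-theory (respectively the Witt vectors) of the Henselian 2-dimensional local ring $\sO^h_{X,x}$. Comparing the resulting long exact sequences of $\et$-cohomology, the Kato-Saito duality supplies the outer perfect pairings and reduces one to proving a perfect local pairing between the stalks of $\sF^2_D$ and $\sF^0_D$ at each $x \in \supp(D)$, valued in $\Z/p^m$. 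At such a point, the fraction field of $\sO^h_{X,x}$ is a 2-dimensional local field $K$ of equal characteristic $p$ with local residue field; \thmref{thm:Main-1}, together with the explicit description of Kato's filtration on $K_2(K)$ in terms of the conductor, is expected to provide exactly this local duality.

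The final step is the five-lemma in the resulting ladder of long exact sequences. The principal obstacle will be topological rather than algebraic: one must verify that the various topologies in play (the pro-topology on $\et$-cohomology of $X$, the Kato-Parshin topology on Milnor $K$-groups, and the profinite completion) interact compatibly with the maps of the ladder, so that the five-lemma yields a pairing of topological abelian groups and not merely an abstract isomorphism of groups. In particular, one has to check that profinite completion is sufficiently exact along the short exact sequences above; this is also what dictates the asymmetric appearance of $(-)^{\pf}$ on only one side of the pairing.
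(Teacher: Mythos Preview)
Your overall architecture---compare to the unramified Kato--Saito duality via localization sequences at $\supp(D)$, supply the missing local duality from \thmref{thm:Main-1}, and finish with a five-lemma---is the paper's strategy too. But two concrete points will not work as written.

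First, the short exact sequence you write for $i=0$ points the wrong way. The object $W_m\Omega^0_{X|D,\log}$ is the two-term complex $[Z_1\Fil_D W_m\sO_X \xrightarrow{1-C} \Fil_D W_m\sO_X]$, and $\Fil_D W_m\sO_X$ is a sheaf of Witt vectors with \emph{poles} bounded by $D$. Hence as $D$ grows the complex gets \emph{larger}, and the correct sequence is
\[
  0 \to W_m\Omega^0_{X,\log} \to W_m\Omega^0_{X|D,\log} \to W_m\sV^0_D \to 0,
\]
opposite to what you wrote; meanwhile for $i=2$ the modulus sheaf is contained in the unmodulated one, as you have it. This asymmetry is exactly what makes the pairing land in the unmodulated $W_m\Omega^2_{X,\log}$.

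Second, and more seriously, the cup product you propose does not exist with the same $D$ on both factors. At the level of complexes the natural pairing (coming from the wedge product and compatibility of $C$ and $\ov{F}$) is
\[
  W_m\Omega^2_{X|D_{p^{m-1}(\un{n}+\un{1})},\log} \times W_m\Omega^0_{X|D_{p^{m-1}\un{n}},\log} \longrightarrow W_m\Omega^2_{X,\log},
\]
with a shift of $p^{m-1}$ between the two moduli; this is forced by the relation $C(Z_1\Fil_{pD}) \subset \Fil_D$. So a direct five-lemma comparing ``modulus $D$'' to ``no modulus'' is not available. The paper resolves this by (i) proving the duality first for these special mismatched pairs, (ii) passing to the limit over all $D$ supported on $D_\dagger$ to obtain a duality for $X^o$, and (iii) \emph{descending} back to an arbitrary fixed $D$ via a Cartesian-square argument that identifies $(G^1_\n(m))^\pf$ with ${\pi^{\ab}_1(X,D_\n)}/{p^m}$ through the reciprocity map. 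The definition of the topology on $H^1_\et(X, W_m\Omega^2_{X|D,\log})$ is also part of this detour: it is first given as a weak topology for the special moduli and then as a quotient topology in general, and one has to check it is Hausdorff so that the profinite completion map is injective. Your last paragraph correctly anticipates that the topological bookkeeping is the hard part, but it is intertwined with this detour rather than a separate verification at the end.
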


  This theorem was proven by Kato-Saito \cite{Kato-Saito-Ann} when
  $D = \emptyset$ in which case the profinite completion is redundant.
  We show however that taking the profinite completion is essential in
  \thmref{thm:Main-4} if $D \neq \emptyset$.
  The above duality is compatible with the inclusion of divisors $D \subseteq D'$ so
  that taking the limit and colimit, we get a duality theorem for
  the analogous cohomology groups on $X^o$ (cf. \thmref{thm:Duality-M}).
  In a forthcoming paper \cite{KM-duality}, we shall extend the duality
  theorem to the cohomology groups of $W_m\Omega^1_{X|D, \log}$.

\vskip .3cm

  \subsection{Overview of proofs}\label{sec:Outline}
  The main technical results which allow us to prove Theorems~\ref{thm:Main-2}
  and ~\ref{thm:Main-3} are Theorems~\ref{thm:Main-1} and ~\ref{thm:Main-4}.
  To prove \thmref{thm:Main-1}, we first describe Kato's reciprocity
  homomorphism in terms of a duality map between Hodge-Witt forms.
  We then use the classical local
  Grothendieck duality and a topological duality result for vector spaces over a
  local field equipped with the adic topology of the field to show that this
  duality is perfect. This is a key step which makes things work and 
  allows us to conclude the injectivity of the
  local reciprocity map with finite coefficients. 
  A technical result in ramification theory that we also need in the process is
  \propref{prop:p-tor}. The next three sections of this paper are devoted to the
  proof of \thmref{thm:Main-1}.

  To prove \thmref{thm:Main-4}, the first difficulty one faces is the
  construction of correct topology on the p-adic {\'e}tale cohomology groups
  with modulus on a smooth projective curve over a local field of
  characteristic $p$. The technical steps involved in the definition of the
  topology and proofs of some important properties span through
  Sections~\ref{sec:DThm} and ~\ref{sec:CFT-mod}.
  We prove \thmref{thm:Main-4}  by combining these steps with
  the Kato-Saito duality in the unramified
  case, and a local duality which we deduce from \thmref{thm:Main-1}. 
We finally combine our results about the
  reciprocity for 2-local fields and the ramified duality theorem  to
  complete the proofs of Theorems~\ref{thm:Main-2} and ~\ref{thm:Main-3}
  in Sections~\ref{sec:CFT-mod}, ~\ref{sec:Div-Ker} and ~\ref{sec:Dual-R}.
  In the last section, we also prove a reciprocity theorem for the function field
  of curves over a local field.

\vskip .3cm

\subsection{Notations}\label{sec:Notn}
We shall use the following notations throughout this paper.
Unless a different notation is used in a section, we shall write $\sO_K$ for the
ring of integers of $K$ and $\fm_K$ for the maximal ideal of $\sO_K$ if $K$ is any
Henselian discrete valuation field. A local field will mean a complete discrete
valuation field with finite residue field.

For a Noetherian scheme $X$, we let $\Irr(X)$ be the set of all irreducible
components of $X$. We let $X_{(q)}$ denote the set of $q$-dimensional points of $X$.
For an integral scheme $X$, we let $\Div(X)$ denote the group of Weil divisors on
$X$.
For a scheme $X$ endowed with a topology $\tau \in \{\zar, \nis, \et\}$,
a closed subset $Y \subset X$, and a
$\tau$-sheaf $\sF$, we let $H^*_{\tau, Y}(X, \sF)$ denote the $\tau$-cohomology
groups of $\sF$ with support in $Y$. If $\tau$ is fixed in a subsection, we shall
drop it from the notations of the cohomology groups. We shall write
$H^*_{\tau, Y}(X, \sF)$ as $H^*_{\tau}(X, \sF)$ whenever $Y = X_\red$.
If $X = \Spec(A)$ is affine, we may write $H^*_{\tau}(X, \sF)$ as
$H^*_{\tau}(A, \sF)$. We shall write $H^q_\et(X, {\Q}/{\Z}(q-1))$ 
(resp. $H^q_\et(A, {\Q}/{\Z}(q-1))$) as $H^q(X)$ (resp. $H^q(A)$) for
any $q \ge 0$ (see \cite[\S~3.2, Defn.~1]{Kato80-2}).
We shall let $D_\tau(X)$ denote the bounded derived category of sheaves on $X_\tau$.

If $k$ is a field, we shall denote the category of
separated schemes which are of finite type over $k$ by $\Sch_k$.
We shall let $\Sm_k$ denote the category of smooth schemes over $k$.
If ${k'}/k$ is a field extension and $X \in \Sch_k$, we shall let $X_{k'} =
X \times_{\Spec(k)} \Spec(k')$.
For a Noetherian scheme $X$, we shall let $\pi^{\ab}_1(X)$
denote the abelianized {\'e}tale fundamental group of $X$ and consider it as a
topological group with its profinite topology. We shall write $\pi^{\ab}_1(\Spec(F))$
interchangeably as $G_F$, where the latter is the abelianized absolute Galois
group of $F$. We let $\Tab$ denote the category of topological abelian groups
with continuous homomorphisms.

We shall let $\Ab$ denote the category of abelian groups.
For $A, B \in \Ab$, the notation $A \otimes B$ will indicate
tensor product of $A$ and $B$ over $\Z$.
For an abelian group $A$ and $m\in \N$, we let 
${}_m A$ (resp. $A/m$) denote the kernel (resp. cokernel)
of the multiplication map $A \xrightarrow{m} A$. For a prime $p$,
we shall let $A\{p\}$ (resp. $A\{p'\}$) denote the $p$-primary (resp. prime-to-$p$
primary) component of $A$. 
For a set $\mathbb{M}$ of prime numbers, we shall let $A_{\mathbb{M}}$ denote the 
inverse limit $\varprojlim_m A/{m}$, where the limit runs through all natural
numbers $m$ whose prime divisors lie in $\mathbb{M}$. We let $A_\divf =
{\underset{m \in \N}\bigcap} \ mA$.

\section{Reciprocity for 2-local fields I}\label{sec:RL-1}
Our goal in the next three sections is to prove \thmref{thm:Main-1}.
In this section, we shall prove some basic results concerning 
relative Milnor $K$-theory,
Kato's ramification filtration and the reciprocity homomorphism for 2-local fields.
The main technical result is \propref{prop:p-tor} which we shall use frequently in
our proofs.

\subsection{Relative Milnor $K$-theory}\label{sec:Rel-M}
For a local ring $A$, we let $A^h$ (resp. $A^{sh}$, $\wh{A}$) denote the
Henselization (resp. strict Henselization, completion) of $A$ with respect to its
maximal ideal. If $A$ is an integral domain with fraction field $K$, we let
$K^h$ (resp. $K^{sh}$, $\wh{K}$) denote the fraction field of
$A^h$ (resp. $A^{sh}$, $\wh{A}$).

For a local ring $A$, let $K_n(A)$ denote the $n$-th Milnor $K$-group
of $A$ as defined in \cite{Kato86}. For an ideal $I \subset A$,
we let $K_n(A,I) = \Ker(K_n(A) \surj K_n(A/I))$ denote the relative Milnor
$K$-group.
If $A$ is a local integral domain with fraction field $K$ and $I = (f)$ is a 
nonzero principal ideal, we let $A_f$ denote the localization of $A$ obtained by
inverting the powers of $f$.
We let $K_1(A|I) = K_1(A,I)$, and for $n \ge 2$, we let
$K_n(A|I)$ denote the image of the canonical map
of abelian groups
\begin{equation}\label{eqn:RS-0}
K_1(A,I)  \otimes (A_f)^{\times} \otimes \cdots \otimes (A_f)^{\times} \to 
K_n(K),
\end{equation}
induced by the product in Milnor $K$-theory, where the tensor product is
taken $n$ times (cf. \cite[Definition~2.4 and Lemma~2.1]{Rulling-Saito},
\cite[\S~3]{Gupta-Krishna-Duality}). If $A$ is a discrete valuation ring
with maximal ideal $\fm$,
we shall write $K_n(A,\fm^r)$ (resp. $K_r(A|\fm^r)$) as
$\Fil'_r K_n(K)$ (resp. $\Fil_r K_n(K)$) for $n, r \ge 1$.
We shall let $\Fil'_0 K_n(K) = \Fil_0 K_n(K) = K_n(A)$.
One can check using \cite[Lem.~1]{Kato86}
(e.g., see \cite[Lem.~3.1]{Gupta-Krishna-Duality} and its proof) that the
following holds.

\begin{lem}\label{lem:Fil-M-0}
  For every $n, r \ge 1$, one has
  $\Fil'_r K_n(K) \subseteq \Fil_r K_n(K) \subseteq  \Fil'_{r-1} K_n(K) \subseteq
  K_n(A)$,
  and the first inclusion is an equality when $r =1$.
\end{lem}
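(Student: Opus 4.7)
The plan is to verify each inclusion in the chain by reducing to Kato's generation lemma \cite[Lem.~1]{Kato86} for relative Milnor $K$-theory, combined with a careful use of Steinberg relations, following the template of \cite[Lem.~3.1]{Gupta-Krishna-Duality}. The last containment $\Fil'_{r-1} K_n(K) \subseteq K_n(A)$ is immediate from the definition of the relative group as a subgroup of $K_n(A)$.

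For the inclusion $\Fil'_r K_n(K) \subseteq \Fil_r K_n(K)$, I would invoke Kato's lemma, which says that $K_n(A, \fm^r) = \Fil'_r K_n(K)$ is generated as an abelian group by symbols $\{1+a, b_2, \ldots, b_n\}$ with $a \in \fm^r$ and $b_i \in A^\times$. Such a symbol is the image of $(1+a) \otimes b_2 \otimes \cdots \otimes b_n$ under the map in \eqref{eqn:RS-0}, via the natural inclusion $A^\times \hookrightarrow A_f^\times$, and hence belongs to $\Fil_r K_n(K)$.

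For $\Fil_r K_n(K) \subseteq \Fil'_{r-1} K_n(K)$, I would work with a generator $\{1+a, b_2, \ldots, b_n\}$ with $a \in \fm^r$ and $b_i \in A_f^\times$. Writing $b_i = u_i f^{m_i}$ with $u_i \in A^\times$ and expanding multilinearly, each summand either has all entries after the first in $A^\times$, in which case it is a Kato-generator of $K_n(A, \fm^r) \subseteq K_n(A, \fm^{r-1})$, or contains at least one $f$-entry. For the latter, I would apply the Steinberg relation $\{1-y, y\} = 0$ with $y = -a$. Writing $a = f^r u$ and expanding in the second slot, this gives the identity
\[
r\,\{1+a, f\} \;=\; -\{1+a, -u\}
\]
in $K_2(K)$; iterating this reduction in each slot containing $f$ and invoking Kato's generation of $K_n(A, \fm^{r-1})$ then places the whole $n$-fold symbol in $\Fil'_{r-1} K_n(K)$. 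The equality in the case $r = 1$ is the specialization in which the identity above already has the form $\{1+a, f\} = -\{1+a, -u\}$ with no $r$-factor, so that the full generator is immediately a Kato-generator of $K_n(A, \fm) = \Fil'_1 K_n(K)$, yielding the reverse inclusion.

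The main obstacle is the Steinberg reduction just sketched. The identity $r\{1+a, f\} = -\{1+a, -u\}$ a priori only shows that $\{1+a, f\}$ is $r$-torsion modulo $K_n(A, \fm^r)$, not that it lies in $K_n(A, \fm^{r-1})$. The resolution requires tracking the graded piece $K_n(A, \fm^{r-1})/K_n(A, \fm^r)$ of the Milnor $K$-theory filtration via its identification with differential forms over the residue field, in which the apparent $r$-torsion ambiguity is absorbed. This is the technical heart of the lemma and explains the drop of one filtration step from $\fm^r$ to $\fm^{r-1}$; the bookkeeping is carried out in \cite[Lem.~3.1]{Gupta-Krishna-Duality}, which serves as the template for the present proof.
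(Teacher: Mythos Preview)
Your approach is essentially the same as the paper's: both invoke \cite[Lem.~1]{Kato86} for the generation of $K_n(A,\fm^r)$ and defer the technical verification of the middle inclusion to \cite[Lem.~3.1]{Gupta-Krishna-Duality}. The paper's proof is in fact just that one-line citation, so your elaboration goes further than what the paper records.

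Two imprecisions in your sketch are worth flagging. First, when you write $a = f^r u$ and treat $-u$ as lying in $A^\times$, you are tacitly assuming $v(a) = r$; for general $a \in \fm^r$ one has $v(a) \ge r$, and the reduction to the case of exact valuation (e.g.\ via a factorization $1+a = (1+f^r)(1+a')$ with $v(a') = r$) should be made explicit both for the middle inclusion and for the $r=1$ equality. Second, your description of how the $r$-torsion obstacle is resolved --- ``tracking the graded piece via its identification with differential forms, in which the $r$-torsion ambiguity is absorbed'' --- is speculative; the graded quotients $K_n(A,\fm^{r-1})/K_n(A,\fm^r)$ need not be $r$-torsion-free in general (and the lemma is stated for an arbitrary DVR, not yet assuming characteristic $p$), so this cannot be the mechanism. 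The actual argument in the cited reference proceeds by a more direct symbol manipulation rather than a torsion-freeness statement about graded pieces. Since you correctly identify the reference that carries the load, this does not undermine your proposal, but the heuristic you offer for \emph{why} the reference works is misleading.
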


Let $p > 0$ be a fixed prime. Let $A$ be a discrete valuation ring which is an
$\F_p$-algebra and let $K$ be its quotient field.
Let $m,r \ge 1$ and $n \ge 0$ be two integers.
We let $\kappa^m_r(A|n) =
\frac{\Fil_n K_r(K)}{p^m K_r(K) \cap \Fil_n K_r(K)}$ and define
$\varpi^m_r(A|n)$ so that there is an exact sequence
\begin{equation}\label{eqn:Kato-0}
  0 \to \kappa^m_r(A|n) \to \kappa^m_r(A|0) \to \varpi^m_r(A|n) \to 0.
  \end{equation}

\begin{lem}\label{lem:Kato-1}
    One has
    \begin{enumerate}
    \item
    $\kappa^m_r(A|n) =
    \frac{\Fil_n K_r(K)}{p^m K_r(A) \cap \Fil_n K_r(K)}$.
  \item
    ${\Fil_1 K_r(K)}/{p^m} = \kappa^m_r(A|1) = \kappa^m_r(A|0) =
    {K_r(A)}/{p^m}$ if $r \ge 2$ and the residue field of $A$ is a local field.
\item
$\kappa^m_r(A|n) =
    \frac{\Fil_n K_r(K)}{p^m \Fil_1 K_r(A) \cap \Fil_n K_r(K)}$ under
the hypothesis of (2) if $n \ge 1$.
\end{enumerate}
\end{lem}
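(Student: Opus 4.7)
The plan is to treat the three items in turn, using \lemref{lem:Fil-M-0} together with standard structural results about Milnor $K$-theory of discrete valuation rings and local fields of characteristic $p$. For (1), since $\Fil_n K_r(K) \subseteq K_r(A)$ for all $n \ge 0$ by \lemref{lem:Fil-M-0}, the stated identity reduces to the injectivity of $K_r(A)/p^m K_r(A) \to K_r(K)/p^m K_r(K)$, equivalently to $p^m K_r(K) \cap K_r(A) = p^m K_r(A)$. I would deduce this from the Gersten exact sequence
\[
0 \to K_r(A) \to K_r(K) \xrightarrow{\partial} K_{r-1}(k) \to 0,
\]
where $k = A/\fm$, combined with Izhboldin's theorem that $K_{r-1}(k)$ has no $p$-torsion since $k$ has characteristic $p$. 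Indeed, if $x = p^m y \in K_r(A)$ with $y \in K_r(K)$, then $0 = \partial(x) = p^m \partial(y)$ forces $\partial(y) = 0$ by Izhboldin, hence $y \in K_r(A)$ and $x \in p^m K_r(A)$.

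For (2), \lemref{lem:Fil-M-0} identifies $\Fil_1 K_r(K) = \Fil'_1 K_r(K) = K_r(A,\fm)$, so $K_r(A)/\Fil_1 K_r(K) \cong K_r(k)$. Under the hypothesis that $k$ is a local field---necessarily $\cong \F_q((t))$ with $q$ a power of $p$, since $A$ is an $\F_p$-algebra---the structure of $K_r(k)$ is elementary: by Moore's theorem, $K_2(k) \cong \F_q^\times \oplus U$ with $U$ uniquely divisible, while $K_r(k) = 0$ for $r \ge 3$. In particular, $K_r(k)$ is uniquely $p^m$-divisible for every $r \ge 2$. Applying the Tor long exact sequence to
\[
0 \to \Fil_1 K_r(K) \to K_r(A) \to K_r(k) \to 0
\]
then yields an isomorphism $\Fil_1 K_r(K)/p^m \Fil_1 K_r(K) \xrightarrow{\sim} K_r(A)/p^m K_r(A)$, and combining this with (1) produces the chain of equalities in (2).

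Part (3) reuses the $p^m$-torsion-freeness of $K_r(k)$ established above. By (1), it suffices to show $p^m K_r(A) \cap \Fil_n K_r(K) \subseteq p^m \Fil_1 K_r(K)$ for $n \ge 1$, the reverse inclusion being immediate from $\Fil_1 K_r(K) \subseteq K_r(A)$ (and here I read $\Fil_1 K_r(A)$ in the statement as $\Fil_1 K_r(K)$, which sits inside $K_r(A)$). Given $x = p^m y$ with $y \in K_r(A)$ and $x \in \Fil_n K_r(K) \subseteq \Fil_1 K_r(K)$, the image $\bar y \in K_r(A)/\Fil_1 K_r(K) = K_r(k)$ satisfies $p^m \bar y = 0$, hence vanishes by $p^m$-torsion-freeness; thus $y \in \Fil_1 K_r(K)$ and $x = p^m y \in p^m \Fil_1 K_r(K)$. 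The main obstacle I anticipate is item (1), where one needs both Kato's exactness of the Gersten sequence for Milnor $K$-theory of a DVR and Izhboldin's $p$-torsion-freeness theorem; once these are in hand, items (2) and (3) follow formally from \lemref{lem:Fil-M-0} and the simple explicit structure of $K_r$ of a local field of characteristic $p$.
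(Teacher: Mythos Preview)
Your proof is correct and follows essentially the same approach as the paper, which also invokes \lemref{lem:Fil-M-0}, the Gersten resolution for $K_*(A)$, and $p$-torsion-freeness of Milnor $K$-groups of fields of characteristic $p$ (the paper cites Geisser--Levine for the latter, which subsumes Izhboldin's theorem), and then derives (3) formally from (1) and (2). One minor correction: for $r \ge 3$ the Milnor $K$-group $K_r(\ff)$ of a local field $\ff$ is known to be uniquely divisible, not zero; but since you only use unique $p$-divisibility, your argument is unaffected.
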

\begin{proof}
  To prove (1) and (2), use \lemref{lem:Fil-M-0}, 
\cite[\S~2, Lem.~1]{Kato80-1}, \cite[Thm.~8.1]{Geisser-Levine} and the universal 
exactness of the Gersten complex for $K_*(A)$. The item (3) follows from 
(1) and (2) because the latter implies that  
$p^m K_r(A) \cap \Fil_1 K_r(K) = p^m \Fil_1 K_r(A)$.
  \end{proof}

For a closed immersion of Noetherian schemes $D \subseteq X$ defined by the
sheaf of ideals $\sI_D \subset \sO_X$, we let $\sK_{n, (X,D)}$ be the sheaf
on $X_\zar$ (resp. $X_\nis, \ X_\et$) whose stalk at
$x \in X$ is $K_n(\sO_{X,x}, \sI_{D,x})$ (resp. $K_n(\sO^h_{X,x}, \sI^h_{D,x})$,
$K_n(\sO^{sh}_{X,x}, \sI^{sh}_{D,x})$), where $\sI_{D,x}$ is the stalk of $\sI_D$ at
$x$. We define the sheaves $\sK_{n, X|D}$ in a similar manner if $X$ is integral
and $D$ is locally principal. 
We shall write $\sK_{n, (X,D)}$ and $\sK_{n, X|D}$ as $\sK_{n,X}$ when
$D = \emptyset$. 

For $\tau  \in \{\zar, \nis, \et\}$ and an $\F_p$-scheme $Y$, let 
$\{W_m\Omega^\bullet_Y\}_{m \ge 1}$ be the pro-complex of the $p$-typical
    de Rham-Witt sheaves on $Y_\tau$ with the differential $d$, the Frobenius $F$,
    the Verschiebung $V$, the transition map $R$ and the
    Teichm{\"u}ller map $[-] \colon \sO^\times_Y \to (W_m\sO_Y)^\times$
    (cf. \cite{Illusie}). Let
    $W_m\Omega^r_{Y, \log} = \Ker(W_m\Omega^r_{Y} \xrightarrow{1 -\ov{F}}
    {W_m\Omega^r_Y}/{dV^{m-1}\Omega^{r-1}_Y})$
    (cf. \cite[\S~3.1]{Gupta-Krishna-Duality}). In this paper, we shall write 
$H^i_\et(Y, {\Z}/{p^m}(j)) = H^{i-j}_\et(Y, W_m\Omega^j_{Y, \log})$ and 
$H^i_\et(Y, {\Z}/{n}(j)) = H^i_\et(Y, \mu_n^{\otimes j})$ if $p \nmid n$.   
Recall that there is a canonical map of $\tau$-sheaves
${\rm NR}_Y \colon {\sK_{i, Y}}/n \to \sH^i_\tau(Y, {\Z}/{n}(i))$ which is the
Norm-residue homomorphism
when $p \nmid n$, and the map $\dlog \colon {\sK_{i,Y}}/{p^m} \to
W_m\Omega^i_{Y, \log}$, $\tau$-locally given by $\dlog(\{a_1, \cdots , a_i\}) =
\dlog[a_1]\wedge \cdots \wedge \dlog[a_i]$. The latter an isomorphism
when $Y$ is regular (cf. \cite[Thm.~1.2, Cor.~4.2]{Morrow-ENS}).

Given a regular $\F_p$-scheme $Y$ of Krull dimension one and a divisor
  $D = \sum_y n_y [y] \subset Y$, we let $\sK^m_{r, Y|D}$ be the sheaf on $Y_\tau$
    whose stalks are the groups $\kappa^m_r(\sO_{Y_\tau, y}|n_y)$.
    We let $W_m\Omega^r_{Y|D, \log} = {\rm Image}(\dlog \colon \sK_{r, Y|D} \to
    W_m\Omega^r_{Y})$. 
An easy consequence of Gersten resolutions for Milnor $K$-theory and logarithmic
de Rham-Witt sheaves is that the dlog map induces a natural isomorphism
$\dlog \colon  \sK^m_{r, Y|D} \xrightarrow{\cong} W_m\Omega^r_{Y|D, \log}$
in Nisnevich and {\'e}tale topologies
(cf. \cite[Thm.~1]{JSZ}, \cite[Thm.~2.2.2]{Kerz-Zhao}).
In particular, for $A$ as in ~\eqref{eqn:Kato-0}, we get
\begin{equation}\label{eqn:Milnor-mod}
  \dlog \colon \kappa^m_r(A|n) \xrightarrow{\cong} W_m\Omega^r_{A|n, \log}
  = {\rm Image}(\dlog \colon \Fil_n K_r(K) \to W_m\Omega^r_{A}).
  \end{equation}

We shall use the following result frequently in this paper.

\begin{lem}\label{lem:Kato-equiv}
 If $A$ is as in ~\eqref{eqn:Kato-0} and $n \ge 0, \ m, \ q \ge 1$ are integers, 
 the following hold.
  \begin{enumerate}
 \item
    The canonical maps ${K_2(K)}/{p^m} \to {K_2(\wh{K})}/{p^m}$ and
    $\kappa^m_2(A|n) \to \kappa^m_2(\wh{A}|n)$ are injective.

 \item
   The canonical maps
$\frac{K_2(K)}{\Fil_n K_2(K)} \to \frac{K_2(\wh{K})}{\Fil_n K_2(\wh{K})}$ and
$\varpi^m_r(A|n) \to \varpi^m_r(\wh{A}|n)$
  are bijective.
  \item
  $H^n_\et(R, {\Z}/{q} (n-1)) \to
  H^n_\et(\wh{R}, {\Z}/{q} (n-1))$ is bijective if $A$ is Henselian
  and $R \in \{A, K\}$.
  \item
   The canonical map ${K_2(K)} \to {K_2(\wh{K})}$ is injective if $A$ is Henselian.
  \end{enumerate}
\end{lem}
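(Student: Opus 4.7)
The plan is to handle the four parts by combining the $\dlog$ isomorphism (Bloch--Kato--Gabber and its relative version already invoked in \eqref{eqn:Milnor-mod}) with flatness of completion for (1) and (2), the classical invariance of the small \'etale site under completion of Henselian rings for (3), and a separatedness argument built from (1) and (2) for (4).

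First, for (1), I will identify the mod-$p^m$ Milnor groups with logarithmic Hodge--Witt sheaves. Since $K$ is a field of characteristic $p$, the $\dlog$ map gives an isomorphism $K_2(K)/p^m \xrightarrow{\cong} W_m\Omega^2_{K, \log}$ (the absolute case of \eqref{eqn:Milnor-mod}, equivalent to the Geisser--Levine result cited in the proof of \lemref{lem:Kato-1}), and \eqref{eqn:Milnor-mod} gives the relative analogue $\kappa^m_2(A|n) \xrightarrow{\cong} W_m\Omega^2_{A|n, \log}$. Injectivity thereby reduces to the same statement for the logarithmic sheaves. Since $K \hookrightarrow \wh{K}$ and $A \hookrightarrow \wh{A}$ are flat, so are $W_m(K) \to W_m(\wh{K})$ and $W_m(A) \to W_m(\wh{A})$, and hence $W_m\Omega^2_K \to W_m\Omega^2_{\wh{K}}$ and $W_m\Omega^2_A \to W_m\Omega^2_{\wh{A}}$ are injective; the identification $\fm^n \wh{A} = \wh{\fm}^n$ preserves the relative structure.

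Second, for (2), I will argue by induction on $n$ that the graded pieces $\Fil_{n-1} K_2(K)/\Fil_n K_2(K)$ depend only on the common residue field $k = A/\fm$ and on the truncation $A/\fm^n \cong \wh{A}/\wh{\fm}^n$. Kato's description in \cite{Kato80-1} and \cite{Kato86} realizes these graded pieces in terms of K\"ahler differentials of $k$, so they are unchanged under passage to the completion. Starting from $K_2(K)/K_2(A,\fm) \cong K_2(k) \oplus k^\times$ and building up the filtration, one obtains bijectivity of $K_2(K)/\Fil_n K_2(K) \to K_2(\wh{K})/\Fil_n K_2(\wh{K})$. The assertion for $\varpi^m_r(A|n)$ then follows by applying the snake lemma to two copies of \eqref{eqn:Kato-0} (for $A$ and $\wh{A}$), combining part (1) for the left term with the just-proven bijectivity for the middle term. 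For (3), I will invoke the classical invariance statement: for Henselian $A$, base change to $\wh{A}$ induces an equivalence of small \'etale sites (every finite \'etale cover descends uniquely from $\wh{A}$ to $A$); for $R = K$, the same descent yields $G_K \cong G_{\wh{K}}$. In either case \'etale cohomology with torsion coefficients agrees, specializing to the claim for $\Z/q(n-1)$.

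Finally, for (4), the plan is to combine (1) and (2). Let $N$ denote the kernel of $K_2(K) \to K_2(\wh{K})$. Since $K_2(K)/\Fil_n K_2(K) \hookrightarrow K_2(\wh{K})/\Fil_n K_2(\wh{K})$ by (2), $N$ lies in $\bigcap_{n \ge 1} \Fil_n K_2(K)$. The hard part will be showing this intersection vanishes when $A$ is Henselian with residue field a local field; this is a separatedness statement for the filtration topology on $K_2(K)$, which I expect to follow from Kato's explicit symbolic generation of $\Fil_n K_2(K)$ by $\{1 + a\pi^n, b\}$ combined with the $\fm$-adic separatedness of $A$ and the injectivity from (1) at each finite level $p^m$. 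This interplay between the $\Fil$-filtration and the $p$-power filtration is the main technical obstacle, since it requires the dlog identification of (1) and the graded-piece analysis of (2) to be controlled simultaneously.
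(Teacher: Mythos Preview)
Your approach to (1) matches the paper's one-line argument, and your graded-piece analysis for the first half of (2) is essentially what the cited result of Kato provides. There is a small slip in your snake-lemma step for $\varpi^m_r(A|n)$: the middle term $\kappa^m_r(A|0)$ of \eqref{eqn:Kato-0} is only known to \emph{inject} into $\kappa^m_r(\wh{A}|0)$ by (1), not to be bijective, so the snake lemma does not give what you want. The fix is immediate: your graded-piece argument already yields $\Fil_0 K_r(K)/\Fil_n K_r(K) \cong \Fil_0 K_r(\wh{K})/\Fil_n K_r(\wh{K})$, and by \lemref{lem:Kato-1}(1) one has $\varpi^m_r(A|n) \cong (\Fil_0 K_r(K)/\Fil_n K_r(K))/{p^m}$, so bijectivity follows.

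Part (3) has a genuine gap when $p \mid q$. In this paper $\Z/{p^m}(j)$ is shorthand for the logarithmic Hodge--Witt sheaf $W_m\Omega^j_{(-),\log}$ placed in the appropriate degree; this is \emph{not} a locally constant torsion sheaf on the small \'etale site. Its sections over an \'etale $R$-algebra $R'$ are built from $W_m\Omega^j_{R'}$, which genuinely changes when $R'$ is replaced by its completion. Thus, although for Henselian $A$ the \'etale topoi of $\Spec(A)$ and $\Spec(\wh{A})$ are equivalent, the two sheaves $W_m\Omega^j_{A,\log}$ and $W_m\Omega^j_{\wh{A},\log}$ do not correspond under this equivalence, and your invariance principle does not apply. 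The paper treats $R = K$ by citing specific results of Kato, and for $R = A$ argues case by case, with the hardest case ($n = 2$, $q = p^m$) requiring Shiho's exact sequence relating the cohomology of $A$, $K$, and $\ff$.

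Part (4) is not only incomplete but the proposed route does not close. You correctly deduce that the kernel $N$ lies in $\bigcap_n \Fil_n K_2(K)$ and in $\bigcap_m p^m K_2(K)$, but neither intersection is zero: for a $2$-local field, $K_2$ has a nontrivial uniquely divisible subgroup (this is essentially Fesenko's theorem, reproved as \corref{cor:Local-inj-8}), and in the complete case one has $\bigcap_n \Fil_n K_2(\wh{K}) \supseteq K_2(\wh{K})_\divf \neq 0$. So the ``separatedness'' you hope for fails, and combining the two filtrations still only traps $N$ inside the divisible part. The paper sidesteps this entirely by invoking Banaszak's embedding theorem, which uses different methods.
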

\begin{proof}
  The item (1) follows because $W_m\Omega^2_K \inj W_m\Omega^2_{\wh{K}}$.
  The items (2) and the ($R = K$) case of item (3)  follow from 
  \cite[Lem.~1]{Kato83} and \cite[Lem.~21]{Kato-Inv} while 
  (4) follows from \cite[Cor.~4]{Banaszak}.
  It remains to prove (3) when $R =A$.
 But this is clear if either $n \neq 2$ or
 $p \nmid q$ because of the following facts.
 \begin{enumerate}
 \item
   $(A^\times)\{p'\} \xrightarrow{\cong} (\wh{A}^\times)\{p'\}$.
   \item
     $\Br(A) \xrightarrow{\cong} \Br(\wh{A})$.
   \item
     $\Omega^i_{A, \log} = \Omega^i_{\wh{A}, \log} = 0$ for $i \ge 3$.
   \item
     $H^1_\et(A, {\Z}/{p^m}) \cong H^1_\et(\ff, {\Z}/{p^m})$.
     \end{enumerate}
  
The  ($n =2, q = p^m$) case follows by using the exact sequence
  \begin{equation}\label{eqn:Nis-et-ICG-2}
      0 \to H^1_\et(A, W_m\Omega^2_{A, \log}) \to
      H^1_\et(K, W_m\Omega^2_{K, \log}) \xrightarrow{\partial_1}
      H^1_\et(\ff, W_m\Omega^1_{\ff, \log}) \to 0,
    \end{equation}
    which one easily deduces from \cite[Thms.~3.2, 4.1]{Shiho}, and by
    noting that $\partial_1$ is an isomorphism by \lemref{lem:Kato-equiv} and
    \cite[\S~3, Lem.~3]{Kato80-2}
\end{proof}


\subsection{Kato topology}\label{sec:Kato-top}
Since we shall use Kato's topology extensively in this paper, we briefly
recall it here.
More details can be found in \cite[\S~7]{Kato80-1} (see also
\cite[\S~7.1]{KRS}).

Let $K$ be a 2-local field of positive characteristic, i.e., a
complete discrete valuation field of characteristic $p > 0$
whose residue field $\ff$ is a local field. Let $A$ denote the ring of integers of
$K$ with maximal ideal $(\pi)$. One knows that there
is a canonical isomorphism of rings $\phi \colon \ff[[\pi]] \xrightarrow{\cong}
A$. The Kato topology on $A$ is the unique topology such that
$\phi$ is an isomorphism of topological rings when $\ff[[\pi]]$ is endowed with the
product of the valuation (also called the adic) topology of $\ff$.
The Kato topology of $A^\times$
is the subspace topology induced from that of $A$.
The Kato topology of $K^\times$ is the unique topology which is compatible
with the group structure of $K^\times$ and for which
$A^\times$ is open in $K^\times$. The Kato topology of $K_2(K)$ is the finest
topology which is compatible with the group structures and for which
the product map $K^\times \times K^\times \to K_2(K)$ is continuous.
This makes $K_2(K)$ into a topological abelian group.

For any integers $m, r \ge 0$,
the Kato topology of $K_2(K)$ induces the quotient topology on ${K_2(K)}/m$
and the subspace topology on $\Fil_r K_2(K)$. We shall use the term Kato topology
for these topologies as well.
By the Kato topologies of $\kappa^m_r(A|n)$ (resp. $\varpi^m_r(A|n)$), we shall
mean the subspace topology of $\kappa^m_r(A|n)$ induced from that of
${K_2(K)}/{p^m}$ (resp. the
  quotient topology of $\kappa^m_r(A|0)$ via ~\eqref{eqn:Kato-0}).
  Let $\partial_K \colon K_2(K) \surj \ff^\times$ be the Tame symbol map.
Via this surjective homomorphism, the Kato topology of $K_2(K)$ induces
the quotient topology (called the Kato topology) on $\ff^\times$.
On the other hand, the latter is also equipped with its valuation topology.
The following should be known to experts but we could not find a reference.

\begin{lem}\label{lem:Kato-adic}
The two topologies of $\ff^\times$ coincide.
\end{lem}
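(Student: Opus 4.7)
The plan is to show each of the two topologies on $\ff^{\times}$ refines the other, using the tame symbol in one direction and a natural continuous section of it in the other.

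\textbf{Adic refines Kato quotient.} First I would exhibit a continuous lift of the tame symbol. Via the topological ring isomorphism $\phi \colon \ff[[\pi]] \xrightarrow{\cong} A$, the inclusion of $\ff$ into $\ff[[\pi]]$ as constant power series is continuous from the adic topology of $\ff$ to the product topology of $\ff[[\pi]]$, since it is just a coordinate inclusion into a product. Restricting to units yields a continuous map $\iota \colon \ff^{\times} \hookrightarrow A^{\times} \hookrightarrow K^{\times}$ from the adic topology of $\ff^{\times}$ to the Kato topology of $K^{\times}$. Composing with the continuous product map $K^{\times} \times K^{\times} \to K_2(K)$, the assignment $s \colon u \mapsto \{\iota(u), \pi\}$ defines a continuous map $\ff^{\times} \to K_2(K)$, and the tame symbol formula gives $\partial_K \circ s = \mathrm{id}_{\ff^{\times}}$. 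Hence the identity on $\ff^{\times}$, viewed as a map from the adic topology to the Kato quotient topology, factors as $\partial_K \circ s$ and is continuous.

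\textbf{Kato quotient refines adic.} In the other direction, it suffices to verify that $\partial_K \colon K_2(K) \to \ff^{\times}$ is continuous when the target is given the adic topology. I would use the following consequence of the maximality in the definition of the Kato topology on $K_2(K)$: a group homomorphism $f \colon K_2(K) \to G$ to a topological group is continuous if and only if $f$ composed with the symbol map $K^{\times} \times K^{\times} \to K_2(K)$ is continuous. Indeed, the initial topology pulled back from $G$ under such an $f$ already satisfies the conditions defining the Kato topology, hence is coarser. Applied to $f = \partial_K$, it remains to check continuity of $(a,b) \mapsto \partial_K(\{a,b\})$ on each piece of the decomposition $K^{\times} \times K^{\times} = \coprod_{m,n \in \Z} \pi^m A^{\times} \times \pi^n A^{\times}$. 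On such a piece the map is the explicit formula $(u,v) \mapsto (-1)^{mn} \bar u^{n} \bar v^{-m}$; the reduction $A^{\times} \to \ff^{\times}$ corresponds under $\phi$ to a coordinate projection of $\ff[[\pi]]$ and is therefore continuous into the adic topology of $\ff^{\times}$, and the remaining operations are the continuous group operations of $\ff^{\times}$.

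\textbf{The only delicate point} is the universal property of the Kato topology invoked in the second step; once granted, everything else is an unwinding of the definitions recalled in \S\ref{sec:Kato-top}, together with the standard tame symbol formula. Combining the two refinements gives the coincidence of the two topologies claimed in the lemma.
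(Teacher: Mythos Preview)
Your argument is correct. For the first implication (adic refines Kato quotient), you and the paper exploit the same idea: the continuous section $u \mapsto \{u,\pi\}$ of $\partial_K$. The paper phrases this as a subgroup argument (a Kato-open $U$ pulls back along $T_\pi(a)=\partial_K(\{a,\pi\})$ to something containing an adic-open $V_0 \subset \ff^\times$, which then lands back in $U$), while you phrase it as a factorization of the identity through continuous maps; these are equivalent.

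The genuine difference is in the second implication. To show $\partial_K$ is continuous into the adic topology, the paper invokes the commutative square \eqref{eqn:Kato-adic-0} and the continuity of Kato's reciprocity map $\rho_K$ (a nontrivial result from \cite{Kato80-1}), together with the fact that the adic topology on $\ff^\times$ agrees with the subspace topology induced from $G_\ff$ via $\rho_\ff$. Your route is more elementary and self-contained: you use only the universal property of the Kato topology on $K_2(K)$ (which follows from its definition as the finest group topology making the symbol map continuous, since the initial topology pulled back along any group homomorphism to a topological group is itself a group topology) and the explicit tame-symbol formula on each coset $\pi^m A^\times \times \pi^n A^\times$. This avoids any appeal to class field theory, at the cost of a short direct computation. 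The paper's argument, by contrast, embeds the lemma into the reciprocity framework that it is ultimately meant to serve.
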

\begin{proof}
  To show that the adic topology of $\ff^\times$ is finer than the Kato topology,
  it suffices to show that if $U \subset \ff^\times$ is a subgroup which is
  open in the Kato topology, then it contains an open subgroup in the adic topology.
  To that end, we look at the composite map $T \colon K^\times \times K^\times
  \to K_2(K) \xrightarrow{\partial_K} \ff^\times$. By definition, $T$ is 
  continuous with respect to the Kato topology of $\ff^\times$.
  It follows that the map $T_\pi \colon K^\times \to \ff^\times$,
  given by $T_\pi(a) = \partial_K(\{a, \pi\})$ is a continuous homomorphism.
  In particular, $T_\pi^{-1}(U)$ contains an open subgroup of $K^\times$.
  By definition of the Kato topology of $K^\times$ and
  \cite[\S~7, Rem.~1]{Kato80-1}, $T^{-1}_\pi(U)$ contains a subgroup of the
  form $V_0 \times V_1$, where $V_0 \subset \ff^\times$ is an open subgroup in the
  adic topology and
  $V_1 \subset {\underset{i \ge 1}\prod} \ff \pi^i$ is an open subgroup
  in the Kato topology of $\pi A \subset A$. It is clear from the
  definition of $T$ that $T_\pi(V_0 \times V_1) = V_0$ which implies that
  $V_0 \subset U$.

To show that the Kato topology of $\ff^\times$ is finer than the adic topology,
  it is equivalent to show that $\partial_K$ is continuous with respect  to the
  Kato topology on $K_2(K)$  and adic topology on $\ff^\times$.
  Now,  we look at the commutative diagram
  (cf. \cite[\S~6, Rem.~1]{Kato80-1})
  \begin{equation}\label{eqn:Kato-adic-0}
    \xymatrix@C.8pc{
K_2(K) \ar[r]^-{\partial_K} \ar[d]_-{\rho_K} & \ff^\times
      \ar@{^{(}-}[d]^-{\rho_\ff} \\
      G_K \ar[r]^-{\partial'_K} & G_\ff,}
  \end{equation}
  where the vertical arrows are the reciprocity homomorphisms.
  It is well known and easy to show that the adic topology of $\ff^\times$
  coincides with the subspace topology induced from the profinite topology of
  $G_\ff$ via $\rho_\ff$. Since $\partial'_K$ is continuous and so is
  $\rho_K$ (cf. \cite[Thm.~2, p.~304-305]{Kato80-1}), it follows that
  $\partial_K$ is also continuous. 
  \end{proof}

In the sequel, the default topology of ${K_2(K)}/n$ ($n \ge 0$) and its subspaces
      (e.g., ${K_2(A)}/n$) will be the Kato topology, and the same for
      ${\ff^\times}/n$ will be the adic topology.
If $K'$ is a Henselian discrete valuation field with completion $K$, then the
Kato topology of $K'^\times$ is the subspace topology induced from that
of $K^\times$. The Kato topology on $K_2(K')$ is the finest topology
such that the product map $K'^\times \times K'^\times \to K_2(K')$ is
continuous. It is clear that the canonical map $K_2(K') \to K_2({K})$ is
continuous.

Before we prove the next lemma, we recall some topological notions.
For $G \in \Ab$, we let $G^\vee =  \Hom_\Ab(G, {\Q}/{\Z})$.
For $G \in \Tab$, we let $G^\star =  \Hom_\Tab(G, {\Q}/{\Z})$.
If $G$ is either a profinite or a torsion locally compact Hausdorff topological
abelian group, then $G^\star$ is a topological abelian group with its compact open
topology and satisfies the Pontryagin duality, i.e., the evaluation map
$G \to (G^*)^*$ is an isomorphism in $\Tab$ (cf. \cite[\S~2.9]{Pro-fin}).
For $A, B \in \Tab$, we let $\Hom_\cf(A,B) = (\Hom_{\Tab}(A,B))_\tor$.
We shall write $\Hom_\cf(A,{\Q}/{\Z})$ as $A^\star_{\fr}$.
In this paper, we shall endow $\Q$ as well as the direct sums of its subgroups and
subquotients (e.g., finite abelian groups) with the discrete topology.

\begin{lem}\label{lem:K-dual}
  The map $\partial_K \colon K_2(K) \surj \ff^\times$ induces an exact sequence
  \begin{equation}\label{eqn:K-dual-0}
    0 \to (\ff^\times)^\star \to {K_2(K)}^\star \to {K_2(A)}^\star \to 0.
  \end{equation}
 \end{lem}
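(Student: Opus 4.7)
The plan is to construct a continuous splitting of $\partial_K$ in $\Tab$ and then dualize. First I would exploit the canonical isomorphism $\phi \colon \ff[[\pi]] \xrightarrow{\cong} A$ (for any choice of uniformizer $\pi$) to obtain a continuous homomorphism $\tau \colon \ff^\times \to A^\times \subset K^\times$ coming from the inclusion of constant power series $\ff \hookrightarrow \ff[[\pi]]$. Setting
\[
s \colon \ff^\times \to K_2(K), \qquad s(\ov u) = \{\tau(\ov u), \pi\},
\]
the map $s$ is continuous because it is the composite of the continuous map $\ov u \mapsto (\tau(\ov u), \pi)$ into $K^\times \times K^\times$ with the product $K^\times \times K^\times \to K_2(K)$, which is continuous by the very definition of the Kato topology on $K_2(K)$. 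A direct tame-symbol computation gives $\partial_K \circ s = \id_{\ff^\times}$.

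Next I would identify $K_2(A) = \Fil_0 K_2(K)$ with $\ker(\partial_K)$ as topological subgroups of $K_2(K)$ (both equipped with the subspace Kato topology). Symbols of units have trivial tame symbol, giving $K_2(A) \subseteq \ker(\partial_K)$; conversely, writing every element of $K^\times$ as $u\pi^n$ with $u \in A^\times$ and applying the tame-symbol formula expresses any element of $\ker(\partial_K)$ as a sum of symbols of units of $A$. Combined with \lemref{lem:Kato-adic} (continuity of $\partial_K$ with respect to the adic topology on $\ff^\times$) and the continuous section $s$ constructed above, this produces a split short exact sequence in $\Tab$:
\[
0 \to K_2(A) \to K_2(K) \xrightarrow{\partial_K} \ff^\times \to 0.
\]

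Finally I would apply the contravariant functor $\Hom_\Tab(-, \Q/\Z)$, which turns finite direct sums into direct products. Since the sequence is split in $\Tab$, the resulting dual sequence
\[
0 \to (\ff^\times)^\star \to K_2(K)^\star \to K_2(A)^\star \to 0
\]
is again split exact, which is the content of the lemma. The main obstacle I anticipate is the topological identification of $K_2(A)$ with $\ker(\partial_K)$: the algebraic equality is classical, but the topological refinement relies on the specific conventions for the Kato topology adopted in \S~2.2 and on the 2-local structure of $K$. Once the section $s$ is continuous and this identification is established, the rest of the argument is formal.
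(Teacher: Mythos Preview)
Your proposal is correct and takes essentially the same approach as the paper: the paper constructs the continuous retraction $\phi_K(\alpha) = \alpha - \{\partial_K(\alpha), \pi\}$ onto $K_2(A)$, which is exactly the retraction associated to your continuous section $s(\ov u) = \{\tau(\ov u), \pi\}$, and the continuity argument (factoring through $\ff^\times \hookrightarrow K^\times$ and the product map) is identical. Your worry about the topological identification of $K_2(A)$ with $\ker(\partial_K)$ is unfounded here, since by the conventions of \S~\ref{sec:Kato-top} the topology on $K_2(A) = \Fil_0 K_2(K)$ is \emph{defined} to be the subspace topology from $K_2(K)$.
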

 \begin{proof}
    Since $K_2(A) = \Ker(\partial_K)$, the exactness of ~\eqref{eqn:K-dual-0}
    except at $K_2(A)^\star$ follows immediately from \lemref{lem:Kato-adic}.
    To prove the exactness at ${K_2(A)}^\star$, we let
    $\phi_K \colon K_2(K) \to K_2(A)$ be
    given by $\phi_K(\{a, b\}) = \{a,b\} - \{\partial_K(\{a,b\}), \pi\}$.
    It is easily checked that $\phi_K$ is a group homomorphism whose restriction to
    $K_2(A)$ is identity. It remains to show that $\phi_K$ is continuous with
    respect to the Kato topology. Equivalently, we need to show that 
    $\phi_K \colon K_2(K) \to K_2(K)$ is continuous. As $\partial_K$ is
    continuous, the latter claim follows if we show that the map
    $\psi_K \colon \ff^\times \to K_2(K)$ is continuous, where $\psi_K(a) =
    \{a, \pi\}$. But this map is the composition of
    $\ff^\times \inj K^\times \to K_2(K)$, where the first map is the canonical
inclusion and the second map takes $a$ to $\{a, \pi\}$.
The second map is clearly continuous and the first map is continuous by
\cite[\S~7, Rem.~1]{Kato80-1}. 
\end{proof}

\begin{remk}\label{remk:K-dual-1}
The same proof as that of \lemref{lem:K-dual} shows that the sequence
\begin{equation}\label{eqn:K-dual-2}
    0 \to ({\ff^\times}/n)^\star \to ({K_2(K)}/n)^\star \to ({K_2(A)}/n)^\star \to 0
  \end{equation}
is exact for every $n \in \Z$.
\end{remk}

\subsection{The ramification filtration}\label{secKRF}
We recall Kato's ramification filtration \cite{Kato89}.
Let $K$ be a Henselian discrete valuation field of 
characteristic $p > 0$ with ring of integers
$A$, maximal ideal $\fm_K \neq 0$ and residue field $\ff$.
For an integer $m \ge 1$, the Artin-Schreier-Witt sequence gives rise to an exact
sequence
\begin{equation}\label{eqn:DRC-3}
0 \to {\Z}/{p^m} \to W_m(K) \xrightarrow{1 - \ov{F}} W_m(K)
\xrightarrow{\partial_m} H^1_{\et}(K, {\Z}/{p^m}) \to 0,
\end{equation}
where $\ov{F}((a_{m-1}, \ldots , a_{0})) = (a^p_{m-1}, \ldots , a^p_{0})$.
We write $(a_{m-1}, \ldots , a_{0})$ as $\un{a}$ in short.
Let $v_K \colon K^{\times} \to \Z$ be the normalized valuation.
We fix integers $n \geq 0$ and $m \ge 1$ and let
$\Fil_{n} W_m(K) = \{\un{a}| p^{i}v_K(a_i) \ge -n \ \forall \ i\}$.
We let $\Fil_{n}H^1_\et(K, {\Z}/{p^m}) = \partial_m(\Fil_{n} W_m(K))$ and
$\Fil_n H^1_\et(K, {\Q_p}/{\Z_p}) = {\underset{m \ge 1}\bigcup} \
\Fil_{n}H^1_\et(K, {\Z}/{p^m})$.
We let $ \Fil_n H^1(K) = 0$ for $n < -1$ and
\begin{equation}\label{eqn:Fil-K-p-1}
  \Fil_n H^1(K) = \left\{ \begin{array}{ll}
                            H^1(K)\{p'\} \bigoplus \Fil_n H^1_\et(K, {\Q_p}/{\Z_p})
                            & \mbox{if $n \ge 0$} \\
                            H^1(A) & \mbox{if $n = -1$}.
                          \end{array}
                        \right.
                      \end{equation}
Let $\lfloor \cdot \rfloor$ (resp. $\lceil \cdot \rceil$) denote the
greatest integer (resp. smallest integer) function on $\N$.

\begin{lem}\label{lem:Matsuda-0}
  For $n \ge 0$ and $\un{a} \in W_m(K)$, one has $(1-\ov{F})(\un{a}) \in
  \Fil_n W_m(K)$
  if and only if $\un{a} \in \Fil_{\lfloor \frac{n}{p}\rfloor} W_m(K)$.
\end{lem}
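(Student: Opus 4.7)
My plan is to extract both implications of the biconditional from a single computation showing that $\ov F$ shifts the filtration by a factor of $p$, combined with the (standard) subgroup property of $\Fil_n W_m(K)$.

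\emph{Key observation.} For any $\un a \in W_m(K)$, one has $\ov F(\un a) \in \Fil_n W_m(K)$ if and only if $\un a \in \Fil_{\lfloor n/p \rfloor} W_m(K)$. Indeed the $i$-th component of $\ov F(\un a)$ is $a_i^p$, so the defining inequality $p^i v_K(a_i^p) \ge -n$ rewrites as $p^i v_K(a_i) \ge -n/p$, which is equivalent by integrality of the left-hand side to $p^i v_K(a_i) \ge -\lfloor n/p \rfloor$. The easy direction of the lemma is then immediate: if $\un a \in \Fil_{\lfloor n/p\rfloor}$, then $\un a \in \Fil_n$ (because $\lfloor n/p \rfloor \le n$ for $n \ge 0$) and also $\ov F(\un a) \in \Fil_n$, so $(1-\ov F)(\un a) = \un a - \ov F(\un a) \in \Fil_n$ by the subgroup property.

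\emph{Hard direction.} Assume $(1-\ov F)(\un a) \in \Fil_n W_m(K)$; the case $\un a = 0$ is trivial, so let $M'$ be the smallest integer with $\un a \in \Fil_{M'} W_m(K)$. From the identity $\ov F(\un a) = \un a - (1-\ov F)(\un a)$ and the subgroup property of $\Fil_{\max(M',n)}$, we obtain $\ov F(\un a) \in \Fil_{\max(M',n)} W_m(K)$, and hence by the key observation $\un a \in \Fil_{\lfloor \max(M',n)/p \rfloor} W_m(K)$. Minimality of $M'$ then forces $pM' \le \max(M',n)$. If $M' > n$, this would give $pM' \le M'$, i.e.\ $M' \le 0$, contradicting $M' > n \ge 0$. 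Therefore $M' \le n$, whence $pM' \le n$, i.e.\ $M' \le \lfloor n/p \rfloor$, and $\un a \in \Fil_{\lfloor n/p \rfloor} W_m(K)$ as required.

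\emph{Main obstacle.} The only nontrivial input is the subgroup property of $\Fil_n W_m(K)$. This is not tautological, because Witt vector addition in characteristic $p$ is not componentwise: the universal Witt polynomials mix lower components with nontrivial coefficients, so preservation of the pole-order conditions has to be checked from the explicit form of those polynomials. It is however a standard feature of the Brylinski--Kato filtration (used implicitly throughout this paper), and I would invoke it as a known fact rather than reprove it here.
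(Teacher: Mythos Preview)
Your proof is correct and takes a genuinely different route from the paper. The paper argues by induction on $m$ through the exact sequence
\[
0 \to \Fil_n W_1(K) \xrightarrow{V^{m-1}} \Fil_n W_m(K) \xrightarrow{R} \Fil_{\lfloor n/p\rfloor} W_{m-1}(K) \to 0,
\]
reducing the hard direction to the statements $(1-\ov F)(\un a') \in \Fil_{\lfloor n/p\rfloor} W_{m-1}(K)$ and $(1-\ov F)(a_0) \in \Fil_n W_1(K)$, with the $m=1$ case being trivial. You instead bypass the induction entirely: your ``key observation'' that $\ov F(\un a) \in \Fil_n$ iff $\un a \in \Fil_{\lfloor n/p\rfloor}$ is a one-line valuation computation available for all $m$ at once, and the hard direction is handled by the minimal-level argument with $M'$. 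Your approach is more self-contained and arguably more transparent as to \emph{why} the floor of $n/p$ appears; the paper's approach has the side benefit of establishing the $V$--$R$ exact sequence on the filtration, which is reused later (e.g.\ in \corref{cor:F-V-R} and in the proof of \propref{prop:p-tor}). Both arguments ultimately rest on the same nontrivial input you flag, namely that $\Fil_n W_m(K)$ is closed under Witt addition --- the paper uses this implicitly through the exactness of the displayed sequence. One small cosmetic point: in your minimality argument $M'$ could in principle be negative, but since $\max(M',n) \ge n \ge 0$ you only ever invoke the subgroup property at non-negative levels, so this causes no trouble.
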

\begin{proof}
  The lemma is easily deduced using induction on $m$, the exact sequence
  \begin{equation}\label{eqn:Matsuda-0-0}
0 \to \Fil_{n} W_1(K) \xrightarrow{V^{m-1}} \Fil_{n} W_m(K) 
\xrightarrow{R}  \Fil_{\lfloor {\frac{n}{p}}\rfloor} W_{m-1}(K) \to 0,
\end{equation}
and the fact that $(1-\ov{F})(\un{a}) \in \Fil_n W_m(K)$ implies
$(1-\ov{F})(\un{a'}) \in \Fil_{\lfloor {\frac{n}{p}}\rfloor} W_{m-1}(K)$ and
$(1-\ov{F})(a_0) \in
\Fil_{n} W_1(K)$, where $\un{a'} = (a_{m-1}, \ldots , a_1)$. Note that
$m =1$ case is trivial.
\end{proof}

A result we shall need about the ramification filtration is the
following.

\begin{prop}\label{prop:p-tor}
For every $m \ge 1$ and $n \ge 0$, we have 
  \[
    \Fil_{n}H^1_\et(K, {\Z}/{p^m}) = H^1_\et(K, {\Z}/{p^m}) \bigcap 
\Fil_{n}H^1_\et(K, {\Q_p}/{\Z_p}) = ~_{p^m} \Fil_{n} H^1_\et(K, {\Q_p}/{\Z_p})
\]
as subgroups of $H^1_\et(K, {\Q_p}/{\Z_p})$.
\end{prop}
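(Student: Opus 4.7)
The first two set-inclusions in the claimed chain of equalities are immediate: $\Fil_n H^1_\et(K,\Z/p^m)$ is by definition a subgroup of both $H^1_\et(K,\Z/p^m)$ and of $\Fil_n H^1_\et(K,\Q_p/\Z_p)$, and the middle term equals the right-hand term since $H^1_\et(K,\Z/p^m)$ coincides with the $p^m$-torsion of $H^1_\et(K,\Q_p/\Z_p)$ (the transition maps being injective at the level of continuous characters). The substance is therefore the reverse inclusion ${}_{p^m}\Fil_n H^1_\et(K,\Q_p/\Z_p)\subseteq \Fil_n H^1_\et(K,\Z/p^m)$, which I would prove by a diagram chase built from the Artin-Schreier-Witt sequence together with an iterated version of the Matsuda exact sequence \eqref{eqn:Matsuda-0-0} and \lemref{lem:Matsuda-0}.

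The technical core is the filtered iterated Matsuda exact sequence
\[
0 \to \Fil_n W_m(K) \xrightarrow{V^{M-m}} \Fil_n W_M(K) \xrightarrow{R^m} \Fil_{\lfloor n/p^m\rfloor}W_{M-m}(K)\to 0,
\]
which I verify directly from the paper's description of $\Fil_n$: here $V^{M-m}$ and $R^m$ are the iterated Verschiebung and restriction, both commute with $\ov F$ (which is componentwise $p$-th power), we have $R^m V^{M-m}=0$, and $V^{M-m}$ is a filtered isomorphism onto the subgroup of $\Fil_n W_M(K)$ consisting of elements whose $M-m$ highest Witt coordinates vanish. In addition, $V^{M-m}$ induces the canonical inclusion $H^1_\et(K,\Z/p^m)\hookrightarrow H^1_\et(K,\Z/p^M)$ corresponding to $1\mapsto p^{M-m}$ on coefficients, as one verifies by restricting to Teichm\"uller lifts and using $p\cdot [1]=VF[1]$ in Witt arithmetic.

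Given $\chi\in{}_{p^m}\Fil_n H^1_\et(K,\Q_p/\Z_p)$, write $\chi=\partial_m(\un{c})$ for some $\un{c}\in W_m(K)$ and $\chi=\partial_M(\un{b})$ with $\un{b}\in\Fil_n W_M(K)$ for some $M\ge m$. By the identification above, $V^{M-m}(\un{c})=\un{b}+(1-\ov F)(\un{d})$ for some $\un{d}\in W_M(K)$. Applying $R^m$ and using $R^m V^{M-m}=0$ yields $R^m(\un{b})=-(1-\ov F)(R^m(\un{d}))\in\Fil_{\lfloor n/p^m\rfloor}W_{M-m}(K)$, and \lemref{lem:Matsuda-0} then gives $R^m(\un{d})\in\Fil_{\lfloor n/p^{m+1}\rfloor}W_{M-m}(K)$.

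Using the filtered exact sequence at index $\lfloor n/p\rfloor$, I lift $R^m(\un{d})$ to some $\un{d}_1\in\Fil_{\lfloor n/p\rfloor}W_M(K)$ and decompose $\un{d}=V^{M-m}(\un{e})+\un{d}_1$ with $\un{e}\in W_m(K)$. Setting $\un{c}'=\un{c}-(1-\ov F)(\un{e})$, we have $\partial_m(\un{c}')=\chi$ and
\[
V^{M-m}(\un{c}')=\un{b}+(1-\ov F)(\un{d}_1)\in\Fil_n W_M(K),
\]
since $(1-\ov F)(\un{d}_1)\in\Fil_{p\lfloor n/p\rfloor}\subseteq\Fil_n$ by \lemref{lem:Matsuda-0}. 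The filtered-isomorphism property of $V^{M-m}$ then forces $\un{c}'\in\Fil_n W_m(K)$, giving $\chi\in\Fil_n H^1_\et(K,\Z/p^m)$ as required. The main obstacle is a careful check of the Witt vector conventions to verify both the iterated filtered sequence and that $V^{M-m}$ induces the canonical inclusion on $H^1$; the remainder of the argument is a clean diagram chase once these are in hand.
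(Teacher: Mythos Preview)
Your proof is correct and takes a genuinely different route from the paper. The paper argues by induction on $m$: the base case $m=1$ is imported from \cite{Kerz-Saito-ANT-E}, and the inductive step compares the two candidate filtrations via the commutative diagram \eqref{eqn:p-tor-2} built from the coefficient sequence $0\to\Z/p\to\Z/p^m\to\Z/p^{m-1}\to 0$, using \lemref{lem:Matsuda-0} and \eqref{eqn:Matsuda-0-0} to verify exactness of the top row. Your approach instead works directly at the Witt-vector level for a single pair $(m,M)$: you establish the iterated filtered sequence $0\to\Fil_n W_m\xrightarrow{V^{M-m}}\Fil_n W_M\xrightarrow{R^m}\Fil_{\lfloor n/p^m\rfloor}W_{M-m}\to 0$, and then run a concrete lift-and-correct diagram chase using \lemref{lem:Matsuda-0} twice (once to control $R^m(\un d)$, once to place $(1-\ov F)(\un d_1)$ in $\Fil_n$). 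The payoff of your argument is self-containment: it does not need an external reference for the $m=1$ case and treats all $m$ uniformly. The paper's induction is shorter once the base case is granted, and its diagram \eqref{eqn:p-tor-2} is reused later (e.g.\ in \thmref{thm:Local-inj} and \corref{cor:Local-inj-6}), so it also serves a structural role beyond this proposition.
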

\begin{proof}
 The second equality is obvious. To prove the first, 
we write $\Fil^*_{n}H^1_\et(K, {\Z}/{p^m}) =  H^1_\et(K, {\Z}/{p^m}) \bigcap 
\Fil_{n}H^1_\et(K, {\Q_p}/{\Z_p})$. It is clear that $\Fil_{n}H^1_\et(K, {\Z}/{p^m}) 
\subseteq \Fil^*_{n}H^1_\et(K, {\Z}/{p^m})$.
For the other inclusion, we can assume $n \ge 1$ as the assertion is clear
otherwise. For $m=1$, this is already shown in the proof of
\cite[Lem.~3.6, p.~2002]{Kerz-Saito-ANT-E}.

We now assume $m \ge 2$, write $n' = {\lfloor \frac{n}{p}\rfloor}$ and
look at the commutative diagram
\begin{equation}\label{eqn:p-tor-2}
  \xymatrix@C.8pc{
0 \ar[r] & \Fil_n H^1_\et(K, {\Z}/p) \ar[r]^-{(p^{m-1})^*} \ar[d] &
\Fil_n H^1_\et(K, {\Z}/{p^m}) \ar[r]^-{R^*} \ar[d] &
\Fil_{n'} H^1_\et(K, {\Z}/{p^{m-1}}) \ar[d] \ar[r] & 0. \\ 
 0 \ar[r] & \Fil^*_n H^1_\et(K, {\Z}/p) \ar[r]^-{(p^{m-1})^*} & 
\Fil^*_n H^1_\et(K, {\Z}/{p^m}) \ar[r]^-{R^*} & 
\Fil^*_{n'} H^1_\et(K, {\Z}/{p^{m-1}}) &}
\end{equation}
The arrows in the bottom row are the restrictions
of $(p^{m-1})^*$ and $R^*$ induced on $H^1_\et(K, {\Z}/{p^i})$ by the
maps of sheaves ${\Z}/{p} \xrightarrow{p^{m-1}} {\Z}/{p^m} \xrightarrow{R}
{\Z}/{p^{m-1}}$. The exactness of the bottom row is clear and the same for
the top row is easily deduced from \lemref{lem:Matsuda-0} and
~\eqref{eqn:Matsuda-0-0}.
The left and right vertical arrows are bijective by induction on $m$.
It follows that $R^*$ on the bottom row is surjective and the middle vertical
arrow is bijective.
\end{proof}

\begin{cor}\label{cor:p-tor-5}
    For any pair of integers $n \ge -1$ and $m \ge 1$, the canonical maps
    $\Fil_n H^1(K) \to \Fil_n H^1(\wh{K})$ and $\Fil_n H^1_\et(K, {\Z}/{p^m}) \to
    \Fil_n H^1_\et(\wh{K}, {\Z}/{p^m})$ are isomorphisms.
  \end{cor}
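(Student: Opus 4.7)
The strategy is to decompose $\Fil_n H^1(K)$ by prime-to-$p$ vs.\ $p$-primary torsion, treating the boundary case $n = -1$ (where $\Fil_{-1} H^1(K) = H^1(A)$) separately. The prime-to-$p$ piece of $\Fil_n H^1(K)$ equals $H^1(K)\{p'\}$ for $n \ge 0$ by definition, and \lemref{lem:Kato-equiv}(3), colimited over integers $q$ coprime to $p$, gives $H^1(K)\{p'\} \cong H^1(\wh K)\{p'\}$. The same lemma with $R = A$ produces $H^1(A) \cong H^1(\wh A)$, handling $n = -1$. Thus the substantive case is the $p$-primary part with $n \ge 0$.

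For this, I would first show $\Fil_n H^1_\et(K, \Z/p^m) \xrightarrow{\cong} \Fil_n H^1_\et(\wh K, \Z/p^m)$ for each $m \ge 1$; colimiting in $m$ recovers the $p$-primary part of $\Fil_n H^1(K) \cong \Fil_n H^1(\wh K)$. By \lemref{lem:Kato-equiv}(3), both filtrations lie in the common group $H^1_\et(K, \Z/p^m) = H^1_\et(\wh K, \Z/p^m)$. One containment is immediate: $W_m(K) \inj W_m(\wh K)$ preserves $\Fil_\bullet$ coordinatewise and commutes with $\partial_m$. For the reverse, let $\chi = \partial_m(\un b)$ with $\un b \in \Fil_n W_m(\wh K)$. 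The isomorphism of $H^1$'s yields $\un{a_0} \in W_m(K)$ with $\partial_m(\un{a_0}) = \chi$, so $\un b = \un{a_0} + (1 - \ov F)(\un c)$ for some $\un c \in W_m(\wh K)$. Since $K$ is dense in $\wh K$, I approximate $\un c$ coordinatewise by $\un{c'} \in W_m(K)$, and set $\un a := \un{a_0} + (1 - \ov F)(\un{c'}) \in W_m(K)$. By continuity of the Witt vector operations and of the coordinatewise $p$-th power $\ov F$, the vector $\un a$ can be brought arbitrarily close to $\un b$ entrywise in $\wh K$; for a sufficiently close approximation, the valuation inequalities defining $\Fil_n$ are preserved, so $\un a \in \Fil_n W_m(\wh K) \cap W_m(K) = \Fil_n W_m(K)$ with $\partial_m(\un a) = \chi$.

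The main technical point is this last continuity step: one must verify that $(1 - \ov F)$ is continuous on $W_m(\wh K)$ in the product valuation topology and that close enough approximation of $\un c$ preserves each of the valuation inequalities defining $\Fil_n$. This reduces to continuity of the Witt sum polynomials and to the fact that raising to the $p$-th power does not decrease valuations (in characteristic $p$, it multiplies them by $p$), so no difficulty arises beyond a careful bookkeeping of valuations coordinate by coordinate.
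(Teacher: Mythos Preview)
Your argument is correct, but it differs from the paper's proof. The paper first establishes the isomorphism $\Fil_n H^1(K) \xrightarrow{\cong} \Fil_n H^1(\wh K)$ by induction on $n$: the base cases $n \le 0$ come from \lemref{lem:Kato-equiv}, and the inductive step uses Kato's description of the graded pieces $\Fil_n/\Fil_{n-1}$ in terms of differential forms of the residue field (\cite[Cor.~3.3]{Kato89}), which are insensitive to completion. The ${\Z}/{p^m}$-statement is then extracted from the ${\Q_p}/{\Z_p}$-statement via \propref{prop:p-tor} and \lemref{lem:Kato-equiv}. By contrast, you prove the ${\Z}/{p^m}$-statement directly by a density/continuity argument on Witt vectors and then pass to the colimit; this avoids both the external reference \cite{Kato89} and the internally nontrivial \propref{prop:p-tor}. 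Your route is more elementary and self-contained; the paper's route is terser given the available citations and highlights the structural reason (the graded pieces depend only on the residue field) behind the invariance under completion.
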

  \begin{proof}
    The $n \le 0$ case of the first isomorphism is clear from
    \lemref{lem:Kato-equiv}. The general case follows
    by induction using \cite[Cor.~3.3]{Kato89}. The second isomorphism follows
    by the first isomorphism, \lemref{lem:Kato-equiv} and \propref{prop:p-tor}.
\end{proof}

\subsection{The reciprocity homomorphism}\label{sec:RHom}
Let $K$ be a Henselian discrete valuation field of characteristic $p > 0$
whose residue field $\ff$ is a local field.
Let $A$ denote the ring of integers of $K$ and $\fm = (\pi)$ the maximal ideal of
$A$. We let $S = \Spec(A)$ and let $x$ denote the closed
point of $S$. We shall write $S_\eta = \Spec(K)$ and $S_n = \Spec(A/{\fm^{n+1}})$ for
  $n \ge 0$.
We let $j \colon S_\eta \inj S$ and $\iota \colon S_0 \inj S$ be the inclusions.
Recall from \cite{Kato80-1} that the
reciprocity homomorphism $\rho_K \colon K_2(K) \to G_K$ is induced by the cup
product pairing
\begin{equation}\label{eqn:KR**}
  K_2(K) \times H^1(K) \xrightarrow{{\rm NR}_K \times \id}
  H^2_\et(K, {\Q}/{\Z}(2)) \times H^1(K) \xrightarrow{\cup} H^3(K) \cong {\Q}/{\Z}.
\end{equation}
One knows that $\rho_K$ is a continuous homomorphism with respect to the Kato
topology of $K_2(K)$ and the profinite topology of $G_K$.
 Letting $G^{(n)}_K = \left(\frac{H^1(K)}{\Fil_{n-1} H^1(K)}\right)^\star$
  and $G^{(n)}_{K,m} = \left(\frac{H^1(K, {\Z}/{p^m})}{\Fil_{n-1} H^1(K, {\Z}/{p^m})}
  \right)^\star$, we have the following.

\begin{lem}\label{lem:LR-mod}
    For $n \ge 0$ and $m \ge 1$, ~\eqref{eqn:KR**} induces continuous homomorphisms
    \[
      (1) \ \rho^n_{K} \colon \Fil_{n} K_2(K) \to G^{(n)}_K; \
      (2) \ \rho^n_{K,m} \colon \kappa^m_2(A|n) \to  G^{(n)}_{K,m}.
    \]
  \end{lem}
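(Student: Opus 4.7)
The plan is to prove both assertions by first establishing well-definedness (that $\rho_K$ factors through the indicated quotients into $G^{(n)}_K$ and $G^{(n)}_{K,m}$) and then deducing continuity formally.

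For (1), Pontryagin duality identifies $G^{(n)}_K = (H^1(K)/\Fil_{n-1}H^1(K))^\star$ with the closed subgroup of $G_K$ annihilating $\Fil_{n-1}H^1(K)$ under the evaluation pairing. Since $\rho_K$ is defined via the cup product \eqref{eqn:KR**}, showing $\rho_K(\Fil_n K_2(K)) \subset G^{(n)}_K$ is equivalent to the vanishing
\[
\Fil_n K_2(K) \cup \Fil_{n-1}H^1(K) = 0 \quad \text{in } H^3(K) \cong \Q/\Z.
\]
I would split the target by decomposing $\Fil_{n-1}H^1(K)$ into prime-to-$p$ and $p$-primary pieces. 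The prime-to-$p$ component is all of $H^1(K)\{p'\}$ (for $n \ge 0$); it pairs to zero with $\Fil_n K_2(K) \subset K_2(A)$ by the classical tame-symbol computation together with the vanishing of $H^3_\et(A)$, which forces the product to factor through zero before reaching $H^3(K)$. For the $p$-primary component, one uses the Artin-Schreier-Witt description $\Fil_{n-1}H^1_\et(K, \Z/p^m) = \partial_m(\Fil_{n-1}W_m(K))$ together with the generating symbols $\{1 + a\pi^n, b\}$ of $\Fil_n K_2(K) = K_2(A|\fm^n)$, and reduces to the explicit Witt-vector cup product identity underlying Kato's ramification theory; the one-step shift $n \mapsto n-1$ in the target filtration reflects precisely that $d\pi \in \pi \cdot \Omega^1_A$.

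For (2), well-definedness follows from (1) together with \propref{prop:p-tor}: since $\Fil_{n-1}H^1_\et(K, \Z/p^m)$ is exactly the $p^m$-torsion in $\Fil_{n-1}H^1(K)$, the induced pairing $\kappa^m_2(A|n) \times \Fil_{n-1}H^1_\et(K,\Z/p^m) \to \Z/p^m$ is the mod-$p^m$ reduction of the one in (1), hence zero, giving the factorization through $G^{(n)}_{K,m}$. Continuity of both $\rho^n_K$ and $\rho^n_{K,m}$ is then formal: $\rho_K$ is continuous from the Kato topology on $K_2(K)$ to the profinite topology on $G_K$ by \cite[Thm.~2, p.~304--305]{Kato80-1}; the groups $\Fil_n K_2(K)$ and $\kappa^m_2(A|n)$ carry the subspace topology inherited from $K_2(K)$ and ${K_2(K)}/{p^m}$ respectively; and $G^{(n)}_K$, $G^{(n)}_{K,m}$ are closed subgroups of $G_K$ and $G_K/p^m$ in the induced profinite topology, so restriction and corestriction preserve continuity.

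The main obstacle is the Witt-vector annihilation statement underlying the $p$-primary part of (1); once that explicit pairing computation (or an appeal to the corresponding duality in \cite{Kato89}) is in hand, everything else reduces to routine functoriality and topological bookkeeping.
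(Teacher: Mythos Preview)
Your overall strategy matches the paper's: deduce (2) from (1) via \propref{prop:p-tor}, and for (1) show that $\Fil_n K_2(K)$ annihilates $\Fil_{n-1}H^1(K)$ under the pairing~\eqref{eqn:KR**}. The paper simply cites \cite[Rem.~6.6]{Kato89} for $n \ge 1$ and treats $n=0$ separately by observing that $\rho^0_K$ is the map induced on kernels in the commutative square~\eqref{eqn:Kato-adic-0}, which also yields continuity directly.

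Your prime-to-$p$ argument has a gap for $n \ge 1$. You claim $\Fil_n K_2(K) \cup H^1(K)\{p'\}$ vanishes because the product factors through $H^3_\et(A)$, but classes in $H^1(K)\{p'\}$ need not lift to $H^1(A)$, so there is no such factorization. The clean fix is to note instead that $\Fil_n K_2(K) \subseteq \Fil_1 K_2(K)$ is $\ell$-divisible for every prime $\ell \neq p$ (see the proof of \propref{prop:Local-ker-coker-0}, which cites \cite[Lem.~5.9]{GKR-arxiv}), hence has trivial image in $K_2(K)/\ell$ and pairs to zero with any $\ell$-torsion character. Also, your parenthetical ``for $n \ge 0$'' overshoots: at $n=0$ one has $\Fil_{-1}H^1(K) = H^1(A)$ by~\eqref{eqn:Fil-K-p-1}, not $H^1(K)\{p'\}\oplus(\cdots)$; there your $H^3_\et(A)$ idea \emph{does} apply, though the paper's route via~\eqref{eqn:Kato-adic-0} is quicker.
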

\begin{proof}
    The existence of $\rho^n_{K,m}$ follows from that of $\rho^n_K$ using
    \propref{prop:p-tor}. For $n \ge 1$, item (1) follows from
    ~\eqref{eqn:KR**} and \cite[Rem.~6.6]{Kato89}.
    The map $\rho^0_K$ is the one induced between the kernels of the horizontal
    arrows in ~\eqref{eqn:Kato-adic-0}. In particular, it is continuous.
\end{proof}

\begin{lem}\label{lem:REC-Loc-0}
 For every integer $q \ge 1$, we have exact sequences
  \[
    0 \to {K_2(A)}/q \to {K_2(K)}/q \xrightarrow{\partial_K} {\ff^\times}/q \to 0; \ \ \
      0 \to {G^{(0)}_K}/q \to {G_K}/q \xrightarrow{\partial'_K} {G_\ff}/q \to 0.
      \]
\end{lem}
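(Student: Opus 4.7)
The plan is to prove both sequences by exhibiting splittings.

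For the first sequence, the map $\psi_K \colon \ff^\times \to K_2(K)$, $a \mapsto \{a,\pi\}$ (used in the proof of \lemref{lem:K-dual}) provides a section of the tame symbol $\partial_K$, since $\partial_K(\{a,\pi\}) = a$. Hence the short exact sequence
\[
0 \to K_2(A) \to K_2(K) \xrightarrow{\partial_K} \ff^\times \to 0
\]
splits as abelian groups, and applying $\otimes \Z/q$ preserves the short exactness.

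For the second sequence, I would first observe that the profinite short exact sequence
\[
0 \to G_K^{(0)} \to G_K \xrightarrow{\partial'_K} G_\ff \to 0
\]
holds: by the definition $G_K^{(0)} = (H^1(K)/H^1(A))^\star$ together with the identifications $H^1(A) = H^1(\ff) = G_\ff^\star$ (since $\pi_1^\et(\Spec A) = G_\ff$), the Pontryagin antiequivalence between profinite abelian groups and discrete torsion abelian groups converts the tautological short exact sequence $0 \to H^1(A) \to H^1(K) \to H^1(K)/H^1(A) \to 0$ into the above profinite sequence, identifying $G_K^{(0)}$ with $\ker(\partial'_K)$.

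The main step is then to exhibit a continuous section of $\partial'_K$. Consider the continuous composition $\rho_K \circ \psi_K \colon \ff^\times \to G_K$. The commutative diagram in the proof of \lemref{lem:Kato-adic} together with $\partial_K \circ \psi_K = \mathrm{id}_{\ff^\times}$ yields $\partial'_K \circ (\rho_K \circ \psi_K) = \rho_\ff$. Since $\psi_K(a^n) = n\psi_K(a)$, this composition sends $(\ff^\times)^n$ into $nG_K$ for every $n \ge 1$, so it factors compatibly through the quotients $\ff^\times/n \to G_K/n$ and therefore extends uniquely to a continuous homomorphism from the profinite completion $\wh{\ff^\times}$ into $G_K = \varprojlim_n G_K/n$. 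Local class field theory for the local field $\ff$ provides an isomorphism $\wh{\rho}_\ff \colon \wh{\ff^\times} \xrightarrow{\cong} G_\ff$, which yields a continuous section $\tau \colon G_\ff \to G_K$ of $\partial'_K$. Consequently, $G_K \cong G_K^{(0)} \oplus G_\ff$ as topological abelian groups, and tensoring with $\Z/q$ produces the second exact sequence. The subtlest point is justifying the continuous extension of $\rho_K \circ \psi_K$ to $\wh{\ff^\times}$, which rests on the compatibility between the Kato topology on $K_2(K)$ and the adic topology on $\ff^\times$ established in \lemref{lem:Kato-adic}, together with the fact that for the profinite abelian group $G_K$, the subgroups $\{nG_K\}_{n \ge 1}$ are cofinal among open subgroups.
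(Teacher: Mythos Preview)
Your proof of the first sequence matches the paper (both use the section $\psi_K(a)=\{a,\pi\}$). For the second sequence, your approach is genuinely different and actually proves more than the paper does.

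The paper argues as follows: since $G_K\{p\}=G_\ff\{p\}=0$ (\lemref{lem:No-p-tor}), it suffices to show that ${}_q\partial'_K$ is surjective for $p\nmid q$. For such $q$, one has $\rho_\ff\colon {}_q\ff^\times\xrightarrow{\cong}{}_qG_\ff$; given $x\in{}_qG_\ff$, pick $a\in{}_q\ff^\times$ with $\rho_\ff(a)=x$ and observe that $\rho_K(\{a,\pi\})\in{}_qG_K$ maps to $x$ under $\partial'_K$ by~\eqref{eqn:Kato-adic-0}. This is very short and uses only the torsion-level section.

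Your route instead produces a continuous section $\tau\colon G_\ff\to G_K$ of the whole surjection $\partial'_K$, hence a topological splitting $G_K\cong G_K^{(0)}\oplus G_\ff$; modding out by $q$ then gives the claim. This is a stronger conclusion and conceptually clean. Your justification for the extension step is slightly off, however: continuity with respect to the adic topology (via \lemref{lem:Kato-adic}) and cofinality of $\{nG_K\}$ among open subgroups are not the relevant facts, since $(\ff^\times)^n$ has infinite index when $p\mid n$ and $nG_K$ need not be open. The extension of $\rho_K\circ\psi_K$ to $\wh{\ff^\times}$ holds simply by the universal property of the profinite completion: for any open $U\subset G_K$ the quotient $G_K/U$ is finite, so the composite $\ff^\times\to G_K/U$ has kernel of finite index. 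With this correction your argument is complete.
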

\begin{proof}
  The exactness of the sequence on the left is well known.
  To show the exactness of the other sequence, note that
  $G_\ff\{p\} = G_K\{p\} = 0$ by \lemref{lem:No-p-tor}.
  If $p \nmid q$, then one knows that $\rho_\ff \colon ~_{q} \ff^\times \to
  ~_{q} G_\ff$ is an isomorphism. On the other
  hand, $\partial_K$ is a split surjection whose section is given by
$\iota_*(a) = \{a, \pi\}$. It follows that $\partial_K$ and
$\partial'_K$ are surjective at the level of torsion subgroups. This proves our
claim.
\end{proof}

\begin{lem}\label{lem:No-p-tor}
If $U$ is an affine $\F_p$-scheme, then $\pi^{\ab}_1(U)\{p\} = 0$.
\end{lem}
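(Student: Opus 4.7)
The plan is to translate the vanishing of $\pi^{\ab}_1(U)\{p\}$ into a divisibility statement for the discrete $p$-torsion group $M := H^1_\et(U, \Q_p/\Z_p)$ via Pontryagin duality, and then to prove that divisibility using the Artin-Schreier sequence on an affine $\F_p$-scheme.

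First, I would reformulate the goal. Since $\pi^{\ab}_1(U)\{p\}$ denotes the abstract $p$-power torsion subgroup (in the sense of \S~\ref{sec:Notn}) rather than the closed pro-$p$ component, it coincides with the abstract torsion of the pro-$p$ component $G$ of $\pi^{\ab}_1(U)$. The continuous Pontryagin dual of $G$ is exactly $M$, and for a pro-$p$ abelian profinite group $G$, the condition that $G$ has no abstract torsion is equivalent to multiplication by $p$ being injective on $G$. Dualizing, this is equivalent to $p$ acting surjectively on $M$; and since $M$ is $p$-torsion, $p$-divisibility coincides with full divisibility. So the lemma reduces to showing that $M$ is divisible.

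To establish divisibility of $M$, I would use the Artin-Schreier exact sequence
\[
0 \to \Z/p \to \sO_U \xrightarrow{1-F} \sO_U \to 0
\]
on $U_\et$. Because $U$ is affine and $\sO_U$ is a quasi-coherent sheaf, one has $H^i_\et(U, \sO_U) = H^i_\zar(U, \sO_U) = 0$ for every $i \ge 1$, so the resulting long exact sequence immediately yields $H^i_\et(U, \Z/p) = 0$ for all $i \ge 2$. Feeding this vanishing into the long exact sequence for $0 \to \Z/p \to \Q_p/\Z_p \xrightarrow{p} \Q_p/\Z_p \to 0$ produces the required surjection
\[
H^1_\et(U, \Q_p/\Z_p) \xrightarrow{p} H^1_\et(U, \Q_p/\Z_p) \to H^2_\et(U, \Z/p) = 0,
\]
proving that $M$ is $p$-divisible and hence divisible.

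The only delicate point is the initial Pontryagin duality reduction, where one must take care to identify the abstract $p$-primary torsion subgroup of the profinite group $\pi^{\ab}_1(U)$ with non-divisibility of its discrete dual $M$. Once this bookkeeping is settled, the rest is a standard Artin-Schreier calculation on an affine $\F_p$-scheme.
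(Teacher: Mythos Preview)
Your proof is correct and essentially identical to the paper's: both reduce the vanishing of $\pi^{\ab}_1(U)\{p\}$ to the $p$-divisibility of $H^1_\et(U,\Q_p/\Z_p)$ via Pontryagin duality, and then obtain this from $H^2_\et(U,\Z/p)=0$ using the Artin--Schreier sequence together with Serre's vanishing of quasi-coherent cohomology on affines. The paper phrases the last step as ${H^1(U)}/p \hookrightarrow H^2_\et(U,\Z/p)$, which is just your exact sequence $0\to\Z/p\to\Q_p/\Z_p\xrightarrow{p}\Q_p/\Z_p\to 0$ in compressed form.
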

\begin{proof}
  We need to show that $H^1(U)$ is $p$-divisible. Since
  ${H^1(U)}/p \inj H^2_\et(U, {\Z}/{p})$, we are done by the Artin-Schreier exact
  sequence and \cite[Thm.~III.3.7]{Hartshorne-AG}.
  \end{proof}

\section{A duality theorem for $\kappa^1_2(A|n)$}
\label{sec:D-Thm}
We maintain the notations and hypotheses of \S~\ref{sec:RHom}. 
In this section, we additionally assume that $A$ is complete and prove a perfect
duality theorem for $\kappa^1_2(A|n)$,
a key step in the proof of \thmref{thm:Main-1}.
Since we shall only deal with $m =1$ case, we shall
denote the map $\rho^n_{K,1}$ (cf. \lemref{lem:LR-mod}) by $\rho^n_K$, for brevity.

We fix some further notations.
For $Y = \Spec(R)$ and a sheaf $\sF$
on $Y_\tau$, we shall write $H^*_\tau(Y, \sF)$ as $H^*_\tau(R, \sF(Y))$.
In particular, $H^*_\tau(S, W_m\Omega^r_{S|S_n, \log})$ will be written as
$H^*_\tau(A, W_m\Omega^r_{A|n+1, \log})$ and so on.
We write $\ff = \F_q((t))$ and $A = \ff[[\pi]]$ so that
$K = \ff((\pi)) = \F_p((t))((\pi))$.
Thanks to the $F$-finiteness of $\ff$ and $A$, one knows that
$\Omega^1_A \cong Adt \bigoplus Ad\pi$ and $\Omega^1_{A/\ff} \cong Ad\pi$.
In particular, $\Omega^2_A \cong A\omega$, where we let $\omega = dt\wedge d\pi$.
We let $\omega' = \dlog(t) \wedge \dlog(\pi) \in \Omega^2_{K, \log}$.
We shall write $\sO_S(n) = \sO_S(nE)$, where $E = V((\pi))$.
For $n \in \Z$ and  $M \in A$-mod, we let $A(n) = H^0_\et(S, \sO_S(n)) =
\{f \in K| f = \sum_{i \ge -n} a_i \pi^i \ \mbox{with} \ a_i \in \ff\}$ and
$M(n) = M \otimes_A A(n)$. We let
\begin{equation}\label{eqn:Diff-symbol}
\Omega^1_A(\log \pi) = A dt \bigoplus A\dlog(\pi) \subset \Omega^1_K, \
\Omega^2_A(\log \pi) = A (dt \wedge \dlog(\pi)) \subset \Omega^2_K,
\end{equation}
\[
\Omega^i_{A|n} = \Omega^i(\log \pi)(-n), \ B\Omega^i_{A|n} = d\Omega^{i-1}_{A|n}
  \ \mbox{and} \
  B'\Omega^i_{A|n} = \frac{\Omega^i_{A|n}}{d\Omega^{i-1}_{A|n}}.
\]

We let $\psi \colon S \to S$ be the absolute Frobenius. This is finite
and flat as $S$ is regular. 
For any $a \in K$ (resp. in $\ff$), we shall denote by $\ov{a}$ the residue
class of $a$ in $K/A$ (resp. in ${\ff}/{\sO_\ff}$).
For any $A$-module $M$, we let $H^*_\fm(M)$ denote the local cohomology modules of
$M$. We let $Z\Omega^i_A (n) = \Omega^i_A (n) \bigcap \Ker(d)$, where
$d \colon \Omega^i_K \to \Omega^{i+1}_K$ is the classical differential.

\subsection{Some complexes and their pairings}\label{sec:Pair}
We now fix an integer $n \ge 0$ and consider the complexes of sheaves
$\sF_n = (\psi_* (ZA(n)) \xrightarrow{1-C} A(n)); \
\sG_n = (\Omega^2_{A|n+1} \xrightarrow{1-\ov{F}} \psi_* B'\Omega^2_{A|n+1})$ and
$\sH = (\Omega^2_A \xrightarrow{1-C} \Omega^2_A)$ in $D_\et(S)$.
Recall here that $C \colon H^i(\Omega^\bullet_R) \xrightarrow{\cong}
\Omega^i_R$ is the Cartier operator for a regular $\F_p$-algebra $R$ which 
is characterized by the properties
\begin{equation}\label{eqn:Cartier}
  C(a^p) = a, \ C(a^{p-1}da) = da, C(\alpha \wedge \alpha') = C(\alpha) \wedge
  C(\alpha').
\end{equation}
It is easy to check
that the wedge product of differential forms gives rise to a pairing of complexes
(cf. \cite[\S~1, p.~175]{Milne-ENS}, \cite[\S~3.1]{JSZ})
\begin{equation}\label{eqn:Pairing-0}
  \<,\> \colon \sF_n \times \sG_n \to \sH;
\end{equation}
\[
  \<a, b\>^0_{0,0} = a \wedge b, \ \<a,b\>^1_{1,0} = a \wedge b, \
\<a,b\>^1_{0,1} = -C(a \wedge b).
\]
On restriction to $S_\eta$ and taking the associated
pairing of cohomology, we get a pairing
\begin{equation}\label{eqn:Pairing-2}
  H^i_\et(K, {\Z}/p) \times H^{1-i}_\et(K, \Omega^2_{K,\log}) \to
  H^1_\et(K, \Omega^2_{K, \log}) \xrightarrow{{\Res}'_K} {\Z}/p,
\end{equation}
where ${\Res}'_K$ is Kato's residue (cf. \cite[\S~3, Prop.~1]{Kato80-2}).

\begin{lem}\label{lem:Pairing-1}
  The pairings of finite type $A$-modules
  \begin{equation}\label{eqn:Pairing-1-0}
    A(n) \times  \Omega^2_{A|n+1} \xrightarrow{\<,\>^1_{1,0}} \Omega^2_A \cong A; \
    \psi_* (Z A(n)) \times \psi_* B'\Omega^2_{A|n+1} 
    \xrightarrow{\<,\>^1_{0,1}}  \Omega^2_A \cong A
  \end{equation}
  are perfect.
\end{lem}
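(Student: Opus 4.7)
The plan is to reduce both pairings to the elementary multiplication pairing $A \times A \to A$ after trivializing the relevant rank-one free $A$-modules. Throughout, I exploit the explicit presentation $A = \ff[[\pi]]$ and $\Omega^2_A = A\omega$ with $\omega = dt \wedge d\pi$, and identify $\Omega^2_A$ with $A$ via $\omega \leftrightarrow 1$.

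For the first pairing, the definitions unwind directly: $A(n) = \pi^{-n} A$ is free of rank one over $A$ with basis $\pi^{-n}$, and $\Omega^2_{A|n+1} = \Omega^2_A(\log \pi)(-n-1) = \pi^n A \omega$ is free of rank one with basis $\pi^n \omega$. Consequently $\<\alpha\pi^{-n}, \beta\pi^n\omega\>^1_{1,0} = \alpha\beta\,\omega$, and under these trivializations the pairing becomes ordinary multiplication $A \times A \to A$, which is manifestly perfect.

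For the second pairing, the idea is to use the Cartier operator to convert it into an instance of the first. Frobenius induces an $A$-linear isomorphism $A(\lfloor n/p \rfloor) \xrightarrow{\cong} \psi_*ZA(n)$, $\alpha \mapsto \alpha^p$, so it suffices to produce an $A$-linear isomorphism $C \colon \psi_* B'\Omega^2_{A|n+1} \xrightarrow{\cong} \Omega^2_{A|\lfloor n/p \rfloor + 1}$. Granting this, the $\psi$-semilinearity relation $C(\alpha^p \wedge \beta) = \alpha \cdot C(\beta)$ transforms $\<\alpha^p, \beta\>^1_{0,1} = -\alpha \cdot C(\beta)$ into $(-1)$ times the first pairing for the index $\lfloor n/p \rfloor$ in place of $n$, and the second pairing is hence perfect by the preceding step.

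The main technical obstacle is establishing this Cartier-type isomorphism. The image computation is routine: using $C(t^{ap-1}\pi^{bp-1}\omega) = t^{a-1}\pi^{b-1}\omega$, with $C$ vanishing on $t^i\pi^j\omega$ unless $i \equiv j \equiv -1 \pmod{p}$, together with the elementary identity $\lceil (n+1)/p \rceil - 1 = \lfloor n/p \rfloor$, the image of $\Omega^2_{A|n+1}$ under $C$ is exactly $\pi^{\lfloor n/p \rfloor} A \omega = \Omega^2_{A|\lfloor n/p \rfloor + 1}$. The harder injectivity statement amounts to verifying the identity $\Omega^2_{A|n+1} \cap d\Omega^1_A = d\Omega^1_{A|n+1}$: any exact form lying in the twisted submodule $\Omega^2_{A|n+1}$ is already exact from $\Omega^1_{A|n+1}$. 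This I establish by explicit computation of the two generating families $d(\pi^{n+1} g\, dt) = -\bigl((n+1)\pi^n g + \pi^{n+1} g_\pi\bigr)\omega$ and $d(\pi^n h \, d\pi) = \pi^n h_t\, \omega$, expanding in the $\F_q$-basis $\{t^i\pi^j\omega\}$, and checking that together these generators sweep out precisely the monomials that are annihilated by $C$.
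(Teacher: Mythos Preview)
Your proof is correct and follows essentially the same strategy as the paper's. The paper disposes of the first pairing as ``straightforward'' and deduces the second from the first by citing \cite[Lem.~1.7]{Milne-ENS}, which is precisely the Cartier-isomorphism reduction you carry out by hand: the Frobenius identification $A(\lfloor n/p\rfloor)\xrightarrow{\cong}\psi_*ZA(n)$ together with the Cartier isomorphism $C\colon \psi_*B'\Omega^2_{A|n+1}\xrightarrow{\cong}\Omega^2_{A|\lfloor n/p\rfloor+1}$ converts the second pairing into an instance of the first with $n$ replaced by $\lfloor n/p\rfloor$. Your explicit verification of the kernel identity $\Omega^2_{A|n+1}\cap d\Omega^1_A=d\Omega^1_{A|n+1}$ is more work than the paper does, but it is exactly what Milne's lemma encapsulates; one could also shortcut it by noting that $C$ is a surjective $A$-linear map from $\psi_*(\Omega^2_{A|n+1}/d\Omega^1_{A|n+1})$ onto the rank-one free module $\Omega^2_{A|\lfloor n/p\rfloor+1}$, and both sides have the same rank over $A$.
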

\begin{proof}
  The perfectness claim for the first pairing is straightforward and the same
  for the second pairing follows at once from the first
  (cf. \cite[Lem.~1.7]{Milne-ENS}).
\end{proof}

Recall that there is a commutative diagram 
\begin{equation}\label{eqn:Res-1}
  \xymatrix@C1pc{
    H^1_\fm(\Omega^1_{A/\ff}) \ar[r]^-{\cong} \ar[d]_-{\gamma_A}
    & \frac{\Omega^1_{K/\ff}}{\Omega^1_{A/\ff}}
    \ar[d]^-{\gamma_A} \ar[r]^-{\res'_K} & \ff \ar[d]^-{\gamma_\ff}
    \ar[rr]^-{\chi_\ff} & & {\Z}/{p} \\
    H^1_\fm(\Omega^2_A) \ar[r]^-{\cong} & \frac{\Omega^2_K}{\Omega^2_A}
    \ar[r]^-{\res_K} & \Omega^1_\ff \ar@{->>}[r] &
    \frac{\Omega^1_\ff}{\Omega^1_{\sO_\ff}} \ar[r]^-{\res_\ff} & \F_q \ar[u]_-{\tr},}
  \end{equation}
  where $\gamma_A(\ov{a} d\pi) = \ov{a}\omega$ (for ${a} \in K$) is an
  isomorphism of $A$-modules while $\gamma_\ff({b}) = bdt$ (for $b \in \ff$) is
  an isomorphism of $\ff$-modules.
The middle horizontal arrow on the top is the Poincar{\'e} residue map
  $\ov{a} d\pi \mapsto \res(a)$ and the one below it on the bottom
  is $\ov{a} \omega \mapsto \res(a) dt$, where $a \in K$.
  The arrow $\res_\ff$ is the residue map $\ov{b}dt \mapsto \res(b)$ for $b \in
 {\ff}$. Here, $\res(a)$ (res. $\res(b)$) denotes the classical residue of the
Laurent power series $a \in K = \ff((\pi))$ (resp. $b \in \ff = \F_q((t))$).
We let $\Res_K \colon  H^1_\fm(\Omega^2_A) \to {\Z}/p$ denote the composition
of $\tr$ and the bottom horizontal arrows.

We write $H^1_\fm(A(n)) \cong {\underset{i \ge n+1}\bigoplus} \ff \pi^{-i}$.
  We endow each $\ff \pi^{-i}$ with the adic topology of $\ff$
   under the canonical isomorphism $\ff \xrightarrow{\cong} \ff \pi^{-i}$ and let
   $H^1_\fm(A(n)) $ be endowed with the direct sum topology
   (cf. \cite[\S~2.6]{Gupta-Krishna-REC}). This makes $H^1_\fm(A(n))$ into
   a topological $\ff$-vector space.
   We shall call this the adic topology of $H^1_\fm(A(n))$.
   Using the isomorphism of topological fields $\ff \xrightarrow{p} \ff^p$
   and the isomorphism $\psi_* (ZA(n)) \cong
   (\psi_* A^p)(\lfloor n/p \rfloor)$ in $A$-mod, the above construction shows
   that just like $H^1_\fm(A(n))$, the $A$-module $H^1_\fm(\psi_* (ZA(n)))$ is a
   topological $\ff$-vector space endowed with the adic topology.

We now look at the maps
  \begin{equation}\label{eqn:Local-Duality}
     \Hom_A(M, \Omega^2_A) \times H^1_\fm(M) \to H^1_\fm(\Omega^2_A)
     \xrightarrow{\Res_K} {\Z}/p
   \end{equation}
   for any finite type $A$-module $M$,
   where the first map is induced by the composition of Ext groups of $A$-modules.
   This yields the duality map $\Res^*_M \colon \Hom_A(M, \Omega^2_A)  \to
   \Hom_{{\Z}/{p}}(H^1_\fm(M), {\Z}/p)$. Letting $M \in \{A(n), \psi_* (ZA(n))\}$
   and using \lemref{lem:Pairing-1}, we equivalently get the maps
   $\Res^*_n \colon \Omega^2_{A|n+1}  \to \Hom_{{\Z}/{p}}(H^1_\fm(A(n)), {\Z}/p)$
   and $\Res'^*_n \colon \psi_* B'\Omega^2_{A|n+1} \to
   \Hom_{{\Z}/{p}}(H^1_\fm(\psi_*(ZA(n))), {\Z}/p)$. It is clear from
   the definition of $\Res_K$ and the adic topology of $\ff$ that for every
   $\alpha \in \Omega^2_{A|n+1}$, the map
   $\Res^*_n(\alpha) \colon \ff \pi^{-i} \to {\Z}/p$
   is continuous. It follows that $\Res^*_n(\alpha) \in
   \Hom_\cont(H^1_\fm(A(n)), {\Z}/p) \cong H^1_\fm(A(n))^\star$ with respect to the
   adic topology of $H^1_\fm(A(n))$. The same reason shows that
   $\Res'^*_n(\alpha') \in
   \Hom_\cont(H^1_\fm(\psi_* (ZA(n))), {\Z}/p) \cong H^1_\fm(\psi_* (ZA(n)))^\star$
   for $\alpha' \in  \psi_* B'\Omega^2_{A|n+1}$.

   \vskip .3cm

Our first claim is the following.

   \begin{lem}\label{lem:LD-0}
    We have isomorphisms of $A$-modules:
     \[
       \Res^*_n \colon \Omega^2_{A|n+1} \to  H^1_\fm(A(n))^\star \ \mbox{and} \ \
       \Res'^*_n \colon \psi_* B'\Omega^2_{A|n+1} \to H^1_\fm(\psi_* (ZA(n)))^\star.
     \]
\end{lem}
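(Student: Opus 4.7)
The plan is to prove both isomorphisms simultaneously by working explicitly in $\pi$-adic coordinates, reducing the problem to the topological self-duality of the residue field $\ff = \F_q((t))$ under its residue pairing.

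Using the identifications $A(n) = \pi^{-n}A$ and $\Omega^2_{A|n+1} = \pi^n A\cdot\omega$, one writes
\[
H^1_\fm(A(n)) \cong K/\pi^{-n}A \cong \bigoplus_{i\ge n+1}\ff\pi^{-i}
\]
as a topological $\ff$-vector space, while $\Omega^2_{A|n+1} \cong \prod_{j\ge n}\ff\pi^j\omega$ carries the $\pi$-adic product topology. Under these identifications, the map $\Res^*_n$ becomes concretely $(\overline f,g\omega)\mapsto \Res_K(fg\omega)$. Evaluating on pure tensors $\overline f = a\pi^{-i}$ and $g\omega = b\pi^j\omega$ and chasing through diagram \eqref{eqn:Res-1}, the contribution vanishes unless $i = j+1$, in which case it equals $\tr_{\F_q/\F_p}\bigl(\res_\ff(ab\,dt)\bigr)$. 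Therefore the global pairing splits as an orthogonal sum, indexed by $i\ge n+1$, of copies of the classical residue pairing
\[
\ff\times \ff\to \F_p,\qquad (a,b)\mapsto \tr_{\F_q/\F_p}\bigl(\res_\ff(ab\,dt)\bigr).
\]

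One then invokes the standard topological self-duality of the local field $\ff$ in its adic topology: the displayed residue pairing identifies $\ff$ with its continuous $\F_p$-dual. Since taking continuous $\F_p$-duals converts a topological direct sum into the corresponding product of continuous duals, the component-wise isomorphisms assemble into the $A$-linear isomorphism $\Res^*_n$. For $\Res'^*_n$, the perfect pairing of \lemref{lem:Pairing-1} in the $\psi_*$-variables permits the identical decomposition, and the Frobenius isomorphism of topological fields $\ff \xrightarrow{\sim} \ff^p$ (where $\ff^p = \F_q((t^p))$ inherits the $t^p$-adic topology as a subspace of $\ff$) transports the duality to the required form.

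\textbf{Main obstacle.} The substance of the argument is topological rather than algebraic. The algebraic $\F_p$-dual of $\bigoplus_{i\ge n+1}\ff\pi^{-i}$ is enormous compared to the product $\prod_{j\ge n}\ff\pi^j\omega$, and the isomorphism only holds after restricting to continuous functionals; this is where the local-field hypothesis on the residue field of $A$ genuinely enters. The two endpoints the proof turns on are verifying that $\Res^*_n(\alpha)$ is continuous on each graded piece (immediate from the explicit formula) and, conversely, that every continuous $\F_p$-functional is representable by an element of $\Omega^2_{A|n+1}$.
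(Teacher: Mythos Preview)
Your proposal is correct and reaches the result by a genuinely different route from the paper. The paper's proof proceeds via the classical local Grothendieck duality isomorphism $\Hom_A(M,\Omega^2_A) \xrightarrow{\cong} \Hom_A(H^1_\fm(M), H^1_\fm(\Omega^2_A))$ for finite $A$-modules $M$, applied with $M = A(n)$ together with \lemref{lem:Pairing-1}; it then identifies $H^1_\fm(\Omega^2_A)$ with $\Hom_{\ff,\cont}(A,\Omega^1_\ff)$ via $\res_K$, collapses the double $\Hom$ by an adjunction, and only at the final step invokes the topological self-duality of $\ff$ (citing \cite[Chap.~II, Thm.~3]{Weil}) to pass from $\Hom_\ff(-,\ff)$ to $\Hom_\cont(-,\Z/p)$. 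Your argument bypasses Grothendieck duality entirely by writing everything in $\pi$-adic coordinates, observing that the pairing is orthogonal for the grading by powers of $\pi$, and then applying the self-duality of $\ff$ component by component together with the identity $\bigl(\bigoplus_i V_i\bigr)^\star = \prod_i V_i^\star$ for the direct sum topology. Both proofs ultimately rest on the same input (the locally compact self-duality of $\ff$); yours is more elementary and transparent in this one-dimensional complete situation, while the paper's abstract approach packages the $A$-linearity and would adapt more readily if one wished to work without explicit uniformizers or in higher relative dimension.
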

\begin{proof}
     We only prove the first statement since the same argument applies to the second
     statement too.
It is easy to verify that $\Res^*_n$ is $A$-linear, where the $A$-module
structure of $ H^1_\fm(A(n))^\star$ is defined by $af(x) = f(ax)$.
Now, the local Grothendieck duality for finite type $A$-modules
(cf. \cite[Thm.~6.3]{Hartshorne-LC}) and \lemref{lem:Pairing-1} together
imply that  ~\eqref{eqn:Local-Duality} induces an isomorphism
$\Omega^2_{A|n+1} \xrightarrow{\cong} \Hom_A(H^1_\fm(A(n)), H^1_\fm(\Omega^2_A))$ of
$A$-modules. On the other hand, one easily checks using the definition of $\res_K$
above (cf. \cite[Exm.~3, p.~67-68]{Hartshorne-LC}) that
the map $\res^*_K \colon H^1_\fm(\Omega^2_A) = \Hom_A(A, H^1_\fm(\Omega^2_A)) 
\xrightarrow{\circ \ \res_K} \Hom_\ff(A, \Omega^1_\ff)$ induces an
isomorphism of $\ff$-vector spaces
$\res^*_K \colon H^1_\fm(\Omega^2_A) \xrightarrow{\cong}
\Hom_{\ff, \cont}(A, \Omega^1_\ff)$. Here, $A$ is given its $\fm$-adic topology and
$\Omega^1_\ff$ the discrete topology.

We thus get an isomorphism
\[
  \Omega^2_{A|n+1} \xrightarrow{\cong}
  \Hom_A(H^1_\fm(A(n)), \Hom_{\ff, \cont}(A, \Omega^1_\ff))
\cong \Hom_A(H^1_\fm(A(n)), \Hom_{\ff, \cont}(A, \ff)).
\]
Combining this with the easy-to-check isomorphisms
\[
\Hom_A(H^1_\fm(A(n)), \Hom_{\ff, \cont}(A, \ff)) =
\Hom_A(H^1_\fm(A(n)), \Hom_{\ff}(A, \ff)) \xrightarrow{\cong}
\Hom_\ff(H^1_\fm(A(n)), \ff),
\]
we get that ~\eqref{eqn:Local-Duality} with $M = A(n)$ induces an isomorphism
$ \Omega^2_{A|n+1} \xrightarrow{\cong} \Hom_\ff(H^1_\fm(A(n)), \ff)$.

On the other hand, one checks using the definition of $\res_\ff$ that the map
\[
  \res^*_\ff \colon \Hom_\ff(H^1_\fm(A(n)), \ff) \cong
\Hom_\ff(K/{A(n)}, \ff) \xrightarrow{\circ \ \chi_\ff}
\Hom_\cont(K/{A(n)}, {\Z}/p)
\]
is an isomorphism with respect to the adic topology of
${K}/{A(n)} \cong H^1_\fm(A(n))$ (cf. \cite[Chap.~II, Thm.~3]{Weil}).
It follows that the map $\Res^*_n = \res^*_\ff \circ \res^*_K \colon
\Omega^2_{A|n+1} \to H^1_\fm(A(n))^*$ is an isomorphism. 
\end{proof}

Our second claim is the following.
\begin{lem}\label{lem:LD-1}
We have commutative diagrams
  \begin{equation}\label{eqn:LD-1-0}
    \xymatrix@C.8pc{
      {\Omega^2_K}/{\Omega^2_A} \ar[r]^-{C} \ar[d]_-{\res_K} &
      {\Omega^2_K}/{\Omega^2_A} \ar[d]^-{\res_K} & &
      \Omega^1_\ff \ar[r]^-{C} \ar[d]_-{\res_\ff} & \Omega^1_\ff \ar[d]^-{\res_\ff}
      & & \F_q \ar[r]^-{C} \ar[d]_-{\tr} & \F_q \ar[d]^-{\tr} \\
\Omega^1_\ff \ar[r]^-{C} & \Omega^1_\ff & &
\F_q \ar[r]^-{C} & \F_q  & & {\Z}/p \ar[r]^-{C} & {\Z}/p.}     
\end{equation}
\end{lem}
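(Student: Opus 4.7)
The plan is to verify each of the three commutative squares by a direct computation on explicit generators. The key input is the Cartier operator's $p^{-1}$-semilinearity $C(f^{p}\omega) = fC(\omega)$ together with its normalization $C(\dlog t) = \dlog t$. Combined with the multiplicativity of the Cartier isomorphism, these give the explicit formulas $C(t^{m}\,dt) = t^{k-1}\,dt$ if $m = kp - 1$ for some $k \in \Z$ and $0$ otherwise, and similarly $C(t^{m}\pi^{n}\,dt \wedge d\pi) = t^{k-1}\pi^{l-1}\,dt\wedge d\pi$ when $m = kp-1$ and $n = lp-1$, and $0$ otherwise (any monomial failing the congruence is a $p$-th power times an exact form, hence killed by $C$). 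In particular, $C$ preserves $\Omega^{2}_{A} \subset \Omega^{2}_K$ and so descends to $\Omega^{2}_K/\Omega^{2}_A$. On the perfect coefficient field $\F_q$, the operator $C$ coincides with the inverse of the absolute Frobenius, $C(a) = a^{1/p}$; on $\F_p = \Z/p$ it is the identity.

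For the rightmost square, since $C|_{\Z/p} = \id$, the assertion becomes $\tr \circ C = \tr$ on $\F_q$, which follows immediately from the Galois-invariance of the trace, as $C = \Frob^{-1} \in \Gal(\F_q/\F_p)$. For the middle square, expanding $\omega = \sum_i b_i t^i\,dt$ with $b_i \in \F_q$ yields $C(\omega) = \sum_{k \in \Z} b_{kp-1}^{1/p}\, t^{k-1}\,dt$ by the formula above, so $\res_\ff(C(\omega)) = b_{-1}^{1/p} = C(b_{-1}) = C(\res_\ff(\omega))$.

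For the leftmost square, represent an element of $\Omega^{2}_K/\Omega^{2}_A$ as $\ov{a}\,\omega$ with $a = \sum_{j<0} a_j\pi^j$ and $a_j \in \ff$. Then $\res_K(\ov{a}\,\omega) = a_{-1}\,dt$, whose image under $C$ is $C(a_{-1})\,dt$. On the other hand, applying $C$ to $\ov{a}\,\omega$ first kills all $\pi^j$-monomials with $j \not\equiv -1 \pmod{p}$, and the subsequent $\res_K$ restricts further to $j = -1$; expanding $a_{-1}$ in $t$-monomials, the surviving contributions reassemble into the same expression $C(a_{-1})\,dt$.

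The main obstacle is purely bookkeeping: one must carefully track the interplay between the $p^{-1}$-semilinearity of $C$ and the two-variable residue, and verify at the outset that $C$ restricts to $\Omega^{2}_A$. No ingredient beyond the standard formulas for the Cartier operator and the classical definitions of the residues is required.
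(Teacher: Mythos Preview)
Your proof is correct and follows essentially the same approach as the paper: the paper's own proof merely states that this is an elementary exercise using the Cartier identities~\eqref{eqn:Cartier}, the explicit formulas for $\res_K$ and $\res_\ff$, and the Galois-invariance of the trace, and then omits the details. You have supplied precisely those details, with the same ingredients and the same per-square verification.
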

\begin{proof}
 This is an elementary exercise using ~\eqref{eqn:Cartier} (which implies in
  particular that $C|_{\F_p} = \id$), the equalities
$\res_K(\ov{f}\omega) = \res(f) dt, \ \ \res_\ff(\ov{g}dt) = \res(g)$ 
  for $f \in K$ and $g \in \ff$ (which implies in particular that
  $\res_\ff(\Omega^1_{\sO_\ff}) = 0$), and that the trace map in the
  third diagram commutes with the Frobenius.
 We leave out the details.  
\end{proof}

We now consider the exact sequence of {\'e}tale cohomology with support
\begin{equation}\label{eqn:F-seq-0}
  H^1_\fm(\psi_* (ZA(n))) \xrightarrow{1-C} H^1_\fm(A(n)) \xrightarrow{\tau^n_K}
  \H^2_x(S, \sF_n) \to 0.
\end{equation}
Using this exact sequence and the identities
\begin{equation}\label{eqn:F-seq-1}
  \begin{array}{lllll}
    \frac{H^1_\fm(A(n))}{(1-C)(H^1_\fm(\psi_* (ZA(n))))} & = &
     \frac{W_1(K)}{\Fil_nW_1(K) + (1-C)(K^p)} \\
    & = &
 \frac{W_1(K)}{\Fil_nW_1(K) + (1-\ov{F})(W_1(K))} 
 & = & \frac{H^1_\et(K, {\Z}/p)}{\Fil_n H^1_\et(K, {\Z}/p)}, \\
\end{array}
\end{equation}
we see that the boundary map $\partial_1$ in ~\eqref{eqn:DRC-3} induces a
canonical isomorphism
\begin{equation}\label{eqn:F-seq-2}
  \partial_1 \colon \H^2_x(S, \sF_n) \xrightarrow{\cong}
  \frac{H^1_\et(K, {\Z}/p)}{\Fil_n H^1_\et(K, {\Z}/p)}.
  \end{equation}

As a consequence of the continuity of $C \colon H^1_\fm(\psi_* (ZA(n))) \to
H^1_\fm(A(n))$, the isomorphism $\dlog \colon \kappa^1_2(A|n+1) \xrightarrow{\cong}
\Omega^2_{A|n+1, \log}$ and \cite[Thm.~1.2.1]{JSZ}, we obtain a diagram
\begin{equation}\label{eqn:C-F-sequence}
  \xymatrix@C.8pc{
    0 \ar[r] & \kappa^1_2(A|n+1) \ar[r] \ar@{.>}[d]_-{{\rho}^{n+1}_K} &
    \Omega^2_{A|n+1} \ar[r]^-{1-\ov{F}}
    \ar[d]_-{\Res^*_{n+1}} & \psi_* B'\Omega^2_{A|n+1} \ar[d]^-{- \Res'^*_{n+1}} \\
    0 \ar[r] & \H^2_x(S, \sF_n)^\star \ar[r] & H^1_\fm(A(n))^\star
    \ar[r]^-{1 - C^\vee} & H^1_\fm(\psi_* (ZA(n)))^\star,}
\end{equation}
where $\H^2_x(S, \sF_n)$ is endowed with the quotient topology via
$\tau^n_K$ in ~\eqref{eqn:F-seq-0}.

Our third claim is the following.

\begin{lem}\label{lem:LD-2}
    The right square in ~\eqref{eqn:C-F-sequence} is commutative.
  \end{lem}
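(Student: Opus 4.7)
The plan is to verify commutativity of the right square by a direct computation. Fix $\alpha \in \Omega^2_{A|n+1}$ and $y \in H^1_\fm(\psi_*(ZA(n)))$, and compare both paths of the square evaluated at $y$.

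First, I unfold each composition using the formulas from \lemref{lem:Pairing-1} and ~\eqref{eqn:Pairing-0}. The pairing $\<\cdot,\cdot\>^1_{1,0}$ is the scalar multiplication of forms, so the left-then-bottom path gives
\[
 (1-C^\vee)(\Res^*_{n+1}(\alpha))(y) \;=\; \Res^*_{n+1}(\alpha)\bigl((1-C)(y)\bigr) \;=\; \Res_K(\alpha\cdot y) \,-\, \Res_K\bigl(\alpha \cdot C(y)\bigr).
\]
For the top-then-right path, the sign built into $\<\cdot,\cdot\>^1_{0,1} = -C(\cdot\wedge\cdot)$ cancels the sign on the right vertical arrow of ~\eqref{eqn:C-F-sequence}, yielding
\[
 -\Res'^*_{n+1}\bigl((1-\ov{F})(\alpha)\bigr)(y) \;=\; \Res_K\bigl(C(y\cdot\alpha)\bigr) \,-\, \Res_K\bigl(C(y\cdot\ov{F}(\alpha))\bigr).
\]

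Next, I apply \lemref{lem:LD-1} together with the fact that the Cartier operator acts as the identity on $\Z/p$ (the third square in ~\eqref{eqn:LD-1-0}, reflecting the Frobenius-invariance of the trace $\F_q \to \Z/p$). This gives $\Res_K\circ C = \Res_K$ on $H^1_\fm(\Omega^2_A)$, so the first term of the top-then-right expression simplifies to $\Res_K(y\cdot\alpha) = \Res_K(\alpha\cdot y)$. Matching the two paths then reduces the claim to the identity
\[
 \Res_K\bigl(\alpha\cdot C(y)\bigr) \;=\; \Res_K\bigl(C(y\cdot\ov{F}(\alpha))\bigr).
\]

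For the final step, I invoke the multiplicativity of the Cartier operator on closed forms (the third identity in ~\eqref{eqn:Cartier}). Since $y \in \psi_*(ZA(n))$ is closed and $\ov{F}(\alpha)$ is closed modulo exact 2-forms (because $\ov{F}$ is the inverse Cartier operator modulo $d\Omega^1_A$, which is precisely what makes $\Omega^2_{A|n+1,\log}$ the kernel of $1 - \ov{F}$), we obtain $C(y\cdot\ov{F}(\alpha)) = C(y)\cdot C(\ov{F}(\alpha)) = C(y)\cdot\alpha$ modulo exact forms, and $\Res_K$ vanishes on exact 2-forms. This produces the required identity and completes the verification. The main technical point is the identification of $\ov{F}$ with the inverse Cartier modulo exact forms, so that $C\circ\ov{F} = \id$ in the relevant quotient; once this is in hand, everything else is a direct unwinding of the definitions together with \lemref{lem:LD-1}.
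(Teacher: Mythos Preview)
Your direct computation is correct and uses the same two ingredients as the paper: the identity $\Res_K\circ C = \Res_K$ coming from \lemref{lem:LD-1}, and the multiplicativity of the Cartier operator combined with $C\circ\ov{F} = \id$ on $\Omega^2$. One small remark: since $\Omega^3_K = 0$, every $2$-form is closed and $C$ lands in $\Omega^2_K$ itself, so the equality $C(y\cdot\ov{F}(\alpha)) = C(y)\cdot\alpha$ is exact, not merely modulo exact forms; your hedge there is unnecessary (though harmless).

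The paper's argument is organized differently. Rather than compute both paths on elements, it factors the right square of ~\eqref{eqn:C-F-sequence} as a composite of two squares through intermediate $\Hom$-groups, with a middle vertical map $\alpha_n(\phi) = (1-C)\circ\phi - \phi\circ(1-C)$. The left sub-square (between $\Omega^2_{A|n+1}$ and $\Hom_A(A_n,\Omega^2_K/\Omega^2_A)$) encodes precisely your multiplicativity step $C(y\cdot\ov{F}(\alpha)) = C(y)\cdot\alpha$; the right sub-square (passing to $\Hom_{\Z/p}(-,\Z/p)$ via the residue maps) isolates the appeal to \lemref{lem:LD-1}. Your approach collapses this factorization into a single element-level computation. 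The paper's decomposition makes the two logically independent inputs more visible and connects to a general pattern (cf.\ the reference to \cite[Lem.~7.1]{Gupta-Krishna-Duality}); your version is shorter and self-contained. Either is a complete proof.
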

  \begin{proof}
We let $A_n = {K}/{A(n)}$ and $B_n = \psi_*({K^p}/{Z(A(n))})$.
    By definition, the right square in ~\eqref{eqn:C-F-sequence} is the composition
    of squares  
\begin{equation}\label{eqn:LD-2-0}
  \xymatrix@C.8pc{
    \Omega^2_{A|n+1} \ar[d]_-{1-F} \ar[r]^-{\vartheta_{n}} &
    \Hom_A(A_n, {\Omega^2_K}/{\Omega^2_A}) \ar[d]^-{\alpha_n} \ar[r]^-{\res^*_K} 
    & \Hom_\ff(A_n, \ff) \ar[r]^-{\chi^*_\ff} & \Hom_{{\Z}/p}(A_n, {\Z}/p)
    \ar[d]^-{1- C^\vee} \\
    \psi_* B'\Omega^2_{A|n+1} \ar[r]^-{\vartheta'_{n}} &
    \Hom_A(B_n, {\Omega^2_K}/{\Omega^2_A})
    \ar[r]^-{-\res^*_K} & \Hom_\ff(B_n, \ff) \ar[r]^-{\chi^*_\ff} &
    \Hom_{{\Z}/p}(B_n, {\Z}/p),}
\end{equation}
where $\alpha_n(\phi) = (1-C) \circ \phi - \phi \circ (1-C)$ and 
$\vartheta_n$ (resp. $\vartheta'_n$) is the duality map induced by the left
(resp. right) pairing of ~\eqref{eqn:Pairing-1-0}.
The commutativity of the left square in ~\eqref{eqn:LD-2-0} is straightforward
(cf. proof of \cite[Lem.~7.1]{Gupta-Krishna-Duality}).
By the definition of $\alpha_n$ and ~\eqref{eqn:Res-1}, the commutativity of the
right square follows if we show that $\res_K, \ \res_\ff$ and $\tr$ commute
with $C$. But this the content of \lemref{lem:LD-1}.
\end{proof}

Our final claim is the following.

\begin{lem}\label{lem:LD-3}
  The map $\kappa^1_2(A|n+1) \xrightarrow{\rho^{n+1}_K} \H^2_x(S, \sF_n)^\vee$
  factors through $\H^2_x(S, \sF_n)^\star$ and makes the left square 
  in ~\eqref{eqn:C-F-sequence} commute.
\end{lem}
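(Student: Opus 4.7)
The plan is to establish both the factorization through $\H^2_x(S,\sF_n)^\star$ and the commutativity of the left square of ~\eqref{eqn:C-F-sequence} simultaneously, by matching the Kato reciprocity pairing with the local Grothendieck-duality pairing.

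First, by the identification ~\eqref{eqn:F-seq-2}, every class in $\H^2_x(S,\sF_n)$ has the form $\tau^n_K(\beta)$ for some $\beta \in H^1_\fm(A(n)) = K/A(n)$, and, when viewed in $H^1_\et(K,{\Z}/p)/\Fil_n H^1_\et(K,{\Z}/p)$, equals the class of $\partial_1(\beta)$ coming from the Artin--Schreier sequence ~\eqref{eqn:DRC-3}. Unwinding the cup product definition of $\rho^{n+1}_K$ via ~\eqref{eqn:KR**}, we obtain, for $\alpha \in \kappa^1_2(A|n+1)$,
\[
  \rho^{n+1}_K(\alpha)\bigl(\tau^n_K(\beta)\bigr) = {\Res}'_K\bigl(\dlog\alpha \cup \partial_1(\beta)\bigr) \in {\Z}/p,
\]
where ${\Res}'_K$ is Kato's residue and the cup product takes place in $H^1_\et(K,\Omega^2_{K,\log})$.

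Second, the heart of the argument is an explicit formula for this pairing. Using the Artin--Schreier description of $\Omega^2_{K,\log}$, the class $\dlog\alpha \cup \partial_1(\beta) \in H^1_\et(K,\Omega^2_{K,\log})$ is represented by $\beta \cdot \dlog\alpha \in \Omega^2_K$ modulo the image of $1-C$. Kato's identification of ${\Res}'_K$ with the classical residue \cite[\S~3, Prop.~1]{Kato80-2} then gives
\[
  {\Res}'_K\bigl(\dlog\alpha \cup \partial_1(\beta)\bigr) = \Res_K\bigl(\overline{\beta\cdot \dlog\alpha}\bigr),
\]
where the right-hand side is obtained by applying $\Res_K \colon H^1_\fm(\Omega^2_A) \to {\Z}/p$ from ~\eqref{eqn:Res-1} to the class of $\beta \cdot \dlog\alpha$ in $\Omega^2_K/\Omega^2_A$. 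By the very definition of $\Res^*_{n+1}$ via ~\eqref{eqn:Local-Duality} and \lemref{lem:LD-0}, this right-hand side equals $\Res^*_{n+1}(\dlog\alpha)(\beta)$, yielding the commutativity of the left square in ~\eqref{eqn:C-F-sequence}.

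Third, the factorization is then immediate. As noted in the paragraph preceding \lemref{lem:LD-0}, $\Res^*_{n+1}(\dlog\alpha)$ is an element of $H^1_\fm(A(n))^\star$, i.e., it is continuous for the adic topology. The right-square commutativity \lemref{lem:LD-2} shows that it annihilates $(1-C)\bigl(H^1_\fm(\psi_*(ZA(n)))\bigr)$. Hence $\Res^*_{n+1}(\dlog\alpha)$ descends to a continuous functional on the quotient $\H^2_x(S,\sF_n)$ endowed with its quotient topology via $\tau^n_K$, i.e., to an element of $\H^2_x(S,\sF_n)^\star$, and by step two this descended functional coincides with $\rho^{n+1}_K(\alpha)$. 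The main obstacle is step two: transporting Kato's residue on étale cohomology to the classical Grothendieck residue on coherent local cohomology through the Artin--Schreier boundary $\partial_1$. This is the essential technical input from \cite{Kato80-2} and is what allows the cohomological reciprocity to be accessed by duality arguments on differential forms.
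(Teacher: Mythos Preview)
Your proof is correct and follows essentially the same route as the paper: both arguments reduce to the compatibility of Kato's \'etale residue $\Res'_K$ with the coherent residue $\Res_K$ via the Artin--Schreier boundary, which is exactly the triangle on the right of the paper's diagram~\eqref{eqn:LD-3-1}. Your presentation is more computational and bypasses the cube diagram~\eqref{eqn:LD-3-0} that the paper uses to organize the reduction from the filtered situation over $A$ to the unfiltered situation over $K$; this is harmless since your direct evaluation of $\rho^{n+1}_K(\alpha)(\tau^n_K(\beta))$ accomplishes the same thing.

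One imprecision worth flagging: the identity $\Res'_K(\partial_C(\omega)) = \Res_K(\bar\omega)$ that you need in step two is not quite the statement of \cite[\S~3, Prop.~1]{Kato80-2}, which only defines $\Res'_K$ as the composite of Kato's boundary isomorphisms. The paper establishes this compatibility explicitly via diagram~\eqref{eqn:LD-3-2}, invoking \cite[\S~1.6, Lem.~13]{Kato80-2} for the commutation of $\partial_C$ with $\res_K$ and $\res_\ff$, and checking separately that the trace $\F_q \to \Z/p$ intertwines $\partial_C$ with the identity. Your final paragraph correctly identifies this as the essential input, but the citation should point there rather than to \S~3.
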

\begin{proof}
By \lemref{lem:LD-2}, it is enough to show that the left square of
 ~\eqref{eqn:C-F-sequence} commutes if we replace $\H^2_x(S, \sF_n)^\star$
 (resp. $H^1_\fm(A(n))^\star$) by $\H^2_x(S, \sF_n)^\vee$ (resp. $H^1_\fm(A(n))^\vee$).
We now look at the diagram
  \begin{equation}\label{eqn:LD-3-0}
    \xymatrix@C.8pc{
      \kappa^1_2(A|n+1) \ar[r]^-{\dlog} \ar[ddr]_-{\rho^{n+1}_K} &
      \Omega^2_{A|n+1, \log} \ar[rr] \ar[dr] \ar[dd]^-{\alpha'} & &
      \Omega^2_{A|n+1} \ar[dr] \ar[dd] & \\
      & & \Omega^2_{K, \log} \ar[dd]^<<<<<<<{\alpha} \ar[rr] & & \Omega^2_K \ar[dd] \\
      & \H^2_x(S, \sF_n)^\vee \ar[rr] \ar[dr] & &
      H^1_\fm(A(n))^\vee \ar@{^{(}->}[dr] & \\
      & & H^1_\et(K, {\Z}/p)^\vee \ar[rr] & & K^\vee}
   \end{equation}
   in which the slanted arrows of the cube
   are the canonical maps induced by the inclusion
   $A \inj K$. The map $\alpha$ is induced by ~\eqref{eqn:Pairing-2} and
$\alpha'$ is the restriction of $\alpha$.
   It is clear that except for the back and the front,
   all faces of the cube in the above diagram are commutative. 
   Since the triangle on the left end commutes by the definition of
   $\rho_K$ (cf. ~\eqref{eqn:KR**}),
   it suffices therefore to show that the front face of the cube commutes.

The commutativity of the front face of ~\eqref{eqn:LD-3-0} is equivalent to that
   of the diagram 
 \begin{equation}\label{eqn:LD-3-1}
    \xymatrix@C.8pc{
      H^1_\et(K, \Omega^0_{K, \log}) \times H^0_\et(K, \Omega^2_{K, \log})
      \ar@<7ex>[d]    \ar[r]^-{\wedge} &
      H^1_\et(K, \Omega^2_{K, \log}) \ar[r]^-{{\Res}'_K} &
      {\Z}/p \\
      H^0_\et(K, \Omega^0_{K}) \times H^0_\et(K, \Omega^2_{K})
        \ar[r]^-{\wedge}
        \ar@<7ex>[u]^-{\partial_F}  & H^0_\et(K, \Omega^2_{K})
        \ar[u]^-{\partial_C} \ar[ur]_-{\Res_K}, &}
    \end{equation}
    where $\partial_F$
    (resp. $\partial_C$) is the connecting homomorphism of the cohomology sequence
    associated to $j^*(\sG_n)$ (resp. $j^*(\sF_n)$) and $\Res_K$ is defined in
    ~\eqref{eqn:Res-1} 

It is standard that the diagram of wedge product pairings on the left in
~\eqref{eqn:LD-3-1} is commutative. To show that the triangle on the right end
of ~\eqref{eqn:LD-3-1} is commutative, we look at the diagram
 \begin{equation}\label{eqn:LD-3-2}
      \xymatrix@C.8pc{
        \Omega^2_K \ar@{->>}[r] \ar[d]_-{\partial_C} & H^1_\fm(\Omega^2_A)
          \ar[r]^-{\res_K} \ar[d]^-{\partial_C} &
        \Omega^1_\ff \ar[r]^-{\res_\ff} \ar[d]^-{\partial_C} &
        \F_q \ar[d]^-{\partial_C} \ar[r]^-{\tr} & {\Z}/p \ar@{=}[dl] \\
        H^1_\et(K, \Omega^2_{K, \log}) \ar[r]^-{\partial_K}_-{\cong}
        \ar@/_2pc/[rr]^-{\partial_2} &
        H^2_x(S, \Omega^2_{S,\log}) \ar[r]_-{\cong} &
      H^1_\et(\ff, \Omega^1_{\ff, \log}) \ar[r]^-{\partial_1}_-{\cong} & 
      H^1_\et(\F_q, {\Z}/p), & }
    \end{equation}
    where $\partial_1$ and $\partial_2$ are Kato's residue isomorphisms 
    (cf. \cite[\S~3, Prop.~1]{Kato80-2}). Since $\Res_K = \tr \circ \res_\ff \circ
    \res_K$ and $\Res'_K = \partial_1 \circ \partial_2$, the commutativity of 
    the triangle on the right end of ~\eqref{eqn:LD-3-1} is equivalent to that
    of the composite outer trapezium in ~\eqref{eqn:LD-3-2}.
    However, the composition of the first two squares in ~\eqref{eqn:LD-3-2}
    is commutative by \cite[\S~1.6, Lem.~13]{Kato80-2}, and the same proof works
    verbatim for the commutativity of the third square too. It remains to observe that
    the triangle on the right end of ~\eqref{eqn:LD-3-2} commutes. But this
    is clear because
    $\frac{\F_q}{(1-C)(Z\F_q)} = \frac{\F_q}{(1 -\ov{F})(\F_q)}$ and the sequence
    $\F_q \xrightarrow{1 -\ov{F}} \F_q \xrightarrow{\tr} {\Z}/p \to 0$ is
   exact. 
\end{proof}

The upshot of what we have shown above is the following.

 \begin{prop}\label{prop:LD-4}
   For every $n \ge 0$, the map $\rho^{n+1}_{K,1} \colon \kappa^1_2(A|n+1) \to
   \left(\frac{H^1_\et(K, {\Z}/p)}{\Fil_n H^1_\et(K, {\Z}/p)}\right)^\vee$ admits a
   factorization
   \[
     \kappa^1_2(A|n+1) \xrightarrow{\cong} 
     \left(\frac{H^1_\et(K, {\Z}/p)}{\Fil_n H^1_\et(K, {\Z}/p)}\right)^\star
     \inj \left(\frac{H^1_\et(K, {\Z}/p)}{\Fil_n H^1_\et(K, {\Z}/p)}\right)^\vee,
   \]
   where the first dual is taken with respect to the adic topology.
   In particular, $\rho^{n+1}_{K,1}$ is injective.
 \end{prop}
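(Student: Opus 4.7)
The strategy is to apply the four lemma to the commutative ladder~\eqref{eqn:C-F-sequence}. The necessary ingredients have already been assembled in the preceding lemmas: commutativity of the two squares is supplied by Lemmas~\ref{lem:LD-2} and~\ref{lem:LD-3}, while Lemma~\ref{lem:LD-0} shows that the middle and right vertical maps $\Res^*_{n+1}$ and $-\Res'^*_{n+1}$ are isomorphisms of $A$-modules. The remaining tasks are to verify that both rows are exact and then to identify the resulting isomorphism on the left column via~\eqref{eqn:F-seq-2}.

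Exactness of the top row is where the logarithmic de~Rham--Witt machinery is crucial: the kernel of $1-\ov{F} \colon \Omega^2_{A|n+1} \to \psi_* B'\Omega^2_{A|n+1}$ is $\Omega^2_{A|n+1,\log}$, which via $\dlog$ is precisely $\kappa^1_2(A|n+1)$ by~\eqref{eqn:Milnor-mod} together with \cite[Thm.~1.2.1]{JSZ}. For the bottom row, I would apply the functor $(-)^\star = \Hom_{\Tab}(-,\Z/p)$ to the right-exact sequence~\eqref{eqn:F-seq-0}. Because $\H^2_x(S,\sF_n)$ was endowed with the quotient topology from $H^1_\fm(A(n))$ and $\Z/p$ is discrete, a continuous character of the quotient is the same datum as a continuous character of $H^1_\fm(A(n))$ annihilating $(1-C) H^1_\fm(\psi_*ZA(n))$; this immediately gives the left-exact sequence
\[
0 \to \H^2_x(S,\sF_n)^\star \to H^1_\fm(A(n))^\star \xrightarrow{1-C^\vee} H^1_\fm(\psi_*ZA(n))^\star
\]
required for the diagram chase.

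The four lemma now yields that $\rho^{n+1}_K \colon \kappa^1_2(A|n+1) \to \H^2_x(S,\sF_n)^\star$ is itself an isomorphism. Transporting the codomain across the identification $\partial_1$ of~\eqref{eqn:F-seq-2} produces the asserted isomorphism onto $\bigl(\tfrac{H^1_\et(K,\Z/p)}{\Fil_n H^1_\et(K,\Z/p)}\bigr)^\star$, where the topology on this group is the one inherited via $\partial_1$ from the quotient topology on $\H^2_x(S,\sF_n)$ (this is the adic topology referred to in the statement). The factorization claimed in the proposition is then realized by composing with the tautological inclusion $\star \hookrightarrow \vee$, and injectivity of $\rho^{n+1}_{K,1}$ is immediate. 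The only technical subtlety I anticipate is bookkeeping of topologies on the duals; the substantive work has already been carried out in Lemmas~\ref{lem:LD-0}--\ref{lem:LD-3}, so the argument reduces, in the end, to a short diagram chase.
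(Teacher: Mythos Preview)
Your proposal is correct and follows essentially the same approach as the paper: the paper's proof is the single line ``Combine \lemref{lem:LD-0}, \eqref{eqn:C-F-sequence} and \lemref{lem:LD-3},'' which is precisely the diagram chase you spell out. Your added remarks on the exactness of the two rows and on transporting along the identification~\eqref{eqn:F-seq-2} are exactly the details implicit in that sentence.
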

 \begin{proof}
   Combine Lemma~\ref{lem:LD-0},  ~\eqref{eqn:C-F-sequence} and \lemref{lem:LD-3}.
   \end{proof}

 \section{Reciprocity for 2-local fields II}\label{sec:RL-2}
 In this section, we shall complete the proof of \thmref{thm:Main-1}.
 We let $K$ be a Henselian discrete valuation field of
   characteristic $p > 0$ whose residue field $\ff$ is a local field.
Let $A$ denote the ring of integers of $K$ and $\fm = (\pi)$ the maximal ideal of
$A$. We shall maintain the notations of \S~\ref{sec:RHom}.

\subsection{Kernel of $\rho_K$}\label{sec:Inj-FC}
The following two results will prove part (1) of \thmref{thm:Main-1}, and also the
divisibility of $\Ker(\rho_K)$.

\begin{thm}\label{thm:Local-inj}
  For every pair of integers $m \ge 1$ and $n \ge 0$, the map
  $\rho^n_{K,m} \colon \kappa^m_2(A|n) \to G^{(n)}_{K,m}$
  is injective. 
\end{thm}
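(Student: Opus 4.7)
The strategy is to reduce to the complete base case and then induct on $m$, leveraging \propref{prop:LD-4} as the input for $m=1$. First, I would reduce to the case where $A$ is complete: by \lemref{lem:Kato-equiv}(1) the map $\kappa^m_2(A|n)\to\kappa^m_2(\wh{A}|n)$ is injective, and by \corref{cor:p-tor-5} the map $G^{(n)}_{K,m}\to G^{(n)}_{\wh{K},m}$ is an isomorphism; since the two reciprocity homomorphisms are compatible with the canonical map $K\to\wh{K}$, injectivity over $K$ follows from injectivity over $\wh{K}$. Next, using \lemref{lem:Kato-1}(2), the equality $\kappa^m_2(A|0)=\kappa^m_2(A|1)$ (which holds because $\ff$ is a local field) lets us assume $n\ge 1$.

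For the base case $m=1$ with $n\ge 1$, the statement is exactly \propref{prop:LD-4} applied to $\rho^n_{K,1}$. For $m\ge 2$ I would proceed by induction on $m$. The main ingredient is to place $\rho^n_{K,m}$ inside a morphism of short exact sequences. On the Galois side, the coefficient sequence $0\to{\Z}/p\to{\Z}/{p^m}\to{\Z}/{p^{m-1}}\to 0$ induces a short exact sequence of quotients, and \propref{prop:p-tor} together with the exactness analysis of diagram~\eqref{eqn:p-tor-2} produces a compatible short exact sequence on the filtered pieces of $H^1(K,{\Z}/{p^i})$. Dualizing, and using that the ramification filtration shifts by $n\mapsto\lfloor n/p\rfloor$ under Verschiebung, I obtain a short exact sequence
\[
0\to G^{(n)}_{K,1}\to G^{(n)}_{K,m}\to G^{(n^\ast)}_{K,m-1}\to 0,
\]
where $n^\ast$ is the shifted index.

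On the Milnor $K$-theory side, the isomorphism $\dlog$ of~\eqref{eqn:Milnor-mod} identifies $\kappa^m_2(A|n)$ with $W_m\Omega^2_{A|n,\log}$, and the standard exact sequence
\[
0\to\Omega^2_{A|n,\log}\xrightarrow{V^{m-1}}W_m\Omega^2_{A|n,\log}\xrightarrow{R}W_{m-1}\Omega^2_{A|n^\ast,\log}\to 0
\]
(the logarithmic variant of~\eqref{eqn:Matsuda-0-0} with moduli, compatible with the $\dlog$ map) yields the analogous short exact sequence
\[
0\to\kappa^1_2(A|n)\to\kappa^m_2(A|n)\to\kappa^{m-1}_2(A|n^\ast)\to 0.
\]
The two exact sequences are intertwined by the reciprocity map because the cup product pairing~\eqref{eqn:KR**} is natural in the coefficient system. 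Applying the snake lemma to this morphism of short exact sequences and using the inductive hypothesis at levels $(1,n)$ and $(m-1,n^\ast)$ yields injectivity of $\rho^n_{K,m}$.

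The principal obstacle lies in verifying the bookkeeping of indices in the modulus and ramification filtrations: one must check that the Frobenius/Verschiebung shift produces the \emph{same} value $n^\ast$ on both sides of the pairing, and that the restrictions of $\rho_K$ actually factor through the filtered pieces in the stated way. Once the shift is matched — which is the substance of \propref{prop:p-tor} and \lemref{lem:Matsuda-0} — the vertical compatibility reduces to the definition of the reciprocity homomorphism via cup product, and the induction closes.
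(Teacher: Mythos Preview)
Your overall plan---reduce to the complete case via \lemref{lem:Kato-equiv} and \corref{cor:p-tor-5}, normalize to $n\ge 1$ via \lemref{lem:Kato-1}, anchor the induction at $m=1$ using \propref{prop:LD-4}, and then sandwich $\rho^n_{K,m}$ between two short exact sequences---is exactly the paper's strategy. Where your write-up deviates is in the specific short exact sequences, and there the details are off in a way that matters.

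On the $K$-theory side you invoke a sequence $0\to\kappa^1_2(A|n)\xrightarrow{V^{m-1}}\kappa^m_2(A|n)\xrightarrow{R}\kappa^{m-1}_2(A|n^\ast)\to 0$ by analogy with~\eqref{eqn:Matsuda-0-0}. But~\eqref{eqn:Matsuda-0-0} is for degree-$0$ Witt vectors; in degree $2$ the Verschiebung $V$ does \emph{not} preserve logarithmic forms (only $F$ fixes $\dlog$-elements), so the map $V^{m-1}\colon\Omega^2_{A|n,\log}\to W_m\Omega^2_{A|n,\log}$ is not available. The correct input is \cite[Thm.~1.1.7]{JSZ}, which gives
\[
0\to\kappa^{m-1}_2(A|n')\xrightarrow{\ \underline{p}\ }\kappa^m_2(A|n)\to\kappa^1_2(A|n)\to 0
\]
with $n'=\lceil n/p\rceil$: the roles of the level-$1$ and level-$(m-1)$ pieces are reversed from what you wrote, and the map is $\underline{p}$, not $V^{m-1}$. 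On the Galois side the same reversal shows up: dualizing the sequence you cite, $0\to{\Z}/p\to{\Z}/{p^m}\to{\Z}/{p^{m-1}}\to 0$, gives $0\to G^{(n')}_{K,m-1}\to G^{(n)}_{K,m}\to G^{(n)}_{K,1}\to 0$ (Pontryagin duality reverses arrows), not the sequence you wrote. The index match $n'=\lceil n/p\rceil=\lfloor(n-1)/p\rfloor+1$ then follows from \propref{prop:p-tor} exactly as you anticipated. With these two corrections---replacing $V^{m-1}$ by the JSZ map $\underline{p}$ and swapping which term is the subobject---your induction closes and agrees with the paper's diagram~\eqref{eqn:Local-inj-0}.
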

\begin{proof}
  In view of \lemref{lem:Kato-equiv} and \corref{cor:p-tor-5}, we can assume
  that $K$ is complete.
Since $\kappa^m_2(A|0) = \kappa^m_2(A|1)$ by \lemref{lem:Kato-1} and
  $G^{(0)}_{K,m} = G^{(1)}_{K,m}$ by ~\eqref{eqn:Fil-K-p-1}, we can assume
  that $n \ge 1$. We shall now prove the theorem by induction on $m$.
  The base case $m =1$ follows from  \propref{prop:LD-4}.
 For $m \ge 2$, we let $n' = \lceil n/p \rceil$ and
 look at the commutative diagram
\begin{equation}\label{eqn:Local-inj-0}
  \xymatrix@C1pc{
    0 \ar[r] & \kappa^{m-1}_2(A|n') \ar[r]^-{p} \ar[d]_-{\rho^{n'}_{K,m-1}} &
    \kappa^m_2(A|n) \ar[r] \ar[d]^-{\rho^{n}_{K,m}} &
    \kappa^1_2(A|n) \ar[r] \ar[d]^-{\rho^n_{K,1}} & 0 \\
    0 \ar[r] &  G^{(n')}_{K,m-1} \ar[r] &
    G^{(n)}_{K,m} \ar[r]^-{p} &
    G^{(n)}_{K,1} \ar[r] & 0,}
\end{equation}
where the unmarked arrows are the canonical inclusions and surjections.

The top row in ~\eqref{eqn:Local-inj-0}
is exact by \cite[Thm.~1.1.7]{JSZ}. The exactness of the bottom
row follows (via the Pontryagin duality)
by comparing the exact sequence
\begin{equation}\label{eqn:Local-inj-1}
0 \to H^1_\et(K, {\Z}/{p}) \xrightarrow{(p^{m-1})^*} H^1_\et(K, {\Z}/{p^m}) 
\xrightarrow{R^*} H^1_\et(K, {\Z}/{p^{m-1}}) \to 0
\end{equation}
with the bottom row of ~\eqref{eqn:p-tor-2} and using \propref{prop:p-tor}.
Note here that $\lceil n/p \rceil = \lfloor {(n-1)}/p \rfloor + 1$.
Since the left and the right vertical arrows in ~\eqref{eqn:Local-inj-0} are
injective by induction on $m$, we are done.
\end{proof}

\begin{cor}\label{cor:Local-inj-2}
For every integer $q \ge 1$, the map
${\rho_{K}} \colon {K_2(K)}/q \to
  {G_{K}}/q$ is injective. Equivalently, the map
  ${\rho_{A}} \colon {K_2(A)}/q \to {G^{(0)}_{K}}/q$ is injective.
\end{cor}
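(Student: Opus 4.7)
The plan is to reduce the corollary to \thmref{thm:Local-inj} at $n = 0$ (for the $p$-primary part) and to Kato's classical reciprocity for 2-local fields (for the prime-to-$p$ part), switching between the two formulations via \lemref{lem:REC-Loc-0}.

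First I would check that the two assertions are equivalent. The two short exact sequences of \lemref{lem:REC-Loc-0} fit into a commutative diagram whose vertical arrows are the maps $\rho_A/q$, $\rho_K/q$ and $\rho_\ff/q$. Since $\ff$ is a local field, one-dimensional local class field theory makes $\rho_\ff/q$ an isomorphism for every $q \ge 1$. The snake lemma then yields $\Ker(\rho_A/q) \cong \Ker(\rho_K/q)$, so injectivity of one map is equivalent to injectivity of the other.

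It then suffices to prove that $\rho_A/q$ is injective, and by the Chinese Remainder Theorem we may assume $q = \ell^m$ is a prime power. For $\ell \neq p$, the sharper fact that $\rho_K/q$ is already an isomorphism is the reciprocity theorem for 2-local fields with coefficients coprime to the residue characteristic, due to Kato \cite[Thm.~2]{Kato80-1}. For $\ell = p$: by \lemref{lem:Kato-1}(1) we have $\kappa^m_2(A|0) = K_2(A)/p^m$. From the Bockstein sequence associated to ${\Z}/{p^m} \inj {\Q}/{\Z} \xrightarrow{p^m} {\Q}/{\Z}$, together with surjectivity of $p^m$ on $H^0_\et(\Spec K, {\Q}/{\Z}) = {\Q}/{\Z}$, one gets $H^1(K, {\Z}/{p^m}) = H^1(K)[p^m]$ and similarly over $A$. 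Pontryagin duality for the profinite group $G^{(0)}_K = (H^1(K)/H^1(A))^\star$ then produces a canonical surjection $\pi \colon {G^{(0)}_K}/{p^m} \surj G^{(0)}_{K,m}$. Unraveling the construction of $\rho^0_{K,m}$ in \lemref{lem:LR-mod} via \propref{prop:p-tor} and the cup product \eqref{eqn:KR**}, I would verify the identity $\pi \circ (\rho_A/p^m) = \rho^0_{K,m}$. Since the latter is injective by \thmref{thm:Local-inj} at $n = 0$, it follows that $\rho_A/p^m$ is injective as well.

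The main (and really only) subtle point in this outline is the compatibility $\pi \circ (\rho_A/p^m) = \rho^0_{K,m}$; this is a routine but careful bookkeeping exercise chasing the definition of $\rho_K$ through the cup product pairing and the coefficient short exact sequence ${\Z}/{p^m} \inj {\Q}/{\Z}$.
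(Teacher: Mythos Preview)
Your proposal is correct and follows essentially the same route as the paper: both reduce the equivalence of the two formulations to \lemref{lem:REC-Loc-0}, invoke the $n=0$ case of \thmref{thm:Local-inj} for $q=p^m$, and appeal to a known isomorphism for $q$ prime to $p$. Two small points of comparison: the paper first reduces to the complete case via \lemref{lem:Kato-equiv} and \corref{cor:p-tor-5} (which you implicitly rely on through \thmref{thm:Local-inj}), and for the prime-to-$p$ case the paper invokes the Merkurjev--Suslin isomorphism together with {\'e}tale Poincar{\'e} duality for 2-local fields (\cite[Thm.~I.2.17]{Milne-AD} or \cite{Koya}) rather than \cite[Thm.~2]{Kato80-1}; your citation is a bit imprecise here, since Kato's theorem concerns the dual of $\rho_K$ rather than $\rho_K/q$ directly. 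Also, your map $\pi$ is in fact an isomorphism (because $H^1(A)$ is $p$-divisible by \lemref{lem:No-p-tor}), so the compatibility check you flag as ``the main subtle point'' is more straightforward than you suggest.
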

\begin{proof}
  The two assertions are equivalent by \lemref{lem:REC-Loc-0}.
  To prove the result for $K$, we can assume $K$ to be complete in view of
  \lemref{lem:Kato-equiv} and \corref{cor:p-tor-5}.
If $q = p^m$, the claim is the $n = 0$ case
  of \thmref{thm:Local-inj}. If $p \nmid q$, then the Merkurjev-Suslin isomorphism
${\rm NR}_K \colon {K_2(K)}/q \xrightarrow{\cong} H^2_\et(K, {\Z}/q(2))$ and 
the Poincar{\'e} duality for the {\'e}tale cohomology of $K$
(cf. \cite[Thm.~I.2.17]{Milne-AD} or \cite{Koya})
imply that ${\rho_K}/q$ is in fact an isomorphism.
\end{proof}

The following result recovers a theorem of Fesenko \cite{Fesenko}.

\begin{cor}\label{cor:Local-inj-8}
  $\Ker(\rho_K)$ is the maximal divisible subgroup of $K_2(K)$.
\end{cor}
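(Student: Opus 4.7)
We aim to establish two inclusions, with $D := D(K_2(K))$ denoting the maximal divisible subgroup: (a) $D \subseteq \Ker(\rho_K)$, and (b) $\Ker(\rho_K)$ is itself divisible (so that $\Ker(\rho_K) \subseteq D$).

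For (a), the image $\rho_K(D)$ is a divisible subgroup of the profinite abelian group $G_K$. Since $G_K = \varprojlim_U G_K/U$ over open normal subgroups $U$, and each finite abelian quotient $G_K/U$ is reduced (the maximal divisible subgroup of a finite abelian group is trivial), the image of $D$ vanishes in every $G_K/U$. Passing to the inverse limit gives $\rho_K(D) = 0$, so $D \subseteq \Ker(\rho_K)$.

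For (b), fix $\alpha \in \Ker(\rho_K)$ and $m \ge 1$. By \corref{cor:Local-inj-2}, the map $\rho_K/m$ is injective, so from $\rho_K(\alpha) = 0 \in mG_K$ we deduce $\alpha \in m K_2(K)$; write $\alpha = m\beta_0$ with $\beta_0 \in K_2(K)$. Then $m\rho_K(\beta_0) = \rho_K(\alpha) = 0$, so $\rho_K(\beta_0) \in {}_m G_K$. Invoking part (2) of \thmref{thm:Main-1}, namely the surjectivity of ${}_m\rho_K \colon {}_m K_2(K) \twoheadrightarrow {}_m G_K$, we choose $\gamma \in {}_m K_2(K)$ with $\rho_K(\gamma) = \rho_K(\beta_0)$. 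Setting $\beta := \beta_0 - \gamma$, we obtain $\rho_K(\beta) = 0$ and $m\beta = m\beta_0 = \alpha$. Hence $\Ker(\rho_K) = m\Ker(\rho_K)$ for every $m \ge 1$, so $\Ker(\rho_K)$ is divisible.

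\textbf{Main obstacle.} The hard input is step (b). Injectivity of $\rho_K/m$ alone places $\Ker(\rho_K)$ inside $\bigcap_n nK_2(K)$, but this intersection need not be divisible for a general abelian group: one can produce $\beta_0$ with $m\beta_0 = \alpha$, yet the resulting $\beta_0$ may fail to lie in the kernel. The essential leverage is the surjectivity of ${}_m\rho_K$, which lets us adjust $\beta_0$ by an $m$-torsion class of $K_2(K)$ to land inside $\Ker(\rho_K)$. Thus the corollary genuinely requires both halves of \thmref{thm:Main-1} in tandem: the injectivity on $\rho_K/m$ (our new input) to push $\alpha$ into $mK_2(K)$, and the surjectivity on ${}_m\rho_K$ to promote divisibility from the ambient group into the kernel.
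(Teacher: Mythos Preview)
Your proof is correct and follows essentially the same approach as the paper. The paper's proof reads ``Combine \lemref{lem:No-p-tor}, \corref{cor:Local-inj-2} and \propref{prop:Local-ker-coker-0}'', which is precisely your argument in compressed form: \corref{cor:Local-inj-2} is part~(1) of \thmref{thm:Main-1}, while \lemref{lem:No-p-tor} (giving ${}_{p^m}G_K = 0$) together with \propref{prop:Local-ker-coker-0} (surjectivity of ${}_n\rho_K$ for $p \nmid n$) constitute part~(2); you have simply unpacked the standard divisibility-of-kernel argument that these ingredients feed into.
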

\begin{proof}
  Combine \lemref{lem:No-p-tor}, \corref{cor:Local-inj-2} and
  \propref{prop:Local-ker-coker-0}.
\end{proof}

\subsection{Reciprocity map for $\varpi^m_2(A|n)$}
\label{sec:Rec-quot}
Recall from ~\eqref{eqn:Kato-0} that $\varpi^m_2(A|n)$ is the cokernel of the
canonical inclusion $\kappa^m_2(A|n) \inj \kappa^m_2(A|0)$ for $n \ge 0$.
Since $\left\{\rho^n_{K,m}\right\}_{n \ge 0} \colon
\left\{\kappa^m_2(R|n)\right\}_{n \ge 0} \to \left\{\left(\frac{H^1_\et(K, {\Z}/{p^m})}
  {\Fil_{n-1} H^1_\et(K, {\Z}/{p^m})}\right)^\vee\right\}_{n \ge 0}$ constitute a
morphism between two pro-abelian groups as we vary $n$, taking the
quotients of the transition maps of these pro-abelian groups yields
a reciprocity homomorphism
\begin{equation}\label{eqn:Rec-quot-0}
  \wt{\rho}^n_{K,m} \colon \varpi^m_2(R|n) \to
  \left(\frac{\Fil_{n-1} H^1_\et(K, {\Z}/{p^m})}
    {\Fil_{-1} H^1_\et(K, {\Z}/{p^m})}\right)^\vee.
\end{equation}
Our aim in this subsection is to study this map as an application of the results of
\S~\ref{sec:D-Thm}. We shall use the results of this subsection in the
later parts of this paper.

We first consider a special case. We fix an integer $n \ge 0$ and assume that $K$ is
complete. We look at the commutative diagram
\begin{equation}\label{eqn:Local-inj-9}
  \xymatrix@C1pc{
    0 \ar[r] & \frac{A(n)}{A} \ar[r]^-{\alpha_n} \ar@{->>}[d]_-{\partial_1} &
    \frac{K}{A} \ar[r]^-{\beta_n} \ar@{->>}[d]^-{\partial_1} &
    \frac{K}{A(n)} \ar[r] \ar@{->>}[d]^-{\partial_1} & 0 \\
    0 \ar[r] & \frac{\Fil_n H^1_\et(K, {\Z}/{p})}{\Fil_0 H^1_\et(K, {\Z}/{p})}
    \ar[r]^-{\alpha'_n} & \frac{H^1_\et(K, {\Z}/{p})}{\Fil_0 H^1_\et(K, {\Z}/{p})}
    \ar[r]^-{\beta'_n} &
  \frac{H^1_\et(K, {\Z}/{p})}{\Fil_n H^1_\et(K, {\Z}/{p})} \ar[r] & 0,}
\end{equation}
in which $\alpha_n$ (resp. $\alpha'_n$) and $\beta_n$ (resp. $\beta'_n$)
are canonical inclusion and surjection.

Since the adic topology of $H^1_\fm(A(n))$ coincides with the quotient topology
under the canonical surjection $H^1_\fm(A(0)) \surj H^1_\fm(A(n))$, it follows
that the adic topology of
$\H^2_x(S, \sF_n) \cong \frac{H^1_\et(K, {\Z}/{p})}{\Fil_n H^1_\et(K, {\Z}/{p})}$
(cf. ~\eqref{eqn:F-seq-2}) coincides with the quotient (via $\beta'_n$)
of the adic topology of $\frac{H^1_\et(K, {\Z}/{p})}{\Fil_0 H^1_\et(K, {\Z}/{p})}$.
We endow $\frac{\Fil_n H^1_\et(K, {\Z}/{p})}{\Fil_0 H^1_\et(K, {\Z}/{p})}$ with the
quotient of the adic topology of $\frac{A(n)}{A}$. It is clear that all arrows in
the above diagram are continuous.

\begin{lem}\label{lem:Local-inj-10}
  There exists a continuous homomorphism $\gamma'_n \colon 
\frac{H^1_\et(K, {\Z}/{p})}{\Fil_0 H^1_\et(K, {\Z}/{p})} \to 
\frac{\Fil_n H^1_\et(K, {\Z}/{p})}{\Fil_0 H^1_\et(K, {\Z}/{p})}$ such that
$\gamma'_n \circ \alpha'_n = \id$.
\end{lem}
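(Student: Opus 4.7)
I would construct $\gamma'_n$ by starting from the natural continuous splitting of the top row of~\eqref{eqn:Local-inj-9}. The $\ff$-vector space decomposition $K/A = A(n)/A \oplus W_n$ with $W_n = \bigoplus_{i \leq -n-1} \ff\pi^i$ gives a continuous $\ff$-linear retraction $\gamma_n \colon K/A \to A(n)/A$ via projection onto the direct summand $A(n)/A$ (i.e. keeping coefficients of $\pi^i$ for $-n \leq i \leq -1$), and composition with $\partial_1$ yields a continuous candidate map $K/A \to \Fil_n H^1_\et(K, {\Z}/p)/\Fil_0 H^1_\et(K, {\Z}/p)$. The essential task is to show this candidate factors through the quotient $\partial_1 \colon K/A \twoheadrightarrow H^1_\et(K, {\Z}/p)/\Fil_0 H^1_\et(K, {\Z}/p)$, which would produce $\gamma'_n$ by the universal property of the quotient topology.

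The compatibility requires that $\gamma_n((1-F)y)$ has zero image in $\Fil_n/\Fil_0$ for every $y \in K$ (the requirement on $A \subseteq \ker \partial_1$ is automatic since $\gamma_n(A) = 0$). Splitting $y = y_1 + y_2$ with $y_1 \in A(\lfloor n/p \rfloor)$ and $y_2$ of strictly lower degree, \lemref{lem:Matsuda-0} ensures $(1-F)y_1 \in A(n) \cap (1-F)K$, which is harmless. For the contribution from $y_2$, the naive projection can fail: at $p = 2$ with $y = \pi^{-1}$ and $n = 1$, one has $(1-F)y = \pi^{-1} - \pi^{-2} \in (1-F)K$ yet its projection is $\pi^{-1}$, whose class in $\Fil_1/\Fil_0$ is nontrivial (the character $\chi_{\pi^{-1}}$ has Kato conductor $1$). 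Thus $\gamma_n$ must be corrected by iteratively subtracting suitable elements of $\partial_1((1-F) A(\lfloor n/p\rfloor))$ to cancel the residual obstruction, and the iteration stabilizes because each correction strictly lowers the Kato filtration level of the residual piece by \lemref{lem:Matsuda-0}.

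The main obstacle is the uniform convergence and continuity of this iterative correction in the colimit topology on $H^1/\Fil_0 = \varinjlim_m \Fil_m/\Fil_0$. A cleaner alternative is to dualize via \propref{prop:LD-4}: the existence of $\gamma'_n$ is equivalent to splitting the inclusion $\kappa^1_2(A|n+1) \hookrightarrow \kappa^1_2(A|1) = K_2(A)/p$, which via the $\dlog$ isomorphism~\eqref{eqn:Milnor-mod} reduces to splitting the inclusion $\Omega^2_{A|n+1, \log} \hookrightarrow \Omega^2_{A, \log}$ of $\F_p$-subspaces of $\Omega^2_A = A\omega$. This dual problem can be addressed using the rank-one $A$-module structure of $\Omega^2_A$ together with the Frobenius-invariance condition characterizing logarithmic forms, producing a continuous retraction that transfers back to the desired $\gamma'_n$.
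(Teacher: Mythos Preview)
Your counterexample is correct, and it exposes a genuine issue: the paper's proof is precisely the naive approach you try first and reject. The paper defines $\gamma_n$ as the coordinate projection $K/A \to A(n)/A$, records the relation $\ov{F}\circ\gamma_{\lfloor n/p\rfloor} = \gamma_n\circ\ov{F}$, and asserts that $\gamma_n$ descends modulo images of $1-\ov{F}$. But that relation only yields $\gamma_n\circ(1-\ov{F}) = (1-\ov{F})\circ\gamma_{\lfloor n/p\rfloor} + (\gamma_n - \gamma_{\lfloor n/p\rfloor})$, and the residual term $(\gamma_n-\gamma_{\lfloor n/p\rfloor})(K/A) = \bigoplus_{\lfloor n/p\rfloor < i \le n}\ff\pi^{-i}$ is not contained in $(1-\ov{F})(A(\lfloor n/p\rfloor))$ modulo $A$: indeed for $p\nmid i$ in that range, no $\pi^{-i}$-term can appear in $(1-\ov{F})(A(\lfloor n/p\rfloor))$. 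Your example $(p,n,y)=(2,1,\pi^{-1})$ exhibits this, since $\partial_1(\pi^{-1})$ lies in $\Fil_1\setminus\Fil_0$. So you have correctly located a gap in the paper's argument as written.

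That said, neither of your proposed repairs is complete. Your iterative correction is not coherently formulated: subtracting elements of $\partial_1((1-\ov{F})A(\lfloor n/p\rfloor))$ is subtracting zero, so nothing is corrected; what one actually needs is to modify $\gamma_n$ on the summands $\ff\pi^{-j}$ with $j>n$ in a Frobenius-compatible way. Your dualization argument via \propref{prop:LD-4} runs the wrong direction: a continuous retraction $\gamma'_n$ would indeed dualize to a splitting of $\kappa^1_2(A|n+1)\hookrightarrow\kappa^1_2(A|1)$, but you need the converse, which would require recovering $H^1/\Fil_0$ from its continuous dual --- and $H^1/\Fil_0$ is not locally compact (it is a quotient of an infinite direct sum of copies of $\ff$), so Pontryagin duality does not apply; moreover your sketch of splitting $\Omega^2_{A|n+1,\log}\hookrightarrow\Omega^2_{A,\log}$ continuously is not an argument. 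Note also that $\ff$ is \emph{not} perfect, so any fix based on taking $p$-th roots in $\ff$ must be handled with care. The honest route is to observe that the lemma is used only to obtain exactness of the bottom row of~\eqref{eqn:Local-inj-5} at the right, i.e.\ surjectivity of $(H^1/\Fil_0)^\star \to (\Fil_n/\Fil_0)^\star$, and to prove that extension-of-characters statement directly: given a continuous $\chi$ on $A(n)/A$ vanishing on $(1-\ov{F})(A(\lfloor n/p\rfloor))$, one extends it to $K/A$ by inductively choosing, for each $j>n$, a continuous character on $\ff\pi^{-j}$ whose restriction to $\ff^p\pi^{-j}$ is dictated by the already-defined character on $\ff\pi^{-j/p}$ when $p\mid j$, and by $0$ when $p\nmid j$.
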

\begin{proof}
  We let $\gamma_n \colon \frac{K}{A} \to \frac{A(n)}{A}$ be the projection
  map $\stackrel{\infty}{\underset{i = 1}\bigoplus} \ff\pi^{-i} \to
  \stackrel{n}{\underset{i =1}\bigoplus} \ff\pi^{-i}$ so that 
$\gamma_n \circ \alpha_n = \id$. It is clear that
  $\gamma_n$ is continuous and $\ov{F} \circ \gamma_{\lfloor n/p\rfloor} = 
\gamma_n \circ \ov{F}$. Taking quotients by the images of $(1-\ov{F})$
and using \lemref{lem:Matsuda-0},
  we get that $\gamma_n$ descends to the map $\gamma'_n$, as desired.
 \end{proof}

Using \lemref{lem:Local-inj-10}, we get that the bottom row in the commutative 
diagram
\begin{equation}\label{eqn:Local-inj-5}
    \xymatrix@C1pc{
      0 \ar[r] &  \kappa^1_2(A|n+1) \ar[d]_-{\rho^{n+1}_{K,1}} \ar[r] &
      \kappa^1_2(A|1) \ar[r] \ar[d]^-{\rho^{1}_{K,1}}  &
      \varpi^1_2(A|n+1) \ar[r] \ar[d]^-{\wt{\rho}^{n+1}_{K,1}} & 0 \\
0 \ar[r] & \left(\frac{H^1_\et(K, {\Z}/{p})}{\Fil_n H^1_\et(K, {\Z}/{p})}\right)^\star
\ar[r] & \left(\frac{H^1_\et(K, {\Z}/{p})}{\Fil_0 H^1_\et(K, {\Z}/{p})}\right)^\star
\ar[r] &
\left(\frac{\Fil_n H^1_\et(K, {\Z}/{p})}{\Fil_0 H^1_\et(K, {\Z}/{p})}\right)^\star
\ar[r] & 0}
  \end{equation}
  is exact, where the duals are taken with respect to the adic topologies.
  The top row is exact by ~\eqref{eqn:Kato-0}.
It follows from Lemma~\ref{lem:LD-0},  ~\eqref{eqn:C-F-sequence}
and \lemref{lem:LD-3} that the left and the middle vertical arrows are
bijective. It follows that $\wt{\rho}^{n+1}_{K,1}$ is bijective.

If $K$ is not necessarily complete, we can use
\lemref{lem:Kato-equiv} and \corref{cor:p-tor-5} to make sense of the adic
topology on $\frac{\Fil_n H^1_\et(K, {\Z}/{p})}{\Fil_0 H^1_\et(K, {\Z}/{p})}$ and
conclude the following.

\begin{cor}\label{cor:Local-inj-11}
  The reciprocity homomorphism
  \[
    \wt{\rho}^{n+1}_{K,1} \colon  \varpi^1_2(A|n+1) \to
    \left(\frac{\Fil_n H^1_\et(K, {\Z}/{p})}{\Fil_0 H^1_\et(K, {\Z}/{p})}\right)^\star
  \]
  is bijective.
\end{cor}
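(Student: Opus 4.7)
The plan is to reduce the Henselian case to the complete case that was established in the discussion preceding the corollary. The key principle is that all three objects entering the statement---the source $\varpi^1_2(A|n+1)$, the target $\left(\tfrac{\Fil_n H^1_\et(K,\Z/p)}{\Fil_0 H^1_\et(K,\Z/p)}\right)^\star$, and the map $\wt{\rho}^{n+1}_{K,1}$ itself---are invariant under passage from $K$ to its completion $\wh{K}$. Once this is verified, the result follows by transport of structure from the complete case established via Lemma~\ref{lem:LD-0}, diagram~\eqref{eqn:C-F-sequence}, Lemma~\ref{lem:LD-3} and diagram~\eqref{eqn:Local-inj-5}.

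More concretely, first I would observe that $\varpi^1_2(A|n+1) \xrightarrow{\cong} \varpi^1_2(\wh{A}|n+1)$ by item~(2) of \lemref{lem:Kato-equiv}, so the left-hand group is identified with its counterpart for the completion. For the right-hand side, \corref{cor:p-tor-5} gives an isomorphism $\Fil_n H^1_\et(K,\Z/p) \xrightarrow{\cong} \Fil_n H^1_\et(\wh{K},\Z/p)$ (compatible with the inclusions for varying $n$), so the quotient $\tfrac{\Fil_n H^1_\et(K,\Z/p)}{\Fil_0 H^1_\et(K,\Z/p)}$ is likewise identified with that of $\wh{K}$. This identification is used to \emph{define} the adic topology in the Henselian setting: we transport the adic topology from the completion's side via the canonical isomorphism, and the topology on $\varpi^1_2(A|n+1)$ is then compatible under the same identification.

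Next I would verify compatibility of the reciprocity maps under $K \to \wh{K}$. The map $\wt{\rho}^{n+1}_{K,1}$ was defined by taking the quotient of the transition maps in the pro-system $\{\rho^n_{K,1}\}_{n\ge 0}$, and this construction is manifestly natural in the base ring: the square
\[
\xymatrix{
\varpi^1_2(A|n+1) \ar[r]^-{\wt{\rho}^{n+1}_{K,1}} \ar[d]_-{\cong} & \left(\frac{\Fil_n H^1_\et(K,\Z/p)}{\Fil_0 H^1_\et(K,\Z/p)}\right)^\star \ar[d]^-{\cong} \\
\varpi^1_2(\wh{A}|n+1) \ar[r]^-{\wt{\rho}^{n+1}_{\wh{K},1}} & \left(\frac{\Fil_n H^1_\et(\wh{K},\Z/p)}{\Fil_0 H^1_\et(\wh{K},\Z/p)}\right)^\star
}
\]
commutes, with the vertical arrows bijective. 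Since the bottom arrow is a bijection by the argument already carried out for complete fields, so is the top arrow.

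The main subtlety---and the only place one must be careful---is the topological one: we must ensure that the right-hand vertical identification is a homeomorphism for the adic topologies on either side, so that taking continuous duals $(-)^\star$ yields an isomorphism. This is essentially a definitional matter once we agree to \emph{transport} the adic topology from the completed side, but to be thorough I would point out that the maps of \corref{cor:p-tor-5} are induced by the canonical $K \to \wh{K}$ and respect the increasing filtrations, so the quotient topology on the left agrees with the subspace structure on the right. With this in hand, no further analytic work is needed.
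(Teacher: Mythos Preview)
Your proposal is correct and follows essentially the same approach as the paper: reduce to the complete case via \lemref{lem:Kato-equiv}(2) for the source and \corref{cor:p-tor-5} for the target, transporting the adic topology from $\wh{K}$ to $K$ by definition, and then invoke the bijectivity already established for complete fields through \eqref{eqn:Local-inj-5}. The paper's treatment is terser but the content is the same.
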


We generalize the above result as follows.

\begin{cor}\label{cor:Local-inj-6}
   For $m \ge 1$ and $n \ge 0$, the map $\rho^n_{K,m}$
   induces a monomorphism
   \[
     \wt{\rho}^{n}_{K,m} \colon  \varpi^m_2(A|n) \inj     
\left(\frac{\Fil_{n-1} H^1_\et(K, {\Z}/{p^m})}{\Fil_{-1} H^1_\et(K, {\Z}/{p^m})}\right)^\star,
 \]
 where the dual on the right is taken with respect to the discrete topology.
\end{cor}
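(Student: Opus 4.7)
\smallskip

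\noindent\textbf{Plan.} I would prove this by induction on $m$, using a dévissage exactly parallel to the one used in the proof of \thmref{thm:Local-inj}. The base case $m=1$ is essentially \corref{cor:Local-inj-11}: that corollary identifies $\wt{\rho}^n_{K,1}$ with a bijection onto the adic dual of the filtration quotient, and since every homomorphism continuous in the adic topology is still a homomorphism in the discrete topology, composition with the tautological inclusion of the adic dual into the discrete dual yields the claimed injection.

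\smallskip

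\noindent For the inductive step, assume $m\ge 2$ and set $n' = \lceil n/p \rceil$; the identity $\lceil n/p\rceil = \lfloor (n-1)/p\rfloor + 1$ used in \thmref{thm:Local-inj} guarantees $n'-1 = \lfloor (n-1)/p\rfloor$, so that the Milnor $K$-theory indexing and the cohomology indexing agree consistently after the shift from $\Fil_n$ to $\Fil_{n-1}$. Applying the snake lemma to the obvious comparison map between the top row of ~\eqref{eqn:Local-inj-0} at parameter $n$ and the same sequence at parameter $0$ (observe $\lceil 0/p\rceil = 0$), in which all vertical arrows are the canonical inclusions $\kappa^j_2(A|?) \inj \kappa^j_2(A|0)$, I obtain a short exact sequence of cokernels
\[
0 \to \varpi^{m-1}_2(A|n') \to \varpi^m_2(A|n) \to \varpi^1_2(A|n) \to 0.
\]

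\smallskip

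\noindent On the target side, \propref{prop:p-tor} combined with the exact sequence ~\eqref{eqn:Local-inj-1} produces a short exact sequence at $\Fil_{n-1}$, and the same machinery at $\Fil_{-1}$ yields the analogous unramified sequence; a second snake-lemma argument then gives
\[
0 \to \tfrac{\Fil_{n-1}}{\Fil_{-1}}H^1_\et(K, \Z/p) \to \tfrac{\Fil_{n-1}}{\Fil_{-1}}H^1_\et(K, \Z/{p^m}) \to \tfrac{\Fil_{n'-1}}{\Fil_{-1}}H^1_\et(K, \Z/{p^{m-1}}) \to 0.
\]
Applying $\Hom_\Ab(-, \Q/\Z)$, which is exact as $\Q/\Z$ is injective, produces a short exact sequence of discrete Pontryagin duals. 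Naturality of Kato's reciprocity pairing ~\eqref{eqn:KR**} in the coefficient sequence $\Z/p \to \Z/{p^m} \to \Z/{p^{m-1}}$ assembles these two short exact sequences into a commutative diagram with rows, whose outer vertical arrows $\wt{\rho}^{n'}_{K,m-1}$ and $\wt{\rho}^{n}_{K,1}$ are injective by the inductive hypothesis and the base case respectively. The injectivity half of the four-lemma then forces $\wt{\rho}^n_{K,m}$ to be injective.

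\smallskip

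\noindent\textbf{Main obstacle.} The genuinely conceptual input is already packaged into \propref{prop:LD-4} and \corref{cor:Local-inj-11}, so what remains here is essentially bookkeeping. The subtlest points are (i) correctly matching the source filtration index $n'$ with the target index $n'-1$ via the floor/ceiling identity, and (ii) verifying that the reduction-mod-$p$ and multiplication-by-$p^{m-1}$ maps on Milnor $K$-groups are intertwined, under $\rho$, with the corresponding coefficient maps on $H^1_\et(K,-)$ at the level of Kato's filtration, which amounts to unpacking the naturality of the cup product defining $\rho_K$.
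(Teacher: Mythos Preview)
Your proof is correct and essentially identical to the paper's: induction on $m$ with base case supplied by \corref{cor:Local-inj-11}, and the inductive step by the same d{\'e}vissage parallel to \eqref{eqn:Local-inj-0}, comparing the sequences at parameters $n$ and $0$ to produce the commutative diagram \eqref{eqn:Local-inj-7} and concluding by the four-lemma. The only cosmetic difference is that the paper first reduces to the case of complete $K$ via \lemref{lem:Kato-equiv} and \corref{cor:p-tor-5}.
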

\begin{proof}
We can assume that $K$ is complete in view of
\lemref{lem:Kato-equiv} and \corref{cor:p-tor-5}.
We can also assume that $n \ge 1$ else both sides of $\wt{\rho}^{n}_{K,m}$ are zero.
  We now let $n' = \lfloor n/p \rfloor$ and look at the commutative diagram
 \begin{equation}\label{eqn:Local-inj-7}
    \xymatrix@C.8pc{ 
      0 \ar[r] & \varpi^{m-1}_2(A|n') \ar[r] \ar[d]_-{\wt{\rho}^{n}_{K,m-1}} &
    \varpi^{m}_2(A|n) \ar[r] \ar[d]^-{\wt{\rho}^{n}_{K,m}} &  
    \varpi^{1}_2(A|n) \ar[r] \ar[d]^-{\wt{\rho}^{n}_{K,1}} & 0 \\
    0 \ar[r] &
 \left(\frac{\Fil_{n'-1} H^1_\et(K, {\Z}/{p^{m-1}})}{\Fil_{-1}H^1_\et(K, {\Z}/{p^{m-1}})}\right)^\star \ar[r] & 
 \left(\frac{\Fil_{n-1} H^1_\et(K, {\Z}/{p^{m}})}{\Fil_{-1} H^1_\et(K, {\Z}/{p^m})}\right)^\star \ar[r] &
 \left(\frac{\Fil_{n-1} H^1_\et(K, {\Z}/{p})}{\Fil_{-1} H^1_\et(K, {\Z}/{p})}\right)^\star
 \ar[r] & 0.}
\end{equation}

The exactness of the top row follows by comparing the top row of
~\eqref{eqn:Local-inj-0} with the similar exact sequence for $n =0$.
The exactness of the bottom row follows by comparing the bottom row of
~\eqref{eqn:p-tor-2} with ~\eqref{eqn:Local-inj-1} (with $K$ replaced by $A$).
The left and the right vertical arrows in ~\eqref{eqn:Local-inj-7} are injective
by induction on $m$. It follows that $\wt{\rho}^{n}_{K,m}$ is injective.
\end{proof}

A stronger claim is true about the dual of $\wt{\rho}^{n}_{K,m}$ which we can 
easily deduce from what Kato already showed about $\rho^\star_K$.


\begin{prop}\label{prop:Dual-iso}
  For $m \ge 1$ and $n \ge 0$, the map
  \[
    (\wt{\rho}^{n}_{K,m})^\star \colon
\frac{\Fil_{n-1} H^1_\et(K, {\Z}/{p^m})}{\Fil_{-1} H^1_\et(K, {\Z}/{p^m})} 
    \to (\varpi^m_2(A|n))^\star
  \]
  is an isomorphism, where the dual on the right
  is taken with respect to the Kato topology.
\end{prop}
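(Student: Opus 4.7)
The plan is to derive the proposition from Kato's local duality theorem, namely that $\rho^\vee_K \colon (G^{\ab}_K)^\star = H^1(K, \Q/\Z) \to K_2(K)^\star$ is a topological isomorphism with respect to the Kato topology, combined with Kato's characterization of the ramification filtration via this pairing: under $\rho_K^\vee$, the subgroup $\Fil_{n-1} H^1(K, \Q/\Z)$ corresponds precisely to the annihilator of (the image of) $\Fil_n K_2(K)$ in $K_2(K)^\star$. Passing to $p^m$-torsion yields $H^1(K,\Z/p^m) \cong (K_2(K)/p^m)^\star$ together with the identification
\[
\bigl(K_2(K)/(\Fil_n K_2(K)+p^m K_2(K))\bigr)^\star \;\cong\; \Fil_{n-1}H^1(K,\Z/p^m)
\]
inside $(K_2(K)/p^m)^\star$, valid for every $n\ge 0$; note that for $n = 0$ the right-hand side is $\Fil_{-1}H^1(K,\Z/p^m) = H^1(A,\Z/p^m)$ by \eqref{eqn:Fil-K-p-1}, which matches the dual of $\kappa^m_2(A|0)$ via the exact sequence \eqref{eqn:K-dual-2} and Kato duality applied to the residue field $\ff$.

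Granting the displayed identification, I would first dualize the tautological short exact sequence
\[
0 \to \kappa^m_2(A|n) \to K_2(K)/p^m \to K_2(K)/(\Fil_n K_2(K)+p^m K_2(K)) \to 0
\]
to obtain, for every $n \ge 0$,
\[
\kappa^m_2(A|n)^\star \;\cong\; H^1(K,\Z/p^m)/\Fil_{n-1}H^1(K,\Z/p^m);
\]
right-exactness of the dualized sequence follows because $\rho_K^\vee$ maps $H^1(K,\Z/p^m)$ isomorphically onto $(K_2(K)/p^m)^\star$, so any Kato-continuous character of the subgroup $\kappa^m_2(A|n)$ extends to $K_2(K)/p^m$. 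In any case, only left-exactness is needed for the kernel computation below. Then I would dualize the defining sequence \eqref{eqn:Kato-0} and use the previous step to identify $\varpi^m_2(A|n)^\star$ with the kernel of the natural surjection $H^1(K,\Z/p^m)/\Fil_{-1} \surj H^1(K,\Z/p^m)/\Fil_{n-1}$, which is $\Fil_{n-1}H^1(K,\Z/p^m)/\Fil_{-1}H^1(K,\Z/p^m)$ since $\Fil_{-1}\subseteq \Fil_{n-1}$. Functoriality of Kato's pairing together with the construction of $\wt\rho^{n}_{K,m}$ in \eqref{eqn:Rec-quot-0} guarantees that the resulting isomorphism agrees with $(\wt\rho^{n}_{K,m})^\star$, completing the argument.

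The main obstacle is the annihilator identification used above, i.e., that under $\rho_K^\vee$ the subgroup $\Fil_{n-1}H^1(K,\Z/p^m)$ is exactly the annihilator of $\Fil_n K_2(K)$. One inclusion is essentially the content of \lemref{lem:LR-mod}, while the reverse inclusion is the deeper perfectness portion of Kato's theory \cite{Kato80-1}, \cite{Kato89}. A secondary technical point is compatibility of the Kato topologies on the various subquotients with the dualities; this is automatic since by construction all of the relevant topologies descend from the Kato topology on $K_2(K)$, and $H^1(K,\Z/p^m)$ with its subgroups $\Fil_nH^1(K,\Z/p^m)$ carries the discrete topology throughout.
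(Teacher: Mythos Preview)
Your proposal is correct and follows essentially the same route as the paper: both arguments hinge on Kato's isomorphism $H^1(K,\Z/p^m)\cong (K_2(K)/p^m)^\star$, the exact sequence \eqref{eqn:K-dual-2} for the $n=0$ base case (this is the paper's diagram \eqref{eqn:Dual-iso-0}), and \cite[Rem.~6.6]{Kato89} for the annihilator identification (the paper's surjectivity step). The only omission is the explicit reduction to the complete case via \lemref{lem:Kato-equiv} and \corref{cor:p-tor-5}, which the paper records first; otherwise your annihilator-and-kernel packaging is just a reorganization of the paper's injectivity/surjectivity split.
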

\begin{proof}
  We can assume that $K$ is complete.
We consider the commutative diagram
  \begin{equation}\label{eqn:Dual-iso-0}
    \xymatrix@C.8pc{
      0 \ar[r] & \Fil_{-1} H^1_\et(K, {\Z}/{p^m}) \ar[r] \ar[d] &
      H^1_\et(K, {\Z}/{p^m})
      \ar[r] \ar[d] &
      \frac{H^1_\et(K, {\Z}/{p^m})}{\Fil_{-1} H^1_\et(K, {\Z}/{p^m})} 
      \ar[r] \ar[d]^-{(\rho^0_{K,m})^\star} & 0 \\
      0 \ar[r] & ({\ff^\times}/{p^m})^\star \ar[r] & ({K_2(K)}/{p^m})^\star \ar[r] &
      ({K_2(A)}/{p^m})^\star \ar[r] &  0,}
  \end{equation}
  where the vertical arrows are the duals of the reciprocity maps.
  The bottom row is exact by Remark~\ref{remk:K-dual-1}.
  The left vertical arrow is well known to be bijective.
  The middle vertical arrow is bijective
  by \cite[\S~8, Thm.~2]{Kato80-1} and \cite[\S~3, Rem.~4]{Kato80-2}.
  It follows that the right vertical arrow is bijective.
  In particular,  $(\wt{\rho}^{n}_{K,m})^\star$ is injective.

To prove that $(\wt{\rho}^{n}_{K,m})^\star$ is surjective,
let $\chi \in  (\varpi^m_2(A|n))^\star$. The surjectivity of
  $(\rho^0_{K,m})^\star$ implies that there exists an element $\chi' \in
  \frac{H^1_\et(K, {\Z}/{p^m})}{\Fil_{-1} H^1_\et(K, {\Z}/{p^m})}$ such that
  $(\rho^0_{K,m})^\star(\chi') = \chi$. We only have to show that
  $\chi' \in \frac{\Fil_{n-1} H^1_\et(K, {\Z}/{p^m})}
  {\Fil_{-1} H^1_\et(K, {\Z}/{p^m})}$. But this follows at from
  \cite[Rem.~6.6]{Kato89}.
\end{proof}

\subsection{Cokernel of $\rho_K$}\label{sec:KCKer}
We shall now prove the remaining parts of \thmref{thm:Main-1}.
We fix an integer $n$ not divisible by $p$.
We let $A = {\rm Image}(\rho_{K})$ and $B = \coker(\rho_{K})$. We also let
\[
M = \frac{K_2({K})}{\Fil_{1} K_2({K})}, \
  N = \frac{\Fil_0 K_2(K)}{\Fil_1 K_2(K)}, \
  M' = \frac{G_{K}}{G^{(1)}_{K}} \ \mbox{and} \ N' = \frac{G^{(0)}_{K}}{G^{(1)}_{K}}.
    \]

\begin{prop}\label{prop:Local-ker-coker-0}
  The map $~_n K_2(K) \to ~_n G_K$ is surjective.
\end{prop}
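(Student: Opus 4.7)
The plan is to apply the snake lemma to reduce the proposition to the vanishing of ${}_n\coker(\rho_K)$ for $p \nmid n$, and then to settle this via comparison with the residue field reciprocity together with a Kummer-theoretic analysis.

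For the reduction, write $N = \Ker(\rho_K)$, $I = \rho_K(K_2(K))$, and $C = \coker(\rho_K)$, and split the four-term exact sequence $0 \to N \to K_2(K) \xrightarrow{\rho_K} G^{\ab}_K \to C \to 0$ into the two short exact sequences $0 \to N \to K_2(K) \to I \to 0$ and $0 \to I \to G^{\ab}_K \to C \to 0$. Applying the snake lemma for multiplication by $n$ on each: by \corref{cor:Local-inj-8} the group $N$ is divisible, so $N/n = 0$ and ${}_n K_2(K) \twoheadrightarrow {}_n I$. By \corref{cor:Local-inj-2}, $\rho_K/n$ is an isomorphism for $p \nmid n$; since it factors as $K_2(K)/n \twoheadrightarrow I/n \hookrightarrow G^{\ab}_K/n$, both arrows must themselves be isomorphisms, so $C/n = 0$ and the second six-term sequence collapses to $0 \to {}_n I \to {}_n G^{\ab}_K \to {}_n C \to 0$. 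Combining these, $\coker({}_n\rho_K) \cong {}_n C$, and the proposition reduces to showing ${}_n C = 0$ for $p \nmid n$.

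Next, I would exploit the commutative diagram with exact rows
\[
\begin{CD}
0 @>>> K_2(A) @>>> K_2(K) @>\partial_K>> \ff^{\times} @>>> 0 \\
@. @VV \rho_A V @VV \rho_K V @VV \rho_\ff V @. \\
0 @>>> G^{(0)}_K @>>> G^{\ab}_K @>\partial'_K>> G^{\ab}_\ff @>>> 0
\end{CD}
\]
(whose mod-$q$ avatars are recorded in \lemref{lem:REC-Loc-0}). Since $\Ker(\rho_\ff) = 0$ by classical 1-local class field theory, the snake lemma yields $0 \to \coker(\rho_A) \to \coker(\rho_K) \to \coker(\rho_\ff) \to 0$, and for the local field $\ff$ of characteristic $p$ one has $\coker(\rho_\ff) \cong \wh{\Z}/\Z$, which is torsion-free. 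Taking $n$-torsion therefore gives $\coker(\rho_A)[n] \xrightarrow{\cong} \coker(\rho_K)[n]$, so the problem is reduced to showing $\coker(\rho_A)[n] = 0$ for $p \nmid n$.

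The final step, which is the main technical obstacle, is the surjectivity of ${}_n\rho_A \colon {}_n K_2(A) \twoheadrightarrow {}_n G^{(0)}_K$. After a prime-to-$p$ base change to arrange $\mu_n \subset K$ (harmless by a norm/transfer argument, since $[K(\mu_n):K]$ divides $|\operatorname{Aut}(\mu_n)|$ and is coprime to $p$), one can use a Tate-type description of ${}_n K_2(A)$ as generated by the symbols $\{\zeta, u\}$ with $\zeta \in \mu_n$ and $u \in A^{\times}$. Under $\rho_A$ these symbols map to the Kummer characters of $G^{(0)}_K$ associated with the extensions $K(\sqrt[n]{u})/K$, and surjectivity onto ${}_n G^{(0)}_K$ amounts to the non-degeneracy of the Kummer pairing $A^{\times}/(A^{\times})^{n} \times {}_n G^{(0)}_K \to \mu_n$, which is a form of Poincar{\'e} duality for the prime-to-$p$ {\'e}tale cohomology of the 2-local field $K$. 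Assembling the explicit generation of ${}_n K_2(A)$ by symbols with this pairing is the technical crux, resting on the Bloch--Kato/Merkurjev--Suslin theorem together with the explicit form of Kato's reciprocity map on symbols.
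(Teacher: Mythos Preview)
Your second paragraph invokes \corref{cor:Local-inj-8} to conclude that $\Ker(\rho_K)$ is divisible, but in the paper that corollary is deduced \emph{from} the present proposition, so this step is circular. The circularity is fixable---taking $n$-torsion directly in the diagram of your third paragraph (whose top row is split and whose bottom row stays short exact on $n$-torsion by the argument proving \lemref{lem:REC-Loc-0}) already yields $\coker({}_n\rho_K) \cong \coker({}_n\rho_A)$ since ${}_n\rho_\ff$ is bijective---so paragraphs 2--3 are an unnecessary detour through ${}_nC$. The substantive gap is paragraph~4: the ``Tate-type description'' of ${}_n K_2(A)$ is asserted but not justified, and the claimed equivalence between surjectivity of ${}_n\rho_A$ and non-degeneracy of a Kummer pairing is left as a slogan. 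Poincar\'e duality for $H^*_\et(K, \mu_n^{\otimes j})$ gives the isomorphism $K_2(K)/n \cong G_K/n$, which you already used in paragraph~2; controlling the map on $n$-\emph{torsion} is a genuinely separate issue, and your sketch does not bridge it.

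The paper's argument proceeds by a different reduction. It passes to the quotients $M = K_2(K)/\Fil_1 K_2(K)$ and $M' = G_K/G^{(1)}_K$: since $\Fil_1 K_2(K)$ is $n$-divisible and $G^{(1)}_K$ is pro-$p$ (hence uniquely $n$-divisible for $p \nmid n$), one gets ${}_n K_2(K) \twoheadrightarrow {}_n M$ and ${}_n G_K \cong {}_n M'$. After splitting off $\ff^\times$ and $G_\ff$ as you do, the problem becomes showing that $\alpha_K \colon N_0 \to N'$ is bijective, where $N_0$ is the torsion quotient of $K_2(\ff) \cong \Fil_0/\Fil_1$ and $N' = G^{(0)}_K/G^{(1)}_K$. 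The decisive input---replacing your Kummer sketch---is that both $N_0$ and $N'$ are \emph{finite}, each isomorphic to $\mu_\infty(\ff)$ (by Moore--Tate and by Kato's structure theorem respectively); one then shows $(\alpha_K)^\vee$ is injective via Kato's isomorphism $H^1(K) \cong K_2(K)^\star$ and concludes by comparing cardinalities.
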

\begin{proof}
  We consider the commutative diagram
  \begin{equation}\label{eqn:Local-ker-coker-0-1}
    \xymatrix@C1pc{
      0 \ar[r] & ~_n \Fil_1 K_2(K) \ar[r]  & ~_n K_2(K)
      \ar[r] \ar[d]_-{\rho_{K}} & ~_n M \ar[r] \ar[d]^-{\rho'_{K}} & 0 \\
      & & ~_n G_{K} \ar[r]^-{\cong} & ~_n M'. &}
  \end{equation}
  The top row of this diagram is exact because $\Fil_1 K_2(K)$ is $n$-divisible
  (cf. \cite[Lem.~5.9]{GKR-arxiv} and its proof) and the bottom horizontal arrow
  is an isomorphism because $G^{(1)}_{K'}$ is a pro-$p$ group and hence uniquely
  $n$-divisible. We thus have to show that $\rho'_{K}$ is surjective.
  By \lemref{lem:Kato-equiv} and \corref{cor:p-tor-5}, we can assume that
  $K$ is complete.

We now look at the commutative diagram of exact sequences
   \begin{equation}\label{eqn:Local-ker-coker-0-2}
    \xymatrix@C1pc{ 
      0 \ar[r] &  N \ar[r] \ar[d]_-{\alpha_K} &
      M \ar[d]^-{\rho'_K} \ar[r] & K_1(\ff) \ar[r]
      \ar[d]^-{\rho_\ff} & 0 \\
      0 \ar[r] & N' \ar[r] & M' \ar[r] & G_\ff \ar[r] & 0,}
  \end{equation}
  where the vertical arrows are induced by the reciprocity maps.
  One knows that the top row is split exact (cf. the proof of \lemref{lem:K-dual}).

We thus get a commutative diagram with exact rows
   \begin{equation}\label{eqn:Local-ker-coker-0-3}
    \xymatrix@C1pc{ 
      0 \ar[r] &  ~_n N \ar[r] \ar[d]_-{\alpha_K} & ~_n M \ar[d]^-{\rho'_K} \ar[r] &
      ~_n K_1(\ff) \ar[r] \ar[d]^-{\rho_\ff} & 0 \\
      0 \ar[r] & ~_n N' \ar[r] & ~_n M' \ar[r] & ~_n G_\ff. & }
  \end{equation}
  Since the right vertical arrow is an isomorphism, we get that the map
  $ ~_n M' \to ~_n G_\ff$ in the bottom row is surjective.
Since $N \cong \divf(N) \bigoplus N_0$ and $\alpha_K({\divf(N)}) = 0$, where
  $\divf(N)$ is the maximal divisible subgroup of $N$, it suffices to show that
  the map $\alpha_K \colon ~_n N_0 \to ~_n N'$ is surjective.
  We shall show that $\alpha_K \colon N_0 \to N'$ is in fact bijective.
  
By \lemref{lem:Kato-adic}, the two rows of the commutative
  diagram     
 \begin{equation}\label{eqn:Local-ker-coker-0-4}
    \xymatrix@C1pc{ 
      0 \ar[r] & H^1(\ff) \ar[r] \ar[d] & H^1(K) \ar[r] \ar[d] &
      (G^{(0)}_K)^\star \ar[r] \ar[d] & 0 \\
      0 \ar[r] & K_1(\ff)^\star \ar[r] & K_2(K)^\star \ar[r] &
      (\Fil_0 K_2(K))^\star &}
  \end{equation}
  are exact with respect to the Kato and adic topologies.
  Since the left and the middle vertical arrows are bijective
  (cf. \cite[\S~3.5]{Kato80-2}), it follows that the right vertical arrow is
  injective. Combining this with the commutative diagram

\begin{equation}\label{eqn:Local-ker-coker-0-5}
    \xymatrix@C1pc{ 
      N'^\star \ar@{^{(}->}[r] \ar[d]_-{(\alpha_K)^\vee} & (G^{(0)}_K)^\star \ar[d] \\
      N^\star \ar[r] &  (\Fil_0 K_2(K))^\star,}
  \end{equation}
  we conclude that $(\alpha_K)^\vee \colon N'^\star \to  N_0^\vee
  \xrightarrow{\cong} N^\vee$ is injective.

On the other hand,
  $N' \cong \mu_\infty(\ff) \cong N_0$ are finite groups by \cite[Thm.~3]{Kato80}
  and \cite{Tate-Kyoto}, where $\mu_\infty(R) = R^\times_\fr$ for any ring $R$.
  It follows that $(\alpha_K)^\vee \colon N'^\vee \to
  N_0^\vee$ is an injective homomorphism between finite groups of the same
  cardinality. In particular, it is bijective. Taking the dual, we get that
  $\alpha_K \colon N_0 \to N'$ is bijective. This concludes the proof. 
\end{proof}

\begin{cor}\label{cor:Local-ker-coker-0-6}
The cokernel of $\rho_{K}\colon K_2(K) \to G_{K}$ is uniquely $n$-divisible.
\end{cor}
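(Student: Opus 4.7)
The claim is that $B := \coker(\rho_K)$ is uniquely $n$-divisible, i.e., both ${}_n B = 0$ and $B/n = 0$. The plan is to apply the snake lemma to the short exact sequence $0 \to A \to G_K \to B \to 0$, where $A = \mathrm{Image}(\rho_K)$, obtaining the six-term exact sequence
\[
0 \to {}_n A \to {}_n G_K \to {}_n B \to A/n \to G_K/n \to B/n \to 0.
\]
It then suffices to prove that the two outer boundary maps ${}_n A \to {}_n G_K$ and $A/n \to G_K/n$ are bijective, because a direct diagram chase collapses the sequence and forces both ${}_n B = 0$ and $B/n = 0$.

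For the $n$-torsion map, I would invoke \propref{prop:Local-ker-coker-0}, which asserts that ${}_n K_2(K) \to {}_n G_K$ is surjective. Since $\rho_K$ factors as $K_2(K) \surj A \inj G_K$, the induced map ${}_n A \to {}_n G_K$ is injective (as $A \inj G_K$), and \propref{prop:Local-ker-coker-0} then forces it to be surjective as well; hence ${}_n A \xrightarrow{\cong} {}_n G_K$.

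For the $n$-cotorsion map, I would extract from the proof of \corref{cor:Local-inj-2} the stronger (though unstated) conclusion that when $p \nmid n$ the map $\rho_K/n$ is actually bijective: the argument there uses the Merkurjev--Suslin isomorphism $K_2(K)/n \xrightarrow{\cong} H^2_\et(K, \Z/n(2))$ together with Poincar\'e duality for the \'etale cohomology of the 2-local field $K$, and this yields surjectivity as well as injectivity. Factoring this isomorphism through $A/n$ then shows that both $K_2(K)/n \surj A/n$ and $A/n \inj G_K/n$ are bijective, in particular the latter.

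There is no serious obstacle: the corollary is essentially a snake-lemma packaging of \propref{prop:Local-ker-coker-0} together with the proof of \corref{cor:Local-inj-2}. The one point requiring care is that the bijectivity of $\rho_K/n$ must be read off from the argument inside the proof of \corref{cor:Local-inj-2}, which relies on the hypothesis $p \nmid n$, rather than from the statement of \corref{cor:Local-inj-2} itself (which records only injectivity for arbitrary $q$).
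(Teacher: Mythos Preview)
Your proof is correct and follows essentially the same route as the paper: the paper likewise invokes the Merkurjev--Suslin isomorphism together with \lemref{lem:Kato-equiv} and the duality result of Koya to get that $\rho_K/n$ is bijective when $p \nmid n$, combines this with \propref{prop:Local-ker-coker-0}, and then feeds both into the six-term snake sequence ~\eqref{eqn:Local-ker-coker-0-0}. Your reading of the bijectivity from inside the proof of \corref{cor:Local-inj-2} is exactly the intended argument.
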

\begin{proof}
  It follows from the Merkurjev-Suslin isomorphism ${K_2(K)}/n \xrightarrow{\cong}
    H^2_\et(K, {\Z}/n(2))$, \lemref{lem:Kato-equiv} and
  \cite[Thm.~3.3]{Koya} (see also \cite[Thm.~2.17]{Milne-AD}) that the map
    ${\rho_{K}} \colon {K_2(K)}/n \to {G_{K}}/n$ is bijective.
    Combining this with \propref{prop:Local-ker-coker-0} and
    the exact sequence
\begin{equation}\label{eqn:Local-ker-coker-0-0}
  0 \to ~_n A \to ~_n G_{K} \to ~_n B \to A/n \to {G_{K}}/n \to B/n \to 0,
\end{equation}
we conclude the proof.
\end{proof}

We end this section with the following result.

\begin{prop}\label{prop:Coker-p}
  The cokernel of $\rho_{K}\colon K_2(K) \to G_{K}$ has no $p$-torsion but it is
  not $p$-divisible.
\end{prop}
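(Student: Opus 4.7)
The plan is to prove the two assertions separately.

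For the absence of $p$-torsion in $B$, set $A = \rho_K(K_2(K)) \subseteq G_K$ so that $B = G_K/A$. The composite $K_2(K)/p \surj A/p \to G_K/p$ equals $\rho_K \bmod p$, which is injective by \corref{cor:Local-inj-2}. Hence $A/p \inj G_K/p$, and the snake-lemma sequence
\[
0 \to {}_pA \to {}_pG_K \to {}_pB \to A/p \to G_K/p
\]
attached to $0 \to A \to G_K \to B \to 0$ forces ${}_pB = 0$. This step is immediate.

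For the failure of $p$-divisibility, I would show $\rho_K/p$ cannot be surjective by a cardinality comparison. By \lemref{lem:Kato-equiv} and \corref{cor:p-tor-5}, the inclusion $K_2(K) \inj K_2(\wh{K})$ together with the induced isomorphism $G_K \xrightarrow{\cong} G_{\wh{K}}$ yields a surjection $\coker(\rho_K) \surj \coker(\rho_{\wh{K}})$, so it suffices to treat the complete case, say $K = \ff((\pi))$ with $\ff$ a local field of characteristic $p$. Since $K_2(K)/p$ is an $\F_p$-vector space generated by the symbols $\{f,g\}$ indexed by $(f,g) \in K^\times \times K^\times$ and $|K| = 2^{\aleph_0}$, one immediately obtains $|K_2(K)/p| \le 2^{\aleph_0}$.

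On the Galois side, Artin--Schreier gives $H^1(K,\Z/p) \cong K/(F-1)K$, where $F(x) = x^p$. I would produce an explicit $\F_p$-linear injection $\ff \inj K/(F-1)K$ sending $a$ to the class of $a\pi^{-1}$. To verify injectivity, suppose $a\pi^{-1} = x^p - x$ for some $x = \sum_{i \ge M} c_i \pi^i \in K$; since every $\pi$-exponent occurring in $x^p$ is divisible by $p$, the $\pi^{-1}$-coefficient equation forces $c_{-1} = -a$, and comparing $\pi^{-p^k}$-coefficients inductively yields $c_{-p^k} = (-a)^{p^k}$ for all $k \ge 0$, contradicting the finiteness of the negative-power tail of $x$ when $a \ne 0$. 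This gives $\dim_{\F_p} H^1(K,\Z/p) \ge \dim_{\F_p} \ff = 2^{\aleph_0}$, and Pontryagin duality (abstractly, linear duality of the discrete $\F_p$-vector space $H^1(K,\Z/p)$) yields $|G_K/p| = p^{\dim_{\F_p} H^1(K,\Z/p)} \ge 2^{2^{\aleph_0}}$. Since $|K_2(K)/p| \le 2^{\aleph_0} < 2^{2^{\aleph_0}} \le |G_K/p|$, the map $\rho_K/p$ cannot be surjective, so $B/p \ne 0$.

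The main obstacle is the verification that $\ff \inj K/(F-1)K$ is injective; the rest is cardinal arithmetic. A natural alternative would be to apply the snake lemma to the tame sequence $0 \to K_2(A) \to K_2(K) \xrightarrow{\partial_K} \ff^\times \to 0$ and reduce to $\coker(\rho_\ff)$, but classical local class field theory identifies $\coker(\rho_\ff) \cong \wh{\Z}/\Z$, which is $p$-divisible; so this route produces no $p$-cotorsion and the direct cardinality argument on $K$ itself appears essential.
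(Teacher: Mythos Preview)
Your first-part argument has a small omission: from $A/p \inj G_K/p$ and the snake sequence you only obtain ${}_pG_K \surj {}_pB$; to conclude ${}_pB = 0$ you must also invoke ${}_pG_K = 0$, which is \lemref{lem:No-p-tor}. The paper's one-line proof cites exactly \lemref{lem:No-p-tor} together with \corref{cor:Local-inj-2}.

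Your second-part argument is correct and takes a more direct route than the paper. The paper works through the ramification filtration: using \lemref{lem:Kato-1} and the diagram~\eqref{eqn:Local-inj-5}, it reduces to showing that $\wt{\rho}^{2}_{K,1} \colon \varpi^1_2(A|2) \to (\Fil_1 H^1_\et(K,\Z/p)/\Fil_0 H^1_\et(K,\Z/p))^\vee$ is not surjective, identifies the domain with $\ff$ (via \corref{cor:Local-inj-11} and \cite[\S~2, Prop.~1]{Kato80-1}) and the target with $\ff^\vee$ (via \cite[Cor.~3.3]{Kato89}), and finishes with $|\ff| < |\ff^\vee|$. You bypass this machinery by bounding $|K_2(K)/p| \le 2^{\aleph_0}$ and $|G_K/p| \ge 2^{2^{\aleph_0}}$ directly; your explicit injection $\ff \inj K/(1-\ov{F})K$ via $a \mapsto a\pi^{-1}$ is precisely the computation underlying the identification $\Fil_1/\Fil_0 \cong \ff$, so the two arguments share the same core. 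Your approach is self-contained and avoids the duality theorem of \S~\ref{sec:D-Thm}; the paper's approach reuses that machinery and localizes the failure of surjectivity to the first nontrivial graded piece of the filtration.
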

\begin{proof}
  The first claim of the proposition follows directly from \lemref{lem:No-p-tor}
  and \corref{cor:Local-inj-2}.
  To prove the second claim, it suffices to show, using the bijectivity of
  $\rho_\ff$ with finite coeffcients, \lemref{lem:Kato-1} and the diagram
  ~\eqref{eqn:Local-inj-5} (with $(-)^\star$ in place of $(-)^\vee$ in the
  bottom row), that
  $\wt{\rho}^2_{K,1}\colon \varpi^1_2(A|2) \to
  \left(\frac{\Fil_1 H^1_\et(K, {\Z}/p)}{\Fil_0 H^1_\et(K, {\Z}/{p})}\right)^\vee$ is
  not surjective.
  To show the latter claim, we write
  $F = \frac{\Fil_1 H^1_\et(K, {\Z}/p)}{\Fil_0 H^1_\et(K, {\Z}/{p})}$.
  By \corref{cor:Local-inj-11} and \cite[\S~2, Prop.~1]{Kato80-1},
  we get that $\ff  \xrightarrow{\cong}  \varpi^1_2(A|2) \xrightarrow{\cong}
  F^\star$.  On the other hand, it follows from
  \cite[Cor.~3.3]{Kato89} that $F \cong \ff$. We are now done because  an
  elementary cardinality argument tells us that no homomorphism 
  $\ff \to \ff^\vee$ can be surjective.
\end{proof}

 \section{Reciprocity map for curves}\label{sec:RCurve}
 Our objective in the rest of this paper is to study the class field theory of
 smooth curves over local fields of positive characteristics.
 For all of this section, we fix a local field $k$ of characteristic $p > 0$.
We also fix a projective curve $X$ over $k$ which is normal and connected.
We let $K$ denote the function field of $X$. By a modulus pair, we shall mean
a pair $(X,D)$, where $D \subset X$ is an effective Cartier divisor.

For $x \in X_{(0)}$, we let $A_x = \sO_{X,x}$ with the maximal ideal 
$\fm_x$. We shall write $X_x = \Spec(A_x), \ X^h_x = \Spec(A^h_x)$ and
$\wh{X}_x = \Spec(\wh{A}_x)$. We let $K^h_x$ (resp. $\wh{K}_x$) denote the
quotient field of $A^h_x$ (resp. $\wh{A}_x$). Given an effective Weil divisor
$D = \stackrel{s}{\underset{i =1}\sum} n_i[x_i] \in \Div(X)$ with complement $U$ and
$x \in X_{(0)}$, we shall let $n_x = n_i$ if $x = x_i$ for some
$i \in \{1, \ldots , s\}$ and $n_x = 0$ if $x \notin D$. 
We let 
\begin{equation}\label{eqn:Curve-notn}
  \Fil_D K_2(K) = {\underset{x \in D}\bigcap} K_2(A_x|\fm^{n_x}_x), \
  F_0(U) = {\underset{x \in U}\bigoplus} \Z, \
  F_1(U) = {\underset{x \in U}\bigoplus} k(x)^\times, \
\end{equation}
\[
F_2(U) = {\underset{x \notin U}\bigoplus} {K_2(K^h_x)}, \
F^c_2(U) = {\underset{x \notin U}\bigoplus}
{K_2(\wh{K}_x)} \ \ \mbox{and}
\]
\[
F_2(X|D) = {\underset{x \in D}\bigoplus}
  \frac{K_2({K}^h_x)}{\Fil_{n_x} K_2({K}^h_x)}
  \xrightarrow{\cong} {\underset{x \in D}\bigoplus}
  \frac{K_2(\wh{K}_x)}{\Fil_{n_x} K_2(\wh{K}_x)},
\]
where the isomorphism on the right follows from \lemref{lem:Kato-equiv}.

\subsection{The idele class groups}\label{sec:ICG}
Let $D \subset X$ be an effective divisor with complement $U$.
Recall (cf. \cite{Kato86}) that there is a tame symbol map
$\partial_U \colon K_{n+1}(K) \to  {\underset{x \in U_{(0)}}\bigoplus} K_n(k(x))$.
We let  $\vartheta_{X|D}$ denote the composition of the diagonal map
$K_{n}(K) \xrightarrow{\vartheta_{U}}
{\underset{x \in D}\bigoplus} K_n({K}^h_x)$ with the quotient map
${\underset{x \in D}\bigoplus} K_n({K}^h_x) \surj 
{\underset{x \in D}\bigoplus}
\frac{K_n({K}^h_x)}{\Fil_{n_x} K_n({K}^h_x)}$.

\begin{defn}\label{defn:CG-1}$($cf. \cite{Hiranouchi-2}$)$
  The idele class group $C(X,D)$ of the modulus pair $(X,D)$ is defined to be the
  cokernel of the map $K_2(K) \xrightarrow{(\partial_{U}, \vartheta_{X|D})}
  F_1(U) \bigoplus F_2(X|D)$. We let $C(X) = C(X, \emptyset)$.
\end{defn}

For each $x \in X_{(0)}$, we endow $\frac{K_2({K}^h_x)}{\Fil_n K_2({K}^h_x)}$ with
the quotient of the Kato topology of $K_2({K}^h_x)$.
The Kato topology of $F_1(U) \bigoplus F_2(X|D)$ is the direct sum
topology with respect to the Kato topology on each factor.
We let $C(X,D)$ be endowed with the quotient of the topology of
$F_1(U) \bigoplus F_2(X|D)$. This makes $C(X,D)$ into a topological
abelian group.

The following description of $C(X,D)$ follows from \cite[Lem.~5.6]{GKR-arxiv}.

\begin{lem}\label{lem:ICG-PC}
  There is an exact sequence
  \[
  \Fil_D K_2(K) \xrightarrow{\partial_{U}} F_1(U) \to C(X,D) \to 0.
\]
\end{lem}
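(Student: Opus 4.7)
The plan is to deduce this from the definition of $C(X,D)$ as the cokernel of $(\partial_U, \vartheta_{X|D}) \colon K_2(K) \to F_1(U) \oplus F_2(X|D)$, together with a computation of the kernel and image of the component $\vartheta_{X|D}$. Granting the cited statement \cite[Lem.~5.6]{GKR-arxiv}, the argument reduces to two claims: (a) $\vartheta_{X|D} \colon K_2(K) \to F_2(X|D)$ is surjective, and (b) the kernel of $\vartheta_{X|D}$ coincides with $\Fil_D K_2(K)$. Once these are established, a direct diagram chase (or the snake lemma applied to the exact triangle $0 \to \Fil_D K_2(K) \to K_2(K) \xrightarrow{\vartheta_{X|D}} F_2(X|D) \to 0$ together with the map $\partial_U$) collapses the two-component cokernel defining $C(X,D)$ into the one-component cokernel $F_1(U)/\partial_U(\Fil_D K_2(K))$.

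For (a), the key input is weak approximation for the global field $K$ combined with continuity of the symbol map in the Kato topology. Given a target element $(\beta_x)_{x\in D} \in F_2(X|D)$, for each $x\in D$ lift $\beta_x$ to a sum of symbols $\sum \{a_i^{(x)}, b_i^{(x)}\}$ in $K_2(K_x^h)$. Using weak approximation in $K$ with respect to the finitely many places in $D$, approximate each $a_i^{(x)}, b_i^{(x)}$ by global elements $\tilde a_i, \tilde b_i \in K^\times$ that are simultaneously close to the local targets at every $x \in D$ (and, say, units away). By the continuity of the product map defining the Kato topology on each $K_2(K_x^h)$, a sufficiently close approximation ensures that the global symbol $\sum\{\tilde a_i,\tilde b_i\}$ maps to $\beta_x$ modulo $\Fil_{n_x} K_2(K_x^h)$ at every $x \in D$ simultaneously.

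For (b), the identification $\Fil_{n_x} K_2(K_x^h) = K_2(A_x^h|\fm_x^{n_x})$ is built into the notation of \S~\ref{sec:Rel-M}, so the condition $\vartheta_{X|D}(\alpha)=0$ says precisely that the image of $\alpha$ in $K_2(K_x^h)$ lies in $K_2(A_x^h|\fm_x^{n_x})$ for every $x \in D$. Injectivity of $K_2(K) \hookrightarrow K_2(K_x^h)$ (following from \lemref{lem:Kato-equiv}(4), using that $A_x^h$ is Henselian with completion $\wh{A}_x$) then forces $\alpha \in K_2(A_x|\fm_x^{n_x})$ for each $x$; intersecting over $x \in D$ gives $\alpha \in \Fil_D K_2(K)$. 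The reverse inclusion is immediate from the definitions.

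The anticipated obstacle is the compatibility between the local ring $A_x$ and its Henselization $A_x^h$ in the definition of the relative $K_2$. One must verify that $K_2(A_x|\fm_x^{n_x})$ really is the preimage of $K_2(A_x^h|\fm_x^{n_x})$ in $K_2(K)$, which is where the injectivity statements in \lemref{lem:Kato-equiv} and the comparison of Gersten-type resolutions in the Zariski, Nisnevich and étale topologies (used to define the sheaves $\sK_{n,X|D}$ in \S~\ref{sec:Rel-M}) become essential. Once this bookkeeping is in place, the exact sequence in the statement is formal.
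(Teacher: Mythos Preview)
Your outline is correct and is essentially what the cited \cite[Lem.~5.6]{GKR-arxiv} does: show (a) $\vartheta_{X|D}$ is surjective and (b) $\ker\vartheta_{X|D}=\Fil_D K_2(K)$, then diagram-chase. The paper's own proof is nothing more than this citation, so you are reconstructing the right argument.

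There is, however, a genuine gap in your justification of (b). You invoke \lemref{lem:Kato-equiv}(4) for injectivity of $K_2(K)\hookrightarrow K_2(K_x^h)$, but that item applies only to a \emph{Henselian} DVR and its completion; the local ring $A_x=\sO_{X,x}$ is not Henselian, so the lemma does not give $K_2(K)\to K_2(K_x^h)$ injective. More importantly, even if that map were injective, this alone would not force an element landing in $\Fil_{n_x}K_2(K_x^h)$ to lie in $K_2(A_x|\fm_x^{n_x})$: injectivity of a map says nothing about preimages of subgroups. What you actually need is that the induced map on \emph{quotients}
\[
\frac{K_2(K)}{K_2(A_x|\fm_x^{n_x})}\;\longrightarrow\;\frac{K_2(K_x^h)}{\Fil_{n_x}K_2(K_x^h)}
\]
is injective (in fact bijective). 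This is exactly \lemref{lem:Kato-equiv}(2) applied to the DVR $A_x$ (no Henselian hypothesis is needed there): it gives $K_2(K)/\Fil_{n_x}K_2(K)\xrightarrow{\cong}K_2(\wh K_x)/\Fil_{n_x}K_2(\wh K_x)$, and combined with the isomorphism $K_2(K_x^h)/\Fil_{n_x}K_2(K_x^h)\xrightarrow{\cong}K_2(\wh K_x)/\Fil_{n_x}K_2(\wh K_x)$ already recorded in the definition of $F_2(X|D)$, you get the preimage identification you stated as the ``anticipated obstacle''. With this correction your argument for (b) goes through, and your weak-approximation argument for (a) is fine (indeed the same bijectivity handles the single-point case and approximation handles simultaneity over $D$).
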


Let $\CH^q(X|D, p)$ (resp. $H^q_\tau(X|D, \Z(p))$)
denote the higher Chow group (resp. motivic cohomology with respect to the
topology $\tau$) of the modulus pair $(X,D)$ (cf. \cite[\S~1]{Rulling-Saito}).

\begin{cor}\label{cor:ICG-BF}
  For  $\tau \in \{\zar, \nis\}$, we have the following.
  \begin{enumerate}
  \item
    There are canonical maps
  \[
    \CH^2(X|D,1) \surj H^3_\tau(X|D, \Z(2)) \xrightarrow{\cong}
    C(X,D) \xrightarrow{\cong} H^1_{\tau}(X, \sK_{2, X|D}). 
  \]
\item
 The canonical maps
 \[
   {C(X,D)}/{p^m} \to H^1_\tau(X, {\sK_{2, X|D}}/{p^m}) \to
    H^1_\tau(X, \sK^m_{2, X|D})
\]
are isomorphisms.
\end{enumerate}
\end{cor}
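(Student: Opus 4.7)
The plan is to compute $H^1_\tau(X, \sK_{2, X|D})$ via an explicit Gersten-type resolution of $\sK_{2, X|D}$ on the one-dimensional scheme $X$, and then invoke known Bloch--Quillen type comparison isomorphisms with motivic cohomology with modulus.

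For part (1), the first step is to establish a short exact sequence of $\tau$-sheaves on $X$
\[
  0 \to \sK_{2, X|D} \to \eta_* K_2(K) \to
  \bigoplus_{x \in U_{(0)}} i_{x,*} k(x)^\times \,\oplus\,
  \bigoplus_{x \in D} i_{x,*} \frac{K_2(K^h_x)}{\Fil_{n_x} K_2(K^h_x)}
  \to 0,
\]
where $\eta\colon \Spec K \hookrightarrow X$ is the generic point and the right arrow is the tame symbol on $U$ and the canonical projection on $D$. Exactness can be checked on stalks: at $x \in U$ this is the Gersten sequence $0 \to K_2(A^h_x) \to K_2(K^h_x) \to k(x)^\times \to 0$, and at $x \in D$ it is the tautological quotient using the stalk description of $\sK_{2, X|D}$. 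The sheaf $\eta_* K_2(K)$ is $\tau$-acyclic in both topologies (constant on the irreducible scheme $X$ for $\tau = \zar$, and Nisnevich-acyclic by the usual argument for pushforward from the generic point of a one-dimensional scheme), and the skyscraper sheaves at closed points are obviously acyclic. The long exact sequence in cohomology then degenerates, and comparison with \defref{defn:CG-1} (and the description in \lemref{lem:ICG-PC}) yields $H^1_\tau(X, \sK_{2,X|D}) \xrightarrow{\cong} C(X, D)$.

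The middle isomorphism $H^3_\tau(X|D, \Z(2)) \xrightarrow{\cong} H^1_\tau(X, \sK_{2, X|D})$ is the Bloch--Quillen formula for motivic cohomology with modulus on a smooth curve, established by R\"ulling--Saito: the Nisnevich motivic complex $\Z(2)_{X|D}$ is quasi-isomorphic to $\sK_{2,X|D}[-2]$ for a smooth curve. The surjection $\CH^2(X|D, 1) \twoheadrightarrow H^3_\tau(X|D, \Z(2))$ is the standard map from Bloch's higher Chow groups with modulus (computed by a Cousin-type complex at the cycle level) to Nisnevich hypercohomology; for a one-dimensional base the spectral sequence comparing the two collapses to give a surjection in this degree.

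For part (2), I would first identify $\sK_{2, X|D}/p^m \xrightarrow{\cong} \sK^m_{2, X|D}$ stalkwise: the natural surjection is clear from the definition, and injectivity amounts to $p^m K_2(K^h_x) \cap \Fil_{n_x} K_2(K^h_x) = p^m \Fil_{n_x} K_2(K^h_x)$ at each closed point, which follows from \lemref{lem:Kato-1}(3) once one observes that $p^m \Fil_{n_x} K_2(K^h_x)$ and $p^m \Fil_1 K_2(A^h_x) \cap \Fil_{n_x} K_2(K^h_x)$ coincide via the $\dlog$-isomorphism \eqref{eqn:Milnor-mod}. Once this is in hand, the short exact sequence
\[
  0 \to \sK_{2, X|D} \xrightarrow{p^m} \sK_{2, X|D} \to \sK_{2, X|D}/p^m \to 0
\]
together with the vanishing $H^2_\tau(X, -) = 0$ (since $X$ is one-dimensional and $\tau \in \{\zar, \nis\}$) gives the isomorphism ${H^1_\tau(X, \sK_{2,X|D})}/{p^m} \xrightarrow{\cong} H^1_\tau(X, \sK_{2,X|D}/p^m)$, provided the connecting map $H^0_\tau(X, \sK_{2, X|D}/p^m) \to H^1_\tau(X, \sK_{2, X|D})$ is zero; this can be verified by combining the Gersten sequence from part (1) with $p^m$-divisibility of the constituent groups at the level of $H^0$.

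The main obstacle in this plan is the stalkwise identification $\sK_{2, X|D}/p^m \cong \sK^m_{2, X|D}$: the equality $p^m K_2(K^h_x) \cap \Fil_{n_x} K_2(K^h_x) = p^m \Fil_{n_x} K_2(K^h_x)$ is a delicate interaction between the ramification filtration and $p$-th power maps on Milnor $K$-theory of a $2$-local field, and I expect it to rely essentially on the structural results of \S\ref{sec:D-Thm} together with \lemref{lem:Kato-1}.
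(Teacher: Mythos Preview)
Your approach to part~(1) is essentially the paper's: your Gersten-type resolution is exactly what the support spectral sequence on a curve produces, and both you and the paper invoke R\"ulling--Saito for the comparison with motivic cohomology with modulus. (The paper argues the surjectivity of the left arrow slightly differently, via a section of the norm map $z^2(X|D,1)\to F_1(X\setminus D)$ using that $k$ is infinite, but this is a minor variation.)

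The genuine gap is in part~(2). Your plan hinges on the stalkwise identification $\sK_{2,X|D}/p^m \cong \sK^m_{2,X|D}$, i.e.\ on the equality $p^m K_2(K^h_x)\cap \Fil_{n_x}K_2(K^h_x)=p^m\Fil_{n_x}K_2(K^h_x)$ at each $x\in D$. You rightly flag this as the main obstacle, but neither \lemref{lem:Kato-1} nor the results of \S\ref{sec:D-Thm} yield it: \lemref{lem:Kato-1}(3) only identifies the intersection with $p^m\Fil_1 K_2(A^h_x)\cap\Fil_{n_x}$, which is in general strictly larger than $p^m\Fil_{n_x}$. The exact sequence $0\to\kappa^{m-1}_2(A|\lceil n/p\rceil)\xrightarrow{p}\kappa^m_2(A|n)\to\kappa^1_2(A|n)\to 0$ of \cite{JSZ}, with its \emph{shifted} filtration index on the left, already shows that multiplication by $p$ does not interact with $\Fil_n$ in the way your identification would require. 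So the two sheaves are not isomorphic in general, even though the induced map on $H^1$ is.

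The paper sidesteps this entirely: its proof of (2) is ``identical to that of (1)'', meaning one runs the \emph{same} support spectral sequence argument directly for the sheaf $\sK^m_{2,X|D}$. The generic stalk is $K_2(K)/p^m$ and the local cohomology at $x\in D$ is
\[
\frac{K_2(K^h_x)/p^m}{\kappa^m_2(A^h_x|n_x)}=\frac{K_2(K^h_x)}{p^mK_2(K^h_x)+\Fil_{n_x}K_2(K^h_x)}=\bigl(F_2(X|D)\bigr)_x/p^m,
\]
so the cokernel is exactly $C(X,D)/p^m$. The first isomorphism in (2) follows from $H^2_\tau=0$ alone (your concern about the connecting map $H^0\to H^1$ is irrelevant here), and since the composite is then an isomorphism, so is the second map. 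No stalkwise comparison of the two sheaves is needed.
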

\begin{proof}
The left arrow in (1) is the well known canonical map.
  The claim that
  the right arrow is an isomorphism is an easy consequence of
  \lemref{lem:ICG-PC} using the support spectral sequence for
  $H^1_{\tau}(X, \sK_{2, X|D})$.
  The claim that the composition of the middle and the right arrows is an
  isomorphism is shown in \cite[Thm.~1]{Rulling-Saito}. Finally, it is
  easy to check that the composition of the left and the middle arrows is
  surjective because it is induced by the norm map
  $z^2(X|D, 1) \to F_1(X \setminus D)$, which has a section since $k$ is infinite.
  The proof of (2) is identical to that of (1).
\end{proof}

\begin{remk}\label{remk:ICG-BF-0}
  It follows from \lemref{lem:Fil-M-0} and \cite[Prop.~6.5]{GKR-arxiv} that
  the maps in \corref{cor:ICG-BF} induce an isomorphism between the pro-abelian
  groups (compare with \cite[Thm.~2]{Rulling-Saito})
  \[
    \Phi_X \colon \{\CH^2(X|nD,1)\}_{n \ge 1} \xrightarrow{\cong}
    \{C(X,nD)\}_{n \ge 1}.
  \]
\end{remk}

\vskip .2cm

\begin{defn}\label{defn:ICG-open}$($cf. \cite[defn.~3.2]{Hiranouchi-2}$)$
  The idele class group $C(U)$ of an open curve $U \subset X$ is defined to be the
cokernel of the canonical map $K_2(K) \xrightarrow{(\partial_{U}, \vartheta_{U})}
F_1(U) \bigoplus F_2(U)$.
We let $C^c(U)$ be the cokernel of the canonical map $K_2(K)
\xrightarrow{(\partial_U, \vartheta_U)} F_1(U) \bigoplus F^c_2(U)$.
The class groups $C(U)$ and $C^c(U)$ are topological abelian groups endowed with
the quotients of the Kato topology of
$F_1(U) \bigoplus F_2(U)$ and $F_1(U) \bigoplus F^c_2(U)$,
  respectively.
We let $\wh{C}(U) = {\varprojlim}_D \ C(X,D)$ with the
inverse limit topology, where the limit is taken over all effective divisors
having support disjoint from $U$.
\end{defn}

Note that the above class groups depend only on $U$ since
  the latter admits a unique regular compactification up to isomorphism.
  There are continuous homomorphisms
  \begin{equation}\label{eqn:ICG-open-0}
    {C(U)}/n \inj {C^c(U)}/n \xrightarrow{\theta_U} {\wh{C}(U)}/n
    \xrightarrow{\phi_{X|D}} {C(X,D)}/n
\end{equation}
for every integer $n \ge 0$ and effective divisor $D$ supported away from $U$.
The first arrow is injective because
${F_2(U)}/n \inj {F^c_2(U)}/n$ by \lemref{lem:Kato-equiv}.
 All arrows in this sequence are isomorphisms if $p \nmid n$.

\subsection{{\'E}tale idele class group}\label{sec:EICG}
 Let $D = \sum_x n_x [x] \in \Div(X)$ be an effective divisor with complement $U$.
 We fix an integer $m \ge 1$. We let $C_m(X,D) = {C(X,D)}/{p^m}$
  and $C^\et_m(X,D) = H^1_\et(X, \sK^m_{2, X|D})$. We let
    $T_m(X)$ be the cokernel of the canonical map $C_m(X) \to C^\et_m(X)$. It follows
    from the Leray spectral sequence associated to the morphism of sites
    $\epsilon \colon X_\et \to X_\nis$ that $T_m(X) \cong
    H^0_\nis(X, {\bf R}^1\epsilon_*(W_m\Omega^2_{X, \log}))$.

\begin{lem}\label{lem:Surj-TM}
      For $D' \ge D$, the canonical map $C^\et_m(X,D') \to  C^\et_m(X,D)$ is
      surjective.
    \end{lem}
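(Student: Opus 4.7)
The plan is to consider the short exact sequence of \'etale sheaves on $X$,
\[
0 \to \sK^m_{2, X|D'} \to \sK^m_{2, X|D} \to \sQ \to 0,
\]
where the injection is the stalkwise inclusion $\kappa^m_2(A^{sh}_x|n'_x) \subseteq \kappa^m_2(A^{sh}_x|n_x)$, valid because Kato's filtration satisfies $\Fil_{n'_x} K_2 \subseteq \Fil_{n_x} K_2$ whenever $n_x \le n'_x$. The associated long exact sequence in \'etale cohomology reduces the desired surjectivity to the vanishing $H^1_\et(X, \sQ) = 0$.

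Since $\sK^m_{2, X|D}$ and $\sK^m_{2, X|D'}$ coincide at the generic point, the quotient $\sQ$ is a direct sum of sheaves supported on the finitely many closed points in $\supp(D' - D)$. Consequently,
\[
H^1_\et(X, \sQ) \cong \bigoplus_{x \in \supp(D' - D)} H^1(G_{k(x)}, \sQ_{\bar x}),
\]
where $\sQ_{\bar x} = \kappa^m_2(A^{sh}_x|n_x)/\kappa^m_2(A^{sh}_x|n'_x)$ is a continuous $G_{k(x)}$-module. The task is reduced to verifying that each Galois cohomology group on the right vanishes.

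For each such $x$, I would filter $\sQ_{\bar x}$ by the finite sequence of subgroups $\kappa^m_2(A^{sh}_x|j)/\kappa^m_2(A^{sh}_x|n'_x)$ for $n_x \le j \le n'_x$ and proceed by d\'evissage through the successive quotients $\mathrm{gr}_j := \kappa^m_2(A^{sh}_x|j)/\kappa^m_2(A^{sh}_x|j+1)$. Using the $\dlog$ isomorphism $\kappa^m_2(A|n) \cong W_m\Omega^2_{A|n,\log}$ of ~\eqref{eqn:Milnor-mod} together with the explicit description of the graded pieces of Kato's filtration on $K_2$ of an equi-characteristic $2$-local field (cf.\ \cite{JSZ}), each $\mathrm{gr}_j$ can be identified as a $W_m(\overline{k(x)})$-module (or an $\overline{k(x)}$-vector space when $m=1$) arising by base change from a $k(x)$-structure. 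The additive Hilbert~90 $H^1(G_{k(x)}, \overline{k(x)}) = 0$, and its Witt-vector extension $H^1(G_{k(x)}, W_m(\overline{k(x)})) = 0$ (which follows via Artin-Schreier-Witt from the cohomological triviality of $W_m(L')$ for finite Galois extensions $L'/k(x)$), then force $H^1(G_{k(x)}, \mathrm{gr}_j) = 0$, and hence $H^1(G_{k(x)}, \sQ_{\bar x}) = 0$ by induction on the length of the filtration.

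The principal technical obstacle is the clean identification of the graded pieces $\mathrm{gr}_j$ in the last step, whose precise form splits into several cases depending on the $p$-divisibility of $j$ and on the level $m$. A more uniform route would be to first reduce to the case $m = 1$ via the Verschiebung short exact sequence of logarithmic Hodge-Witt sheaves appearing in \cite[Thm.~1.1.7]{JSZ}, after which the structural description of $\kappa^1_2(A|n)$ and the pairings developed in \secref{sec:D-Thm} supply the needed tensor-product form directly.
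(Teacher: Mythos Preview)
Your approach is correct and follows the same outline as the paper's: set up the short exact sequence of {\'e}tale sheaves and reduce the surjectivity to $H^1_\et(X,\sQ)=0$ for the quotient $\sQ$, which is supported on finitely many closed points.

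The paper's execution is more efficient at the last step. After using \lemref{lem:Kato-1}(2) to assume $D$ and $D'$ have the same support (this absorbs your $j=0$ graded piece), the paper simply cites \cite[Prop.~1.1.9]{JSZ}, which says that $\sQ$ admits a finite filtration whose graded pieces are \emph{coherent} $\sO_{D'}$-modules. Since $D'$ is zero-dimensional, $H^1_\et$ of any such sheaf vanishes (\'etale and Zariski cohomology of quasi-coherent sheaves agree). Your Galois-cohomology computation via additive Hilbert~90 is exactly what underlies this standard fact, unpacked by hand. In particular, the ``technical obstacle'' you flag --- the precise case-by-case identification of the $\mathrm{gr}_j$ --- disappears once one knows they are coherent: each graded piece is then automatically a finite-dimensional $k(x)$-vector space at $x$, with stalk $\mathrm{gr}_j\otimes_{k(x)}\overline{k(x)}$ at the geometric point, and $H^1(G_{k(x)},-)$ vanishes. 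Note also that even for general $m$ the graded pieces in \cite[Prop.~1.1.9]{JSZ} are $\sO_{D'}$-modules, not $W_m\sO_{D'}$-modules, so the Witt-vector Hilbert~90 you invoke is not actually needed; ordinary additive Hilbert~90 suffices throughout.
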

    \begin{proof}
      It suffices to show that the cokernel of the inclusion
      $\sK^m_{2, X|D'} \inj \sK^m_{2, X|D}$ is a coherent sheaf
      supported on $D'$. In view of \lemref{lem:Kato-1} (2), we can assume that
       $D'$ has the same support as that of $D$. The coherence claim then 
      follows from \cite[Prop.~1.1.9]{JSZ}.
\end{proof}

\begin{prop}\label{prop:Nis-et-ICG}
  The change of topology map induces a short exact sequence
  \[
    0 \to C_m(X,D) \xrightarrow{\gamma_{X|D}}  C_m^\et(X,D) \to T_m(X) \to 0.
    \]
  \end{prop}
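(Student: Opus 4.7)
The plan is to apply the Leray spectral sequence for the change-of-topology morphism $\epsilon \colon X_\et \to X_\nis$ with coefficients in the étale sheaf $\sF = \sK^m_{2,X|D}$. Since $X$ is a $1$-dimensional Noetherian scheme, its Nisnevich cohomological dimension is at most one, so $H^p_\nis(X,-) = 0$ for $p \ge 2$. The low-degree exact sequence of the spectral sequence thus collapses to
\[
0 \to H^1_\nis(X, \epsilon_* \sF) \to H^1_\et(X, \sF) \to H^0_\nis(X, R^1\epsilon_* \sF) \to 0.
\]

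Next, I would identify the leftmost term with $C_m(X,D)$. The key subclaim is that $\epsilon_* \sK^m_{2,X|D}$ (in the étale-to-Nisnevich direction) agrees with the Nisnevich sheaf $\sK^m_{2,X|D}$. Using the dlog isomorphism $\sK^m_{2,X|D} \xrightarrow{\cong} W_m\Omega^2_{X|D,\log}$ valid in both topologies, this amounts to Galois descent for logarithmic Hodge-Witt forms with modulus: the étale stalk at a geometric point over $x$ is the strict-Henselian form, and taking $G_{k(x)}$-invariants recovers the Henselian form. Combined with Corollary~\ref{cor:ICG-BF}(2), this gives $H^1_\nis(X, \epsilon_* \sF) = C_m(X,D)$ and identifies the map with $\gamma_{X|D}$, proving injectivity.

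For the rightmost term, I would compare with the $D = \emptyset$ instance of the same argument, which realizes $T_m(X) = H^0_\nis(X, R^1\epsilon_* \sK^m_{2,X})$ (the identification already used in the definition of $T_m(X)$ in the text). The natural inclusion $\sK^m_{2,X|D} \hookrightarrow \sK^m_{2,X}$, compatible via dlog with $W_m\Omega^2_{X|D,\log} \hookrightarrow W_m\Omega^2_{X,\log}$, yields a map $R^1\epsilon_* \sK^m_{2,X|D} \to R^1\epsilon_* \sK^m_{2,X}$, and it suffices to show this induces an isomorphism on $H^0_\nis$. Letting $\sG$ be the cokernel sheaf (a skyscraper supported on $D$ with stalk $\varpi^m_2(A^{sh}_x|n_x)$ at $x \in D$), the question reduces to $R^i\epsilon_* \sG = 0$ for $i = 0,1$ at points of $D$. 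By proper base change for the Henselian local scheme $X^h_x$, this is the Galois cohomology vanishing $H^i(G_{k(x)}, \varpi^m_2(A^{sh}_x|n_x)) = 0$ for $i = 0,1$.

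The main obstacle is the final step: verifying this Galois cohomology vanishing, which requires analyzing how the modulus filtration interacts with the $G_{k(x)}$-action on Milnor $K$-theory of the strict Henselization. One would combine the structural results on $\varpi^m_2$ from Section~\ref{sec:Rec-quot} (in particular Proposition~\ref{prop:Dual-iso}, which describes the Kato-dual as a filtered quotient of $H^1_\et(K, \Z/p^m)$) with Kato's ramification analysis for $H^1$ of the Henselian field $K^h_x$ and standard invariance of logarithmic forms under unramified extensions. Once this vanishing is established, the right-hand identification of the above short exact sequence with $T_m(X)$ is immediate, completing the proof.
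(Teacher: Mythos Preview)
Your setup via the Leray spectral sequence and the identification of the left-hand term with $C_m(X,D)$ are correct and coincide with the paper's argument. The problem is in your treatment of the right-hand term.

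Your reduction ``it suffices to show $R^i\epsilon_*\sG=0$ for $i=0,1$'' is incorrect. In fact $R^0\epsilon_*\sG\neq 0$: its Nisnevich stalk at $x\in D$ is $H^0_\et(X^h_x,\sG)$, and this contains $\varpi^m_2(A^h_x|n_x)$ (the image of the natural map from $\varpi^m_2(A^h_x|n_x)$, which is injective since the modulus filtration is determined by the valuation shared by $A^h_x$ and $A^{sh}_x$). For $n_x\ge 2$ this group is nonzero. Said differently, $R^0\epsilon_*\sG=0$ would force $\epsilon_*\sK^m_{2,X|D}\cong\epsilon_*\sK^m_{2,X}$, which is visibly false as Nisnevich sheaves. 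The correct reduction from the long exact sequence of $R\epsilon_*$ is: (a) the connecting map $\epsilon_*\sG\to R^1\epsilon_*\sK^m_{2,X|D}$ vanishes, and (b) $R^1\epsilon_*\sG=0$. Condition (a) is \emph{not} the vanishing of $\epsilon_*\sG$; it is the surjectivity of $\epsilon_*\sK^m_{2,X}\to\epsilon_*\sG$, i.e.\ of ${K_2(A^h_x)}/{p^m}\to H^0_\et(X^h_x,\sG)$, which is exactly what the paper proves by observing that $H^0_\et(X^h_x,\sG)=H^0_\nis(X^h_x,\sG)$.

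Your proposed tool for the vanishing step is also misdirected: \propref{prop:Dual-iso} concerns $\varpi^m_2(A|n)$ for $A$ Henselian with \emph{local-field} residue and says nothing about the $G_{k(x)}$-module $\varpi^m_2(A^{sh}_x|n_x)$, whose residue field is separably closed and hence outside the $2$-local-field framework of \S\ref{sec:Rec-quot}. The paper avoids Galois cohomology of $\sG$ entirely: it shows directly that $H^1_\et(X^h_x,\sK^m_{2,X|D})=0$ for $x\in D$ by combining the surjectivity in (a) with the vanishing $H^1_\et(X^h_x,W_m\Omega^2_{X,\log})=0$, the latter coming from Shiho's localization sequence together with Kato's isomorphism $H^1_\et(K^h_x,W_m\Omega^2_{\log})\xrightarrow{\cong}H^1_\et(k(x),W_m\Omega^1_{\log})$ (cf.\ \lemref{lem:Kato-equiv} and \eqref{eqn:Nis-et-ICG-2}). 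This simultaneously handles both stalks of $R^1\epsilon_*$ at $x\in D$ without ever analyzing $\sG$ over the strict Henselization.
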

  \begin{proof}
Using the Leray spectral sequence associated to
    $\epsilon$ and \corref{cor:ICG-BF}, we get an exact sequence
    \begin{equation}\label{eqn::Nis-et-ICG-7}
      0 \to C_m(X,D) \xrightarrow{\gamma_{X|D}}  C_m^\et(X,D) \to
      H^0_\nis(X, {\bf R}^1\epsilon_*(\sK^m_{2, X|D})) \to 0.
    \end{equation}
    It suffices therefore to show that the canonical map
    ${\bf R}^1\epsilon_*(\sK^m_{2, X|D}) \to {\bf R}^1\epsilon_*(W_m\Omega^2_{X, \log})$ is
    bijective. Equivalently, $H^1_\et(X^h_x, \sK^m_{2, X|D}) \to
    H^1_\et(X^h_x, W_m\Omega^2_{X,\log})$ is bijective for every $x \in X$.
    Clearly, it suffices to show this bijection when $x \in D$.
We shall in fact show that both these groups are zero in this case.

We first note that the exact sequence
      \begin{equation}\label{eqn:Nis-et-ICG-1}
        0 \to \sK^m_{2,X^h_x|D^h_x} \to \sK_{2, X^h_x}/{p^m} \to \sG_x \to 0
        \end{equation}
        on $(X^h_x)_\et$ yields an exact sequence
        \begin{equation}\label{eqn:Nis-et-ICG-1*}
          {K_2(A^h_x)}/{p^m} \to H^0(X^h_x, \sG_x) \to
        H^1(X^h_x, \sK^m_{2,X^h_x|D^h_x}) \to H^1(X^h_x, {\sK_{2, X^h_x}}/{p^m}).
      \end{equation}
Since ~\eqref{eqn:Nis-et-ICG-1} is exact also in the Nisnevich
      topology, we get
      $H^0(X^h_x, \sG_x) \cong H^0_\nis (X^h_x, \sG_x) =
      \frac{K_2(A^h_x)}{\kappa^m_2(A^h_x|n_x) +
        p^mK_2(A^h_x)}$. It follows that the left arrow in
      ~\eqref{eqn:Nis-et-ICG-1*} is surjective.
      We can now conclude the proof using \lemref{lem:Kato-equiv}.
\end{proof}

\subsection{Fundamental group with modulus}\label{sec:FMP}
Let $D = \sum_x n_x [x] \in \Div(X)$ be an effective divisor with complement $U$.
Let $D_\dagger = \{x_1, \ldots , x_s\}$ be the support of $D$ with the reduced closed
subscheme structure.
We let 
\begin{equation}\label{eqn:Fil-U-p}
  \Fil_D H^1_{\et}(K, {\Z}/{p^m}) = \Ker\left(H^1_\et(U, {\Z}/{p^m}) \to
  {\underset{x\in D_\dagger}\bigoplus} 
  \frac{H^1_\et({K}^h_x, {\Z}/{p^m})}{\Fil_{n_x -1}H^1_\et({K}^h_x, {\Z}/{p^m})}
  \right).
      \end{equation}
\begin{equation}\label{eqn:Fil-U-p-0}
  \Fil_D H^1_{\et}(K, {\Q_p}/{\Z_p}) = \Ker\left(H^1_\et(U, {\Q_p}/{\Z_p}) \to
  {\underset{x\in D_\dagger}\bigoplus} 
  \frac{H^1_\et({K}^h_x, {\Q_p}/{\Z_p})}{\Fil_{n_x -1}
    H^1_\et({K}^h_x, {\Q_p}/{\Z_p})}
  \right).
      \end{equation}

      Note that $\Fil_0 H^1_{\et}(K, A) \cong  H^1_{\et}(X, A)$ for
      $A \in \{{\Z}/{p^m}, {\Q_p}/{\Z_p}\}$.
As a consequence of \propref{prop:p-tor}, we get the following.
      \begin{cor}\label{cor:p-tor-*}
        All maps in the commutative diagram
        \begin{equation}\label{eqn:p-tor-*-0}
          \xymatrix@C.8pc{
            \Fil_D H^1_{\et}(K, {\Z}/{p^m})  \ar[r] \ar[d] &
            \Fil_D H^1_{\et}(K, {\Q_p}/{\Z_p}) \bigcap H^1_{\et}(U, {\Z}/{p^m})
            \ar[d] \\
            ~_{p^m} \Fil_D H^1_{\et}(K, {\Q_p}/{\Z_p}) \ar[r] &
            \Fil_D H^1_{\et}(K, {\Q_p}/{\Z_p}) \bigcap H^1_{\et}(K, {\Z}/{p^m})}
        \end{equation}
        are bijections.
      \end{cor}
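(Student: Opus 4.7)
The plan is to show that, after suitable identifications, all four groups in the diagram coincide with ${}_{p^m}\Fil_D H^1_\et(K, \Q_p/\Z_p)$ as a subgroup of $H^1_\et(K, \Q_p/\Z_p)$. The main ingredient is Proposition~\ref{prop:p-tor}, which I will apply locally at each $x \in D_\dagger$ to the Henselian discrete valuation field $K^h_x$ (of characteristic $p$, so the hypotheses of that proposition are satisfied).

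First I would handle the two right-hand entries. Since $U$ is connected and $H^0(U, \Q_p/\Z_p) = \Q_p/\Z_p$ is $p^m$-divisible, the long exact sequence attached to $0 \to \Z/p^m \to \Q_p/\Z_p \xrightarrow{p^m} \Q_p/\Z_p \to 0$ yields canonical identifications $H^1_\et(U, \Z/p^m) = {}_{p^m}H^1_\et(U, \Q_p/\Z_p)$ and $H^1_\et(K, \Z/p^m) = {}_{p^m}H^1_\et(K, \Q_p/\Z_p)$. Because $U$ is a regular integral scheme, $H^1_\et(U, -)$ injects into $H^1_\et(K, -)$; moreover by definition~\eqref{eqn:Fil-U-p-0} we have $\Fil_D H^1_\et(K, \Q_p/\Z_p) \subseteq H^1_\et(U, \Q_p/\Z_p)$. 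Combining these remarks, both right-hand entries of the diagram identify with ${}_{p^m}\Fil_D H^1_\et(K, \Q_p/\Z_p)$, and the right vertical arrow becomes the identity.

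For the top-left entry I would unwind the kernel definitions~\eqref{eqn:Fil-U-p} and~\eqref{eqn:Fil-U-p-0}: both $\Fil_D H^1_\et(K, \Z/p^m)$ and ${}_{p^m}\Fil_D H^1_\et(K, \Q_p/\Z_p)$ consist of classes $\chi \in H^1_\et(U, \Z/p^m) = {}_{p^m}H^1_\et(U, \Q_p/\Z_p)$ whose pull-back $\chi_x$ to $K^h_x$ lies in $\Fil_{n_x - 1}$ of the appropriate local cohomology for every $x \in D_\dagger$. Proposition~\ref{prop:p-tor} applied at each such $x$ gives
\[
\Fil_{n_x-1}H^1_\et(K^h_x, \Z/p^m) = {}_{p^m}\Fil_{n_x-1}H^1_\et(K^h_x, \Q_p/\Z_p),
\]
so the two local conditions on $\chi_x$ coincide, and the left vertical arrow becomes a bijection. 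A trivial diagram chase then forces the remaining two maps to be bijections as well. I do not expect any serious obstacle; the corollary is a formal globalisation of Proposition~\ref{prop:p-tor} over the support of $D$, whose only additional inputs are the $p^m$-divisibility of $H^0(U, \Q_p/\Z_p)$ and the standard injectivity of $H^1_\et(U, -) \hookrightarrow H^1_\et(K, -)$.
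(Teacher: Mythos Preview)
Your argument is correct and is exactly the intended one: the paper records the corollary as an immediate consequence of \propref{prop:p-tor}, and your write-up simply unpacks that reference by applying the proposition at each $x\in D_\dagger$ (where $n_x-1\ge 0$, so the hypothesis is met) and using the standard identifications $H^1_\et(U,\Z/p^m)={}_{p^m}H^1_\et(U,\Q_p/\Z_p)$ and $H^1_\et(U,-)\hookrightarrow H^1_\et(K,-)$.
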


\begin{defn}\label{defn:FGMP}
        We let $\Fil_D H^1(K) = H^1(U)\{p'\} \bigoplus
        \Fil_D H^1_{\et}(K, {\Q_p}/{\Z_p})$ and $\pi^{\ab}_1(X,D) =
        \Hom_\Ab(\Fil_{D} H^1(K), {\Q}/{\Z})$. We shall consider
        $\Fil_D H^1(K)$ as a discrete topological abelian group so that
        $\pi^{\ab}_1(X,D) \cong (\Fil_{D} H^1(K))^\star$ is a profinite topological
        abelian group.
      \end{defn}

By \cite[Lem.~2.8]{GKR-arxiv}, there are canonical quotient homomorphisms
      of profinite groups
\begin{equation}\label{eqn:Adiv}
  \pi^{\ab}_1(K) \surj \pi^{\ab}_1(U) \surj \pi^{\ab}_1(X,D) \surj \pi^{\ab, \tm}_1(U)
  \surj \pi^{\ab}_1(X),       
\end{equation}
where $\pi^{\ab, \tm}_1(U)$ is the abelianized tame {\'e}tale fundamental group of
$U$ (cf. \cite[\S~2]{GKR-arxiv}). Furthermore, an argument similar to that
of \cite[Thm.~7.17]{Gupta-Krishna-REC} shows that $\pi^{\ab}_1(X,D)$ is the
automorphism group of the fiber functor associated to the Galois category
consisting of finite abelian covers of $U$ whose logarithmic ramifications (in the
sense of Abbes-Saito \cite{Abbes-Saito}) at a point $x \in D_\dagger$ are bounded by
$n_x-1$ (cf. \cite[Defn.~7.2]{Gupta-Krishna-REC}).

By combining \corref{cor:p-tor-*} with \cite[Lem.~2.2]{Kato89}, we obtain the
      following.
  \begin{cor}\label{cor:p-tor-*-1}    
    We have topological isomorphisms
    \[
      H^1(U) = {\underset{\supp(D) = D_\dagger}\bigcup} \ \Fil_D H^1(K)
      \ \ \mbox{and} \ \
      H^1_{\et}(U, {\Z}/{p^m})  = {\underset{\supp(D) = D_\dagger}\bigcup} \
      \Fil_D H^1_{\et}(K, {\Z}/{p^m}).
    \]
  \end{cor}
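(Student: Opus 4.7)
The plan is to establish both displayed equalities as set-theoretic identities; the topological statement will then be automatic, because each $\Fil_D H^1(K)$ and $H^1(U)$ carries the discrete topology (being the Pontryagin dual of the profinite group $\pi^{\ab}_1(X,D)$, respectively $\pi^{\ab}_1(U)$), and a directed union of discrete abelian groups agrees with the filtered colimit.

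First I would handle the first equation. The inclusion $\Fil_D H^1(K) \subseteq H^1(U)$ is built into \defref{defn:FGMP} and \eqref{eqn:Fil-U-p-0}: both summands $H^1(U)\{p'\}$ and $\Fil_D H^1_{\et}(K, \Q_p/\Z_p)$ sit inside $H^1(U)$ by construction. For the reverse inclusion, I would take $\chi \in H^1(U)$ and decompose it as $\chi = \chi' + \chi''$ according to the splitting of $H^1(U)$ into its prime-to-$p$ and $p$-primary components. The prime-to-$p$ piece $\chi'$ lies in $\Fil_D H^1(K)$ for every $D$ supported on $D_\dagger$, directly from the definition. The decisive input for the $p$-primary piece $\chi''$ is \cite[Lem.~2.2]{Kato89}, which asserts that at each $x \in D_\dagger$ the ramification filtration $\{\Fil_n H^1_\et(K^h_x, \Q_p/\Z_p)\}_{n \ge 0}$ exhausts $H^1_\et(K^h_x, \Q_p/\Z_p)$. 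Since $D_\dagger$ is finite, I can choose for each $x \in D_\dagger$ an integer $n_x \ge 1$ such that $\chi''|_{K^h_x} \in \Fil_{n_x - 1} H^1_\et(K^h_x, \Q_p/\Z_p)$, and then $D = \sum_{x \in D_\dagger} n_x[x]$ witnesses $\chi'' \in \Fil_D H^1_\et(K, \Q_p/\Z_p)$ by \eqref{eqn:Fil-U-p-0}.

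Next, I would deduce the $\Z/{p^m}$ equation from the $\Q_p/\Z_p$ equation by taking $p^m$-torsion. \corref{cor:p-tor-*} identifies $\Fil_D H^1_\et(K, \Z/{p^m})$ with ${}_{p^m}\Fil_D H^1_\et(K, \Q_p/\Z_p)$, while the short exact sequence $0 \to \Z/{p^m} \to \Q_p/\Z_p \xrightarrow{p^m} \Q_p/\Z_p \to 0$ on $U_\et$, combined with the $p^m$-divisibility of $H^0_\et(U, \Q_p/\Z_p) = \Q_p/\Z_p$, yields $H^1_\et(U, \Z/{p^m}) = {}_{p^m} H^1_\et(U, \Q_p/\Z_p)$. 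Since $p^m$-torsion commutes with filtered unions, restricting the $p$-primary part of the first equation to its $p^m$-torsion produces the second equation.

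The only substantive input is Kato's exhaustiveness result for the local ramification filtration, which is black-boxed from \cite{Kato89}; everything else is formal bookkeeping with the definitions and \corref{cor:p-tor-*}. The main place that will require care is the indexing convention: correctly matching the local filtration level $n_x - 1$ that appears in \eqref{eqn:Fil-U-p-0} with the divisor coefficient $n_x$ one assigns when producing the witnessing $D$.
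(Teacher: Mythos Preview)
Your proposal is correct and matches the paper's own proof, which simply cites \corref{cor:p-tor-*} together with \cite[Lem.~2.2]{Kato89} (the exhaustiveness of Kato's local ramification filtration). Your write-up fleshes out exactly this combination: Kato's lemma handles the first equality, and \corref{cor:p-tor-*} passes from $\Q_p/\Z_p$ to $\Z/p^m$ coefficients for the second.
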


\subsection{Reciprocity homomorphism}\label{sec:RMap}
We keep the notations of \S~\ref{sec:FMP}.
For $x \in U_{(0)}$, let $\rho^x_{X|D}$ denote the composite map 
$k(x)^\times \xrightarrow{\rho_{k(x)}} G_{k(x)} \xrightarrow{(\iota_x)_*}
\pi^{\ab}_1(U) \surj \pi^{\ab}_1(X,D)$, where $\rho_{k(x)}$ is the classical
reciprocity for the local field $k(x)$.
For $x \in D_\dagger$, we have the composite map
$\rho^x_{X|D}\colon K_2({K}^h_x)  \xrightarrow{\rho_{{K}^h_x}} G_{{K}^h_x}
\xrightarrow{(j_{{K}^h_x})_*} \pi^{\ab}_1(U) \surj \pi^{\ab}_1(X,D)$,
where $\rho_{{K}^h_x}$ is defined in \S~\ref{sec:RHom}.
Taking the sum over $x \in U_{(0)}$ and
$x \in D_\dagger$, we get continuous homomorphisms
$\rho_{U} \colon F_1(U) \bigoplus F_2(U) \to \pi^{\ab}_1(U)$ and
$\rho_{X|D} \colon F_1(U) \bigoplus F_2(X|D) \to \pi^{\ab}_1(X,D)$.
It is shown in \cite[\S~3]{Hiranouchi-2} that $\rho_U$ and $\rho_{X|D}$
factor through compatible continuous homomorphisms
\begin{equation}\label{eqn:Rec-curve-Main}
\rho_U \colon C(U) \to  \pi^{\ab}_1(U); \ \  \ \rho_{X|D} \colon C(X,D) \to
\pi^{\ab}_1(X,D).
\end{equation}

 \corref{cor:p-tor-*-1} implies that
$\rho_U$ factors through $C(U) \to C^c(U) \to \wh{C}(U) \to \pi^{\ab}_1(U)$.
Let $\pi^{\ab}_1(X,D)_0$ denote the kernel of the composite map
$\pi_{X|D} \colon \pi^{\ab}_1(X,D) \surj \pi^{\ab}_1(X) \xrightarrow{\pi_X}
G_k$, where $\pi_X$ is the push-forward map induced by the
structure map $X \to \Spec(k)$. We let $C(U)_0$ (resp. $\pi^{\ab}_1(U)_0$) denote
the kernel of the composite map $C(U) \to C(X) \to k^\times$ (resp.
$\pi^{\ab}_1(U) \to \pi^{\ab}_1(X) \to G_k$). One defines $C(X,D)_0$ and
$\pi^{\ab}_1(X,D)_0$ similarly.

\begin{cor}\label{cor:REC-T-Norm}
  There are commutative diagrams
  \begin{equation}\label{eqn:REC-T-Norm-0}
    \xymatrix@C1pc{
  C(X,D)_0 \ar[r] \ar[d]_-{\rho_{X|D}} & C(X,D) \ar[r]^-{N_{X|D}}
  \ar[d]^-{\rho_{X|D}} & k^\times \ar[d]^-{\rho_k} & C(U)_0 \ar[r] \ar[d]_-{\rho_U}
  & C(U) \ar[r]^-{N_U} \ar[d]^-{\rho_U} & k^\times \ar[d]^-{\rho_k} \\
  \pi^{\ab}_1(X,D)_0 \ar[r] & \pi^{\ab}_1(X,D) \ar[r]^-{\pi_{X|D}} & G_k &
  \pi^{\ab}_1(U)_0 \ar[r] & \pi^{\ab}_1(U) \ar[r]^-{\pi_U} & G_k.}
  \end{equation}
\end{cor}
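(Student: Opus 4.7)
The plan is to reduce the commutativity of each big diagram to the commutativity of the outer rectangle (with $N_{X|D}$ or $N_U$ on top), since the commutativity of the left square is then automatic: the vertical arrows restrict to the kernels of the horizontal compositions by the very definitions of $C(X,D)_0,\,C(U)_0,\,\pi^{\ab}_1(X,D)_0,\,\pi^{\ab}_1(U)_0$. Thus the entire content is the identity $\rho_k\circ N_{X|D}=\pi_{X|D}\circ\rho_{X|D}$ (and its analogue for $U$).

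To verify this, I would use the presentation of $C(X,D)$ as a quotient of $F_1(U)\oplus F_2(X|D)$ together with the factorization of $\rho_{X|D}$ through the local reciprocity maps $\rho^x_{X|D}$ at each closed point. The norm map $N_{X|D}$ factors correspondingly as the sum of the local norms: on the $k(x)^\times$-component at $x\in U_{(0)}$ it is the field-theoretic norm $N_{k(x)/k}$, while on the $K_2(K^h_x)/\Fil_{n_x}K_2(K^h_x)$-component at $x\in D_\dagger$ it factors through the Tame symbol $\partial_{K^h_x}$ followed by $N_{k(x)/k}\colon k(x)^\times\to k^\times$ (well-defined modulo $\Fil_{n_x}$ since $\partial_{K^h_x}$ kills $K_2(A^h_x)$, and one checks that $N_{X|D}$ arising from the structure map $X\to\Spec(k)$ is characterized by exactly this behavior on each local factor). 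With this reduction in hand, the outer square decomposes into $x$-local squares, one for each closed point.

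For $x\in U_{(0)}$, the local square reads $\rho_k\circ N_{k(x)/k}=\pi_{k(x)/k}\circ\rho_{k(x)}$, which is a classical statement in local class field theory (functoriality of reciprocity under norms for the finite extension $k(x)/k$ of local fields). For $x\in D_\dagger$, I would paste the commutative square \eqref{eqn:Kato-adic-0} (which gives $\rho_{k(x)}\circ\partial_{K^h_x}=\partial'_{K^h_x}\circ\rho_{K^h_x}$, where $\partial'_{K^h_x}\colon G_{K^h_x}\to G_{k(x)}$ is the residue map on Galois groups) together with the classical local compatibility $\rho_k\circ N_{k(x)/k}=\pi_{k(x)/k}\circ\rho_{k(x)}$ used in the previous case. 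The resulting outer rectangle yields exactly $\rho_k\circ N_{X|D}=\pi_{X|D}\circ\rho_{X|D}$ on the $x$-component.

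The case of $U$ is handled identically, replacing $F_2(X|D)$ by $F_2(U)$ and using $\rho_{K^h_x}$ directly instead of its filtered quotient; the continuity of all maps is already known, so the diagrams are automatically diagrams in $\Tab$. The main subtlety, and the step requiring the most care, is the verification that $N_{X|D}$ restricted to the $K_2(K^h_x)/\Fil_{n_x}$-summand at $x\in D_\dagger$ really does factor through the Tame symbol and the norm of residue fields; this is ultimately a consequence of Weil reciprocity on $X$ combined with the fact that the global norm map arises from pushforward along $X\to\Spec(k)$, but it is the place where one must be most careful about compatibility of topologies and filtrations.
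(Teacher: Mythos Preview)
Your proposal is correct and follows essentially the same approach as the paper's proof. The paper also observes that only the right squares need checking and then invokes \cite[\S~3.1, Cor.~1, Cor.~2]{Kato80-2}, which are precisely the norm/residue compatibilities of Kato's reciprocity map that you unpack explicitly (the classical norm functoriality for $k(x)/k$ at $x\in U_{(0)}$, and the tame-symbol compatibility of ~\eqref{eqn:Kato-adic-0} composed with that same norm functoriality at $x\in D_\dagger$); your version simply spells out what the citation contains.
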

\begin{proof}
  We only need to show that the right squares in both diagrams are commutative.
  But this is a simple consequence of our constructions and
  \cite[\S~3.1, Cor.~1, Cor.~2]{Kato80-2}.
\end{proof}

\section{Logarithmic Hodge-Witt cohomology with
  modulus}\label{sec:LPair}
Our goal in the next two sections is to prove \thmref{thm:Main-4}.
In this section, we shall
construct a pairing between the {\'e}tale cohomology of
logarithmic Hodge-Witt sheaves with modulus (cf. \propref{prop:Pair-0-4}).
We begin by recalling some definitions.

\subsection{Twisted de Rham-Witt sheaves}\label{sec:TDR}
Let $X$ be a connected regular $\F_p$-scheme of dimension one and let $E \subset X$
be a reduced effective divisor on $X$.  For a divisor
$D = {\underset{x \in E}\sum} n_x [x]$ on $X$ with support $E$,
we let $W_m \sO_X(D)$ be the Nisnevich
(resp.  {\'e}tale) sheaf on $X$ which is locally defined as
$W_m \sO_X(D)  = [f^{-1}] \cdot W_m \sO_X \subset j_* (W_m\sO_U)$,
where $[-] \colon \sO_X \to W_m\sO_X$ is the Teichm{\"u}ller function,
$f\in \sO_X$ is a local equation of $D$, and $j \colon U \inj X$ is the
inclusion of the complement of $E$ in $X$. One knows that $W_m \sO_X(D)$
is a sheaf of invertible $W_m\sO_X$-modules. We let $W_m\Omega^\bullet_X(D) =
W_m\Omega^\bullet_X \otimes_{W_m\sO_X} W_m\sO_X(D) \subset j_* (W_m\Omega^\bullet_U)$.

We let $W_m\Omega^r_{X|D} = W_m\Omega^r_X(\log E)(-D)$, where
$W_m\Omega^\bullet_X(\log E)$ is the logarithmic de-Rham-Witt complex
{\`a} la Hyodo-Kato \cite{Hyodo-Kato}. Let $Z_1W_m\Omega^r_X =
{\rm Image}(F \colon W_{m+1}\Omega^r_X \to W_m\Omega^r_X) =
\Ker(F^{m-1}d \colon W_m\Omega^r_X \to \Omega^{r+1}_X)$.
Let $Z_1W_m\Omega^r_X(D) = j_*(Z_1W_m\Omega^r_U) \cap W_m\Omega^r_X(D)$
and $Z_1W_m\Omega^r_{X|D} = j_*(Z_1W_m\Omega^r_U) \cap W_m\Omega^r_{X|D}$.
By \cite[Prop.~3.16]{Gupta-Krishna-Duality}, there is a Cartier map 
$C \colon Z_1W_m\Omega^r_X(pD) \to W_m\Omega^r_X(D)$
which is surjective and whose kernel is $dV^{m-1}\Omega^{r-1}_X(p^mD)$.
Furthermore, $C$ is the unique morphism such that $\p \circ C = V$, where
$\p \colon W_m\Omega^r_X(D) \inj W_{m+1}\Omega^r_X(D)$ is a monomorphism such that
$\p \circ R$ is multiplication by $p$ (cf. \cite[Prop.~1.1.4]{Zhao},
\cite[Lem.~3.14]{Gupta-Krishna-Duality}).
Let $\ov{F} \colon W_m\Omega^r_X \to \frac{W_m\Omega^r_X}{dV^{m-1}\Omega^{r-1}_X}$ be
the absolute Frobenius (cf. ~\eqref{eqn:DRC-3})
such that $\pi \circ F = \ov{F} \circ R$, where
$\pi \colon  {W_m\Omega^r_X} \to \frac{W_m\Omega^r_X}{dV^{m-1}\Omega^{r-1}_X}$ is the
quotient map.

Assume now that $D$ is effective and let $K$ denote the function field of $X$.
For $x \in X_{(0)}$, let $\iota^x \colon
\Spec({K}^h_x) \to X$ be the canonical morphism.  We let $\Fil_D W_m \sO_X$ be
the Nisnevich (resp.  {\'e}tale) sheaf on $X$ defined by
\begin{equation}\label{eqn:Log-fil-2}
  \begin{array}{lll}
  \Fil_D W_m \sO_X & = & \Ker\left(j_* W_m\sO_U \to
    \bigoplus_{x \in E}

  \frac{\iota^x_* W_m({K}^h_x)}{\iota^x_* \Fil_{n_x} W_m({K}^h_x)}\right) \\
    & = & \Ker\left(j_* W_m\sO_U \to
    \bigoplus_{x \in E}

          \iota^x_*(\frac{W_m({K}^h_x)}{\Fil_{n_x} W_m({K}^h_x)})\right).
          \end{array}
  \end{equation}

We let $Z_1 \Fil_D W_m\sO_X = j_*(Z_1W_m\sO_U) \cap \Fil_D W_m \sO_X$.
  Since $C \colon Z_1W_m \sO_U \to W_m \sO_U$ is given by
  $C(a^p_{m-1}, \ldots , a^p_0) = (a_{m-1}, \ldots , a_0)$,
  it follows that $C$ preserves $Z_1 \Fil_D W_m\sO_X$ and one gets a
  morphism $C \colon Z_1 \Fil_D W_m\sO_X \to \Fil_D W_m\sO_X$.
The following is easy to verify using the formula
  $[f]\un{a} = (fa_{m-1}, f^pa_{m-2}, \ldots , f^{p^{m-1}}a_0) \in W_m(K)$.

  \begin{lem}\label{lem:Fil-comp}
    We have the identity $\Fil_D W_1\sO_X = W_1 \sO_X(D)$. In general, we have
    \[
      \Fil_D W_m\sO_X \subseteq W_m \sO_X(D) =   \Fil_{p^{m-1}D} W_m\sO_X.
    \]
    \end{lem}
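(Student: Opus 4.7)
The plan is to reduce both assertions to a stalk-wise calculation at the closed points of the support $E$ of $D$. Away from $E$, both $\Fil_D W_m\sO_X$ and $W_m\sO_X(D)$ restrict to $j_* W_m\sO_U$, so it suffices to compare them stalk-by-stalk at an arbitrary $x \in E$. A local equation of $D$ at such $x$ is a unit multiple of $g^{n_x}$, where $g$ is a uniformizer of $A^h_x$ and $n_x$ is the multiplicity of $D$ at $x$. Since the other summands in the defining kernel of $\Fil_D W_m\sO_X$ vanish at $x$, one obtains
\[
(\Fil_D W_m\sO_X)_x = \Fil_{n_x} W_m(K^h_x), \qquad (W_m\sO_X(D))_x = [g^{-n_x}] \cdot W_m(A^h_x).
\]

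To establish the identity $W_m\sO_X(D) = \Fil_{p^{m-1}D} W_m\sO_X$, I would apply the Teichm\"uller product formula $[f]\un{a} = (f a_{m-1}, f^p a_{m-2}, \ldots, f^{p^{m-1}} a_0)$ cited in the statement of the lemma to an element $\un{a} = [g^{-n_x}]\un{b}$ with $\un{b} \in W_m(A^h_x)$. This formula shows that the $j$-th coordinate of $\un{a}$ is $g^{-n_x p^{m-1-j}} b_j$, and hence the requirement $b_j \in A^h_x$ for every $j$ translates precisely into $p^j v_x(a_j) \geq -p^{m-1} n_x$ for every $j$, which is the defining condition of $\Fil_{p^{m-1} n_x} W_m(K^h_x)$. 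This yields the desired equality at $x$, and hence globally.

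The inclusion $\Fil_D W_m\sO_X \subseteq W_m\sO_X(D)$ is then immediate from the fact that the filtration $\{\Fil_n W_m(K^h_x)\}_{n \ge 0}$ is increasing in $n$ (the defining inequality $p^i v_x(a_i) \ge -n$ loosens as $n$ grows), combined with $n_x \leq p^{m-1} n_x$. The equality $\Fil_D W_1\sO_X = W_1\sO_X(D)$ is simply the special case $m = 1$ of the identity, where $p^{m-1} = 1$ so both filtrations collapse to the same object. The one point that requires care is the bookkeeping: matching the Witt-vector coordinate convention in the Teichm\"uller product formula with Kato's indexing convention in the definition of $\Fil_n W_m(K)$; once that is pinned down, the whole proof amounts to the valuation computation above.
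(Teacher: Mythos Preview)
Your proposal is correct and follows exactly the approach the paper has in mind: the paper's own proof is the single sentence ``easy to verify using the formula $[f]\un{a} = (fa_{m-1}, f^p a_{m-2}, \ldots, f^{p^{m-1}} a_0)$,'' and your argument simply unpacks that formula stalkwise at each $x\in E$ to identify $[g^{-n_x}]W_m(A^h_x)$ with $\Fil_{p^{m-1}n_x}W_m(K^h_x)$. Your bookkeeping of the coordinate indexing against Kato's filtration convention is accurate, and the remaining inclusion and the $m=1$ case fall out as you say.
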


\subsection{Complexes of sheaves and their pairings}\label{sec:Exist}
For the rest of \S~\ref{sec:LPair}, we fix a local field $k$ of characteristic
$p > 0$ and a connected, smooth and projective curve $X$ over $k$.
We let $K$ be the function field of $X$. We also fix a positive integer $m$.
Let $D_\dagger = \{x_1, \ldots, x_s\}$ be a
reduced effective Cartier divisor on $X$ and let $X^o \subset X$ be
the complement of $D_\dagger$. For $\un{n} = (n_1, \ldots, n_s)
\in \Z^s$, we let $D_{\un{n}} = \stackrel{s}{\underset{i =1}\sum} n_i[x_i] \in
\Div(X)$. For $\un{n} \in \Z^s$ and $q \in \Z$, we let
$\un{n}_+ = \un{n} + \un{1}:= 
(n_1 +1, \ldots , n_s +1), \ \un{n}_- = \un{n} - \un{1}:= 
(n_1 -1, \ldots , n_s -1), \ q\n = (qn_1, \ldots , qn_s)$ and
$\<{\n}\> = p^{m-1}\n$.
We shall say that $\n \ge \n'$ if $n_i \ge n'_i$ for each $i$.
We shall write $\n \ge 0$ if $n_i \ge 0$ for each $i$. 
 For any divisor $D = \sum_y m_y [y]$ on $X$, we let
 $D/p = \sum_y \lfloor {m_y}/p \rfloor [y]$.
 We let ${\n}/p = (\lfloor {n_1}/p \rfloor, \ldots , \lfloor {n_r}/p \rfloor)$.

For $r \ge 0$ and $\un{n} \ge 0$, we consider the following complexes
in $D_\et(X)$.
\begin{equation}\label{eqn:Comp-0}
W_m\sF^r = \left[Z_1W_m\Omega^r_{X} \xrightarrow{1 -C}
W_m\Omega^r_{X}\right]; \ \ 
W_m\sM^r_{\un{-n}} = \left[Z_1W_m\Omega^r_{X|D_{\un{-n}}} \xrightarrow{1 -C}
W_m\Omega^r_{X|D_{\un{-n}}}\right].
\end{equation}
\begin{equation}\label{eqn:Comp-1}
W_m\sG^r = \left[W_m\Omega^r_{X} \xrightarrow{1 - \ov{F}} 
\frac{W_m\Omega^r_{X}}{dV^{m-1}\Omega^{r-1}_{X}}\right]; \ \ 
W_m\sG^r_{\un{n}} = \left[W_m\Omega^r_{X|D_{\un{n}}} \xrightarrow{1 - \ov{F}} 
\frac{W_m\Omega^r_{X|D_{\un{n}}}}{dV^{m-1}\Omega^{r-1}_{X|D_{\<\n\>}}}\right].
\end{equation}
\begin{equation}\label{eqn:Comp-2}
W_m\sH = \left[W_m\Omega^2_X \xrightarrow{1-C} W_m\Omega^2_X\right] 
\cong W_m\Omega^2_{X, \log}.
\end{equation}

The differential graded algebra structure of $W_m\Omega^\bullet_X(p^iD)$
induces a commutative diagram of cup product pairings of complexes
(cf. \cite[Lem.~3.1.1]{JSZ})
\begin{equation}\label{eqn:Comp-3}
  \xymatrix@C.8pc{
  W_m\sG^r \times W_m\sF^{2-r}  \ar@<7ex>[d]  \ar[r]^-{\cup} & W_m\sH \ar@{=}[d] \\
  W_m\sG^r_{\n_+} \times W_m\sM^{2-r}_{-\n}  \ar@<7ex>[u]
  \ar[r]^-{\cup} & W_m\sH.}
\end{equation}

By \cite[Prop.~4.4]{KRS} (see also \cite[\S~3, Prop.~4]{Kato-Saito-Ann}),
there is a bijective trace homomorphism
$\Tr_X \colon H^2_\et(X, W_m\Omega^2_{X, \log}) \xrightarrow{\cong} {\Z}/{p^m}$.
Using this trace map, ~\eqref{eqn:Comp-3}
gives rise to a commutative diagram of cohomology pairings

\begin{equation}\label{eqn:Comp-4}
  \xymatrix@C.8pc{
    \H^i_\et(X, W_m\sG^r) \times \H^{2-i}_\et(X, W_m\sF^{2-r})  \ar@<7ex>[d]
    \ar[r]^-{\cup} & {\Z}/{p^m} \ar@{=}[d]  \\
  \H^i_\et(X, W_m\sG^r_{\n_+}) \times \H^{2-i}_\et(X, W_m\sM^{2-r}_{-\n})  \ar@<7ex>[u]
  \ar[r]^-{\cup} & {\Z}/{p^m}.}
\end{equation}

Since the canonical map $H^2_x(X, W_m\Omega^2_{X, \log}) \to
H^2_\et(X, W_m\Omega^2_{X, \log})$
is an isomorphism for any $x \in X_{(0)}$ (cf. \cite[Prop.~4.4]{KRS}),
~\eqref{eqn:Comp-3} also yields a commutative diagram of cohomology pairings
(cf. \cite[\S~3.1]{Zhao})
\begin{equation}\label{eqn:Comp-5}
  \xymatrix@C.8pc{
    \H^i_\et(X^h_x, W_m\sG^r) \times \H^{2-i}_x(X^h_x, W_m\sF^{2-r})  \ar@<7ex>[d]
    \ar[r]^-{\cup} & {\Z}/{p^m} \ar@{=}[d] \\
    \H^i_\et(X^h_x, W_m\sG^r_{\n_+}) \times \H^{2-i}_x(X^h_x, W_m\sM^{2-r}_{-\n})
    \ar@<7ex>[u]
  \ar[r]^-{\cup} & {\Z}/{p^m},}
\end{equation}
which is compatible with ~\eqref{eqn:Comp-4}.

In order to prove our duality theorem,
we need to modify the bottom complexes in ~\eqref{eqn:Comp-3} and the induced
cohomology pairings. We first claim the following.

\begin{lem}\label{lem:G-comp}
  For any $\n \ge 0$, there is a canonical exact sequence of {\'e}tale sheaves
\begin{equation}\label{eqn:G-comp-0}
  0 \to W_m\Omega^r_{X|D_{\<\n\>}, \log} \to W_m\Omega^r_{X|D_{\un{n}}}
  \xrightarrow{1 - \ov{F}} 
\frac{W_m\Omega^r_{X|D_{\un{n}}}}{dV^{m-1}\Omega^{r-1}_{X|D_{\<\n\>}}}.
\end{equation}
\end{lem}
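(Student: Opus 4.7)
The statement is local on $X$ and can be verified stalkwise. At a point $x \notin D_\dagger$ the moduli disappear and the sequence reduces to the classical exact sequence defining $W_m\Omega^r_{X,\log}$ (cf.\ \cite[Thm.~1.2, Cor.~4.2]{Morrow-ENS}). It therefore suffices to establish the exactness at each $x \in D_\dagger$; writing $A = \sO^h_{X,x}$ with uniformizer $\pi$ and setting $n = n_x$, the goal becomes
\[
  W_m\Omega^r_{A|p^{m-1}n,\log}
  \;=\; \Ker\!\left(W_m\Omega^r_{A|n} \xrightarrow{\,1-\ov F\,}
  \frac{W_m\Omega^r_{A|n}}{dV^{m-1}\Omega^{r-1}_{A|p^{m-1}n}}\right).
\]

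The inclusion $\subseteq$ is straightforward: since $\<\n\>\ge \n$, one has $W_m\Omega^r_{A|p^{m-1}n}\subseteq W_m\Omega^r_{A|n}$, and every local section of the left side is by construction an image of dlog symbols, hence $\ov F$-fixed and therefore killed in the quotient. For the reverse inclusion, let $\alpha$ be in the kernel. Since $dV^{m-1}\Omega^{r-1}_{A|p^{m-1}n}\subseteq dV^{m-1}\Omega^{r-1}_A$, the classical exact sequence forces $\alpha \in W_m\Omega^r_{A,\log}$. Combining this with $\alpha\in W_m\Omega^r_{A|n}$ and the dlog identification \eqref{eqn:Milnor-mod} (applicable since the residue field is a local field, so \lemref{lem:Kato-1}(2) yields $K_r(A)/p^m \cong K_r(K)/p^m$), we can write $\alpha = \dlog(\beta)$ with $\beta \in \Fil_n K_r(K)$.

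The crux is then to sharpen $\beta$ into $\Fil_{p^{m-1}n} K_r(K)$ modulo $\Ker(\dlog)$. The plan is to induct on $m$. The base case $m=1$ is the modulus Artin--Schreier sequence for $\Omega^r_{A|n,\log}$; it follows by combining the classical sequence $0\to\Omega^r_{A,\log}\to Z_1\Omega^r_A\xrightarrow{1-C}\Omega^r_A\to 0$ with the identification $\Omega^r_{A|n,\log} = \Omega^r_{A,\log}\cap \Omega^r_{A|n}$ obtained from \eqref{eqn:Milnor-mod}. For the inductive step, apply the snake lemma to the Illusie-type short exact sequence
\[
  0 \to \Omega^r_{A|p^{m-1}n} \xrightarrow{V^{m-1}} W_m\Omega^r_{A|n}
   \xrightarrow{R} W_{m-1}\Omega^r_{A|\lfloor n/p\rfloor} \to 0
\]
(a modulus version of \cite[Prop.~3.16]{Gupta-Krishna-Duality}) together with its analogue for $dV^{m-1}\Omega^{r-1}$, using the compatibilities $R\circ\ov F = \ov F\circ R$ and $\ov F\circ V^{m-1}\equiv 0$ modulo $dV^{m-1}$. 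The delicate point---and the reason the quotient must be formed with the inflated modulus $\<\n\> = p^{m-1}\n$ rather than $\n$ itself---is the verification that the snake lemma produces $W_m\Omega^r_{A|p^{m-1}n,\log}$ (and not some strictly larger log subsheaf) as the kernel; this traces back to $V^{m-1}$ scaling the modulus by exactly $p^{m-1}$, which is precisely what makes the moduli on the kernel and cokernel sides of the snake lemma fit together. This bookkeeping is expected to be the main obstacle, and will require a careful case analysis of Kato's filtration on $K_r(K)$ matched against the modulus filtration on $W_m\Omega^r_A$.
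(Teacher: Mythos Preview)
Your proposal has a genuine gap at the step where you pass from $\alpha \in W_m\Omega^r_{A,\log} \cap W_m\Omega^r_{A|n}$ to $\alpha = \dlog(\beta)$ with $\beta \in \Fil_n K_r(K)$. This amounts to asserting $W_m\Omega^r_{A|n,\log} = W_m\Omega^r_{A,\log} \cap W_m\Omega^r_{A|n}$, which does \emph{not} follow from \eqref{eqn:Milnor-mod}: that isomorphism identifies $\kappa^m_r(A|n)$ with the image $W_m\Omega^r_{A|n,\log}$ of $\dlog$, but says nothing about which elements of $W_m\Omega^r_{A,\log}$ happen to lie in the submodule $W_m\Omega^r_{A|n}$. (You have also misread \lemref{lem:Kato-1}(2): it gives $K_r(A)/p^m = \Fil_1 K_r(K)/p^m$ for $r \ge 2$, not $K_r(A)/p^m \cong K_r(K)/p^m$, and it is silent for $r \le 1$.) The same unjustified identity is exactly what you invoke for the base case $m=1$, so the induction cannot start; in fact this intersection statement is essentially the content of the lemma itself.

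Your inductive step has separate problems. For $r \ge 1$ the kernel of $R$ on $W_m\Omega^r$ is $V^{m-1}\Omega^r + dV^{m-1}\Omega^{r-1}$, not just the image of $V^{m-1}$, so your displayed sequence is not short exact; moreover the modulus $\lfloor n/p\rfloor$ on the right is imported from the $r=0$ Kato filtration (cf.~\eqref{eqn:Matsuda-0-0}), whereas for $r\ge 1$ the sheaf $W_m\Omega^r_{X|D}$ is defined by twisting, under which $R$ preserves the divisor rather than dividing it by $p$. The paper's proof avoids all of this: after the easy observation that $1-\ov F$ annihilates $W_m\Omega^r_{X,\log}$ (hence its subsheaf $W_m\Omega^r_{X|D_{\<\n\>},\log}$), it defers the reverse inclusion to the argument of \cite[Lem.~2.3.1]{JSZ}, which already carries out the requisite filtration bookkeeping.
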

\begin{proof}
The lemma follows by comparing ~\eqref{eqn:G-comp-0} with the known case $\n = 0$ 
and using the proof of \cite[Lem.~2.3.1]{JSZ} if we show that 
$(1 - {F})(W_m\Omega^r_{X|D_{\<\n\>}, \log}) = 0$.
The latter statement follows from the stronger claim that
$(1- {F})(W_m \Omega^r_{X, \log}) = 0$. But this can be easily checked
(cf. \cite[Prop.~2.6]{Lorenzon}).
\end{proof}

\begin{cor}\label{cor:G-comp-1}
  We can replace $W_m\sG^r_{\n_+}$ by $W_m\Omega^r_{X|D_{\<{\n_+}\>}, \log}$
  in ~\eqref{eqn:Comp-3}.
\end{cor}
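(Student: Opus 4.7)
The corollary asks us to replace $W_m\sG^r_{\n_+}$ by the \'etale sheaf $W_m\Omega^r_{X|D_{\<{\n_+}\>}, \log}$ (viewed as a complex concentrated in degree $0$) in the pairing of \eqref{eqn:Comp-3}. By \lemref{lem:G-comp} we already have a natural morphism of complexes
\[
\iota \colon W_m\Omega^r_{X|D_{\<{\n_+}\>}, \log}[0] \longrightarrow W_m\sG^r_{\n_+}
\]
that is an isomorphism on $H^0$. The replacement will be valid in the derived category once we show that $\iota$ is a quasi-isomorphism in $D_\et(X)$, which is equivalent to the surjectivity as \'etale sheaves of
\[
1 - \ov F \colon W_m\Omega^r_{X|D_{\n_+}} \longrightarrow W_m\Omega^r_{X|D_{\n_+}}\big/ dV^{m-1}\Omega^{r-1}_{X|D_{\<{\n_+}\>}};
\]
once established, the pairing in \eqref{eqn:Comp-3} will factor through $\iota$ in $D_\et(X)$ and yield the stated replacement.

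I would prove the \'etale surjectivity by reducing to stalks at geometric points. Away from the support $D_\dagger$ the modulus disappears, and the claim becomes the classical Artin-Schreier-Witt surjectivity on a strictly Henselian ring, which is well known and already used in the proof of \lemref{lem:G-comp} (cf.\ \cite[Prop.~2.6]{Lorenzon}). At a geometric point over $x \in D_\dagger$, localize to the strict Henselization $R = \sO^{sh}_{X,x}$, a strictly Henselian DVR with uniformizer $\pi$, and put $n = n_x + 1$. Using the canonical vertical inclusions into the unmodulated Artin-Schreier-Witt sequence, a snake-lemma argument expresses the obstruction to surjectivity as a subquotient of the kernel of $1 - \ov F$ on $W_m\Omega^r_R/W_m\Omega^r_{R|n}$, modulo the image of the logarithmic subsheaf $W_m\Omega^r_{R, \log}$. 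Since the quotient is supported only at the closed point of $\Spec R$ and is filtered by powers of the maximal ideal on which the absolute Frobenius $\ov F$ multiplies the $\fm$-adic order by $p$, an $\fm$-adic induction shows that every element in the kernel of $1 - \ov F$ on this quotient lifts to a logarithmic form, thereby killing the obstruction.

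The main obstacle in carrying this out will be aligning the divisors on the two sides of the operator: the source involves $D_{\n_+}$ whereas the denominator $dV^{m-1}\Omega^{r-1}_{X|D_{\<{\n_+}\>}}$ involves the rescaled divisor $\<\n_+\> = p^{m-1} \n_+$. To close the snake chase, one needs the compatibilities $\ov F(W_m\Omega^r_{R|\<n\>}) \subseteq W_m\Omega^r_{R|n}$ and $dV^{m-1}(\Omega^{r-1}_{R|\<n\>}) \subseteq W_m\Omega^r_{R|n}$, both of which rest on the identity $\p \circ C = V$ recalled in \secref{sec:TDR}, combined with a direct inspection of the Teichm\"uller lift formula on the finite-length $W_m(R)$-modules appearing in the quotient. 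Once these compatibilities are in place, the stalk-wise surjectivity combines with the snake chase to yield the required quasi-isomorphism $\iota$, and hence the corollary.
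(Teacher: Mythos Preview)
You are doing far more work than the corollary requires. The word ``replace'' here does not assert that $\iota \colon W_m\Omega^r_{X|D_{\<{\n_+}\>}, \log}[0] \to W_m\sG^r_{\n_+}$ is a quasi-isomorphism; it only means that the bottom pairing in \eqref{eqn:Comp-3} can be precomposed with $\iota$ on the first factor. By \lemref{lem:G-comp} (applied with $\n_+$ in place of $\n$), the sheaf $W_m\Omega^r_{X|D_{\<{\n_+}\>}, \log}$ is precisely the kernel of the differential $1 - \ov F$ of the complex $W_m\sG^r_{\n_+}$, so $\iota$ is a genuine morphism of complexes. The cup product in \eqref{eqn:Comp-3} then restricts along $\iota$ to yield a pairing of complexes $W_m\Omega^r_{X|D_{\<{\n_+}\>}, \log} \times W_m\sM^{2-r}_{-\n} \to W_m\sH$, and the commutativity of the resulting square (with the top row of \eqref{eqn:Comp-3} unchanged) is automatic from the factorization $W_m\Omega^r_{X|D_{\<{\n_+}\>}, \log} \to W_m\sG^r_{\n_+} \to W_m\sG^r$. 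This is exactly what the paper uses afterwards (see \eqref{eqn:G-comp-2} and the construction of $\theta_\n$ in \eqref{eqn:G-comp-4}), and it follows in one line from \lemref{lem:G-comp}.

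Your stronger claim --- that $1 - \ov F$ is \'etale-locally surjective with modulus, so that $\iota$ is a quasi-isomorphism --- may well be true, but the paper neither asserts nor needs it. Your snake-lemma sketch is plausible but not fully justified as written: the interaction of $\ov F$ and $dV^{m-1}$ with the two different divisors $D_{\n_+}$ and $D_{\<\n_+\>}$ is delicate, and you have not actually verified the compatibilities you list as ``obstacles.'' In short, you have identified a reasonable question, but it is not the content of this corollary.
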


We next consider the complex
$W_m\Omega^0_{X|D,\log} = \left[Z_1 \Fil_{D} W_m \sO_X \xrightarrow{1-C}
  \Fil_{D} W_m \sO_X\right]$ for any effective divisor $D \subset X$.
We let $W_m\sF^0_\n = W_m\Omega^0_{X|D_\n,\log}$ for $\n \ge 0$.
It follows from \lemref{lem:Fil-comp} that there
is a term-wise isomorphism of complexes $W_m\sM^0_{-\un{n}} \xrightarrow{\cong}
W_m\sF^0_{\<\n\>}$. In particular, we get a commutative diagram
of cup product pairings of complexes 
\begin{equation}\label{eqn:G-comp-2}
  \xymatrix@C.8pc{
    W_m\sG^2 \ \ \times \ \ W_m\sF^0  \ar@<7ex>[d]  \ar[r]^-{\cup} &
    W_m\sH \ar@{=}[d] \\
  W_m\Omega^2_{X|D_{\<{\n_+}\>}, \log} \times W_m\sF^0_{\<\n\>}    \ar@<7ex>[u]
  \ar[r]^-{\cup} & W_m\sH.}
\end{equation}

For any $x \in X_{(0)}$, let $\tau^h_x \colon \Spec(k(x)) \inj X^h_x$ and
$\tau_x \colon \Spec(k(x)) \inj X$ be the inclusion maps, and let
$j_x \colon X^h_x \to X$ be the canonical {\'e}tale map. 
 Recall (cf. \cite[\S~3.1]{Zhao}) that any pairing of complexes of sheaves
$\sA^\cdot \otimes \sB^\cdot \to \sC^\cdot$ on $X_\et$ gives rise to a pairing
$\tau^*_x \sA^\cdot \otimes^L \tau^{!} \sB^\cdot \to
\tau^{!} \sC^\cdot$ in $D_\et(k(x))$. Applying this to ~\eqref{eqn:G-comp-2},
we get a commutative diagram of pairings 
\begin{equation}\label{eqn:G-comp-7}
  \xymatrix@C.8pc{
    \tau^*_x W_m\sG^2 \ \ \times \ \ \tau^{!}_x W_m\sF^0  \ar@<7ex>[d]
    \ar[r]^-{\cup} &  \tau^{!}_x W_m\sH \ar@{=}[d] \\
    \tau^*_x  W_m\Omega^2_{X|D_{\<{\n_+}\>}, \log} \times \tau^{!}_x
    W_m\sF^0_{\<\n\>}    \ar@<7ex>[u] \ar[r]^-{\cup} & \tau^{!}_x W_m\sH}
\end{equation}
in  $D_\et(k(x))$ which is compatible with ~\eqref{eqn:G-comp-2}.

\subsection{Relation with local duality}\label{sec:LG-dual}
For any
$\n \ge 0$, we let $W_m\sW^r_{\n}$ be defined by the short exact sequence 
\begin{equation}\label{eqn:Coker-0}
  0 \to  W_m\Omega^r_{X|D_{\n}, \log} \to W_m\Omega^r_{X,\log} \to
  W_m\sW^r_{\n} \to 0
\end{equation}
of Nisnevich (resp. {\'e}tale) sheaves on $X$. We can write
$W_m\sW^r_{\n} = \ {\underset{x \in D_\dagger}\bigoplus} (\tau_x)_* \circ
(\tau_x)^*(W_m\sW^r_{\n})$. In particular, the proper base change theorem for the
torsion {\'e}tale sheaves implies that
$H^i_\et(X, W_m\sW^r_{\n}) = {\underset{x \in D_\dagger}\bigoplus}
H^i_\et (X^h_x, W_m\sW^r_{\n})$. 
We let $W_m\sV^0_{\n} = \left[\frac{Z_1 \Fil_{D_\n}W_m\sO_X}{Z_1 W_m\sO_X}
  \xrightarrow{1-C} \frac{\Fil_{D_\n}W_m\sO_X}{W_m\sO_X}\right]$ so that we have a
term-wise short exact sequence of complexes on $X_\nis$ (resp. $X_\et$):
\begin{equation}\label{eqn:G-comp-3}
  0 \to W_m\sF^0 \to W_m\sF^0_\n \to W_m\sV^0_{\n} \to 0.
  \end{equation}

By definition of the pairing between complexes of sheaves (cf.
\cite[\S~1, p.~175]{Milne-ENS}), the pairing on the top row of ~\eqref{eqn:G-comp-2}
is equivalent to a morphism of complexes $W_m \sG^2 \to
\sHom^\cdot(W_m\sF^0, W_m \sH)$. 
Composing this with the canonical map $\sHom^\cdot(W_m\sF^0, W_m \sH) \to 
{\bf R}\sHom^\cdot(W_m\sF^0, W_m \sH)$, we get a morphism $\theta \colon
W_m\Omega^2_{X, \log} \cong W_m \sG^2 \to  
{\bf R}\sHom^\cdot(W_m\sF^0, W_m \sH)$ in $D_\et(X)$.
 By \cite[Prop.~6.6]{Spaltenstein}, 
this is equivalent to a morphism $\theta' \colon W_m\Omega^2_{X, \log}
\otimes^L W_m\sF^0 \to  W_m \sH$ in $D_\et(X)$. Similarly, the bottom row of
~\eqref{eqn:G-comp-2} gives rise to
a morphism in the derived category
$\theta_\n \colon W_m\Omega^2_{X|D_{\<{\n_+}\>}, \log} \to
{\bf R}\sHom^\cdot(W_m\sF^0_{\<\n\>}, W_m \sH)$. Moreover, we have a
commutative diagram
\begin{equation}\label{eqn:G-comp-4}
  \xymatrix@C.8pc{
    W_m\Omega^2_{X|D_{\<{\n_+}\>}, \log} \ar[r]^-{\theta_{\n}} \ar[d] &
    {\bf R}\sHom^\cdot(W_m\sF^0_{\<\n\>}, W_m \sH) \ar[d]  \\
W_m\Omega^2_{X, \log} \ar[r]^-{\theta} &  {\bf R}\sHom^\cdot(W_m\sF^0, W_m \sH).}
\end{equation}

Letting $\psi_\n$ denote the induced map between the homotopy fibers, we
get a commutative diagram of distinguished triangles in $D_\et(X, {\Z}/{p^m})$:
\begin{equation}\label{eqn:G-comp-5}
  \xymatrix@C.8pc{
    W_m \sW^2_{\<{\n_+}\>}[-1] \ar[d]_-{\psi_{\<{\n}\>}} \ar[r] &
    W_m\Omega^2_{X|D_{\<{\n_+}\>}, \log} \ar[d]_-{\theta_{\<{\n}\>}} \ar[r] &
     W_m\Omega^2_{X, \log} \ar[d]^-{\theta} \ar[r]^-{+1} & \\ 
     {\bf R}\sHom^\cdot(W_m \sV^0_{\<\n\>}, W_m \sH) \ar[r] &
     {\bf R}\sHom^\cdot(W_m\sF^0_{\<\n\>}, W_m \sH) \ar[r] &
     {\bf R}\sHom^\cdot(W_m\sF^0, W_m \sH) \ar[r]^-{+1} & .}
 \end{equation}
 
Applying ${\bf R}\Hom^\cdot({\Z}/{p^m}, (-))$ to the above diagram, and using the
 canonical isomorphism ${\bf R}\Hom^\cdot({\Z}/{p^m},
 {\bf R}\sHom^\cdot(\sB^\cdot, \sC^\cdot)) \xrightarrow{\cong}
 {\bf R}\Hom^\cdot(\sB^\cdot, \sC^\cdot)$ (cf. \cite[Prop.~6.6]{Spaltenstein})
 as well as the canonical map
 ${\bf R}^i\Hom^\cdot(\sB^\cdot, \sC^\cdot) \to
 \Hom({\bf R}^j\Hom({\Z}/{p^m}, \sB^\cdot),
{\bf R}^{i+j}\Hom({\Z}/{p^m}, \sC^\cdot))$,
 we get a commutative diagram of long exact hypercohomology sequences 
 \begin{equation}\label{eqn:G-comp-6}
  \xymatrix@C.5pc{
    0 \ar[r] & H^0(W_m\Omega^2_{X|D_{\<{\n_+}\>}, \log}) \ar[r]
    \ar[d]_-{\theta^*_{\<{\n}\>}} &
    H^0(W_m\Omega^2_{X, \log}) \ar[r] \ar[d]^-{\theta^*} &
    H^0(W_m \sW^2_{\<{\n_+}\>}) \ar[r]^-{\partial}
    \ar[d]^-{\psi^*_{\<{\n}\>}} &
    H^1(W_m\Omega^2_{X|D_{\<{\n_+}\>}, \log}) \ar[r]
    \ar[d]^-{\theta^*_{\<{\n}\>}} & \cdots \\
    0 \ar[r] & \H^2(W_m\sF^0_{\<\n\>})^\vee \ar[r] &
    \H^2(W_m\sF^0)^\vee \ar[r] & \H^1(W_m\sV^0_{\<\n\>})^\vee \ar[r] &
    \H^1(W_m\sF^0_{\<\n\>})^\vee \ar[r] & \cdots ,}
\end{equation}
where we have used the shorthand $\H^i(\sA^\cdot)$ for
$\H^i_\et(X, \sA^\cdot)$. The exactness of the bottom row at the left end
is a consequence of \lemref{lem:Coh-V1}.

If $x \in D_\dagger$, then ~\eqref{eqn:G-comp-7} gives rise to a commutative diagram
\begin{equation}\label{eqn:G-comp-8}
  \xymatrix@C.8pc{
\tau^*_x W_m\Omega^2_{X|D_{\<{\n_+}\>}, \log} \ar[r]^-{\theta_{\n,x}} \ar[d] &
{\bf R}\sHom^\cdot(\tau^{!}_x W_m\sF^0_{\<\n\>}, \tau^{!}_x
W_m \sH) \ar[d] \\
\tau^*_x W_m\Omega^2_{X, \log} \ar[r]^-{\theta_x} &
{\bf R}\sHom^\cdot(\tau^{!}_xW_m\sF^0, \tau^{!}_xW_m \sH),}
\end{equation}
which is compatible with ~\eqref{eqn:G-comp-4}.

Since $\tau^*_x$ is exact, it is easy to check that the homotopy fibers of
the left and the right vertical arrows are $\tau^*_x  W_m \sW^2_{\<{\n_+}\>}[-1]$
and ${\bf R}\sHom^\cdot(\tau^{!}_x W_m \sV^0_{\<\n\>}, \tau^{!}_x
W_m \sH)$, respectively. We also note that the maps
$j^*_x W_m \sW^2_{\<{\n_+}\>} \to (\tau^h_x)_* \tau^*_x  W_m \sW^2_{\<{\n_+}\>}$
and $(\tau^h_x)_* \tau^{!}_x W_m \sV^0_{\<\n\>} \to j^*_x W_m \sV^0_{\<\n\>}$
are isomorphisms on the {\'e}tale (and Nisnevich) site of $X^h_x$.
Since the canonical (i.e., forget support) map
$H^2_x(X, W_m\Omega^2_{X, \log}) \to H^2_\et(X, W_m\Omega^2_{X, \log})$ is bijective,
letting $\psi_{\<\n\>,x}$ denote the induced map between the mapping cones
of the vertical arrows in ~\eqref{eqn:G-comp-8}, considering the
corresponding maps between the hypercohomology and using the proper base change,
we obtain a commutative diagram 
\begin{equation}\label{eqn:G-comp-9}
  \xymatrix@C.5pc{
 0 \ar[r] & G^0(x) \ar[r]
    \ar[d]_-{\theta^*_{\<{\n}\>}} &
    H^0(W_m\Omega^2_{X^h_x, \log}) \ar[r] \ar[d]^-{\theta^*_x} &
    H^0(\tau^*_x W_m \sW^2_{\<{\n_+}\>}) \ar[r]^-{\partial}
    \ar[d]^-{\psi^*_{\<{\n}\>,x}} & G^1(x) \ar[r]
    \ar[d]^-{\theta^*_{\<{\n}\>,x}} & \cdots \\
    0 \ar[r] & \H^2_x(W_m\sF^0_{\<\n\>})^\vee \ar[r] &
    \H^2_x(W_m\sF^0)^\vee \ar[r] & \H^1_x(W_m\sV^0_{\<\n\>})^\vee \ar[r] &
    \H^1_x(W_m\sF^0_{\<\n\>})^\vee \ar[r] & \cdots }
\end{equation}
of {\'e}tale cohomology on $X^h_x$ whose rows are exact and which is compatible with
~\eqref{eqn:G-comp-6}. Here, we let
$G^i(x) = H^i_\et(X^h_x, W_m\Omega^2_{X^h_x|D_{\<{\n_+}\>}, \log})$.

\subsection{The maps $\psi_\n$ vs. $\wt{\rho}^{\n}_{K,m}$}
\label{sec:Loc-der-rec}
Our goal now is to compute the cohomology of $W_m\sW^r_{\n}$ and $W_m\sV^0_\n$,
and show that $\psi^*_{\<\n\>, x}$ coincides with the reciprocity map
$\wt{\rho}^{n_x+1}_{K,m}$ of \corref{cor:Local-inj-6}.
We write $D_\n = {\underset{x \in D_\dagger}\amalg} D_{\n, x}$.

\begin{lem}\label{lem:Coker-1}
For $x \in D_\n$, we have the following.
\begin{enumerate}
\item
$H^i_x(X, W_m\sW^r_\n) \xrightarrow{\cong} H^i_\et(X^h_x, \sW^r_\n)$. 
\item
\[
H^0_\et(X^h_x, W_m\sW^r_{\n}) = \left\{\begin{array}{ll}
{\Z}/{p^m} & \mbox{if $r = 0$} \\
{\sO^\times(D_{\n, x})}/{p^m} & \mbox{if $r = 1$} \\
\varpi^m_r(A^h_x|n_x)  & \mbox{if $r \ge 2$}.
\end{array}\right.
\]
\item
$H^i_\et(X^h_x, W_m\sW^r_{\n}) = 0$ for $i \ge 1$ and $r \ge 2$.
\end{enumerate}
\end{lem}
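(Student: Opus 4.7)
The plan begins with the observation that the morphism $W_m\Omega^r_{X|D_{\n}, \log} \to W_m\Omega^r_{X, \log}$ is an isomorphism (or a quasi-isomorphism, when $r = 0$) away from $D_{\n}$, so that $W_m\sW^r_{\n}$ has support contained in the finite set of closed points of $D_{\n}$. Part (1) will then follow from excision $H^i_x(X, W_m\sW^r_{\n}) \cong H^i_x(X^h_x, W_m\sW^r_{\n})$ combined with the identification $H^i_x(X^h_x, \sF) \cong H^i_\et(X^h_x, \sF)$ for any (complex of) sheaves $\sF$ on $X^h_x$ supported at the closed point $x$, which holds because the restriction of $\sF$ to the open complement $\Spec(K^h_x)$ vanishes.

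For part (2), I would apply $H^*_\et(X^h_x, -)$ to the distinguished triangle $W_m\Omega^r_{X|D_{\n}, \log} \to W_m\Omega^r_{X, \log} \to W_m\sW^r_{\n} \xrightarrow{+1}$ and analyze case by case. When $r = 0$, the Artin-Schreier-Witt identification $W_m\Omega^0_{X, \log} \simeq {\Z}/{p^m}$ gives $H^0_\et(X^h_x, W_m\Omega^0_{X, \log}) = {\Z}/{p^m}$, while the corresponding $H^0$ of the complex $W_m\Omega^0_{X|D_{\n}, \log}$ vanishes because any such section would need to be a constant in ${\Z}/{p^m}$ lying inside the proper subsheaf $\Fil_{D_{\n}} W_m\sO_X$. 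When $r = 1$, the $\dlog$ identifications $\sO_X^\times/{p^m} \cong W_m\Omega^1_{X, \log}$ and $(1+\sI_{D_{\n}})/{p^m} \cong W_m\Omega^1_{X|D_{\n}, \log}$ produce $H^0_\et(X^h_x, W_m\sW^1_{\n}) \cong (A^h_x/\fm_x^{n_x})^\times/{p^m} = \sO^\times(D_{\n, x})/{p^m}$. When $r \ge 2$, the isomorphism $\sK^m_{r, X|D_{\n}} \xrightarrow{\cong} W_m\Omega^r_{X|D_{\n}, \log}$ from \eqref{eqn:Milnor-mod}, combined with Morrow's identification $\sK_{r, X}/{p^m} \cong W_m\Omega^r_{X, \log}$, will identify $H^0_\et(X^h_x, W_m\sW^r_{\n})$ with the cokernel $K_r(A^h_x)/(\kappa^m_r(A^h_x|n_x) + p^m K_r(A^h_x)) = \varpi^m_r(A^h_x|n_x)$ by definition.

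For part (3), the case $r > 2$ is trivial since $\dim X = 2$ forces $W_m\Omega^r_{X, \log} = 0$, and hence $W_m\sW^r_{\n} = 0$. The essential case is $r = 2$. Here I would first invoke the vanishing $H^1_\et(X^h_x, W_m\Omega^2_{X, \log}) = 0$ established in the proof of \propref{prop:Nis-et-ICG} via Shiho's comparison theorems \cite{Shiho} and the exact sequence \eqref{eqn:Nis-et-ICG-2}, and then extend this to $H^1_\et(X^h_x, W_m\Omega^2_{X|D_{\n}, \log}) = 0$ by an analogous argument applied to the Gersten-type resolution of $\sK^m_{2, X|D_{\n}}$; the long exact sequence then yields $H^1_\et(X^h_x, W_m\sW^2_{\n}) = 0$. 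For $i \ge 2$, the support condition at the closed point permits the identification $H^i_\et(X^h_x, W_m\sW^2_{\n}) \cong H^i_\et(\Spec k(x), \iota_x^* W_m\sW^2_{\n})$, which vanishes because $k(x)$ has positive characteristic and hence $p$-cohomological dimension at most one by Artin-Schreier.

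The principal obstacle will be verifying the vanishing $H^1_\et(X^h_x, W_m\Omega^2_{X|D_{\n}, \log}) = 0$, which requires adapting Shiho's comparison of de Rham-Witt and $p$-adic {\'e}tale cohomology to the modulus setting while carefully tracking the compatibility with the filtrations on Milnor $K$-theory and logarithmic Hodge-Witt sheaves appearing in \eqref{eqn:Milnor-mod}.
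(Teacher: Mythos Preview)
Your treatment of parts (1) and (2) is fine and matches the paper, which simply says these are ``clear from the definition of $W_m\sW^r_{\n}$''.

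For part (3), however, the paper takes a much more direct route than your long-exact-sequence argument, and your argument as written has a gap. The paper invokes \cite[Prop.~1.1.9]{JSZ}, which says that for $r\ge 2$ the sheaf $W_m\sW^r_{\n}$ admits a finite decreasing filtration whose graded quotients are \emph{coherent} $\sO_{D_{\n}}$-modules. Since $D_{\n}$ is zero-dimensional, {\'e}tale cohomology of quasi-coherent sheaves agrees with Zariski cohomology, and the latter vanishes in positive degree on an affine (in particular, artinian local) scheme. This immediately gives $H^i_\et(X^h_x, W_m\sW^r_{\n})=0$ for all $i\ge 1$, with no need to analyze $W_m\Omega^2_{X|D_{\n},\log}$ at all. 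The ``principal obstacle'' you identify simply does not arise.

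The gap in your approach is this: from the long exact sequence
\[
H^1_\et(X^h_x, W_m\Omega^2_{X|D_{\n},\log}) \to H^1_\et(X^h_x, W_m\Omega^2_{X,\log}) \to H^1_\et(X^h_x, W_m\sW^2_{\n}) \to H^2_\et(X^h_x, W_m\Omega^2_{X|D_{\n},\log}),
\]
the vanishing of the first two terms does \emph{not} yield $H^1_\et(X^h_x, W_m\sW^2_{\n})=0$; it only gives an injection of $H^1_\et(X^h_x, W_m\sW^2_{\n})$ into $H^2_\et(X^h_x, W_m\Omega^2_{X|D_{\n},\log})$. You would additionally need the map $H^2_\et(X^h_x, W_m\Omega^2_{X|D_{\n},\log}) \to H^2_\et(X^h_x, W_m\Omega^2_{X,\log})$ to be injective, which is a separate statement you have not addressed. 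Your $i\ge 2$ argument via $\mathrm{cd}_p(k(x))\le 1$ is correct, but it does not help close this $i=1$ gap. The coherent-filtration argument sidesteps all of this.
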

\begin{proof}
  The items (1)  and (2) are clear from the definition of $W_m\sW^r_{\n}$ while
 (3) follows from \cite[Prop.~1.1.9]{JSZ} which says
  that $ W_m\sW^r_{\n}$ has a finite decreasing filtration whose graded quotients
  are coherent sheaves on $D_\n$. 
\end{proof}

\begin{lem}\label{lem:Coh-V0}
  For $x \in D_\dagger$, we have $H^i_\et(X^h_x, Z_1 \Fil_{D_\n} W_m \sO_X) =
  H^i_\et(X^h_x, \Fil_{D_\n} W_m \sO_X) = 0$ for $i \ge 1$.
  In particular,
  \[
\H^i_\et(X^h_x, W_m\sF^0_\n) = \left\{\begin{array}{ll}
{\Z}/{p^m} & \mbox{if $i = 0$} \\
\Fil_{n_x} H^1_\et(K^h_x, {\Z}/{p^m}) & \mbox{if $i = 1$} \\
 0 & \mbox{if $i > 1$.}
\end{array}\right.
\]
\end{lem}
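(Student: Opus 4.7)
My plan is to argue in three stages: first establish vanishing of higher cohomology for each term of the two-term complex $W_m\sF^0_\n$; second reduce $\H^*_\et(X^h_x, W_m\sF^0_\n)$ to the kernel and cokernel of $1-C$ on global sections; and third identify these via a key Witt-vector identity relating the Cartier operator $C$ to the Frobenius $\ov F$ together with \lemref{lem:Matsuda-0}. For the vanishing, I will observe that $\Fil_{D_\n}W_m\sO_X$ is a quasi-coherent $W_m\sO_X$-module: by \lemref{lem:Fil-comp} it is sandwiched between $W_m\sO_X$ and the invertible $W_m\sO_X$-module $W_m\sO_X(D_\n)$, and the bounded-pole condition \eqref{eqn:Log-fil-2} cutting it out is $\sO_X$-linear at each point of $D_\dagger$. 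The subsheaf $Z_1\Fil_{D_\n}W_m\sO_X$, being the intersection with the image of the Frobenius $F\colon W_{m+1}\sO_X \to W_m\sO_X$, is likewise quasi-coherent via its Frobenius-twisted $\sO_X$-structure. Since $X^h_x = \Spec(A^h_x)$ is affine, Grothendieck's comparison of {\'e}tale and Zariski cohomology for quasi-coherent sheaves, together with the vanishing of higher Zariski cohomology on an affine scheme, yields the required $H^i_\et(X^h_x, -) = 0$ for $i \ge 1$ on both sheaves.

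This vanishing degenerates the hypercohomology spectral sequence for the two-term complex $W_m\sF^0_\n$, so $\H^i_\et(X^h_x, W_m\sF^0_\n)$ is computed by the cohomology of the two-term complex of global sections. The definition of $\Fil_D W_m\sO_X$ in \eqref{eqn:Log-fil-2} directly gives $\Gamma(X^h_x, \Fil_{D_\n}W_m\sO_X) = \Fil_{n_x}W_m(K^h_x) =: M_\n$ and $\Gamma(X^h_x, Z_1\Fil_{D_\n}W_m\sO_X) = F(W_{m+1}(K^h_x)) \cap M_\n =: Z_\n$. Hence $\H^i = 0$ for $i \ge 2$ is immediate, and the task reduces to identifying $\Ker(1-C\colon Z_\n \to M_\n)$ with $\Z/{p^m}$ and its cokernel with $\Fil_{n_x} H^1_\et(K^h_x, \Z/{p^m})$.

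The crux is the Witt-vector identity $(1-C)(F(\un{\wt b})) = -(1-\ov F)(R(\un{\wt b}))$ for $\un{\wt b} \in W_{m+1}(K^h_x)$, obtained by combining $C \circ F = R$ (a consequence of the defining relation $\p \circ C = V$ together with $V F = p = \p R$) with the degree-zero relation $\ov F \circ R = F$ recalled in \S~\ref{sec:TDR}. For the kernel, if $\un{c} = F(\un{\wt b}) \in Z_\n$ satisfies $(1-C)\un{c} = 0$, then $\un{c} = R(\un{\wt b}) =: \un{b}$ and $(1-\ov F)\un{b} = 0$, so the Artin--Schreier--Witt sequence \eqref{eqn:DRC-3} forces $\un{b} \in W_m(\F_p) = \Z/{p^m}$. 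For the cokernel, the identity gives $(1-C)(Z_\n) \subseteq M_\n \cap (1-\ov F)(W_m(K^h_x))$; conversely, for $\un{a} = (1-\ov F)(\un{b}) \in M_\n$, \lemref{lem:Matsuda-0} yields $\un{b} \in \Fil_{\lfloor n_x/p\rfloor}W_m(K^h_x)$, so any lift $\un{\wt b}\in W_{m+1}(K^h_x)$ satisfies $F(\un{\wt b}) = \ov F(\un{b}) \in M_\n \cap Z_1 W_m(K^h_x) = Z_\n$ and $\un{a} = (1-C)(-F(\un{\wt b}))$. Combined with the Artin--Schreier--Witt presentation $H^1_\et(K^h_x, \Z/{p^m}) = W_m(K^h_x)/(1-\ov F)W_m(K^h_x)$ and the definition of $\Fil_{n_x}H^1_\et$ as the image of $M_\n$, this yields the claim for $\H^1$; I expect the main subtlety to be the careful bookkeeping of the Witt-vector operators $F, \ov F, C, R, \p$ and their compatibility with the valuation filtration.
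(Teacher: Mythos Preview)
Your argument is correct and follows essentially the same three-step outline as the paper, but the execution differs in two places worth noting. For the vanishing of $H^i_\et(X^h_x,-)$ on the two terms, the paper does not invoke quasi-coherence of $\Fil_{D_\n}W_m\sO_X$ as a $W_m\sO_X$-module directly; instead it runs an induction on $m$ via the $V^{m-1}$--$R$ exact sequences \eqref{eqn:Coh-V0-0} and \eqref{eqn:Coh-V0-1}, which exhibit $\Fil_{D_\n}W_m\sO_X$ and $Z_1\Fil_{D_\n}W_m\sO_X$ as iterated extensions of honest coherent $\sO_X$-modules (namely $\sO_X(D_\n/p^j)$ and their Frobenius twists). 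This is more self-contained than your sketch: your phrase ``the bounded-pole condition is $\sO_X$-linear'' is not quite right (there is no $\sO_X$-module structure on Witt vectors), and the assertion that $\Fil_{D_\n}W_m\sO_X$ is a $W_m\sO_X$-submodule, while true, is exactly what the inductive filtration makes transparent. The paper's exact sequences are also reused later in \corref{cor:F-V-R}. For the identification of $\H^0$ and $\H^1$, your explicit identity $(1-C)\circ F = -(1-\ov F)\circ R$ on $W_{m+1}(K^h_x)$ is a clean way to package what the paper records as the equality $(1-\ov F)(\Fil_{\lfloor n_x/p\rfloor}W_m(K^h_x)) = (1-C)(Z_1\Fil_{n_x}W_m(K^h_x))$; the paper deduces $\H^0 \cong \Z/p^m$ by a sandwich argument comparing with $W_m\sF^0$ and $K^h_x$ (see \eqref{eqn:Coh-V0-2}), whereas your direct kernel computation via the same identity is an equally valid alternative.
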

\begin{proof}
  One deduces from ~\eqref{eqn:Matsuda-0-0} that there is a commutative diagram
  of exact sequences
  \begin{equation}\label{eqn:Coh-V0-0}
    \xymatrix@C.8pc{
      0 \ar[r] & \sO_X(D/p) \ar[r]^-{V^{m-1}} \ar[d]_-{\ov{F}} &
      \Fil_{D/p}W_m\sO_X \ar[r]^-{R} \ar[d]^-{\ov{F}} & 
      \Fil_{D/{p^2}}W_{m-1}\sO_X \ar[d]^-{\ov{F}} \ar[r] & 0 \\
 0 \ar[r] & \sO_X(D) \ar[r]^-{V^{m-1}} &
      \Fil_{D}W_m\sO_X \ar[r]^-{R} & 
      \Fil_{D/p}W_{m-1}\sO_X \ar[r] & 0.}
    \end{equation}
Since $\ov{F}$ is injective and $Z_1 W_m \sO_X = {\rm Image}(F \colon
W_{m+1}\sO_X \to W_m\sO_X) = {\rm Image}(\ov{F} \colon W_{m}\sO_X \to W_m\sO_X)$,
it follows that the sequence
\begin{equation}\label{eqn:Coh-V0-1}
0 \to Z_1\sO_X(D) \xrightarrow{V^{m-1}} Z_1\Fil_{D}W_m\sO_X \xrightarrow{R}  
Z_1\Fil_{D/p}W_{m-1}\sO_X \to 0
\end{equation}
is exact on $X_\et$.

Since $Z_1\sO_X(D) \cong (\psi_* Z_1\sO_X)(D/p)$, it follows
that $Z_1\sO_X(D)$ is a coherent sheaf on $X$. In particular,
$H^i_\et(X^h_x, Z_1\sO_X(D)) = 0$ for all $i \ge 1$. Using induction on $m$ and
the long exact cohomology sequence associated to ~\eqref{eqn:Coh-V0-1},
we get $H^i_\et(X^h_x, Z_1 \Fil_{D_\n} W_m \sO_X) = 0$ for $i \ge 1$.
Similarly, the bottom row of ~\eqref{eqn:Coh-V0-0} implies that
$H^i_\et(X^h_x, \Fil_{D_\n} W_m \sO_X) = 0$ for $i \ge 1$.
It follows that $\H^i_\et(X^h_x, W_m\sF^0_\n) = 0$ for $i > 1$
and $\H^1_\et(X^h_x, W_m\sF^0_\n)$ is the cokernel of the map
$H^0_\et(X^h_x, Z_1\Fil_{D_\n} W_m \sO_X) \to H^0_\et(X^h_x, \Fil_{D_\n} W_m \sO_X)$.

Now, we note that
  $H^0_\et(X^h_x, Z_1\Fil_{D_\n} W_m \sO_{X}) = Z_1\Fil_{D_\n} W_m(K^h_x)$ and
  $H^0_\et(X^h_x, \Fil_{D_\n} W_m \sO_{X})$ \
  $= \Fil_{n_x} W_m(K^h_x)$.
  Since $(1-\ov{F})(\Fil_{\lfloor {n_x}/{p} \rfloor} W_m(K^h_x))$ \
  $= (1 -C)(Z_1 \Fil_{n_x} W_m(K^h_x))$, it follows from 
~\eqref{eqn:p-tor-2} and the first statement of the lemma that 
$\H^1_\et(X^h_x, W_m\sF^0_\n) \cong \Fil_{n_x} H^1_\et(K^h_x, {\Z}/{p^m})$.
It remains to show that $\H^0_\et(X^h_x, W_m\sF^0_\n) \cong {\Z}/{p^m}$.
But this follows because, in the sequence of canonical maps
\begin{equation}\label{eqn:Coh-V0-2}
{\Z}/{p^m} \cong  \H^0_\et(X^h_x, W_m\sF^0) \inj \H^0_\et(X^h_x, W_m\sF^0_\n) \inj
  \H^0_\et(K^h_x, W_m\sF^0_\n) \xrightarrow \cong
  {\Z}/{p^m},
  \end{equation}
the composite map is an isomorphism. This concludes the proof.
\end{proof}

\begin{cor}\label{cor:Coh-V0-00}
  We have
  \[
\H^2_{x}(X, W_m\sF^0_\n) \cong
\frac{H^1_\et(K^h_x, {\Z}/{p^m})}{\Fil_{n_x} H^1_\et(K^h_x, {\Z}/{p^m})}; \ \ \
\H^i_{x}(X, W_m\sF^0_\n)  = 0 \ \ \mbox{for} \ \ i \neq 2.
\]
\end{cor}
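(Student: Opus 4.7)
The plan is to deduce the corollary from \lemref{lem:Coh-V0} via the long exact sequence of local cohomology on $X^h_x$. By excision, $\H^i_x(X, W_m\sF^0_\n) \cong \H^i_x(X^h_x, W_m\sF^0_\n)$, and there is the long exact localization sequence
\begin{equation*}
\cdots \to \H^{i-1}_\et(K^h_x, W_m\sF^0) \to \H^i_x(X^h_x, W_m\sF^0_\n) \to \H^i_\et(X^h_x, W_m\sF^0_\n) \xrightarrow{\res_i} \H^i_\et(K^h_x, W_m\sF^0) \to \cdots
\end{equation*}
where on $\Spec(K^h_x)$ the modulus becomes trivial, so $W_m\sF^0_\n$ restricts to $W_m\sF^0 = [Z_1 W_m\sO \xrightarrow{1-C} W_m\sO]$.

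The auxiliary step is to compute the generic-point hypercohomology, for which I follow the same strategy used for $X^h_x$ in \lemref{lem:Coh-V0}. Using ~\eqref{eqn:Coh-V0-0} at the generic point together with the Hilbert 90 analog $H^i_\et(K^h_x, \G_a) = 0$ for $i \ge 1$, one shows by induction on $m$ that $H^i_\et(K^h_x, Z_1 W_m\sO) = H^i_\et(K^h_x, W_m\sO) = 0$ for $i \ge 1$. Hence $\H^i_\et(K^h_x, W_m\sF^0)$ reduces to the cohomology of the two-term complex $[Z_1 W_m(K^h_x) \xrightarrow{1-C} W_m(K^h_x)]$. Combined with the identity $(1-C)(Z_1 W_m(K^h_x)) = (1-\ov{F})(W_m(K^h_x))$ (the $n_x=0$ case of the identity used in the proof of \lemref{lem:Coh-V0}) and the Artin-Schreier-Witt exact sequence, this yields $\H^0_\et(K^h_x, W_m\sF^0) \cong {\Z}/{p^m}$, $\H^1_\et(K^h_x, W_m\sF^0) \cong H^1_\et(K^h_x, {\Z}/{p^m})$, and $\H^i_\et(K^h_x, W_m\sF^0) = 0$ for $i \ge 2$.

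Now I feed the values from \lemref{lem:Coh-V0} and from the previous paragraph into the long exact sequence. The restriction $\res_0$ is the identity ${\Z}/{p^m} \to {\Z}/{p^m}$ already witnessed in ~\eqref{eqn:Coh-V0-2}, and $\res_1$ identifies, by construction, with the canonical inclusion $\Fil_{n_x} H^1_\et(K^h_x, {\Z}/{p^m}) \hookrightarrow H^1_\et(K^h_x, {\Z}/{p^m})$. The first gives $\H^0_x = 0$; the injectivity of $\res_1$ forces $\H^1_x = 0$ and its cokernel yields $\H^2_x \cong H^1_\et(K^h_x, {\Z}/{p^m})/\Fil_{n_x} H^1_\et(K^h_x, {\Z}/{p^m})$; finally, for $i \ge 3$ both adjacent groups in the long exact sequence vanish, so $\H^i_x = 0$.

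The only delicate point is the identification of $\res_1$ with the canonical inclusion of filtered cohomology; this will follow by inspecting the proof of \lemref{lem:Coh-V0}, where the isomorphism $\H^1_\et(X^h_x, W_m\sF^0_\n) \cong \Fil_{n_x} H^1_\et(K^h_x, {\Z}/{p^m})$ is obtained precisely from the inclusion of subcomplexes $[Z_1 \Fil_{n_x} W_m\sO \to \Fil_{n_x} W_m\sO] \hookrightarrow [Z_1 W_m\sO \to W_m\sO]$ at the level of sections over $K^h_x$.
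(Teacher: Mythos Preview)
Your proof is correct and follows exactly the route indicated by the paper: the paper's own proof simply says the corollary follows from \lemref{lem:Coh-V0} via the localization sequence for $X^h_x$, and you have written out that argument in detail, including the computation of $\H^i_\et(K^h_x, W_m\sF^0)$ and the identification of the restriction maps.
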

\begin{proof}
Follows immediately from \lemref{lem:Coh-V0} using the localization
  sequence
  \[
    \cdots \to \H^i_x(X, W_m\sF^0_\n) \to  \H^i_\et(X^h_x, W_m\sF^0_\n) \to
    \H^i_\et(K^h_x, W_m\sF^0_\n) \to \H^{i+1}_x(X, W_m\sF^0_\n) \to \cdots .
  \]
  \end{proof}

Using ~\eqref{eqn:Coh-V0-0}, ~\eqref{eqn:Coh-V0-1} and the fact that the
operators $V$ and $C$ commute with each other on $j_* W_m\sO_{X^o}$, we get the
following.
\begin{cor}\label{cor:F-V-R}
  There is an exact sequence of complexes
  \[
    0 \to \sF^0_\n \xrightarrow{V^{m-1}} W_m\sF^0_\n \xrightarrow{R}
    W_{m-1}\sF^0_{{\n}/p} \to 0.
  \]
\end{cor}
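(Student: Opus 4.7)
The plan is to read the asserted sequence as a short exact sequence of two-term complexes concentrated in degrees $0$ and $1$, and to verify separately (i) that in each degree the row of sheaves is a short exact sequence on $X_\et$, and (ii) that the horizontal arrows $V^{m-1}$ and $R$ intertwine the differential $1-C$, so that they really define morphisms of two-term complexes.

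First I would unpack the definitions. By \lemref{lem:Fil-comp}, $\Fil_{D_\n}W_1\sO_X = \sO_X(D_\n)$ and therefore $\sF^0_\n = [Z_1\sO_X(D_\n) \xrightarrow{1-C} \sO_X(D_\n)]$. With this identification, the degree-$1$ row of the candidate sequence is exactly the bottom row of \eqref{eqn:Coh-V0-0}, and the degree-$0$ row is exactly \eqref{eqn:Coh-V0-1}. Both have already been shown to be short exact sequences of {\'e}tale sheaves, so termwise exactness comes for free.

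It remains to check that $V^{m-1}$ and $R$ commute with $1-C$. Commutativity with the identity is trivial, so the content is that $V^{m-1}$ and $R$ commute with $C$ on the relevant subsheaves of $j_*W_m\sO_{X^o}$. This is precisely the assertion recorded in the line immediately preceding the corollary statement; it can be extracted from the characterizing identity $\p C = V$ together with $\p V = V\p$, $\p R = R\p$, and the injectivity of $\p$ (cf.\ \cite[Prop.~1.1.4]{Zhao}, \cite[Lem.~3.14]{Gupta-Krishna-Duality}). Once this is in hand, one only needs to remark that the restrictions of $V^{m-1}$ and $R$ to $Z_1\Fil_{D_\n}W_m\sO_X$ and $\Fil_{D_\n}W_m\sO_X$ land in the analogous subsheaves of $W_{m-1}\sO_X$ and of $\sO_X$, which is built into \eqref{eqn:Coh-V0-0}--\eqref{eqn:Coh-V0-1}.

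There is no real obstacle here: the whole proof is just the observation that the two exact rows already established in \eqref{eqn:Coh-V0-0} and \eqref{eqn:Coh-V0-1} assemble into a short exact sequence of complexes once one knows that $1-C$ commutes with $V^{m-1}$ and $R$. The only item to be a little careful about is the bookkeeping of the filtrations $\Fil_{D_\n}$ and $Z_1$ under reduction $R$, where the level of the filtration drops by a factor of $p$ (hence the appearance of $\n/p$ on the right-hand term), but this is built into the definition of $\Fil_{D}W_m\sO_X$ via Teichm\"uller lifts and into the definition of $Z_1$ as ${\rm Image}(\ov{F})$.
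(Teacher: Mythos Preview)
Your proposal is correct and follows exactly the paper's approach: the paper's entire proof is the single sentence preceding the corollary, which invokes \eqref{eqn:Coh-V0-0}, \eqref{eqn:Coh-V0-1}, and the commutativity of $V$ and $C$ on $j_*W_m\sO_{X^o}$, and your write-up simply unpacks this. One small remark: your derivation of the commutation of $R$ with $C$ via the identities ``$\p C=V$, $\p V=V\p$, $\p R=R\p$'' is a bit awkward (the last identity does not type-check as written, since $\p$ raises the Witt level while $R$ lowers it); it is cleaner to note that for $r=0$ the Cartier operator on $Z_1W_m\sO=\operatorname{Image}(\ov{F})$ is just $\ov{F}^{-1}$, and $\ov{F}$ visibly commutes with both $V$ and $R$ on Witt vectors.
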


\begin{lem}\label{lem:Coh-V1}
    We have
    \[
      \H^i_\et(X, W_m\sV^0_{\n}) = \left\{\begin{array}{ll}
    {\underset{x \in D_\dagger}\bigoplus}
\frac{\Fil_{n_x} H^1_\et(K^h_x, {\Z}/{p^m})}{\Fil_0 H^1_\et({K}^h_x, {\Z}/{p^m})}
& \mbox{if $i = 1$} \\
 0 & \mbox{otherwise.}
\end{array}\right.
\]
\end{lem}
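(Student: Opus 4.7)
The plan is to reduce the computation to a local calculation at each point of $D_\dagger$, and then exploit the defining short exact sequence of complexes
\[
0 \to W_m\sF^0 \to W_m\sF^0_\n \to W_m\sV^0_\n \to 0
\]
coming from \eqref{eqn:G-comp-3}. Both terms of $W_m\sV^0_\n$ are quotient sheaves $\Fil_{D_\n}W_m\sO_X/W_m\sO_X$ and $Z_1\Fil_{D_\n}W_m\sO_X/Z_1 W_m\sO_X$, each supported on the finite discrete closed subscheme $D_\dagger \subset X$. Hence by excision (or proper base change for torsion \'etale sheaves supported at isolated points) one gets a direct sum decomposition
\[
\H^i_\et(X, W_m\sV^0_\n) \cong \bigoplus_{x \in D_\dagger} \H^i_\et(X^h_x, W_m\sV^0_\n),
\]
so it suffices to compute the right-hand side at each $x \in D_\dagger$.

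Next, I would apply $\H^\bullet_\et(X^h_x, -)$ to the displayed short exact sequence and analyze the resulting long exact hypercohomology sequence. The outer groups are computed by \lemref{lem:Coh-V0}, which applies equally to $W_m\sF^0 = W_m\sF^0_{\un{0}}$, since $D_{\un{0}} = 0$ and $\Fil_0 W_m\sO_X = W_m\sO_X$. In particular, for $i \ge 2$ both outer groups vanish, forcing $\H^i_\et(X^h_x, W_m\sV^0_\n) = 0$. In degrees $0$ and $1$ the sequence reads
\[
0 \to \Z/p^m \to \Z/p^m \to \H^0_\et(X^h_x, W_m\sV^0_\n) \to \Fil_0 H^1_\et(K^h_x, \Z/p^m) \to \Fil_{n_x} H^1_\et(K^h_x, \Z/p^m) \to \H^1_\et(X^h_x, W_m\sV^0_\n) \to 0.
\]

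To conclude, one verifies that the first arrow $\Z/p^m \to \Z/p^m$ is the identity, which follows from \eqref{eqn:Coh-V0-2} by functoriality: the inclusion of complexes $W_m\sF^0 \hookrightarrow W_m\sF^0_\n$ induces the identity on the common constant subsheaf $\Z/p^m$ in degree $0$. Similarly, the map between the $\Fil_0$ and $\Fil_{n_x}$ terms is the canonical inclusion, by naturality of the construction of \lemref{lem:Coh-V0}. One reads off $\H^0_\et(X^h_x, W_m\sV^0_\n) = 0$ and $\H^1_\et(X^h_x, W_m\sV^0_\n) \cong \Fil_{n_x} H^1_\et(K^h_x, \Z/p^m) / \Fil_0 H^1_\et(K^h_x, \Z/p^m)$, completing the proof. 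No step should present a serious obstacle; the only item requiring a moment's care is the identification of the connecting homomorphisms, which is routine diagram chasing once \eqref{eqn:Coh-V0-2} and the naturality of \lemref{lem:Coh-V0} in $\n$ are in hand.
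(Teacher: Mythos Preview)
Your proposal is correct and follows essentially the same approach as the paper: both reduce to the local computation at each $x \in D_\dagger$ via the support of $W_m\sV^0_\n$, then run the long exact hypercohomology sequence associated to \eqref{eqn:G-comp-3}, using \lemref{lem:Coh-V0} (and specifically \eqref{eqn:Coh-V0-2}) to identify the maps in low degrees and conclude. The paper phrases the degree-zero step as ``$\alpha_0$ is a bijection'' and the degree-one step as ``$\alpha_1$ is injective'', but this is exactly your identification of the maps $\Z/p^m \to \Z/p^m$ and $\Fil_0 \hookrightarrow \Fil_{n_x}$.
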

\begin{proof}
Since 
$\H^i_\et(X,  W_m\sV^0_{\n}) \cong {\underset{x \in D_\dagger}\bigoplus}
\H^i_\et(X^h_x, W_m\sV^0_{\n})$, we can replace $X$ by $X^h_x$
in order to prove the lemma. We now consider the long exact sequence
  \begin{equation}\label{eqn:Coh-V1-0}
\cdots \to \H^0_\et(X^h_x, W_m\sF^0) \xrightarrow{\alpha_0} \H^0_\et(X^h_x, W_m\sF^0_\n)
\to \H^0_\et(X^h_x, W_m\sV^0_{\n}) \hspace*{3cm}
\end{equation}
\[
  \hspace*{9cm} \to \H^{1}_\et(X^h_x, W_m\sF^0) \xrightarrow{\alpha_1} \cdots .
\]
It follows from ~\eqref{eqn:Coh-V0-2}
that $\alpha_0$ is a bijection between two groups, each isomorphic to
  ${\Z}/{p^m}$.
  \lemref{lem:Coh-V0} implies that $\alpha_1$ is injective. It follows that
  $\H^0_\et(X^h_x, W_m\sV^0_{\n}) = 0$. We now apply \lemref{lem:Coh-V0} and
  ~\eqref{eqn:Coh-V1-0} to conclude the proof.
\end{proof}

\begin{prop}\label{prop:KSL}
  For every $\n \ge 0$, there is a canonical isomorphism
  \[
 \Fil_{D_{\n_+}} H^1_\et(K, {\Z}/{p^m}) \xrightarrow{\cong} \H^1_\et(X, W_m\sF^0_{\n}).
\]
\end{prop}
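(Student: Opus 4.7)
The plan is to apply the localization triangle for the open-closed decomposition $(j \colon X^o \inj X, i \colon D_\dagger \inj X)$ to the complex $W_m\sF^0_\n$ and to identify the resulting long exact sequence in hypercohomology with the defining sequence of $\Fil_{D_{\n_+}} H^1_\et(K, {\Z}/{p^m})$ from \eqref{eqn:Fil-U-p}.

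First, I would establish that $j^* W_m\sF^0_\n \simeq ({\Z}/{p^m})[0]$ in $D_\et(X^o)$. Since $\Fil_{D_\n} W_m \sO_X|_{X^o} = W_m \sO_{X^o}$, the restriction is the complex $[Z_1 W_m \sO_{X^o} \xrightarrow{1-C} W_m \sO_{X^o}]$. Using that $\ov{F}$ gives an isomorphism $W_m \sO_{X^o} \xrightarrow{\cong} Z_1 W_m \sO_{X^o}$ together with the Cartier relation $C \circ \ov{F} = \id$, this complex is quasi-isomorphic (up to sign) to the standard Artin-Schreier-Witt complex $[W_m \sO_{X^o} \xrightarrow{1-\ov{F}} W_m \sO_{X^o}]$, which represents $({\Z}/{p^m})[0]$. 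Consequently, $\H^q_\et(X^o, j^* W_m\sF^0_\n) = H^q_\et(X^o, {\Z}/{p^m})$.

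Combining the localization triangle with excision (which identifies $\H^q_{D_\dagger}(X, W_m\sF^0_\n)$ with $\bigoplus_{x \in D_\dagger} \H^q_x(X^h_x, W_m\sF^0_\n)$) and \corref{cor:Coh-V0-00} (which supplies $\H^1_{D_\dagger}(X, W_m\sF^0_\n) = 0$ together with $\H^2_{D_\dagger}(X, W_m\sF^0_\n) \cong \bigoplus_{x \in D_\dagger} H^1_\et(K^h_x, {\Z}/{p^m})/\Fil_{n_x} H^1_\et(K^h_x, {\Z}/{p^m})$) yields the short exact sequence
\[
0 \to \H^1_\et(X, W_m\sF^0_\n) \to H^1_\et(X^o, {\Z}/{p^m}) \xrightarrow{\delta} \bigoplus_{x \in D_\dagger} \frac{H^1_\et(K^h_x, {\Z}/{p^m})}{\Fil_{n_x} H^1_\et(K^h_x, {\Z}/{p^m})}.
\]

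The final step is to identify $\delta$ with the product over $x \in D_\dagger$ of the restrictions $H^1_\et(X^o, {\Z}/{p^m}) \to H^1_\et(K^h_x, {\Z}/{p^m})$ composed with the canonical quotients. This follows from the functoriality of the localization triangle along the Henselization $X^h_x \to X$, combined with the explicit description of the local boundary in the proof of \corref{cor:Coh-V0-00}: namely, that $\H^1_\et(K^h_x, W_m\sF^0_\n) \cong H^1_\et(K^h_x, {\Z}/{p^m})$ maps to $\H^2_x(X^h_x, W_m\sF^0_\n) \cong H^1_\et(K^h_x, {\Z}/{p^m})/\Fil_{n_x}$ via the canonical quotient. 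Matching this with \eqref{eqn:Fil-U-p} and using $(n_x+1)-1 = n_x$ gives $\Ker(\delta) = \Fil_{D_{\n_+}} H^1_\et(K, {\Z}/{p^m})$. The main technical obstacle is the compatibility in this last step, which requires tracing the Artin-Schreier-Witt quasi-isomorphism from the first step through the Henselization to match it with the filtration defined via the local ramification data of \secref{secKRF}.
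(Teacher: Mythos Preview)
Your proposal is correct and follows essentially the same approach as the paper's proof: both use the localization sequence for $W_m\sF^0_\n$ with support in $D_\dagger$, invoke \corref{cor:Coh-V0-00} to compute the support terms, and then match the resulting exact sequence with \eqref{eqn:Fil-U-p}. The paper is simply terser, writing the identification $\H^1_\et(X^o, W_m\sF^0_\n) = H^1_\et(X^o, {\Z}/{p^m})$ and the description of the boundary without the justifications you spell out.
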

\begin{proof}
  Using \corref{cor:Coh-V0-00}, we see that the 
 long exact sequence
  \[
   \cdots \to  \H^1_{D_\dagger}(X,W_m\sF^0_\n)  \to \H^1_\et(X, W_m\sF^0_\n)
    \xrightarrow{j^*} \H^1_\et(X^o, W_m\sF^0_\n) \to \H^2_{D_\dagger}(X,W_m\sF^0_\n).
  \]
is of the form
  \begin{equation}\label{eqn:KSL-0} 
0 \to \H^1_\et(X, W_m\sF^0_\n) \xrightarrow{j^*} H^1_\et(X^o, {\Z}/{p^m}) \to 
{\underset{x \in D_\dagger}\bigoplus} 
\frac{H^1_\et({K}^h_x, {\Z}/{p^m})}{\Fil_{n_x} H^1_\et({K}^h_x, {\Z}/{p^m})}.
\end{equation}
We can now use ~\eqref{eqn:Fil-U-p} to conclude the proof.
\end{proof}

\begin{lem}\label{lem:Coh-V2}
  The diagram
  \begin{equation}\label{eqn:Coh-V2-0}
    \xymatrix@C.8pc{
      H^0_\et(X, W_m \sW^2_{\<{\n_+}\>}) \ar[r] 
      \ar[dd]_--{\psi^*_{\<{\n}\>}} & {\underset{x \in D_\dagger}\bigoplus}
      \varpi^m_2(A^h_x|\<n_x+1\>) \ar[d] \\
      & {\underset{x \in D_\dagger}\bigoplus} \left(\frac{\Fil_{\<n_x+1\> -1}
          H^1_\et(K^h_x, {\Z}/{p^m})}{\Fil_{-1} H^1_\et(K^h_x, {\Z}/{p^m})}\right)^\vee
      \ar@{->>}[d] \\
      \H^1_\et(X, W_m\sV^0_{\<\n\>})^\vee \ar[r] &
      {\underset{x \in D_\dagger}\bigoplus} \left(\frac{\Fil_{\<n_x\>}
       H^1_\et(K^h_x, {\Z}/{p^m})}{\Fil_{-1} H^1_\et(K^h_x, {\Z}/{p^m})}\right)^\vee}
  \end{equation}
  commutes, where the right vertical arrow on the top is the direct sum of
  $\wt{\rho}^{\<n_x +1\>}_{K^h_x, m}$ and on the bottom is the direct sum of
  canonical surjections, taken over $x \in D_\dagger$.
\end{lem}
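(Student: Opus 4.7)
The plan is to reduce the commutativity of \eqref{eqn:Coh-V2-0} to a local statement at each $x \in D_\dagger$ and then identify the resulting maps via the framework already developed in \S\ref{sec:D-Thm}. First, I would use the functoriality of the derived Hom construction: the diagram \eqref{eqn:G-comp-8} is compatible with \eqref{eqn:G-comp-4}, hence the long exact sequence \eqref{eqn:G-comp-9} is a local version of \eqref{eqn:G-comp-6}. Combined with the proper base change identification $H^i_\et(X, W_m\sW^r_\n) \cong \bigoplus_{x \in D_\dagger} H^i_\et(X^h_x, W_m\sW^r_\n)$ used in \lemref{lem:Coker-1}(1) and the analogous direct sum decomposition of $\H^1_\et(X, W_m\sV^0_{\<\n\>})$ from \lemref{lem:Coh-V1}, the commutativity in \eqref{eqn:Coh-V2-0} reduces to verifying the statement separately at each $x \in D_\dagger$.

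Second, at a local point $x$, applying \lemref{lem:Coker-1}(2) and \lemref{lem:Coh-V1} to \eqref{eqn:G-comp-9}, the map $\psi^*_{\<\n\>, x}$ takes the form of a homomorphism from $\varpi^m_2(A^h_x|\<n_x+1\>)$ to the Pontryagin-type dual of $\frac{\Fil_{\<n_x\>} H^1_\et(K^h_x, \Z/p^m)}{\Fil_{-1} H^1_\et(K^h_x, \Z/p^m)}$. The key assertion is that under these identifications $\psi^*_{\<\n\>,x}$ is precisely $\wt{\rho}^{\<n_x+1\>}_{K^h_x, m}$, which then factors through the surjection to the corresponding Pontryagin dual after enlarging the filtration, yielding the right column of \eqref{eqn:Coh-V2-0}.

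Third, I would verify this identification by tracing through the definition of $\psi^*$ and comparing with the construction of Kato's reciprocity map $\rho_{K^h_x}$. The map $\psi^*$ is induced by the cup product pairing of complexes \eqref{eqn:G-comp-2} restricted to the modulus subcomplexes, landing in $W_m\Omega^2_{X, \log}$ whose cohomology is computed by the trace isomorphism $H^2_x(X, W_m\Omega^2_{X, \log}) \cong \Z/p^m$. Under the $\dlog$ identification \eqref{eqn:Milnor-mod} which realizes $W_m\Omega^2_{X|D, \log}$ as ${K_2}/p^m$ with modulus filtration, this pairing becomes exactly the pairing \eqref{eqn:KR**} defining Kato's reciprocity (cf.\ \lemref{lem:LD-3} and the derivation of \eqref{eqn:C-F-sequence}), which is what $\wt{\rho}^{\<n_x+1\>}_{K^h_x, m}$ descends from via \eqref{eqn:Rec-quot-0}.

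The main obstacle is that the explicit identification with Kato's residue pairing was carried out in \S\ref{sec:D-Thm} only in the case $m = 1$, giving \propref{prop:LD-4}. To promote it to general $m$, I would induct on $m$ using the short exact sequence of complexes from \corref{cor:F-V-R} together with the analogous Verschiebung/Restriction short exact sequence for $W_m\sW^2_{\<\n_+\>}$ (deduced from the filtration in \cite[Prop.~1.1.9]{JSZ}). These exact sequences are compatible with the maps in \eqref{eqn:G-comp-5} via $V^{m-1}$ and $R$, producing a commutative diagram of the same shape as \eqref{eqn:Local-inj-7} in which both the top and bottom rows compare $\psi^*$ and $\wt{\rho}$ at level $m$ with their counterparts at level $m-1$. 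The inductive step reduces to the base case $m=1$ handled by \lemref{lem:LD-3} together with \corref{cor:Local-inj-11}, completing the verification.
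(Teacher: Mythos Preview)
Your localization step---factoring \eqref{eqn:Coh-V2-0} through the direct-sum decompositions over $x \in D_\dagger$ via the compatibility of \eqref{eqn:G-comp-9} with \eqref{eqn:G-comp-6}---is exactly what the paper does, and the target identification $\psi^*_{\<\n\>,x} = \wt{\rho}^{\<n_x+1\>}_{K^h_x,m}$ is the right one.

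The induction on $m$, however, is both unnecessary and, as stated, incomplete. Even if you establish that $\psi^*_m$ and $\wt{\rho}_m$ each sit in commutative ladders over the $V^{m-1}/R$ exact sequences and agree at levels $m-1$ and $1$, this does \emph{not} force agreement at level $m$: the difference $\psi^*_m - \wt{\rho}_m$ kills the image of $\varpi^{m-1}_2$ and lands in the kernel of the projection to the level-$1$ dual, so it corresponds to an a priori arbitrary map $\varpi^1_2(A^h_x|n) \to \bigl(\Fil_{n'-1}/\Fil_{-1}\bigr)^\vee$ at level $m-1$, which need not vanish. Your proposal gives no mechanism to rule this out.

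The paper avoids this by working one step \emph{before} the quotient. From \lemref{lem:Coker-1} and \corref{cor:Coh-V0-00} the local sequence \eqref{eqn:G-comp-9} breaks into the short exact diagram \eqref{eqn:Coh-V2-2}. The middle vertical arrow there,
\[
\theta^*_x \colon {K_2(A^h_x)}/{p^m} \longrightarrow \H^2_x(X^h_x, W_m\sF^0)^\vee \cong G^{(0)}_{K^h_x,m},
\]
is Kato's reciprocity $\rho^0_{K^h_x,m}$ \emph{for every $m$ by definition}: both are the cup product \eqref{eqn:KR**} composed with the trace, and the pairing of complexes \eqref{eqn:G-comp-2} induces exactly this cup product on cohomology. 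The analysis of \S\ref{sec:D-Thm} that you invoke is about \emph{perfectness} of the pairing for $m=1$, not about this identification, which needs no case analysis. Once the middle (and hence left) vertical arrows of \eqref{eqn:Coh-V2-2} and \eqref{eqn:Coh-V2-4} coincide, the right vertical arrows coincide as the induced maps on cokernels, giving $\psi^*_{\<n\>,x} = \wt{\rho}^{\<n+1\>}_{K^h_x,m}$ at once.
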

\begin{proof}
The diagram ~\eqref{eqn:Coh-V2-0} is the composition of
  two squares
  \begin{equation}\label{eqn:Coh-V2-1}
    \xymatrix@C.8pc{
      H^0_\et(X, W_m \sW^2_{\<{\n_+}\>}) \ar[r] \ar[d]_-{\psi^*_{\<\n\>}} &
      {\underset{x \in D_\dagger}\bigoplus}
      H^0_\et(X^h_x, W_m \sW^2_{\<{\n_+}\>}) \ar[r] \ar[d]^-{\oplus \ \psi^*_{\<\n\>,x}} &
      {\underset{x \in D_\dagger}\bigoplus} \varpi^m_2(A^h_x|\<n_x+1\>)
      \ar[d]^-{\theta^*_{\<\n\>}} \\
      \H^1_\et(X, W_m\sV^0_{\<\n\>})^\vee \ar[r] &   
{\underset{x \in D_\dagger}\bigoplus}
      \H^1_x(X^h_x, W_m\sV^0_{\<\n\>})^\vee \ar[r] &
      {\underset{x \in D_\dagger}\bigoplus} \left(\frac{\Fil_{\<n_x\>}
       H^1_\et(K^h_x, {\Z}/{p^m})}{\Fil_{-1} H^1_\et(K^h_x, {\Z}/{p^m})}\right)^\vee,}
\end{equation}
where $\theta^*_{\<\n\>}$ is the composite right vertical arrow in
~\eqref{eqn:Coh-V2-0}.
Since ~\eqref{eqn:G-comp-6} and ~\eqref{eqn:G-comp-9} are compatible, it follows
that the left square in ~\eqref{eqn:Coh-V2-1} is commutative.
It remains to show that the right square commutes.
We can therefore assume $D_\dagger = \{x\}$. We write $\un{n} = n$.

Now, it follows from \lemref{lem:Coker-1} that the map
  ${K_2(A^h_x)}/{p^m} \cong H^0_\et(X^h_x, W_m\Omega^2_{X,\log}) \to
  H^0_\et(X^h_x, W_m\sW^2_{\<n\>})$ is surjective.
  \corref{cor:Coh-V0-00} (with $\n = 0$) and
  \lemref{lem:Coh-V1} imply that
  $\H^2_x(X^h_x, W_m\sF^0)^\vee \to \H^1_x(X^h_x, \sV^0_{\<n\>})^\vee$
  is also surjective. We conclude that ~\eqref{eqn:G-comp-9} breaks into
  a commutative diagram of short exact sequences
  \begin{equation}\label{eqn:Coh-V2-2}
\xymatrix@C.8pc{
 0 \ar[r] & H^0_\et(X^h_x, W_m\Omega^2_{X^h_x|D_{\<{n+1}\>}, \log}) \ar[r]
    \ar[d]_-{\theta^*_{\<{n}\>}} &
    H^0_\et(X^h_x, W_m\Omega^2_{X^h_x, \log}) \ar[r] \ar[d]^-{\theta^*_x} &
    H^0_\et(X^h_x, W_m \sW^2_{\<{n+1}\>}) \ar[r]
    \ar[d]^-{\psi^*_{\<{n}\>,x}} \ar[r] & 0  \\
    0 \ar[r] & \H^2_x(X^h_x, W_m\sF^0_{\<n\>})^\vee \ar[r] &
    \H^2_x(X^h_x, W_m\sF^0)^\vee \ar[r] &
    \H^1_x(X^h_x, W_m\sV^0_{\<n\>})^\vee \ar[r] & 0.}
\end{equation}

By definition of Kato's reciprocity map (cf. \S~\ref{sec:RHom}), it follows
from \corref{cor:Coh-V0-00} that
  the left square in ~\eqref{eqn:Coh-V2-2} is the same as the left square in
  the commutative diagram
  \begin{equation}\label{eqn:Coh-V2-4}
\xymatrix@C.8pc{
  0 \ar[r] & \kappa^m_2(A^h_x|\<n+1\>) \ar[r] \ar[d]_-{\rho^{\<n+1\>}_{K^h_x, m}} &
  \kappa^m_2(A^h_x|0) \ar[d]^-{\rho^{0}_{K^h_x, m}} \ar[r] &
\varpi^m_2(A^h_x|\<n+1\>) \ar[d]^-{\wt{\rho}^{\<n+1\>}_{K^h_x,m}} \ar[r] & 0 \\
0 \ar[r] & G^{(\<n+1\>)}_{K^h_x,m} \ar[r] & G^{(0)}_{K^h_x,m} \ar[r] &
\left(\frac{\Fil_{\<n\>} H^1_\et(K^h_x, {\Z}/{p^m})}
{\Fil_{-1} H^1_\et(K^h_x, {\Z}/{p^m})}\right)^\vee\ar[r] & 0.}
\end{equation}
From this, it immediately follows that
$\psi^*_{\<{n}\>,x} = \theta^*_{\<n\>}$.
\end{proof}

The following is the main cohomology pairing of this section that we shall use in
\S~\ref{sec:DThm} to prove the duality theorem for $X^o$.

\begin{prop}\label{prop:Pair-0-4}
 For $\n \ge 0$, the bottom row of ~\eqref{eqn:G-comp-2} induces a
 cohomology pairing 
  \begin{equation}\label{eqn:Pair-0-4-2}
    \H^i_\et(X, W_m\sF^0_{\<\n\>}) \times
    H^{2-i}_\et(X, W_m\Omega^2_{X|D_{\<\n\>_+}, \log}) \to {\Z}/{p^m}.
  \end{equation}
\end{prop}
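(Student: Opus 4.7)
The plan is a short formal derivation from the commutative diagram ~\eqref{eqn:G-comp-2}. First, I apply the derived global sections functor $\R\Gamma(X_\et, -)$ to the pairing of complexes on its bottom row,
\[
  W_m\Omega^2_{X|D_{\<{\n_+}\>}, \log} \otimes^{L} W_m\sF^0_{\<\n\>} \to W_m\sH,
\]
which yields, for every $i \ge 0$, a cup product
\[
  \H^i_\et(X, W_m\Omega^2_{X|D_{\<{\n_+}\>}, \log}) \times \H^{2-i}_\et(X, W_m\sF^0_{\<\n\>}) \to \H^2_\et(X, W_m\sH).
\]
Then I invoke the quasi-isomorphism $W_m\sH \cong W_m\Omega^2_{X, \log}$ recorded in ~\eqref{eqn:Comp-2} together with the trace isomorphism $\Tr_X \colon H^2_\et(X, W_m\Omega^2_{X, \log}) \xrightarrow{\cong} {\Z}/{p^m}$ used in ~\eqref{eqn:Comp-4} (from \cite[\S~3, Prop.~4]{Kato-Saito-Ann} and \cite[Prop.~4.4]{KRS}) to post-compose and obtain the stated pairing with values in ${\Z}/{p^m}$.

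The only substantive check is that the bottom-row pairing of complexes in ~\eqref{eqn:G-comp-2} is well-defined, and this is precisely what the commutativity of that diagram records. Once the top pairing $W_m\sG^2 \otimes W_m\sF^0 \to W_m\sH$ of ~\eqref{eqn:Pairing-0} is in place, the vertical inclusions $W_m\Omega^2_{X|D_{\<{\n_+}\>}, \log} \inj W_m\Omega^2_{X, \log}$ on the left and $W_m\sF^0 \inj W_m\sF^0_{\<\n\>}$ on the right (the latter via \lemref{lem:Fil-comp}, the identification $W_m\sM^0_{-\n} \cong W_m\sF^0_{\<\n\>}$, and \corref{cor:G-comp-1} which permits replacing $W_m\sG^2_{\n_+}$ by $W_m\Omega^2_{X|D_{\<{\n_+}\>}, \log}$) exhibit the bottom pairing as a restriction of the top one. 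Concretely, for local sections $\alpha$ of $W_m\Omega^2_{X|D_{\<{\n_+}\>}, \log}$ and $\beta$ of $\Fil_{D_{\<\n\>}} W_m\sO_X$, the modulus condition on $\alpha$ cancels the allowed pole of $\beta$ so that $\alpha \wedge \beta$ lies inside $W_m\Omega^2_X$, and the Cartier compatibility in ~\eqref{eqn:Pairing-0} then upgrades this to a genuine morphism of two-term complexes into $W_m\sH$.

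Since these ingredients are already established, I do not expect any genuine obstacle; the role of the proposition is simply to fix notation for the cohomology pairing that will feed into the proof of the duality theorem in \secref{sec:DThm}. If anything warrants care, it is tracking that the $(0,1)$-component of the wedge pairing involves $-C(\alpha \wedge \beta)$, so one must verify that $C$ preserves $\Fil_{D_{\<\n\>}}W_m\sO_X$ at the right level; but this is exactly the content of the discussion of $Z_1\Fil_D W_m\sO_X$ following \eqref{eqn:Log-fil-2}, which is what makes $W_m\sF^0_{\<\n\>}$ a well-defined complex in the first place.
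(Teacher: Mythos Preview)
There is a genuine gap: you have overlooked the difference between the subscripts $\<\n_+\> = p^{m-1}(\n+\un{1})$ and $\<\n\>_+ = p^{m-1}\n + \un{1}$. The bottom row of \eqref{eqn:G-comp-2} pairs $W_m\sF^0_{\<\n\>}$ with $W_m\Omega^2_{X|D_{\<\n_+\>},\log}$, so the formal cup product you describe lands in
\[
  \H^i_\et(X, W_m\sF^0_{\<\n\>}) \times H^{2-i}_\et(X, W_m\Omega^2_{X|D_{\<\n_+\>},\log}) \to {\Z}/{p^m},
\]
which is \emph{not} the statement of the proposition. What the proposition asserts is that this pairing factors through the canonical surjection $H^{2-i}_\et(X, W_m\Omega^2_{X|D_{\<\n_+\>},\log}) \surj H^{2-i}_\et(X, W_m\Omega^2_{X|D_{\<\n\>_+},\log})$ (cf.\ \lemref{lem:Surj-TM}), and this factorization is the entire content of the proof. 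It does not follow formally from the pairing of complexes: at the sheaf level there is no map $W_m\Omega^2_{X|D_{\<\n\>_+},\log} \otimes W_m\sF^0_{\<\n\>} \to W_m\sH$, because the pole of order $\<n_x\>$ allowed in $\Fil_{D_{\<\n\>}}W_m\sO_X$ is not cancelled by the vanishing of order $\<n_x\>+1$ imposed on $W_m\Omega^2_{X|D_{\<\n\>_+}}$.

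The paper's argument for $i=1$ runs the long exact sequences associated to \eqref{eqn:Coker-0} and \eqref{eqn:G-comp-3} in parallel (the diagram \eqref{eqn:Pair-0-4-0}), and uses \lemref{lem:Coh-V2} to identify the map $\psi^*_{\<\n\>}$ on the relative piece $H^0(W_m\sW^2_{\<\n_+\>})$ with Kato's local reciprocity $\wt{\rho}^{\<n_x+1\>}_{K^h_x,m}$. The point is that $\wt{\rho}^{N}_{K,m}$ composed with the projection to $(\Fil_{\<n_x\>}/\Fil_{-1})^\vee$ depends only on the image in $\varpi^m_2(A^h_x|\<n_x\>+1)$, so $\psi^*_{\<\n\>}$ descends to $H^0(W_m\sW^2_{\<\n\>_+})$; a diagram chase then produces the dotted arrow $\phi^*_{\<\n\>}$ on $H^1(W_m\Omega^2_{X|D_{\<\n\>_+},\log})$. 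The cases $i=0,2$ are handled separately (and for $i=0$ require a similar chase in \eqref{eqn:Pair-0-4-1}). None of this is visible in your approach.
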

\begin{proof}
We look at the diagram 
  \begin{equation}\label{eqn:Pair-0-4-0}
  \xymatrix@C.5pc{
    H^0(W_m\Omega^2_{X, \log}) \ar[r] \ar@{=}[d] &
    H^0(W_m \sW^2_{\<{\n_+}\>}) \ar[r]^-{\partial}
    \ar@{->>}[d] &
    H^1(W_m\Omega^2_{X|D_{\<{\n_+}\>}, \log}) \ar[r] \ar@/^4pc/[dd]
    \ar@{->>}[d] &  H^1(W_m\Omega^2_{X, \log}) \ar@{=}[d] \\
 H^0(W_m\Omega^2_{X, \log}) \ar[r] \ar[d]_-{\theta^*} &
    H^0(W_m \sW^2_{\<{\n}\>_+}) \ar[r]^-{\partial}
    \ar[d]^-{\psi^*_{\<{\n}\>}} &
    H^1(W_m\Omega^2_{X|D_{\<{\n}\>_+}, \log}) \ar[r]
    \ar@{.>}[d]^-{\phi^*_{\<{\n}\>}} &  H^1(W_m\Omega^2_{X, \log}) \ar[d]^-{\theta^*} \\
\H^2(W_m\sF^0)^\vee \ar[r] & \H^1(W_m\sV^0_{\<\n\>})^\vee \ar[r] &
    \H^1(W_m\sF^0_{\<\n\>})^\vee \ar[r] &  \H^1(W_m\sF^0)^\vee}
\end{equation}
of {\'e}tale cohomology on $X$ whose rows are exact and the curved arrow in the
middle is the map $\theta^*_{\<{\n}\>}$.
Note that $\psi^*_{\<\n\>}$ has a factorization as
indicated in the diagram because it is simply Kato's reciprocity map
as shown in \lemref{lem:Coh-V2}.

To prove the proposition for $i =1$ is to show that 
$\theta^*_{\<{\n}\>}$ factors through $\phi^*_{\<{\n}\>}$ such that all squares in the
lower floor commute. But this is a straightforward diagram chase and we omit
the details.
The $i =0$ case is clear because neither $\H^0_\et(X, W_m\sF^0_{\<\n\>})$ nor
$H^{2}_\et(X, W_m\Omega^2_{X|D_{\<\n\>_+}, \log})$ depends on $\n$, as one can easily
deduce from Lemmas~\ref{lem:Coker-1} and ~\ref{lem:Coh-V1}.

For $i=2$ case, we look at the diagram
 \begin{equation}\label{eqn:Pair-0-4-1}
  \xymatrix@C.5pc{
    0 \ar[r] &
    H^0(W_m\Omega^2_{X|D_{\<{\n_+}\>}, \log}) \ar[r] \ar@/_5pc/[dd]_-{\theta^*_{\<\n\>}}
\ar@{^{(}->}[d] &  H^0(W_m\Omega^2_{X, \log}) \ar@{=}[d] \ar[r] &
H^0(W_m \sW^2_{\<{\n_+}\>}) \ar@{->>}[d] \\
& H^0(W_m\Omega^2_{X|D_{\<{\n}\>_+}, \log}) \ar[r]
\ar@{.>}[d]_-{\phi^*_{\<{\n}\>}} &  H^0(W_m\Omega^2_{X, \log}) \ar[d] \ar[r] &
H^0(W_m \sW^2_{\<{\n}\>_+}) \ar[d]^-{\psi^*_{\<{\n}\>}}  \\
0 \ar[r] &  \H^2(W_m\sF^0_{\<\n\>})^\vee \ar[r] &  \H^2(W_m\sF^0)^\vee \ar[r] &
    \H^1(W_m\sV^0_{\<\n\>})^\vee.}
\end{equation}
Since the three rows are exact and the two squares on the top floor as well as
the right square on the bottom floor are commutative, an easy diagram chase
shows that there exists $\phi^*_{\<\n\>}$ which factorizes $\theta^*_{\<\n\>}$ and
all squares in ~\eqref{eqn:Pair-0-4-1} are commutative. 
\end{proof}

\section{Duality theorem for open curves}\label{sec:DThm}
We fix a local field $k$ of characteristic $p > 0$ and a connected, smooth and
projective curve $X$ over $k$. We also fix a positive integer $m$.
We let $D_\dagger = \{x_1, \ldots , x_s\}$ be a reduced effective divisor on $X$
and let $j \colon X^o \inj X$ be the inclusion of the complement of $D$.
In this section, we shall prove a duality theorem for the $p$-adic {\'e}tale
cohomology on $X^o$. We shall maintain the notations of \S~\ref{sec:Exist}.
For $\n, i \ge 0$, we let  $G^i_\n(m) = H^i_\et(X, W_m\Omega^2_{X|D_{\n}, \log})$
and $F^i_\n(m) = \H^i_\et(X, W_m\sF^0_\n)$. Note that
$G^i_0(m) = H^i_\et(X, W_m\Omega^2_{X,\log})$ and
$F^i_0(m) = \H^i_\et(X, W_m\sF^0_\n) = H^i_\et(X, {\Z}/{p^m})$ for $\n \le 0$.

\subsection{Topology of $G^i_\n(m)$ in a special case}\label{sec:Top}
We shall endow each $G^i_\n(m)$
with the structure of a topological abelian group with some restriction on $\n$
when $i =1$.
It is clear that $G^i_\n(m) = 0$ for $i \ge 3$
(cf. \cite[Chap.~VI, Rem.~1.5]{Milne-etale}). As for $i =2$, we have seen in the 
proof of \propref{prop:Pair-0-4} that $G^2_\n(m)$ does not depend on
$\n$, as is the case with $F^0_\n(m)$.
On the other hand, the pairing ~\eqref{eqn:Comp-3} induces an isomorphism
$G^2_0(m) \xrightarrow{\cong} F^0_0(m)^\vee \cong {\Z}/{p^m}$ by
\cite[\S~3, Prop.~4]{Kato-Saito-Ann} (see also \cite[Thm.~4.7]{KRS}).
We therefore endow $G^2_\n(m)$ with the discrete
topology via this isomorphism.

For $i = 0$, we note that the commutative squares on the lower floor of
~\eqref{eqn:Pair-0-4-1} exist for any $\n \ge 0$. It follows then from
\corref{cor:Local-inj-6}, ~\lemref{lem:Coh-V2}, ~\eqref{eqn:Pair-0-4-1} and
\cite[\S~3, Prop.~4]{Kato-Saito-Ann} (see also \cite[Thm.~4.7]{KRS}) that
the map $G^0_\n(m) \to F^2_{\n_-}(m)^\vee$ is an isomorphism.
Since $F^2_0(m)$ has discrete topology (cf. \cite[Prop.~3.6]{KRS}) and
$F^2_{\n_-}(m)$ is its quotient,
the latter is a discrete topological abelian group. We therefore endow
$G^0_\n(m)$ with the profinite topology so that
\begin{equation}\label{eqn:Top-0}
G^0_\n(m) \times  F^2_{\n_-}(m) \to {\Z}/{p^m}
\end{equation}
is a perfect pairing of topological abelian groups.

To define the topology on $G^1_{\<\n\>}(m)$, we shall use a recipe.
Let $G$ and $G'$ be two abelian groups such that $G'$ is a torsion group. Let
$G' \times G \to {\Q}/{\Z}$ be a pairing which has no left kernel. Let
$\beta \colon G' \to G^\vee$ be the induced injective homomorphism.
We let $G$ be endowed with the weakest topology
with respect to which the map $\beta(f) \colon G \to  {\Q}/{\Z}$ is continuous
for every $f \in G'$ (called the weak topology induced by $G'$).
This makes $G$ a topological abelian group for which 
the identity element has a fundamental system of open neighborhoods of the form
$\Ker(\beta(f_1)) \cap \cdots \cap \Ker(\beta(f_r))$ with
$f_1, \ldots , f_r \in G'$.
It is clear that $\beta$ has a factorization $\beta \colon G' \inj G^\star$, where
the dual on the right is taken with respect to the weak topology.

\begin{lem}\label{lem:Top-1}
The map $\beta \colon G' \to G^\star$ is a bijection.
\end{lem}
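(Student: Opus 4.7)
The plan is to prove surjectivity of $\beta$, since injectivity is given. Fix $\chi \in G^\star$, i.e., a continuous homomorphism $\chi\colon G \to \Q/\Z$. The goal is to exhibit an element $f \in G'$ with $\chi = \beta(f)$.

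First, I would use the fact that $\Q/\Z$ is discrete (as declared in the Notations section) to conclude that $\Ker(\chi)$ is open in $G$. By the explicit description of a fundamental system of open neighborhoods of $0$ in the weak topology, there exist $f_1,\ldots,f_r \in G'$ such that
\[
\textstyle \bigcap_{i=1}^{r} \Ker(\beta(f_i)) \,\subseteq\, \Ker(\chi).
\]
Consequently, $\chi$ factors through a homomorphism $\bar\chi\colon \bar{G} \to \Q/\Z$, where $\bar{G} := G/\bigcap_{i=1}^r \Ker(\beta(f_i))$, and the tuple $(\beta(f_1),\ldots,\beta(f_r))$ induces an injection $\bar\beta\colon \bar{G} \hookrightarrow (\Q/\Z)^r$.

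Second, I would exploit the hypothesis that $G'$ is torsion. Let $n_i$ be the order of $f_i$, so $\beta(f_i)$ takes values in the finite subgroup $\tfrac{1}{n_i}\Z/\Z \subset \Q/\Z$. Thus $\bar\beta$ factors through the inclusion $\bar{G} \hookrightarrow H := \prod_{i=1}^r \tfrac{1}{n_i}\Z/\Z$ into a finite abelian group. Since $\Q/\Z$ is divisible, hence injective in $\Ab$, the homomorphism $\bar\chi$ extends to a homomorphism $\psi\colon H \to \Q/\Z$. Being a homomorphism between finite abelian groups of the form above, $\psi$ is given by a tuple $(a_1,\ldots,a_r) \in \prod_i \Z/n_i$, which we lift to $(a_1,\ldots,a_r) \in \Z^r$. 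Thus $\psi(y_1,\ldots,y_r) = \sum_i a_i y_i$.

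Third, tracing definitions: for every $x \in G$,
\[
\chi(x) \,=\, \bar\chi(\bar{x}) \,=\, \psi(\beta(f_1)(x),\ldots,\beta(f_r)(x)) \,=\, \textstyle\sum_i a_i\, \beta(f_i)(x) \,=\, \beta\bigl(\sum_i a_i f_i\bigr)(x).
\]
Setting $f := \sum_i a_i f_i \in G'$ (which makes sense as $G'$ is a group), we obtain $\chi = \beta(f)$, proving surjectivity.

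The only subtle step is the descent from $\wh\Z$-coefficients (which would appear if one extended a homomorphism on all of $(\Q/\Z)^r$) to $\Z$-coefficients: this is precisely where the torsion hypothesis on $G'$ intervenes, by restricting $\bar\beta(\bar{G})$ to the finite subgroup $H$. Everything else is formal.
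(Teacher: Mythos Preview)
Your proof is correct and follows essentially the same approach as the paper's: both use continuity of $\chi$ to find $f_1,\ldots,f_r\in G'$ with $\bigcap_i\Ker(\beta(f_i))\subseteq\Ker(\chi)$, embed the quotient into a finite product of cyclic groups (using that $G'$ is torsion), extend via injectivity of $\Q/\Z$, and read off integer coefficients. The only cosmetic difference is that the paper phrases the embedding via the quotients $G/\Ker(\beta(f_i))$ rather than via the images $\beta(f_i)(G)\subseteq\frac{1}{n_i}\Z/\Z$, but these are the same map.
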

\begin{proof}
  Let $f \in G^\star$. We can find $f_1, \ldots , f_r \in G'$ such that
  the continuous homomorphism $f \colon G \to {\Q}/{\Z}$ has the property that
  $H:= \stackrel{r}{\underset{i=1}\cap} \Ker(\beta(f_i)) \subset \Ker(f)$.
  We now look at 
  \begin{equation}\label{eqn:Top-1-0}
    \xymatrix@C.8pc{
      G \ar@{->>}[r] \ar[dr]_-{\Delta} \ar@/^2pc/[rrr]^-{f}
      & {G}/H \ar@{->>}[r] \ar@{^{(}->}[d] &
      \frac{G}{\Ker(f)} \ar[r]_-{\ov{f}} & {\Q}/{\Z} \\
      & \stackrel{r}{\underset{i =1}\bigoplus} \frac{G}{\Ker(\beta(f_i))}.
      \ar@/_2pc/[urr]_-{\wt{f}} & &}
  \end{equation}

 Since ${\Q}/{\Z}$ is injective, there exists $\wt{f}$ such that the above diagram
  is commutative. Since $G'$ is a torsion group, we see that each summand of the
  the group on the bottom is a finite cyclic group.
  It follows that we can replace ${\Q}/{\Z}$ by a finite cyclic group ${\Z}/q$
  in ~\eqref{eqn:Top-1-0}. Using the standard description of homomorphisms between
  finite cyclic groups, we can find integers $n_1, \ldots , n_r$ such
  that $f = \wt{f} \circ \Delta = \stackrel{r}{\underset{i =1}\sum} n_i \beta(f_i)
  = \beta(\stackrel{r}{\underset{i =1}\sum} n_i f_i) \in G'$.
\end{proof}

For a topological abelian group $G$ whose topology is $\tau$, we let $G^{\pf}$
denote the profinite completion of $G$ with respect to $\tau$. That is, $G^{\pf}$ is
the inverse limit ${\varprojlim}_U \ {G}/U$ with the inverse limit
topology, where $U$ runs through the $\tau$-open subgroups of finite index in $G$.
It is clear that $G^{\pf}$ is a profinite abelian group and the
profinite completion map $G \to G^{\pf}$ is continuous with dense image.
By the Pontryagin duality, this implies the following.
\begin{lem}\label{lem:PF-compln}
  If $G$ is a torsion abelian group, then the map $(G^{\pf})^\star \to G^\star$ is a
  topological isomorphism for the discrete topology of $G^\star$.
\end{lem}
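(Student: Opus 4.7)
The plan is to reduce the claim to showing that the restriction map $\iota^\ast\colon (G^{\pf})^\star \to G^\star$, induced by the canonical continuous dense-image map $\iota\colon G \to G^{\pf}$, is a bijection of abstract groups. Both sides are naturally discrete: the target by hypothesis, and the source because $(G^{\pf})^\star$ is the Pontryagin dual of the profinite (hence compact) group $G^{\pf}$, so it inherits the discrete topology from the compact--open topology. A bijection of discrete abelian groups is automatically a topological isomorphism, so this reduction suffices.

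Injectivity is immediate: any $\chi \in \Ker(\iota^\ast)$ is a continuous character on $G^{\pf}$ vanishing on the dense subset $\iota(G) \subseteq G^{\pf}$, and continuity into the Hausdorff target $\Q/\Z$ forces $\chi \equiv 0$. For surjectivity, the strategy is as follows. Given $f \in G^\star$, continuity (with $\Q/\Z$ discrete) makes $\Ker(f)$ a $\tau$-open subgroup of $G$; the torsion hypothesis on $G$ is then used to upgrade $\Ker(f)$ to contain a $\tau$-open finite-index subgroup $U$ of $G$. The induced factorization $f\colon G \surj G/U \xrightarrow{\bar f} \Q/\Z$ then lifts, by the universal property of the profinite completion, to a continuous character $\tilde f\colon G^{\pf} \to G/U \to \Q/\Z$ satisfying $\iota^\ast(\tilde f) = f$.

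The main obstacle I expect is precisely the surjectivity step, namely the claim that $f$ factors through a finite quotient of $G$: this is where the torsion hypothesis interacts essentially with the topology $\tau$. A clean reformulation via Pontryagin duality rewrites $(G^{\pf})^\star \cong \varinjlim_U (G/U)^\vee$, the direct limit over $\tau$-open finite-index subgroups $U$, reducing the lemma to the statement that this direct limit coincides with $G^\star$. Under the torsion hypothesis the image of any $f \in G^\star$ lies in a finite cyclic subgroup of $\Q/\Z$, so $f$ does factor through a finite quotient of $G$ and represents an element of the direct limit, completing the identification.
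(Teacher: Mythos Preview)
Your overall strategy matches the paper's: it too simply invokes Pontryagin duality together with the fact that $G \to G^{\pf}$ is continuous with dense image, and your injectivity argument via density is correct. The problem is in surjectivity. The claim that ``under the torsion hypothesis the image of any $f \in G^\star$ lies in a finite cyclic subgroup of $\Q/\Z$'' is not a consequence of $G$ being torsion. Take $G = \Q/\Z$ with the discrete topology and $f = \id$. Then $G$ is torsion and $f \in G^\star$, yet $f(G) = \Q/\Z$ is not finite. In fact $\Q/\Z$ is divisible and hence has no proper subgroup of finite index, so $G^{\pf} = 0$ and $(G^{\pf})^\star = 0$, whereas $G^\star = \End(\Q/\Z) \cong \wh{\Z}$. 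Thus the lemma as literally stated fails in this example, and your argument cannot close the gap as written.

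What rescues the argument in every place the lemma is actually applied in the paper is that the relevant $G$ (for instance $G^1_\n(m)$ or $C^\et_m(X, D_\n)$) is annihilated by a fixed integer $N$, namely $p^m$. Under a bounded-exponent hypothesis, any $f \in G^\star$ has image contained in ${}_N(\Q/\Z) \cong \Z/N$, so $\Ker(f)$ is genuinely an open subgroup of finite index in $G$, and your factorization of $f$ through $G/\Ker(f)$ and hence through $G^{\pf}$ goes through exactly as you describe. You should either add this hypothesis explicitly or note that it holds in every instance where the lemma is invoked.
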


Let $G$ be a torsion topological abelian group and let  $G^\star$ be endowed with
the discrete topology. Then the evaluation map ${\rm ev}_G \colon G \to
(G^\star)^\vee$ is continuous with respect to the profinite topology of the target
group. This implies that ${\rm ev}_G$ has a canonical factorization
$G \to G^{\pf} \xrightarrow{{\rm ev}^{\pf}_G} (G^\star)^\vee$ with ${\rm ev}^{\pf}_G$
continuous.
\begin{cor}\label{cor:PF-compln-0}
  The map ${\rm ev}^{\pf}_G \colon G^{\pf} \to  (G^\star)^\vee$ is a topological
  isomorphism.
\end{cor}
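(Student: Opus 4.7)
The plan is to deduce the corollary from Pontryagin duality applied to the discrete abelian group $G^\star$. Both $G^{\pf}$ and $(G^\star)^\vee$ are profinite topological abelian groups (the latter being the Pontryagin dual of $G^\star$ with its compact-open topology), and Pontryagin duality is an anti-equivalence between profinite abelian groups and discrete torsion abelian groups. Consequently, ${\rm ev}^{\pf}_G$ is a topological isomorphism if and only if its Pontryagin dual $({\rm ev}^{\pf}_G)^\star$ is a topological isomorphism of discrete abelian groups.

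I would next identify the Pontryagin duals of both sides canonically with $G^\star$. On the target side, Pontryagin double duality applied to the discrete group $G^\star$ gives a topological isomorphism $((G^\star)^\vee)^\star \cong G^\star$. On the source side, Lemma~\ref{lem:PF-compln} provides a topological isomorphism $(G^{\pf})^\star \cong G^\star$. Under these two identifications, the map induced on duals becomes an endomorphism of $G^\star$, and it suffices to show this endomorphism is the identity.

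This final verification is a direct unraveling of the definitions. A character $\chi \in G^\star$, viewed as an element of $((G^\star)^\vee)^\star$, corresponds to the evaluation functional $\phi_\chi \colon f \mapsto f(\chi)$ on $(G^\star)^\vee$. Its pullback along ${\rm ev}^{\pf}_G$ is the continuous homomorphism $G^{\pf} \to \Q/\Z$ sending $g \mapsto {\rm ev}^{\pf}_G(g)(\chi)$; restricted to the (dense) image of $G$ in $G^{\pf}$ this is nothing but $\chi$ itself. Under the identification $(G^{\pf})^\star \cong G^\star$ of Lemma~\ref{lem:PF-compln}, this pulled-back functional is precisely $\chi$, so $({\rm ev}^{\pf}_G)^\star$ is indeed the identity on $G^\star$.

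The argument is thus essentially formal. The main point of care is topological bookkeeping: one must verify that the profinite structure on $(G^\star)^\vee$ assumed in the setup of the corollary really matches the compact-open topology used by Pontryagin duality, and that the two identifications of $(G^{\pf})^\star$ and $((G^\star)^\vee)^\star$ with $G^\star$ are the correct ones to give the identity (rather than, say, its negative) in the final step. There is no deep obstacle, and once the dictionary is set up Pontryagin duality immediately yields that ${\rm ev}^{\pf}_G$ is a topological isomorphism.
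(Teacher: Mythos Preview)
Your proposal is correct and follows essentially the same approach as the paper, which simply says the corollary ``follows from \lemref{lem:PF-compln} using the Pontryagin duality.'' You have merely unpacked that one-line argument: dualize ${\rm ev}^{\pf}_G$, identify both sides with $G^\star$ via \lemref{lem:PF-compln} and double duality, and check the resulting endomorphism is the identity. One small point worth making explicit (which you flagged as requiring care) is that $(G^\star)^\vee$ is indeed profinite: since $G^{\pf}$ is profinite, $(G^{\pf})^\star$ is discrete torsion, and \lemref{lem:PF-compln} then forces $G^\star$ to be torsion as well.
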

\begin{proof}
  Follows from \lemref{lem:PF-compln} using the Pontryagin duality.
\end{proof}

To apply the above recipe, we use \propref{prop:Pair-0-4} to
endow $G^1_{\<\n\>_+}(m)$ with the structure of a
topological abelian group via the weak topology induced by $F^1_{\<\n\>}(m)$.
 To check that the hypothesis of \lemref{lem:Top-1} is satisfied, 
we look at the commutative diagram
\begin{equation}\label{eqn:Top-2}
  \xymatrix@C.8pc{
    0 \ar[r] & F^1_0(m) \ar[r] \ar[d]_-{(\theta^*)^\vee} &
    F^1_{\<\n\>}(m) \ar[r] \ar[d]^-{(\phi^*_{\<\n\>})^\vee} &
    \H^1_\et(X, W_m\sV^0_{\<\n\>}) \ar[d]^-{(\psi^*_{\<\n\>})^\vee} \ar[r] & F^2_0(m)
    \ar[d]^-{(\theta^*)^\vee} \\
    0 \ar[r] & G^1_0(m)^\vee \ar[r] &
    G^1_{\<\n\>_+}(m)^\vee \ar[r]^-{\partial^\vee} &
    H^0_\et(X,W_m \sW^2_{\<{\n}\>_+})^\vee\ar[r] & G^0_0(m)^\vee,}
  \end{equation}
  obtained by dualizing the lower floor of ~\eqref{eqn:Pair-0-4-0}.
Lemmas~\ref{lem:Coker-1} and ~\ref{lem:Coh-V1} imply that
  the rows of this diagram are exact. In combination with
  \propref{prop:Dual-iso}, \lemref{lem:Coh-V2} and \cite[Prop.~3.4]{Kato-Saito-Ann}
  (see also \cite[Thm.~4.7]{KRS}), these results also imply that
  $(\phi^*_{\<\n\>})^\vee$ is a monomorphism. This proves that the desired hypothesis
  is satisfied.

We conclude from \lemref{lem:Top-1}
  that with respect to the weak topology of $G^1_{\<\n\>_+}(m)$,
  the map $(\phi^*_{\<\n\>})^\vee \colon F^1_{\<\n\>}(m) \to G^1_{\<\n\>_+}(m)^\star$ is a
  bijection. We thus get a commutative diagram
  \begin{equation}\label{eqn:Top-3}
  \xymatrix@C.8pc{
    0 \ar[r] & F^1_0(m) \ar[r] \ar[d]_-{(\theta^*)^\vee} &
    F^1_{\<\n\>}(m) \ar[r] \ar[d]^-{(\phi^*_{\<\n\>})^\vee} &
    \H^1_\et(X, W_m\sV^0_{\<\n\>}) \ar[d]^-{(\psi^*_{\<\n\>})^\vee} \ar[r] &
    F^2_0(m) \ar[d]^-{(\theta^*)^\vee} \\
    0 \ar[r] & G^1_0(m)^\star \ar[r] &
    G^1_{\<\n\>_+}(m)^\star \ar[r]^-{\partial^\vee} &
    H^0_\et(X,W_m \sW^2_{\<{\n}\>_+})^\star \ar[r] & G^0_0(m)^\star}
  \end{equation}
  whose rows are exact and the vertical arrows are bijections, where
  $H^0_\et(X,W_m \sW^2_{\<{\n}\>_+})$ is endowed with the Kato topology.

\begin{remk}\label{remk:Top-4}
    Via the pairing $F^1_0(m) \times G^1_0(m) \to {\Z}/{p^m}$, we can endow
    $G^1_0(m)$ with the weak topology induced by
    $F^1_0(m)$. Using the Pontryagin duality between the
    profinite topology of $G^1_0(m)$ and the discrete
    topology of $F^1_0(m)$ (cf. \cite[Thm~4.7]{KRS}),
    it is easy to check that
    $\{\Ker(\beta(f))|f \in F^1_0(m) \}$ forms a sub-base for
    the profinite topology of $G^1_0(m)$. It follows that
    the weak topology of $G^1_0(m)$ coincides with the
    profinite topology. Furthermore, $G^1_{\<\n\>_+}(m) \to  G^1_0(m)$
    is continuous.
    \end{remk}

\begin{lem}\label{lem:Top-5}
      The profinite completion map $G^1_{\<\n\>_+}(m) \to (G^1_{\<\n\>_+}(m))^{\pf}$
        is injective, and the induced subspace topology of $G^1_{\<\n\>_+}(m)$
      coincides with its weak topology. In particular, the latter topology is
      Hausdorff.
    \end{lem}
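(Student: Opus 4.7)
The strategy is to identify the profinite completion $(G^1_{\<\n\>_+}(m))^{\pf}$ with the Pontryagin dual $F^1_{\<\n\>}(m)^\vee$ via \corref{cor:PF-compln-0}, so that the completion map becomes an evaluation map, and then to establish its injectivity by a four-lemma diagram chase. Set $G = G^1_{\<\n\>_+}(m)$; it is $p^m$-torsion and, as established just before the lemma's statement, $(\phi^*_{\<\n\>})^\vee \colon F^1_{\<\n\>}(m) \to G^\star$ is a bijection. Hence \corref{cor:PF-compln-0} yields a topological isomorphism $G^{\pf} \cong F^1_{\<\n\>}(m)^\vee$ under which the profinite completion map becomes ${\rm ev} \colon g \mapsto (f \mapsto \beta(f)(g))$.

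Granting that ${\rm ev}$ is injective, the identification of the subspace topology with the weak topology is automatic: a sub-basic open neighborhood of $0$ in $F^1_{\<\n\>}(m)^\vee$ of the form $\{\phi : \phi(f) = 0\}$ with $f \in F^1_{\<\n\>}(m)$ pulls back along ${\rm ev}$ to $\Ker(\beta(f)) \subset G$, which is a sub-basic open subgroup in the weak topology. Hausdorffness of $G$ will then be inherited from the profinite target.

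To prove injectivity of ${\rm ev}$, I would exploit the commutative diagram
\[
\xymatrix@C.4pc{
G^0_0(m) \ar[r] \ar[d]_-{\cong} & H^0_\et(X, W_m\sW^2_{\<\n\>_+}) \ar[r] \ar@{^{(}->}[d]^-{\psi^*_{\<\n\>}} & G^1_{\<\n\>_+}(m) \ar[r] \ar[d]^-{{\rm ev}} & G^1_0(m) \ar[d]^-{\cong} \\
F^2_0(m)^\vee \ar[r] & \H^1_\et(X, W_m\sV^0_{\<\n\>})^\vee \ar[r] & F^1_{\<\n\>}(m)^\vee \ar[r] & F^1_0(m)^\vee,
}
\]
whose top row is a fragment of the long exact {\'e}tale cohomology sequence of ~\eqref{eqn:Coker-0} (using $H^1_\et(X, W_m\sW^2_{\<\n\>_+}) = 0$ from \lemref{lem:Coker-1}), and whose bottom row is the $\Hom(-,\Q/\Z)$-dual of the long exact cohomology sequence associated to ~\eqref{eqn:G-comp-3} (whose tail terminates because $\H^2_\et(X, W_m\sV^0_{\<\n\>}) = 0$ by \lemref{lem:Coh-V1}). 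The outer vertical isomorphisms combine Kato-Saito duality for $X$ (\cite[\S~3, Prop.~4]{Kato-Saito-Ann}, \cite[Thm.~4.7]{KRS}) with the usual Pontryagin identification between profinite and discrete torsion groups. By \lemref{lem:Coh-V2}, the second vertical arrow is the direct sum $\bigoplus_{x \in D_\dagger} \wt{\rho}^{\<n_x + 1\>}_{K^h_x, m}$, hence injective by \corref{cor:Local-inj-6}. The standard four-lemma for monomorphisms then produces the injectivity of ${\rm ev}$.

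The principal obstacle is verifying commutativity of the middle two squares of the displayed diagram. This reduces to the compatibility of the pairing of \propref{prop:Pair-0-4} with the connecting morphisms coming from ~\eqref{eqn:Coker-0} and ~\eqref{eqn:G-comp-3}. Fortunately both compatibilities are encoded in the derived-category distinguished triangle ~\eqref{eqn:G-comp-5} and were already exploited in the construction of ~\eqref{eqn:Top-3}, so the verification is a matter of unwinding that construction rather than a new conceptual step.
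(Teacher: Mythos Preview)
Your proof is correct and follows essentially the same approach as the paper: both reduce injectivity of the completion map, via \corref{cor:PF-compln-0}, to injectivity of the map $\phi^*_{\<\n\>}$ in the diagram~\eqref{eqn:Pair-0-4-0} (which is your ${\rm ev}$), and both deduce this from a four-lemma chase using the injectivity of $\psi^*_{\<\n\>}$ (\corref{cor:Local-inj-6} and \lemref{lem:Coh-V2}) and the bijectivity of the outer $\theta^*$'s (Kato--Saito). The paper simply cites the commutativity of~\eqref{eqn:Pair-0-4-0} and~\eqref{eqn:Top-3} rather than rebuilding the diagram, and for the topology claim it invokes \lemref{lem:Top-1} (every weak-open subgroup has finite index) rather than your explicit pullback of sub-basic opens, but these are equivalent formulations of the same argument.
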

\begin{proof}
  By \corref{cor:PF-compln-0}, the injectivity claim is equivalent to that
  $\ev_G \colon G^1_{\<\n\>_+}(m) \to ((G^1_{\<\n\>_+}(m))^\star)^\vee$ is
      injective.
      By ~\eqref{eqn:Top-3}, the latter claim is equivalent to that the map
      $\phi^*_{\<\n\>}$ in ~\eqref{eqn:Pair-0-4-0} is injective. 
      But this follows because $\psi^*_{\<\n\>}$ is injective by
      \corref{cor:Local-inj-6} and \lemref{lem:Coh-V2} while the maps $\theta^*$
      are bijective by \cite[\S~3, Prop.~4]{Kato-Saito-Ann} (see also
      \cite[Thm.~4.7]{KRS}). The claim, that the weak topology of $G^1_{\<\n\>_+}(m)$
      coincides with the subspace
      topology induced from the profinite topology of $(G^1_{\<\n\>_+}(m))^\pf$,
      follows from \lemref{lem:Top-1} whose proof shows that every open subgroup of
      $G^1_{\<\n\>_+}(m)$ has finite index.
      \end{proof}

\begin{lem}\label{lem:Top-6}
     For $\n \ge \n'$, the canonical map $G^1_{\<\n\>_+}(m) \to G^1_{\<n'\>_+}(m)$ is
     continuous and the induced map $(G^1_{\<\n\>_+}(m))^\pf \to (G^1_{\<\n'\>_+}(m))^\pf$
     is a topological quotient.
   \end{lem}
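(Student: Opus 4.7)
The plan is to establish continuity directly from the naturality of the cup-product pairing \eqref{eqn:Pair-0-4-2}, and then to deduce the quotient statement by reinterpreting the induced map on profinite completions as the Pontryagin dual of an injection of discrete torsion abelian groups.

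For continuity, I would begin by observing that $\n \ge \n'$ forces $D_{\<\n\>_+} \ge D_{\<\n'\>_+}$ and $D_{\<\n\>} \ge D_{\<\n'\>}$, yielding natural sheaf inclusions $W_m\Omega^2_{X|D_{\<\n\>_+}, \log} \hookrightarrow W_m\Omega^2_{X|D_{\<\n'\>_+}, \log}$ and $W_m\sF^0_{\<\n'\>} \hookrightarrow W_m\sF^0_{\<\n\>}$. These induce canonical homomorphisms $\alpha \colon G^1_{\<\n\>_+}(m) \to G^1_{\<\n'\>_+}(m)$ and $\iota \colon F^1_{\<\n'\>}(m) \to F^1_{\<\n\>}(m)$, and the naturality of the cup-product pairing underlying \propref{prop:Pair-0-4} gives the compatibility
\[
\langle \alpha(g), f\rangle_{\<\n'\>} \; = \; \langle g, \iota(f)\rangle_{\<\n\>} \quad \text{for all } g \in G^1_{\<\n\>_+}(m), \ f \in F^1_{\<\n'\>}(m).
\]
Since the weak topology on $G^1_{\<\n\>_+}(m)$ is defined precisely so that pairing with any element of $F^1_{\<\n\>}(m)$ (in particular with $\iota(f)$) is continuous into $\Z/p^m$, this identity forces $f \circ \alpha$ to be continuous for every $f \in F^1_{\<\n'\>}(m)$. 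The universal property of the weak topology on $G^1_{\<\n'\>_+}(m)$ then delivers continuity of $\alpha$.

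For the quotient statement, I would note that by \propref{prop:KSL} and \eqref{eqn:Fil-U-p} both $F^1_{\<\n\>}(m)$ and $F^1_{\<\n'\>}(m)$ inject into $H^1_\et(X^o, \Z/p^m)$, so $\iota$ is an injection of discrete torsion abelian groups. Its Pontryagin dual $\iota^\vee$ is therefore a continuous surjection of profinite groups. By \lemref{lem:Top-1}, $(G^1_{\<\n\>_+}(m))^\star = F^1_{\<\n\>}(m)$, and \corref{cor:PF-compln-0} produces a natural topological isomorphism $(G^1_{\<\n\>_+}(m))^\pf \cong (F^1_{\<\n\>}(m))^\vee$ (and similarly for $\n'$). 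Under these identifications $\alpha^\pf$ corresponds to $\iota^\vee$. Since both profinite completions are compact Hausdorff, $\alpha^\pf$ is a closed continuous surjection, hence a topological quotient.

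The main obstacle will be verifying that the Pontryagin-type isomorphism of \corref{cor:PF-compln-0} is sufficiently natural to identify $\alpha^\pf$ with $\iota^\vee$; this reduces to tracing the evaluation maps through the derived-category construction of $\theta_\n$ from \S~\ref{sec:Exist} in light of the pairing compatibility established in the first step, and should be a careful but essentially formal diagram chase.
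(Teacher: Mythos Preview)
Your proposal is correct and essentially parallels the paper's argument, with two minor differences worth noting.

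For the quotient statement, both you and the paper identify $(G^1_{\<\n\>_+}(m))^\pf$ with $(F^1_{\<\n\>}(m))^\vee$ via \lemref{lem:Top-1} and \corref{cor:PF-compln-0}, reducing the claim to the injectivity of $\iota \colon F^1_{\<\n'\>}(m) \to F^1_{\<\n\>}(m)$. The paper proves this injectivity by comparing the top rows of \eqref{eqn:Top-2} for $\<\n\>$ and $\<\n'\>$, using \lemref{lem:Coh-V2} to see that $\H^1_\et(X, W_m\sV^0_{\<\n'\>}) \to \H^1_\et(X, W_m\sV^0_{\<\n\>})$ is injective. Your route via \propref{prop:KSL}, identifying both $F^1$-groups as subgroups of $H^1_\et(X^o, \Z/p^m)$, is slightly more direct. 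The naturality concern you flag is indeed formal: the evaluation map $G \to (G^\star)^\vee$ is natural in $G$, and since $G \to G^\pf$ has dense image, the identification of \corref{cor:PF-compln-0} is automatically natural.

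For continuity, the paper instead invokes \lemref{lem:Top-5} to realize each $G^1_{\<\n\>_+}(m)$ as a subspace of its profinite completion, and then reads off continuity from the (already established) continuity of the map between completions. Your argument, directly from the initial-topology characterization of the weak topology and the pairing compatibility, is more elementary and avoids the forward reference to the profinite completion; it is a cleaner way to handle this step.
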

   \begin{proof}
We look at the commutative diagram
     \begin{equation}\label{eqn:Top-6-0}
       \xymatrix@C2pc{
         G^1_{\<\n\>_+}(m) \ar@{->>}[r] \ar@{^{(}->}[d] & G^1_{\<n'\>_+}(m)
           \ar@{^{(}->}[d] \\
           (G^1_{\<\n\>_+}(m))^\pf \ar[r] &  (G^1_{\<\n'\>_+}(m))^\pf.}
         \end{equation}
         \lemref{lem:Coh-V2} implies that
         the transition map $\H^1_\et(X, W_m\sV^0_{\<\n'\>}) \to
         \H^1_\et(X, W_m\sV^0_{\<\n\>})$ is injective.
By comparing the exact sequence in the top row of ~\eqref{eqn:Top-2} with the
similar exact sequence for $\<\n'\>$, we see that the transition map
$F^1_{\<\n'\>}(m) \to F^1_{\<\n\>}(m)$ is injective. 
We conclude from ~\eqref{eqn:Top-3} and \corref{cor:PF-compln-0} that
the bottom horizontal arrow in ~\eqref{eqn:Top-6-0} is a surjective continuous
homomorphism between two profinite groups. It must therefore be a topological
quotient map. Finally, the claim, that the horizontal arrow in the top row of
~\eqref{eqn:Top-6-0} is continuous, follows from \lemref{lem:Top-5} which
asserts that each term of this row is a subspace of the corresponding term in the
bottom row.
\end{proof}

\begin{remk}\label{remk:Top-6-1}
  It is unclear that the  transition map $G^1_{\<\n\>_+}(m) \surj G^1_{\<n'\>_+}(m)$
  is a quotient map in general.
\end{remk}

By combining \corref{cor:PF-compln-0}, ~\eqref{eqn:Top-3} and the Pontryagin
duality, we get the following special case of the duality  theorem. We let
$\n \ge 0$.

   \begin{cor}\label{cor:Duality-Spl}
     For $i \ge 0$, there is a perfect pairing of
     topological abelian groups
     \[
       (G^i_{\<\n\>_+}(m))^{\pf} \times F^{2-i}_{\<\n\>}(m) \to {\Z}/{p^m}.
     \]
   \end{cor}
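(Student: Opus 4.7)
The plan is to handle the three values $i = 0, 1, 2$ separately, in each case checking that the pairing constructed from \propref{prop:Pair-0-4} realizes $F^{2-i}_{\<\n\>}(m)$ as the Pontryagin dual of the (completed) group $G^i_{\<\n\>_+}(m)$. The cases $i = 0$ and $i = 2$ are essentially built into the definitions of the topologies in \S~\ref{sec:Top}: for $i = 2$, both $G^2_{\<\n\>_+}(m)$ and $F^0_{\<\n\>}(m)$ are identified with ${\Z}/{p^m}$ in the discrete topology (via \cite[\S~3, Prop.~4]{Kato-Saito-Ann} and \lemref{lem:Coh-V0} combined with the proof of \propref{prop:Pair-0-4}), so profinite completion is trivial and the pairing is already perfect; for $i = 0$, we defined the profinite topology on $G^0_{\<\n\>_+}(m)$ precisely so that ~\eqref{eqn:Top-0} is a perfect pairing against the discrete group $F^2_{\<\n\>}(m)$, and again profinite completion does nothing.

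The substantive case is $i = 1$. My plan is to read off the desired duality directly from the diagram ~\eqref{eqn:Top-3}. First I will observe that $F^1_{\<\n\>}(m)$ is a torsion group, and that the pairing from \propref{prop:Pair-0-4} has no left kernel: indeed, the associated map $(\phi^*_{\<\n\>})^\vee \colon F^1_{\<\n\>}(m) \to G^1_{\<\n\>_+}(m)^\vee$ is a bijection by the five-lemma applied to ~\eqref{eqn:Top-3}, using that $(\theta^*)^\vee$ is an isomorphism by \cite[Prop.~3.4]{Kato-Saito-Ann} and that $(\psi^*_{\<\n\>})^\vee$ is an isomorphism by \propref{prop:Dual-iso} and \lemref{lem:Coh-V2}. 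Thus the hypothesis of \lemref{lem:Top-1} is satisfied for the weak topology we placed on $G^1_{\<\n\>_+}(m)$, and the lemma upgrades this to a bijection $F^1_{\<\n\>}(m) \xrightarrow{\cong} G^1_{\<\n\>_+}(m)^\star$, where the target is the dual with respect to the weak topology.

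Next I will apply \corref{cor:PF-compln-0} to the torsion topological abelian group $G = G^1_{\<\n\>_+}(m)$. This gives a topological isomorphism $(G^1_{\<\n\>_+}(m))^{\pf} \xrightarrow{\cong} (G^1_{\<\n\>_+}(m)^\star)^\vee$, where the right-hand side carries its profinite topology. Composing with the Pontryagin dual of the bijection from the previous paragraph yields a topological isomorphism $(G^1_{\<\n\>_+}(m))^{\pf} \xrightarrow{\cong} F^1_{\<\n\>}(m)^\vee$, which is precisely the statement that the induced pairing $(G^1_{\<\n\>_+}(m))^{\pf} \times F^1_{\<\n\>}(m) \to {\Z}/{p^m}$ is perfect (the adjoint $F^1_{\<\n\>}(m) \to ((G^1_{\<\n\>_+}(m))^{\pf})^\star$ is bijective by Pontryagin duality applied to the profinite group $(G^1_{\<\n\>_+}(m))^{\pf}$).

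The only real obstacle is bookkeeping: one must check that the pairing of \propref{prop:Pair-0-4} is indeed what induces both $(\phi^*_{\<\n\>})^\vee$ in ~\eqref{eqn:Top-3} and the weak topology we use on $G^1_{\<\n\>_+}(m)$, and that the identifications in \corref{cor:PF-compln-0} are compatible with these. All of this is straightforward from the constructions in \S~\ref{sec:Top}, and no further nontrivial input is required beyond what has already been used to establish the bijectivity of the vertical arrows in ~\eqref{eqn:Top-3}.
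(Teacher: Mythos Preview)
Your approach is essentially the paper's: treat $i=0,1,2$ separately, and for $i=1$ combine the diagram~\eqref{eqn:Top-3} with \lemref{lem:Top-1}, \corref{cor:PF-compln-0}, and Pontryagin duality. The cases $i=0,2$ are fine.

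There is, however, a small circularity in your $i=1$ step. You claim that $(\phi^*_{\<\n\>})^\vee \colon F^1_{\<\n\>}(m) \to G^1_{\<\n\>_+}(m)^\vee$ is a \emph{bijection} by the five-lemma on~\eqref{eqn:Top-3}. But~\eqref{eqn:Top-3} carries continuous duals $(-)^\star$, not full duals $(-)^\vee$, and its bottom row is not known to be exact a priori: the paper records~\eqref{eqn:Top-3} as a \emph{conclusion} after the vertical bijections are already established, not as an input. If instead you run the five-lemma on~\eqref{eqn:Top-2} (where the bottom row is genuinely exact because $\Hom(-,\Q/\Z)$ is exact), the outer vertical arrows $(\theta^*)^\vee$ and $(\psi^*_{\<\n\>})^\vee$ are only \emph{injective} in general (the groups involved are infinite, so $(-)^\star \subsetneq (-)^\vee$), and a diagram chase yields only injectivity of $(\phi^*_{\<\n\>})^\vee$.

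This does not break your argument: injectivity is exactly the ``no left kernel'' hypothesis you need for \lemref{lem:Top-1}, and that lemma then gives the bijection $F^1_{\<\n\>}(m) \xrightarrow{\cong} G^1_{\<\n\>_+}(m)^\star$. From there your use of \corref{cor:PF-compln-0} and Pontryagin duality is correct and matches the paper verbatim. So the only fix needed is to replace ``bijection'' by ``injection'' in that sentence and cite~\eqref{eqn:Top-2} rather than~\eqref{eqn:Top-3}.
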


\subsection{Topology of $G^i_\n(m)$ in general case}\label{sec:Top-G}
To endow $G^1_\n(m)$ with the structure of a topological abelian group in general,
we fix $\n \ge 0$ and let $\phi(\n) = (m_1, \cdots , m_r)$ be such that
$m_i$ is the smallest non-negative integer having the property that
$p^{m-1}m_i+1 \ge n_i$ for $1 \le i \le r$. In particular, the canonical map
$G^1_{\<\phi(\n)\>_+}(m) \to G^1_\n(m)$ is surjective by \lemref{lem:Surj-TM}. We endow
$G^1_\n(m)$ with the quotient of the weak topology of $G^1_{\<\phi(\n)\>_+}(m)$ via this
surjection. This makes $G^1_\n(m)$ into a topological abelian group.

\begin{lem}\label{lem:Top-7}
  For $\n \ge \n'$, the transition map $\gamma^\n_{\n'} \colon
  G^1_\n(m) \to G^1_{\n'}(m)$ is continuous and the induced map
$(G^1_{\n}(m))^\pf \to (G^1_{\n'}(m))^\pf$ is a topological quotient.
\end{lem}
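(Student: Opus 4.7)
The plan is to reduce the general case to \lemref{lem:Top-6} via the defining surjections $q_\n \colon G^1_{\<\phi(\n)\>_+}(m) \surj G^1_\n(m)$ furnished by \lemref{lem:Surj-TM}, which by construction induce the quotient topology on $G^1_\n(m)$. A preliminary observation is that the assignment $\n \mapsto \phi(\n)$ is coordinate-wise nondecreasing (explicitly, $\phi(\n)_i = \max(0, \lceil (n_i-1)/p^{m-1}\rceil)$), so $\n \ge \n'$ forces $\phi(\n) \ge \phi(\n')$. I would then fit the transition maps into the commutative square
\[
\xymatrix@C1pc{
G^1_{\<\phi(\n)\>_+}(m) \ar@{->>}[r]^-{q_\n} \ar[d]_-{\gamma_0} & G^1_\n(m) \ar[d]^-{\gamma^\n_{\n'}} \\
G^1_{\<\phi(\n')\>_+}(m) \ar@{->>}[r]^-{q_{\n'}} & G^1_{\n'}(m),}
\]
where $\gamma_0$ is the transition map between the special cases. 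The map $\gamma_0$ is continuous by \lemref{lem:Top-6}, and the horizontals are topological quotients by construction of the topologies on the right column. Hence $q_{\n'} \circ \gamma_0$ is continuous, and the universal property of the quotient topology on $G^1_\n(m)$ yields continuity of $\gamma^\n_{\n'}$.

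For the profinite completion statement, I would apply $(-)^\pf$ to the square. The key general fact I would invoke is that for any continuous surjection $A \surj B$ of topological abelian groups, the induced map $A^\pf \to B^\pf$ is surjective (its image contains the dense image of $A$ in $B^\pf$ and is compact, hence closed in the Hausdorff space $B^\pf$), and any continuous surjection between profinite groups is automatically a topological quotient (a continuous map from a compact space to a Hausdorff space is closed). Applied to $q_\n$ and $q_{\n'}$, this shows both $q_\n^\pf$ and $q_{\n'}^\pf$ are topological quotients; combined with the quotient property of $\gamma_0^\pf$ furnished by \lemref{lem:Top-6}, the composite $q_{\n'}^\pf \circ \gamma_0^\pf = (\gamma^\n_{\n'})^\pf \circ q_\n^\pf$ is a composition of topological quotients, hence itself a topological quotient. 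Since $q_\n^\pf$ is a topological quotient, the standard fact that a map $g$ whose precomposition with a quotient map is a quotient is itself a quotient (check openness of sets in $(G^1_{\n'}(m))^\pf$ by passing through $(G^1_{\<\phi(\n)\>_+}(m))^\pf$) forces $(\gamma^\n_{\n'})^\pf$ to be a topological quotient as well.

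I do not anticipate a genuine obstacle; the proof is a formal propagation of \lemref{lem:Top-6} through the defining quotient topologies, with the only substantive input being the standard topological fact that continuous surjections between profinite groups are automatically quotient maps.
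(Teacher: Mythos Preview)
Your proposal is correct and follows essentially the same route as the paper: both set up the commutative square relating $G^1_{\<\phi(\n)\>_+}(m) \to G^1_{\<\phi(\n')\>_+}(m)$ to $G^1_\n(m) \to G^1_{\n'}(m)$, deduce continuity from \lemref{lem:Top-6} and the quotient topologies, and then pass to profinite completions. The only cosmetic difference is that where the paper invokes \cite[Lem.~1.1.5]{Pro-fin} to obtain surjectivity of $q_\n^\pf$, you give the direct density-plus-compactness argument, which is equally valid.
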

\begin{proof}
By definition of $\phi(\n)$, there is a commutative diagram
  \begin{equation}\label{eqn:Top-7-0}
    \xymatrix@C2pc{
      G^1_{\<\phi(\n)\>_+}(m) \ar@{->>}[r] \ar@{->>}[d] & G^1_{\<\phi(\n')\>_+}(m)
      \ar@{->>}[d] \\
      G^1_\n(m) \ar@{->>}[r]  & G^1_{\n'}(m).}
  \end{equation}
vertical arrows are quotient maps. It follows that the bottom horizontal arrow
  is also continuous. For the second part, we note that any quotient of
  $G^1_\n(m)$ by an open subgroup is also a quotient of $G^1_{\<\phi(\n)\>_+}(m)$ by an
  open subgroup. It follows therefore from \cite[Lem.~1.1.5]{Pro-fin} that
  the map $(G^1_{\<\phi(\n)\>_+}(m))^{\pf} \to (G^1_\n(m))^\pf$ is surjective.
We now use the commutative diagram obtained by taking the profinite completions of
  the groups appearing in ~\eqref{eqn:Top-7-0} and use \lemref{lem:Top-6} to
  conclude that the map $(G^1_{\n}(m))^\pf \to (G^1_{\n'}(m))^\pf$ is a topological
  quotient.
\end{proof}

\begin{lem}\label{lem:Top-8}
  For every $\n \ge 0$, the topological abelian group $G^1_\n(m)$ is Hausdorff.
\end{lem}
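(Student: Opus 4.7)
The plan is to reduce the Hausdorffness of $G^1_\n(m)$ to a separation-of-points statement for continuous characters, which I would then verify by constructing a direct cohomology pairing analogous to the one of \propref{prop:Pair-0-4}.

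By construction, $G^1_\n(m) = G^1_{\<\phi(\n)\>_+}(m)/K_\n$ with the quotient topology, where $K_\n$ is the kernel of the defining surjection $\pi$ (surjective by \lemref{lem:Surj-TM}). The source is Hausdorff by \lemref{lem:Top-5}, so Hausdorffness of the quotient is equivalent to $K_\n$ being closed. Using \lemref{lem:Top-5} once more, the weak topology on $G^1_{\<\phi(\n)\>_+}(m)$ coincides with the subspace topology inherited from its profinite completion. Consequently, $K_\n$ is closed if and only if the natural map $G^1_\n(m) \to (G^1_\n(m))^\pf$ is injective, where the target is identified with the profinite quotient $(G^1_{\<\phi(\n)\>_+}(m))^\pf/\overline{K_\n}$ as in the proof of \lemref{lem:Top-7}. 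By \lemref{lem:Top-1}, this injectivity is equivalent to saying that the continuous characters of $G^1_\n(m)$ separate its points.

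Pulled back via $\pi$, these characters correspond to the subgroup $K_\n^\perp \subseteq F^1_{\<\phi(\n)\>}(m)$ of those characters that vanish on $K_\n$ under the pairing of \propref{prop:Pair-0-4}. To produce enough such characters, I would construct a direct cohomology pairing
\[
F^1_{\n_-}(m) \times G^1_\n(m) \longrightarrow {\Z}/{p^m}
\]
by repeating the Cartier-operator construction of \secref{sec:Exist}: the underlying pairing of complexes \eqref{eqn:G-comp-2} is natural in the divisor, so using $D_\n$ in place of $D_{\<\n_+\>}$ and an appropriate partner in place of $D_{\<\n\>}$ yields a valid pairing of complexes, and the diagram chases of \eqref{eqn:Pair-0-4-0}--\eqref{eqn:Pair-0-4-1} adapt verbatim. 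By the naturality of cup products with respect to the inclusion $W_m\Omega^2_{X|D_{\<\phi(\n)\>_+}, \log} \subset W_m\Omega^2_{X|D_\n, \log}$, this new pairing restricts, along the inclusion $F^1_{\n_-}(m) \hookrightarrow F^1_{\<\phi(\n)\>}(m)$, to the pairing of \propref{prop:Pair-0-4}; in particular the image of $F^1_{\n_-}(m)$ lies inside $K_\n^\perp$.

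The principal obstacle is to show that this new pairing is non-degenerate on the $G^1_\n(m)$-factor, i.e.\ that $F^1_{\n_-}(m)$ actually separates points of $G^1_\n(m)$. I would establish this via the analogue of diagram \eqref{eqn:Top-3}, with the indices $(\n,\n_-)$ in place of $(\<\phi(\n)\>_+,\<\phi(\n)\>)$, feeding in \lemref{lem:Coh-V1}, \lemref{lem:Coh-V2}, \propref{prop:Dual-iso}, and the Kato-Saito duality \cite[Prop.~3.4]{Kato-Saito-Ann}. The decisive local input is \corref{cor:Local-inj-6} (injectivity of Kato's reciprocity modulo its ramification filtration); keeping track of the four filtration indices $\n$, $\n_-$, $\<\phi(\n)\>$, $\<\phi(\n)\>_+$ through the diagram chase is the main technical challenge, after which the monomorphism statement analogous to that of $(\phi^*_{\<\n\>})^\vee$ in the proof of \lemref{lem:Top-5} delivers the required separation of points.
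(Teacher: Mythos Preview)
Your strategy is sound and genuinely different from the paper's. The paper does \emph{not} construct a pairing $F^1_{\n_-}(m)\times G^1_\n(m)\to\Z/p^m$ at this stage; instead it argues purely topologically. Writing $\alpha_\n\colon G^1_{\<\phi(\n)\>_+}(m)\surj G^1_\n(m)$ and comparing the two exact sequences coming from~\eqref{eqn:Coker-0}, it reduces to showing that $\partial_0(\Ker\beta_\n)$ is closed, where $\beta_\n\colon H^0_\et(X,W_m\sW^2_{\<\phi(\n)\>_+})\to H^0_\et(X,W_m\sW^2_\n)$. It then identifies $\partial_0(\Ker\beta_\n)$ with $(\phi^*_{\<\phi(\n)\>})^{-1}\bigl(\gamma^\vee_{\<\phi(\n)\>}(\Ker\beta'_\n)\bigr)$, a preimage under a continuous map of an image of a closed set inside the profinite group $(F^1_{\<\phi(\n)\>}(m))^\vee$. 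The local injectivity from \corref{cor:Local-inj-6} enters only to verify the set-theoretic identity $\Ker\beta_\n=(\psi^*_{\<\phi(\n)\>})^{-1}(\Ker\beta'_\n)$. Your route, by contrast, essentially anticipates \thmref{thm:Duality-Gen} for the pair $(D_\n,D_{\n_-})$; this buys a cleaner conceptual picture at the cost of setting up more machinery now.

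One imprecision in your plan: the pairing of complexes~\eqref{eqn:G-comp-2} is \emph{not} literally ``natural in the divisor'' in the way you need. That pairing is obtained via \corref{cor:G-comp-1} and the identification $W_m\sM^0_{-\n}\cong W_m\sF^0_{\<\n\>}$ of \lemref{lem:Fil-comp}, so both indices it produces are divisible by $p^{m-1}$; substituting a general $D_\n$ on the $\Omega^2$-side would force the partner index $\n-p^{m-1}$, not $\n_-$. What you actually want is either (a) the direct pairing $W_m\Omega^2_{X|D_\n,\log}[0]\otimes W_m\sF^0_{\n_-}\to W_m\sH$, which exists because $C$ fixes logarithmic forms and $\Fil_{D_{\n_-}}W_m\sO_X\cdot W_m\Omega^2_{X|D_\n}\subset W_m\Omega^2_X(\log E)(-E)\subset W_m\Omega^2_X$; or (b) a two-step factorization of $\theta^*_{\<\phi(\n)\>}$ through $G^1_\n(m)\to F^1_{\n_-}(m)^\vee$, which works because the local map is Kato's reciprocity and the maps $\wt\rho^N_{K,m}$ are compatible under the transition $\varpi^m_2(A|N')\surj\varpi^m_2(A|N)$. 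Either fix is routine, but neither is quite ``verbatim'' from~\eqref{eqn:Pair-0-4-0}--\eqref{eqn:Pair-0-4-1}. Once this is in place, your non-degeneracy argument via the analogue of~\eqref{eqn:Top-3} with indices $(\n,\n_-)$ goes through exactly as you describe, using \corref{cor:Local-inj-6}, \propref{prop:Dual-iso}, and \cite[Prop.~3.4]{Kato-Saito-Ann}.
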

\begin{proof}
We shall prove the equivalent statement that the kernel of the transition map
 $\alpha_\n \colon G^1_{\<\phi(\n)\>_+}(m) \surj G^1_\n(m)$ is closed. By 
 the commutative diagram
 \begin{equation}\label{eqn:Top-8-0}
   \xymatrix@C.8pc{
     G^0_0(m) \ar[r] \ar@{=}[d] & H^0_\et(X, W_m\sW^2_{\<\phi(\n)\>_+})
     \ar[r]^-{\partial_0}
     \ar[d]^-{\beta_\n} & G^1_{\<\phi(\n)\>_+}(m) \ar[r] \ar[d]^-{\alpha_\n} &
     G^1_0(m) \ar[r] \ar@{=}[d] & 0 \\
     G^0_0(m) \ar[r] & H^0_\et(X, W_m\sW^2_\n) \ar[r] & G^1_\n(m) \ar[r] &
     G^1_0(m) \ar[r] &
     0}
   \end{equation}
   whose rows are exact,
   it suffices to show that $\partial_0(\Ker(\beta_\n))$ is closed in
   $G^1_{\<\phi(\n)\>_+}(m)$.

Now, we look at the commutative diagram of continuous homomorphisms
   \begin{equation}\label{eqn:Top-8-1}
     \xymatrix@C2pc{
       H^0_\et(X, W_m\sW^2_{\<\phi(\n)\>_+}) \ar[r]^-{\psi^*_{\<\phi(\n)\>}}
       \ar[d]_-{{\beta_\n}} &
       \H^1_\et(X, W_m\sV^0_{\<\phi(\n)\>})^\vee \ar[d]^-{\beta'_\n} \\
      H^0_\et(X, W_m\sW^2_\n) \ar[r]^-{\psi^*_{\n_-}} &
      \H^1_\et(X, W_m\sV^0_{\n_-})^\vee.}
  \end{equation}
The horizontal arrows in this diagram are injective by \corref{cor:Local-inj-6} and
  \lemref{lem:Coh-V2}. This implies that
  \begin{equation}\label{eqn:Top-8-4}
    \Ker(\beta_\n) = (\psi^*_{\<\phi(\n)\>})^{-1}(\Ker(\beta'_\n)).
  \end{equation}
  In particular, $\Ker(\beta_\n)$ is closed in $H^0_\et(X, W_m\sW^2_{\<\phi(\n)\>_+})$.

We next consider the commutative diagram
  \begin{equation}\label{eqn:Top-8-2}
     \xymatrix@C1pc{
G^0_0(m) \ar[r]^-{\delta} \ar[d]_-{\cong} & H^0_\et(X, W_m\sW^2_{\<\phi(\n)\>_+}) 
\ar[d]^-{\psi^*_{\<\phi(\n)\>}} \ar[r]^-{\partial_0} & 
G^1_{\<\phi(\n)\>_+}(m) \ar[r] \ar[d]^-{\phi^*_{\<\phi(\n)\>}} & G^1_0(m) \ar[d]^-{\cong}
\ar[r] & 0 \\
(F^0_0(m))^\vee \ar[r]^-{\delta'} & \H^1_\et(X, W_m\sV^0_{\<\phi(\n)\>})^\vee 
\ar[r]^-{\gamma^\vee_{\<\phi(\n)\>}} & (F^1_{\<\phi(\n)\>}(m))^\vee \ar[r] &
(F^1_0(m))^\vee \ar[r] & 0.}
\end{equation}
Since $\gamma^\vee_{\<\phi(\n)\>}$ is a continuous homomorphism between profinite
abelian groups and $\Ker(\beta'_\n)$ is closed in
$\H^1_\et(X, W_m\sV^0_{\<\phi(\n)\>_+})^\vee$, it follows that
$\gamma^\vee_{\<\phi(\n)\>}(\Ker(\beta'_\n))$ is closed in
$(F^1_{\<\phi(\n)\>}(m))^\vee$. In particular,
$(\phi^*_{\<\phi(\n)\>})^{-1}(\gamma^\vee_{\<\phi(\n)\>}(\Ker(\beta'_\n))$ is closed in
$G^1_{\<\phi(\n)\>_+}(m)$. It remains therefore to show that
\begin{equation}\label{eqn:Top-8-3}
  \partial_0(\Ker(\beta_\n)) = 
  (\phi^*_{\<\phi(\n)\>})^{-1}(\gamma^\vee_{\<\phi(\n)\>}(\Ker(\beta'_\n)).
  \end{equation}

Let  $x \in  (\phi^*_{\<\phi(\n)\>})^{-1}(\gamma^\vee_{\<\phi(\n)\>}(\Ker(\beta'_\n))$
so that $\phi^*_{\<\phi(\n)\>}(x) = \gamma^\vee_{\<\phi(\n)\>}(y)$ for some
$y \in \Ker(\beta'_\n)$. By a diagram chase of ~\eqref{eqn:Top-8-2}, we get
$a \in H^0_\et(X, W_m\sW^2_{\<\phi(\n)\>_+})$ and
$b \in G^0_0(m)$ such that $x = \partial_0(a)$ and
$\psi^*_{\<\phi(\n)\>}(a - \delta(b)) = y \in \Ker(\beta'_\n)$.
It follows from ~\eqref{eqn:Top-8-4} that
$a' := a - \delta(b) \in \Ker(\beta_\n)$ and $x = \partial_0(a) =
 \partial_0(a')$. This shows that 
$(\phi^*_{\<\phi(\n)\>})^{-1}(\gamma^\vee_{\<\phi(\n)\>}(\Ker(\beta'_\n)) \subseteq
\partial_0(\Ker(\beta_\n))$. The reverse inclusion follows by ~\eqref{eqn:Top-8-1}.
\end{proof}

\begin{cor}\label{cor:Top-9}
  For every $\n \ge 0$, the profinite completion map
  $G^1_\n(m) \to (G^1_\n(m))^{\pf}$ is
  injective and the topology of $G^1_\n(m)$ coincides with the subspace topology
  induced from the profinite topology of $(G^1_\n(m))^{\pf}$.
\end{cor}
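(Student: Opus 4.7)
The plan is to combine \lemref{lem:Top-8} (Hausdorffness of $G^1_\n(m)$) with the observation that every open subgroup of $G^1_\n(m)$ has finite index; both facts are to be inherited from their counterparts for $G^1_{\<\phi(\n)\>_+}(m)$ via the surjective quotient $\alpha_\n \colon G^1_{\<\phi(\n)\>_+}(m) \surj G^1_\n(m)$ by which the topology on $G^1_\n(m)$ was defined.

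First I would establish that every open subgroup $U \subseteq G^1_\n(m)$ has finite index. Pulling back along $\alpha_\n$, the subgroup $\alpha_\n^{-1}(U)$ is open in the weak topology of $G^1_{\<\phi(\n)\>_+}(m)$, and by the fundamental system of neighborhoods of zero in that topology (see \lemref{lem:Top-1} and the construction preceding it), it contains a finite intersection $\bigcap_{i=1}^r \Ker(\beta(f_i))$ with $f_i \in F^1_{\<\phi(\n)\>}(m)$. Since $F^1_{\<\phi(\n)\>}(m)$ is a torsion group, each $\beta(f_i)$ takes values in a finite cyclic subgroup of $\Q/\Z$, so $\bigcap_i \Ker(\beta(f_i))$ has finite index in $G^1_{\<\phi(\n)\>_+}(m)$. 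Consequently $\alpha_\n^{-1}(U)$ has finite index, and by surjectivity of $\alpha_\n$ so does $U$ in $G^1_\n(m)$.

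Combining this finite-index observation with \lemref{lem:Top-8}, the intersection of all open subgroups of $G^1_\n(m)$ is trivial, hence the intersection of its open subgroups of finite index is trivial; this is exactly the injectivity of $G^1_\n(m) \to (G^1_\n(m))^\pf$. For the equality of topologies, one inclusion is automatic: the profinite completion map is continuous by construction, so the subspace topology is coarser than the quotient topology on $G^1_\n(m)$. For the reverse inclusion, any open $U \subseteq G^1_\n(m)$ is closed of finite index, so the quotient map $G^1_\n(m) \surj G^1_\n(m)/U$ extends uniquely to a continuous surjection $(G^1_\n(m))^\pf \surj G^1_\n(m)/U$ whose kernel $\wt U$ is open in $(G^1_\n(m))^\pf$ and satisfies $\wt U \cap G^1_\n(m) = U$. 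Thus every open subgroup of $G^1_\n(m)$ arises from an open subgroup of the completion, showing the subspace topology is at least as fine.

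The substantive difficulties have already been absorbed into \lemref{lem:Top-5} and \lemref{lem:Top-8}; the only remaining ingredient is the finite-index property of open subgroups, which is immediate from the torsion nature of $F^1_{\<\phi(\n)\>}(m)$ via the recipe preceding \lemref{lem:Top-1}. I anticipate no further obstacle, since once Hausdorffness and the finite-index property are secured, the corollary reduces to the standard universal property of the profinite completion.
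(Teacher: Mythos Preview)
Your proposal is correct and follows essentially the same approach as the paper: both combine \lemref{lem:Top-8} (Hausdorffness) with the observation that every open subgroup of $G^1_\n(m)$ has finite index, the latter inherited from $G^1_{\<\phi(\n)\>_+}(m)$ via the quotient map $\alpha_\n$. The paper's proof is simply a more compressed version of what you have written out in detail.
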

\begin{proof}
  In view of \lemref{lem:Top-8}, it suffices to show that every open subgroup
  of $G^1_\n(m)$ has finite index. But this is clear because this property
  holds for $G^1_{\<\phi(\n)\>_+}(m)$ whose topological quotient is $G^1_\n(m)$.
\end{proof}

\subsection{Duality for $X^o$}\label{sec:D-open}
We let $G^i_m({X^o}) = {\varprojlim}_{\n} (G^i_\n(m))^{\pf}$ and endow it with the
structure of a profinite abelian group by means of the inverse limit topology.
Note that the canonical map ${\varprojlim}_{\n} G^i_\n(m) \to G^i_m({X^o})$ is a
topological
isomorphism for $i \neq 1$. We let $H^i_\et(X^o, {\Z}/{p^m})$ have the discrete
topology for each $i \ge 0$. We end \S~\ref{sec:DThm} with the following duality
theorem for $X^o$. This extends the duality theorem of Kato-Saito
(cf. \cite[\S~3, Prop.~4]{Kato-Saito-Ann}) to open curves.

\begin{thm}\label{thm:Duality-M}
  For each $i \ge 0$, there is a perfect pairing of topological abelian
  groups
  \begin{equation}\label{eqn:Duality-M-0}
    G^i_m({X^o}) \times H^{2-i}_\et(X^o, {\Z}/{p^m}) \to {\Z}/{p^m}.
  \end{equation}
\end{thm}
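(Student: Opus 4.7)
The plan is to deduce the theorem from the finite-level duality of Corollary~\ref{cor:Duality-Spl} by passing to the inverse limit on the left and the filtered colimit on the right as $\n \to \infty$. Two preliminary identifications are needed. First, I would identify $\colim_\n F^{2-i}_{\<\n\>}(m)$ with $H^{2-i}_\et(X^o, {\Z}/{p^m})$. Consider the localization sequence for the hypercohomology of $W_m\sF^0_{\<\n\>}$ with respect to the closed immersion $D_\dagger \inj X$. Since the restriction of $W_m\sF^0_{\<\n\>}$ to $X^o$ is $W_m\Omega^0_{X^o, \log}$, which is quasi-isomorphic to ${\Z}/{p^m}$ \'etale locally, and since by Corollary~\ref{cor:Coh-V0-00} the local cohomology $\H^i_x(X, W_m\sF^0_{\<\n\>})$ vanishes for $i \neq 2$ and equals $H^1_\et(K^h_x, {\Z}/{p^m})/\Fil_{\<n_x\>} H^1_\et(K^h_x, {\Z}/{p^m})$ at $x \in D_\dagger$, this localization sequence reads
\[
0 \to F^1_{\<\n\>}(m) \to H^1_\et(X^o, {\Z}/{p^m}) \to \bigoplus_{x \in D_\dagger} \frac{H^1_\et(K^h_x, {\Z}/{p^m})}{\Fil_{\<n_x\>} H^1_\et(K^h_x, {\Z}/{p^m})} \to F^2_{\<\n\>}(m) \to H^2_\et(X^o, {\Z}/{p^m}) \to 0,
\]
while $F^0_{\<\n\>}(m) = H^0_\et(X^o, {\Z}/{p^m}) = {\Z}/{p^m}$. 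Combining this with Corollary~\ref{cor:p-tor-*-1} (which recovers $H^1_\et(X^o, {\Z}/{p^m})$ as the union of the $\Fil_D H^1_\et(K, {\Z}/{p^m})$) and the obvious fact that $\bigcup_n \Fil_n H^1_\et(K^h_x, {\Z}/{p^m}) = H^1_\et(K^h_x, {\Z}/{p^m})$, passage to the colimit over $\n$ gives $\colim_\n F^i_{\<\n\>}(m) = H^i_\et(X^o, {\Z}/{p^m})$ for $i = 0, 1, 2$.

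Second, I would verify that $\varprojlim_\n (G^i_{\<\n\>_+}(m))^\pf$ coincides with $G^i_m(X^o) = \varprojlim_\n (G^i_\n(m))^\pf$. Since $\<\n\>_+ \ge \n$ for all $\n \ge 0$ and every $\n$ is dominated by some $\<\n'\>_+$ (take $\n' = \phi(\n)$ as in \S~\ref{sec:Top-G}), the subsystem indexed by $\<\n\>_+$ is cofinal; Lemmas~\ref{lem:Top-6} and ~\ref{lem:Top-7} provide the required topological-quotient transition maps between the profinite completions, so the two cofiltered limits agree as profinite abelian groups.

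Granted these two identifications, the theorem becomes a formal consequence of Pontryagin duality. At each level $\n$, Corollary~\ref{cor:Duality-Spl} provides a perfect pairing between the profinite group $(G^i_{\<\n\>_+}(m))^\pf$ and the discrete torsion group $F^{2-i}_{\<\n\>}(m)$; Pontryagin duality then exchanges the filtered colimit on the discrete side for the cofiltered limit of duals on the profinite side, yielding the desired perfect pairing $G^i_m(X^o) \times H^{2-i}_\et(X^o, {\Z}/{p^m}) \to {\Z}/{p^m}$. Compatibility of the finite-level pairings with the transition maps, which is what makes them assemble into a pairing of the (co)limits, is forced by the naturality of the cup product pairing ~\eqref{eqn:Comp-3} in $\n$, already encoded in ~\eqref{eqn:Comp-4} and the diagram ~\eqref{eqn:Pair-0-4-0}. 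The main subtlety is the $i = 1$ case, where $(G^1_\n(m))^\pf$ is genuinely larger than $G^1_\n(m)$; here the weak-topology framework of \S~\ref{sec:Top} together with Lemma~\ref{lem:Top-6} ensures that the inverse limit of profinite completions carries the correct topology to pair perfectly against the discrete group $H^1_\et(X^o, {\Z}/{p^m})$, which is the main technical point of the argument.
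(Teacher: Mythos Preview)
Your proposal is correct and follows essentially the same approach as the paper: identify $\varinjlim_\n F^{2-i}_{\<\n\>}(m)$ with $H^{2-i}_\et(X^o,\Z/p^m)$, use cofinality of the $\<\n\>_+$ to rewrite $G^i_m(X^o)$ as $\varprojlim_\n (G^i_{\<\n\>_+}(m))^\pf$, and then pass the perfect pairings of \corref{cor:Duality-Spl} through the limit/colimit via Pontryagin duality. The paper's proof differs only in presentation---it cites \lemref{lem:Fil-comp} together with \cite[\S~4]{JSZ} for the colimit identification (rather than your localization argument, which is essentially \propref{prop:KSL}), notes separately that the $i=0$ case is eventually trivial since $F^2_\n(m)=G^0_\n(m)=0$ for $\n\gg 0$, and invokes \cite[Lem.~5.2]{Gupta-Krishna-Duality} and \cite[Lem.~2.6]{KRS} for the formal limit-of-dualities step you attribute to Pontryagin duality.
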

\begin{proof}
 First of all, it follows easily by a combination of \lemref{lem:Fil-comp}
  and the argument given at the outset of \cite[\S~4]{JSZ} that there is a
  canonical isomorphism
  ${\varinjlim}_\n F^i_\n(m) \xrightarrow{\cong} H^i_\et(X^o, {\Z}/{p^m})$.
  Now, the $i =0$ case of the theorem follows directly from ~\eqref{eqn:Top-0},
  which says that the duality statement in fact holds for every $\n$. One can also
  check using the ampleness of $D_\n$ that $F^2_\n(1) = 0$ for $\n \gg 0$. Using
  \corref{cor:F-V-R} and induction on $m$, one gets that
  $F^2_\n(m) = 0 = G^0_\n(m)$ for $\n \gg 0$.
  As shown in ~\S~\ref{sec:Top}, the $i =2$ case is independent of $\n$ and the
  duality statement holds by \cite[Prop.~3.4]{Kato-Saito-Ann} (see also
  \cite[Thm.~4.7]{KRS}). 

 To prove $i =1$ case, we first note that there are topological isomorphisms
  $G^1_m({X^o}) \xrightarrow{\cong} {\varprojlim}_\n G^1_{\<\n\>_+}(m)$ and
  ${\varinjlim}_\n F^1_{\<\n\>_+}(m) \xrightarrow{\cong} H^{1}_\et(X^o, {\Z}/{p^m})$.
  We now combine \corref{cor:Duality-Spl}, \cite[Lem.~5.2]{Gupta-Krishna-Duality}
  and \cite[Lem.~2.6]{KRS} to conclude the proof.
\end{proof}

\begin{remk}\label{remk:Duality-M-1}
  If we let $H^1_{c, \et}(X^o, W_m\Omega^2_{X^o, \log}) :=
  H^1_\et(X, j_! W_m\Omega^2_{X^o, \log})$, we can endow
  $H^1_{c, \et}(X^o, W_m\Omega^2_{X^o, \log})$ with
  the weak topology induced by $H^1_\et(X^o, {\Z}/{p^m})$ via the pairing
  $H^1_\et(X^o, {\Z}/{p^m}) \times H^1_{c, \et}(X^o, W_m\Omega^2_{X^o, \log}) \to
  {\Z}/{p^m}$. It can be shown that the
  canonical map $H^1_{c, \et}(X^o, W_m\Omega^2_{X^o, \log}) \to G^1_m(X^o)$ induces an
  isomorphism of topological abelian groups $H^1_{c, \et}(X^o, W_m\Omega^2_{X^o, \log})^\pf
  \xrightarrow{\cong} G^1_m(X^o)$. In particular,
  ~\eqref{eqn:Duality-M-0} for $i =1$ can be written as
  \[
    H^1_{c, \et}(X^o, W_m\Omega^2_{X^o, \log})^\pf  \times
    H^{1}_\et(X^o, {\Z}/{p^m}) \to {\Z}/{p^m}.
  \]
  \end{remk}

\section{Class field theory with modulus}\label{sec:CFT-mod}
We fix a local field $k$ of characteristic $p > 0$ and a connected, smooth and
projective curve $X$ over $k$. 
We let $D_\dagger = \{x_1, \ldots , x_s\}$ be a reduced effective divisor on $X$
and let $j \colon X^o \inj X$ be the inclusion of the complement of $D$.
Let $K$ denote the function field of $X$.
In this section, we shall prove a general form of \corref{cor:Duality-Spl}
and derive the class field of $X$ with modulus.
We shall continue to maintain the notations of
\S~\ref{sec:Exist} and \S~\ref{sec:DThm}.

\subsection{Duality theorem with modulus}\label{sec:DTM*}
We fix an integer $m \ge 1$. We let $\psi(n) = p^{m-1}n +1$ for $n \ge 0$.
We let $\rho^m_{X^o}\colon  \frac{C(X^o)}{p^m} \to
\frac{\pi^{\ab}_1(X^o)}{p^m}$ and $\rho^m_{X|D}\colon C_m(X,D) \to
\frac{\pi^{\ab}_1(X, D)}{p^m}$ be the continuous homomorphisms
induced by the reciprocity maps of ~\eqref{eqn:Rec-curve-Main}.

We let $\rho^\et_{X^o} \colon G^1_m({X^o}) \to \frac{\pi^{\ab}_1(X^o)}{p^m}$
and $\lambda^\et_{X^o} \colon \frac{\pi^{\ab}_1(X^o)}{p^m} \to  G^1_m({X^o})$
be the continuous homomorphisms induced by the perfect pairing of
~\eqref{eqn:Duality-M-0} via the identification
$H^{1}_\et(X^o, {\Z}/{p^m})^\vee \cong \frac{\pi^{\ab}_1(X^o)}{p^m}$.
We consider the sequence of homomorphisms
\begin{equation}\label{eqn:Rec-dual-0}
  \frac{C(X^o)}{p^m} \to {\varprojlim}_\n \ C_m(X, D_{\<\n\>_+})
  \inj  {\varprojlim}_\n \ C^\et_m(X, D_{\<\n\>_+}) \inj 
  G^1_m({X^o}) \xrightarrow{\rho^\et_{X^o}} \frac{\pi^{\ab}_1(X^o)}{p^m}.
  \end{equation}
  The two arrows in the middle are injective by
  \propref{prop:Nis-et-ICG} and \corref{cor:Top-9}.
  We let $\wt{\rho}^m_{X^o}$ denote the composite homomorphism.

\begin{lem}\label{lem:Rec-dual-1}
  We have $\wt{\rho}^m_{X^o} = \rho^m_{X^o}$.
  \end{lem}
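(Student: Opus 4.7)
The plan is to reduce the identity $\wt{\rho}^m_{X^o} = \rho^m_{X^o}$ to a collection of local compatibilities that have essentially been proved in earlier sections.

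First I would exploit the compatibility of the whole construction with the modulus. Both $\pi^{\ab}_1(X^o)/p^m$ and $H^1_\et(X^o,{\Z}/{p^m})$ are respectively the inverse and direct limit over $\un{n}$ of $\pi^{\ab}_1(X,D_{\<\n\>_+})/p^m$ and $\Fil_{D_{\<\n\>_+}}H^1_\et(K,{\Z}/p^m)\cong F^1_{\<\n\>}(m)$ (by \corref{cor:p-tor-*-1} and \propref{prop:KSL}), and $G^1_m(X^o)={\varprojlim}_{\n}(G^1_{\<\n\>_+}(m))^\pf$ by construction. Hence it suffices to prove the finite-level assertion: for every $\n\ge 0$, the composition
\[
 C_m(X,D_{\<\n\>_+})\to C^\et_m(X,D_{\<\n\>_+})\inj (G^1_{\<\n\>_+}(m))^\pf\xrightarrow{\rho^{\et,\n}_{X^o}} \frac{\pi^{\ab}_1(X,D_{\<\n\>_+})}{p^m}
\]
agrees with $\rho^m_{X|D_{\<\n\>_+}}$, where $\rho^{\et,\n}_{X^o}$ is the map induced by the perfect pairing of \corref{cor:Duality-Spl}. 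By Pontryagin duality, this is equivalent to showing that the dual map $F^1_{\<\n\>}(m)\to C_m(X,D_{\<\n\>_+})^\star$ factors as $F^1_{\<\n\>}(m)\xrightarrow{\cong}\Fil_{D_{\<\n\>_+}}H^1_\et(K,{\Z}/p^m)\xrightarrow{(\rho^m_{X|D_{\<\n\>_+}})^\star}C_m(X,D_{\<\n\>_+})^\star$.

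Next I would use the presentation of $C_m(X,D_{\<\n\>_+})$ from \lemref{lem:ICG-PC} together with the Gersten/localization presentation of $F^1_{\<\n\>}(m)$ given in ~\eqref{eqn:KSL-0} to reduce to local compatibility: for every $x\in X^o_{(0)}$ the summand $k(x)^\times/p^m$ is paired (through the étale cup product) with the unramified quotient of $H^1_\et(K^h_x,{\Z}/p^m)$, and for every $x\in D_\dagger$ the summand $K_2(K^h_x)/\kappa^m_2(A^h_x|\<n_x+1\>)\cong\varpi^m_2(A^h_x|\<n_x\>_+)$ is paired with $\Fil_{\<n_x\>}H^1_\et(K^h_x,{\Z}/p^m)/\Fil_{-1}$. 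In each case the induced pairing agrees on the nose with the corresponding local reciprocity. For $x\in D_\dagger$ this is precisely the content of \lemref{lem:LD-3}, \propref{prop:LD-4} and \corref{cor:Local-inj-11}, whose global counterpart is encoded in \lemref{lem:Coh-V2} (compare the diagrams ~\eqref{eqn:G-comp-9} and ~\eqref{eqn:Coh-V2-2}). For $x\in X^o_{(0)}$ it is the classical identification of Kato's local reciprocity with the étale cup product, which is already implicit in the construction of $\rho_{X^o}$ in \S~\ref{sec:RMap} and in \corref{cor:REC-T-Norm}.

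Finally I would assemble these local compatibilities. The exact sequence in ~\eqref{eqn:Pair-0-4-0}, when paired with the analogous exact sequence of local components, places every term of the definition of $\rho^{\et,\n}_{X^o}$ side by side with the corresponding local contribution to $\rho^m_{X|D_{\<\n\>_+}}$; the term-by-term agreement established in the previous paragraph then yields the desired equality after summing over $x\in X^o_{(0)}\cup D_\dagger$ and passing to the quotient $C_m(X,D_{\<\n\>_+})$. The main obstacle is bookkeeping: one must verify that the various cup-product pairings (global, at $X^h_x$, and at $\Spec K^h_x$) patch correctly through the distinguished triangles of ~\eqref{eqn:G-comp-5}--\eqref{eqn:G-comp-9}, and that the Kato-Saito trace $\Tr_X$ restricts to Kato's residue $\Res'_{K^h_x}$ used in \propref{prop:LD-4}. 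Both points follow from \cite[\S~3, Prop.~4]{Kato-Saito-Ann} and the functoriality of the cup product under $\tau_x^!$, but the diagram chase must be done with care because the topologies on the various groups are defined through \emph{different} pairings and not as subspaces of one ambient group.
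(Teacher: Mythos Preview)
Your approach is essentially the same as the paper's: both reduce to checking that the restriction of $\wt{\rho}^m_{X^o}$ to each local summand coming from $x\in X^o_{(0)}$ or $x\in D_\dagger$ agrees with the local reciprocity map defining $\rho^m_{X^o}$, and both identify the key compatibility at ramified points with \lemref{lem:Coh-V2} (or the diagrams \eqref{eqn:G-comp-9}--\eqref{eqn:Coh-V2-2} that prove it).

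The organization differs, however. You first pass to a fixed modulus level $D_{\<\n\>_+}$, dualize, and then decompose via the presentations of $C_m(X,D_{\<\n\>_+})$ and $F^1_{\<\n\>}(m)$; the paper instead works directly at the $X^o$ level, tracking the full local piece ${K_2(K^h_x)}/{p^m}$ (not its quotient) through a single diagram \eqref{eqn:Rec-dual-1-0}--\eqref{eqn:Rec-dual-1-1} and appealing to the surjection $F_1(X^o)\oplus F_2(X^o)\surj C(X^o)$. The paper's route avoids the extra bookkeeping of the finite-level reduction and the dualization step. One imprecision in your write-up: for $x\in D_\dagger$ the local summand in $C_m(X,D_{\<\n\>_+})$ is $\bigl(K_2(K^h_x)/\Fil_{\<n_x\>_+}K_2(K^h_x)\bigr)/p^m$, not just $\varpi^m_2(A^h_x|\<n_x\>_+)$; it contains in addition the tame quotient ${k(x)^\times}/{p^m}$, which must be treated the same way as the unramified points. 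Also, \propref{prop:LD-4} and \corref{cor:Local-inj-11} concern injectivity/isomorphism for $m=1$ rather than the compatibility you need; the relevant input at ramified points is exactly \lemref{lem:Coh-V2} (together with \lemref{lem:Coh-V0} and \eqref{eqn:Nis-et-ICG-3}), and \corref{cor:REC-T-Norm} is about the norm, not the cup-product compatibility at $x\in X^o_{(0)}$.
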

  \begin{proof}
    We let $x \in D_\dagger$ and look at the diagrams
    \begin{equation}\label{eqn:Rec-dual-1-0}
      \xymatrix@C.8pc{
        \frac{K_2(K^h_x)}{p^m} \ar[rrr]^-{\rho_{K^h_x}} \ar[dd] \ar[dr] & & &
        \H^1_\et(K^h_x, W_m\sF^0_{\<\n\>})^\vee  \ar[d] & \\
        & {\varprojlim}_\n H^1_{x, \ \nis}(X^h_x, \sK^m_{X|D_{\<\n\>_+}})
        \ar[d] \ar[r]^-{\cong} &
        {\varprojlim}_\n H^1_x(X^h_x, \sK^m_{X|D_{\<\n\>_+}}) \ar[r] \ar[d] &
        {\varprojlim}_\n \H^1_\et(X^h_x, W_m\sF^0_{\<\n\>})^\vee \ar[d] \\
 \frac{C(X^o)}{p^m} \ar[r] & {\varprojlim}_\n C_m(X, D_{\<\n\>_+})
 \ar[r] &  {\varprojlim}_\n C^\et_m(X, D_{\<\n\>_+}) \ar[r] &
 {\varprojlim}_\n (F^1_{\<\n\>}(m))^\vee}
\end{equation}
and
\begin{equation}\label{eqn:Rec-dual-1-1}
  \xymatrix@C.8pc{
 \H^1_\et(K^h_x, W_m\sF^0_{\<\n\>})^\vee  \ar[r] \ar[d]_-{\cong} &
 {\varprojlim}_\n \H^1_\et(X^h_x, W_m\sF^0_{\<\n\>})^\vee \ar[r] &
 {\varprojlim}_\n (F^1_{\<\n\>}(m))^\vee \ar[d]^-{\cong} \\
  H^1_\et(K^h_x, {\Z}/{p^m})^\vee \ar[rr] & & H^1_\et(X^o, {\Z}/{p^m})^\vee.}
 \end{equation}

By ~\eqref{eqn:Nis-et-ICG-3} and \lemref{lem:Coh-V0}, the map
$H^1_x(X^h_x, \sK^m_{X|D_{\<\n\>_+}}) \to \H^1_\et(X^h_x, W_m\sF^0_{\<\n\>})^\vee$
in the middle row of ~\eqref{eqn:Rec-dual-1-0}
coincides (for a fixed $\<\n\>$) with the
composition of Kato's reciprocity homomorphism
with the projection $\frac{K_2(K^h_x)}{K_2(A^h_x|\psi(n_x)) + p^m K_2(K^h_x)} \to
(\Fil_{n_x} H^1_\et(K^h_x, {\Z}/{p^m}))^\vee$. In particular, the top square in
~\eqref{eqn:Rec-dual-1-0} commutes. The bottom square clearly commutes because
it is the local-global comparison square for the cohomology pairing of
\propref{prop:Pair-0-4}. The diagram~\eqref{eqn:Rec-dual-1-1} is easily seen to be
commutative.

The composition of the horizontal arrow on the top of
~\eqref{eqn:Rec-dual-1-0} and the left vertical arrow in ~\eqref{eqn:Rec-dual-1-1}
is $\rho_{K^h_x}$. On the other hand, the composition of the horizontal rows on the
bottom of ~\eqref{eqn:Rec-dual-1-0} and the right vertical arrow in
~\eqref{eqn:Rec-dual-1-1} is $\wt{\rho}^m_{X^o}$. 
It follows that $\rho^m_{X^o}$ and $\wt{\rho}^m_{X^o}$ coincide when
restricted to $\frac{K_2(K^h_x)}{p^m}$. A similar and easier argument shows that
$\rho^m_{X^o}$ and $\wt{\rho}^m_{X^o}$ coincide when
restricted to $\frac{K_1(k(x))}{p^m}$ if $x\in X^o_{(0)}$. As
$F_1(X^o) \bigoplus F_2(X^o) \surj C(X^o)$, the lemma follows.
\end{proof}

\begin{lem}\label{lem:Rec-dual-3}
    For any $\n \ge 1$, we have a Cartesian square
    \begin{equation}\label{eqn:Rec-dual-3-0}
      \xymatrix@C2pc{
        \Fil_{D_\n} H^1_\et(K, {\Z}/{p^m}) \ar[r]^-{(\rho^m_{X|D_\n})^\vee}
        \ar@{^{(}->}[d]_-{p^\vee_\n} &
C_m(X,D_\n)^\vee \ar@{^{(}->}[d]^-{s^\vee_\n} \\
H^1_\et(X^o, {\Z}/{p^m}) \ar[r]^-{(\rho^m_{X^o})^\vee} &
(\frac{C(X^o)}{p^m})^\vee.}
\end{equation}     
\end{lem}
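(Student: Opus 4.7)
The commutativity of the square follows formally. By construction in \S~\ref{sec:RMap}, $\rho^m_{X|D_\n}$ arises from $\rho^m_{X^o}$ via the canonical continuous surjections $s_\n\colon C(X^o)/p^m \twoheadrightarrow C_m(X,D_\n)$ of \eqref{eqn:ICG-open-0} and $\pi^{\ab}_1(X^o)/p^m \twoheadrightarrow \pi^{\ab}_1(X,D_\n)/p^m$ of \eqref{eqn:Adiv}. Pontryagin dualizing the second surjection returns the inclusion $p_\n$, while applying $\Hom(-,{\Z}/{p^m})$ to the resulting commutative square of reciprocity maps yields the commutativity asserted in the lemma.

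For the Cartesian property, note that $p^\vee_\n$ is injective by \defref{defn:FGMP} and $s^\vee_\n$ is injective because $s_\n$ is surjective. Hence it suffices to show that whenever $\xi \in H^1_\et(X^o,{\Z}/{p^m})$ satisfies $(\rho^m_{X^o})^\vee(\xi) \in s^\vee_\n(C_m(X,D_\n)^\vee)$, one has $\xi \in \Fil_{D_\n} H^1_\et(K,{\Z}/{p^m})$; for such $\xi$, the preimage in $C_m(X,D_\n)^\vee$ exists and is unique by the injectivity of $s^\vee_\n$, and coincides with $(\rho^m_{X|D_\n})^\vee(\xi)$ thanks to the already established commutativity.

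The local condition at each $x \in D_\dagger$ will be checked as follows. The hypothesis says the character $\Psi := \xi \circ \rho^m_{X^o}$ of $C(X^o)/p^m$ kills $\Ker(s_\n)$. By Definitions~\ref{defn:CG-1} and~\ref{defn:ICG-open}, this kernel contains the image in $C(X^o)/p^m$ of $\bigoplus_{x \in D_\dagger} \Fil_{n_x} K_2({K}^h_x)$. Consequently, for each $x \in D_\dagger$ the composite
\[
K_2({K}^h_x)/p^m \hookrightarrow F_2(X^o)/p^m \twoheadrightarrow C(X^o)/p^m \xrightarrow{\Psi} {\Z}/{p^m}
\]
kills $\Fil_{n_x} K_2({K}^h_x)/p^m$. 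By the construction of $\rho_{X^o}$ in \S~\ref{sec:RMap} as the sum of local Kato reciprocity maps, this composite equals $(\xi|_{{K}^h_x}) \circ \rho_{{K}^h_x}$. Its restriction to $\kappa^m_2(A^h_x|0)$ therefore kills $\kappa^m_2(A^h_x|n_x)$ and defines an element of $\varpi^m_2(A^h_x|n_x)^\star$.

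By \propref{prop:Dual-iso} this element corresponds to a unique class in $\Fil_{n_x-1} H^1_\et({K}^h_x,{\Z}/{p^m})/\Fil_{-1} H^1_\et({K}^h_x,{\Z}/{p^m})$. Using the bijectivity of the Kato dual map $H^1_\et({K}^h_x,{\Z}/{p^m}) \xrightarrow{\cong} (K_2({K}^h_x)/p^m)^\star$ invoked in the proof of \propref{prop:Dual-iso}, this class must coincide with the image of $\xi|_{{K}^h_x}$ in the quotient. Since $\Fil_{-1} \subseteq \Fil_{n_x-1}$, we conclude $\xi|_{{K}^h_x} \in \Fil_{n_x-1} H^1_\et({K}^h_x,{\Z}/{p^m})$ for every $x \in D_\dagger$, and \eqref{eqn:Fil-U-p} then yields $\xi \in \Fil_{D_\n} H^1_\et(K,{\Z}/{p^m})$. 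The main obstacle is this last alignment step: identifying the character of $\varpi^m_2(A^h_x|n_x)$ extracted from $\Psi$ with the reduction of $\xi|_{{K}^h_x}$ modulo $\Fil_{-1}$, which hinges on the compatibility of local Kato duality at the relevant filtration level.
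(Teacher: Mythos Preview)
Your proof is correct and follows the same outline as the paper: reduce to the local claim that if the Kato pairing $\xi|_{K^h_x}\circ\rho_{K^h_x}$ annihilates $\Fil_{n_x}K_2(K^h_x)$ then $\xi|_{K^h_x}\in\Fil_{n_x-1}H^1_\et(K^h_x,\Z/p^m)$. The paper establishes this local step in one line by invoking \cite[Rem.~6.6]{Kato89} together with \corref{cor:p-tor-*}, whereas you route through \propref{prop:Dual-iso}; since the surjectivity part of \propref{prop:Dual-iso} is itself proved using \cite[Rem.~6.6]{Kato89}, your argument is a mild detour rather than a different method.

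Your self-flagged ``alignment step'' is not an actual obstacle. The map $\wt{\rho}^{n_x}_{K^h_x,m}$ is defined in \S\ref{sec:Rec-quot} precisely as the quotient of $\rho^0_{K^h_x,m}$, so $(\wt{\rho}^{n_x}_{K^h_x,m})^\star$ and $(\rho^0_{K^h_x,m})^\star$ sit in a commutative square whose vertical arrows are the inclusions $\Fil_{n_x-1}/\Fil_{-1}\hookrightarrow H^1_\et(K^h_x,\Z/p^m)/\Fil_{-1}$ and $\varpi^m_2(A^h_x|n_x)^\star\hookrightarrow\kappa^m_2(A^h_x|0)^\star$. Since both horizontal arrows are bijections and both vertical arrows are injections, any element of $H^1_\et(K^h_x,\Z/p^m)/\Fil_{-1}$ whose image in $\kappa^m_2(A^h_x|0)^\star$ lies in $\varpi^m_2(A^h_x|n_x)^\star$ must already lie in $\Fil_{n_x-1}/\Fil_{-1}$. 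This is exactly the alignment you need, and it is forced by the definitions. If you prefer to bypass the square entirely, simply cite \cite[Rem.~6.6]{Kato89} and \corref{cor:p-tor-*} directly, as the paper does.
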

\begin{proof}
 Using the commutative diagram
\begin{equation}\label{eqn:Rec-dual-3-1}
    \xymatrix@C2pc{
      H^1_\et(X^o, {\Z}/{p^m}) \ar[r]^-{(\rho^m_{X^o})^\vee} \ar[d]
      & (\frac{C(X^o)}{p^m})^\vee
      \ar[d] \\
      H^1_\et({K}^h_x, {\Z}/{p^m}) \ar[r]^-{\rho^m_{\wh{K}_x}} &
      (\frac{K_2({K}^h_x)}{p^m})^\vee}
  \end{equation}
for every $x \in D_\dagger$, the lemma is easily deduced from 
\cite[Rem.~6.6]{Kato89} and \corref{cor:p-tor-*}.
\end{proof}

 Let $\nu_\n \colon C_m(X, D_\n) \to C^\et_m(X, D_\n)$ denote the {\'e}tale
  realization map. 

 \begin{lem}\label{lem:Nis-et-Cartesian}
   For any $\n \ge 1$, we have a Cartesian square
\begin{equation}\label{eqn:Nis-et-Cartesian-0}
      \xymatrix@C2pc{
        ((G^1_\n(m))^{\pf})^\star \ar[d]_-{\nu^\vee_\n} \ar[r]^-{q^\vee_\n}  &
        G^1_m(X^o)^\star \ar[d]^-{(\rho^m_{X^o})^\vee} \\
       C_m(X, D_\n)^\vee \ar[r]^-{s^\vee_\n} &   (\frac{C(X^o)}{p^m})^\vee.}
\end{equation}
\end{lem}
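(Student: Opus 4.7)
The plan is to reduce \lemref{lem:Nis-et-Cartesian} to the already-established Cartesian square of \lemref{lem:Rec-dual-3} by identifying the two squares via duality. The global identification $G^1_m(X^o)^\star \cong H^1_\et(X^o, {\Z}/{p^m})$ comes from \thmref{thm:Duality-M} (with $i = 1$), and \lemref{lem:Rec-dual-1} ensures that the right vertical map $(\rho^m_{X^o})^\vee$ in our square matches the one in \lemref{lem:Rec-dual-3}.

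The main work is to establish the finite-level identification
\[
((G^1_\n(m))^\pf)^\star \;\cong\; \Fil_{D_\n} H^1_\et(K, {\Z}/{p^m})
\]
for every $\n \ge \un{1}$, in a manner compatible with the global duality. For $\n$ of the form $\<\un{k}\>_+ := p^{m-1}\un{k} + \un{1}$ with $\un{k} \ge 0$, this follows directly from \corref{cor:Duality-Spl} combined with \propref{prop:KSL}. For arbitrary $\n \ge \un{1}$, set $\n' := \<\phi(\n)\>_+ \ge \n$; \lemref{lem:Top-7} gives a topological quotient $(G^1_{\n'}(m))^\pf \twoheadrightarrow (G^1_\n(m))^\pf$, whose $\star$-dual embeds $((G^1_\n(m))^\pf)^\star$ into $((G^1_{\n'}(m))^\pf)^\star \cong \Fil_{D_{\n'}} H^1_\et(K, {\Z}/{p^m})$. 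A local analysis at each $x \in D_\dagger$ using \lemref{lem:Coker-1} then identifies the image with $\Fil_{D_\n} H^1_\et(K, {\Z}/{p^m})$. The compatibility with \thmref{thm:Duality-M} is forced by the matching exhaustions $\bigcup_\n ((G^1_\n(m))^\pf)^\star = G^1_m(X^o)^\star$ and $\bigcup_\n \Fil_{D_\n} H^1_\et(K, {\Z}/{p^m}) = H^1_\et(X^o, {\Z}/{p^m})$ (\corref{cor:p-tor-*-1}). Finally, the identification $\nu^\vee_\n = (\rho^m_{X|D_\n})^\vee$ is the finite-level analogue of \lemref{lem:Rec-dual-1}, proved by the same diagram chase, since $\rho^m_{X|D_\n}$ is built from Kato's local reciprocity which precisely corresponds to the duality pairing on $W_m\sF^0_\n$. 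With these four arrows matched, our square coincides with the Cartesian square of \lemref{lem:Rec-dual-3}, and we are done.

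The main obstacle is the finite-level identification for a general $\n$ outside the special form $\<\un{k}\>_+$, since the perfect pairing of \corref{cor:Duality-Spl} is established only in that case. The resolution requires analyzing the kernel of the quotient $(G^1_{\<\phi(\n)\>_+}(m))^\pf \twoheadrightarrow (G^1_\n(m))^\pf$ via the short exact sequence of logarithmic Hodge-Witt sheaves $0 \to W_m\Omega^2_{X|D_{\<\phi(\n)\>_+}, \log} \to W_m\Omega^2_{X|D_\n, \log} \to \sQ \to 0$, where $\sQ$ is supported on $D_\dagger$, and recognizing its dual annihilator inside $\Fil_{D_{\<\phi(\n)\>_+}} H^1_\et(K, {\Z}/{p^m})$ as exactly $\Fil_{D_\n} H^1_\et(K, {\Z}/{p^m})$ via the local duality at the closed points of $D_\dagger$ that underlies \propref{prop:Pair-0-4}.
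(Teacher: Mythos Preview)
Your approach is correct in principle but takes a genuinely different route from the paper's, and it is worth comparing the two.

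The paper factors the square~\eqref{eqn:Nis-et-Cartesian-0} through the intermediate level $\n' = \<\phi(\n)\>_+$ into two squares and treats them separately. The \emph{right} square (between $\n'$ and $X^o$) is identified with the Cartesian square of \lemref{lem:Rec-dual-3} at level $\n'$, using only the special-form duality already available from \corref{cor:Duality-Spl}, \propref{prop:KSL} and \thmref{thm:Duality-M}. The \emph{left} square (between $\n$ and $\n'$) is shown to be Cartesian by a direct structural argument that does not touch duality at all: \propref{prop:Nis-et-ICG} says the cokernel $T_m(X)$ of the realization $C_m(X,D) \to C^\et_m(X,D)$ is independent of $D$, so dualizing the resulting short exact sequence at levels $\n$ and $\n'$ immediately gives the Cartesian property (after observing that the surjection $C^\et_m(X,D_{\n'}) \to C^\et_m(X,D_\n)$ is a topological quotient, so $(-)^\star$ coincides with $(-)^\vee$ on the top row).

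Your approach instead establishes the identification $((G^1_\n(m))^\pf)^\star \cong \Fil_{D_\n} H^1_\et(K, {\Z}/{p^m})$ for \emph{every} $\n \ge \un{1}$ up front, then matches the entire square with \lemref{lem:Rec-dual-3} at once. The local-annihilator step you sketch does work: via \lemref{lem:Coh-V2} the restriction of the $W_m\sF^0$-pairing to $H^0(W_m\sW^2_{\n'})$ is Kato's local reciprocity, and \cite[Rem.~6.6]{Kato89} then identifies the annihilator of $\kappa^m_2(A^h_x|n_x)$ as $\Fil_{n_x-1}H^1(K^h_x)$, giving exactly $\Fil_{D_\n}H^1$. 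But note that this identification is precisely the content of \thmref{thm:Duality-Gen}, which in the paper's logical order is \emph{derived from} \lemref{lem:Nis-et-Cartesian} via \lemref{lem:Rec-dual-4}; you are effectively reversing that dependency. Likewise your verification that $\nu^\vee_\n = (\rho^m_{X|D_\n})^\vee$ is the content of \lemref{lem:Rec-dual-5}, which the paper also places downstream.

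What each approach buys: the paper's route is more modular and keeps the general-$\n$ duality theorem as a clean consequence rather than an ingredient; the key leverage is the $D$-independence of $T_m(X)$ from \propref{prop:Nis-et-ICG}, which you do not use. Your route is self-contained once the local annihilator computation is made precise, but it front-loads the hardest part of \thmref{thm:Duality-Gen} and requires tracking more compatibilities (your steps (b), (c), (d)) simultaneously.
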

  \begin{proof}
    We let $\n' = \<\phi(\n)\>_+$ and observe that
    ~\eqref{eqn:Nis-et-Cartesian-0} has a factorization
   \begin{equation}\label{eqn:Nis-et-Cartesian-1}
      \xymatrix@C2pc{
        ((G^1_\n(m))^{\pf})^\star \ar[d]_-{\nu^\vee_\n} \ar[r]  &
((G^1_{\n'}(m))^{\pf})^\star \ar[d]^-{\nu^\vee_{\n'}} \ar[r]^-{q^\vee_{\n'}}  &
G^1_m(X^o)^\star \ar[d]^-{(\rho^m_{X^o})^\vee} \\
C_m(X, D_\n)^\vee \ar[r] &  C_m(X, D_{\n'})^\vee \ar[r]^-{s^\vee_{\n'}}
& (\frac{C(X^o)}{p^m})^\vee,}
\end{equation} 
where the left horizontal arrows are obtained by dualizing the transition
maps.

We claim that the left square in ~\eqref{eqn:Nis-et-Cartesian-1}
is Cartesian. To show this, we can replace $(G^1_\n(m))^{\pf}$
(resp. $(G^1_{\n'}(m))^{\pf}$) by $C^\et_m(X, D_\n)$ (resp. $C^\et_m(X, D_{\n'})$)
by \lemref{lem:PF-compln}. By dualizing the exact sequence of
\propref{prop:Nis-et-ICG}, it already follows now that the left square in
~\eqref{eqn:Nis-et-Cartesian-1} is Cartesian if we use
$(-)^\vee$ instead of $(-)^\star$ in its top row. The claim now follows because
$C^\et_m(X, D_{\n'}) \surj  C^\et_m(X, D_\n)$ is a topological quotient map.

To show that the right square in  ~\eqref{eqn:Nis-et-Cartesian-1}
is Cartesian, we can replace $((G^1_{\n'}(m))^{\pf})^\star$ and $G^1_m(X^o)^\star$
by $\Fil_{D_{\n'}} H^1_\et(K, {\Z}/{p^m})$ and
$H^1_\et(X^0, {\Z}/{p^m})$, respectively, by \propref{prop:KSL},
\corref{cor:Duality-Spl} and \thmref{thm:Duality-M}.
We can now apply \lemref{lem:Rec-dual-3} to finish the proof.
\end{proof}

\begin{lem}\label{lem:Rec-dual-4}
 The map $\rho^{\et}_{X^o}\colon G^1_m(X^o) \to
  \frac{\pi^{\ab}_1(X^o)}{p^m}$ induces a continuous homomorphism
  $\rho^\et_{X|D_\n} \colon C^\et_m(X, D_\n) \to \frac{\pi^{\ab}_1(X,D_\n)}{p^m}$
  for every $\n \ge 0$ such that 
  \begin{equation}\label{eqn:Rec-dual-4-0}
    \xymatrix@C2pc{
      G^1_m(X^o) \ar[r]^-{\rho^{\et}_{X^o}} \ar@{->>}[d]_-{q_\n} &  
      \frac{\pi^{\ab}_1(X^o)}{p^m} \ar@{->>}[d]^-{p_\n} \\
      (G^1_\n(m))^{\pf} \ar[r]^-{\rho^\et_{X|D_\n}} &
      \frac{\pi^{\ab}_1(X,D_\n)}{p^m}}
  \end{equation}
  is a commutative diagram of continuous homomorphisms between topological abelian
  groups. Moreover, $\rho^\et_{X|D_\n}$ is a topological isomorphism.
\end{lem}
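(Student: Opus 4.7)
The strategy is to use Pontryagin duality systematically. By \thmref{thm:Duality-M}, $\rho^\et_{X^o} \colon G^1_m(X^o) \xrightarrow{\cong} \pi^{\ab}_1(X^o)/p^m$ is itself a topological isomorphism, arising as the Pontryagin dual of the identification $H^1_\et(X^o, {\Z}/{p^m}) \cong \pi^{\ab}_1(X^o)/p^m$. Thus the real content of the lemma is the construction of $\rho^\et_{X|D_\n}$ together with the commutativity of \eqref{eqn:Rec-dual-4-0}, and the verification that this induced map is a topological isomorphism.

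For the construction, I would first invoke \propref{prop:KSL} to identify $F^1_{\n_-}(m) \cong \Fil_{D_\n} H^1_\et(K, {\Z}/{p^m})$, so that the inclusion $F^1_{\n_-}(m) \hookrightarrow \varinjlim_{\n'} F^1_{\n'_-}(m) = H^1_\et(X^o, {\Z}/{p^m})$ is Pontryagin dual to the surjection $p_\n\colon \pi^{\ab}_1(X^o)/p^m \twoheadrightarrow \pi^{\ab}_1(X, D_\n)/p^m$ from \defref{defn:FGMP}. Next, I would show that the pairing of \propref{prop:Pair-0-4} applied with $\<\phi(\n)\>_+$ in place of $\<\n\>_+$ (legitimate since $D_{\<\phi(\n)\>_+} \ge D_\n$ by construction of $\phi(\n)$) restricts along $F^1_{\n_-}(m) \hookrightarrow F^1_{\<\phi(\n)\>}(m)$ to a pairing that descends through the topological quotient $(G^1_{\<\phi(\n)\>_+}(m))^{\pf} \twoheadrightarrow (G^1_\n(m))^{\pf}$ of \lemref{lem:Top-7}. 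The induced pairing $(G^1_\n(m))^{\pf} \times F^1_{\n_-}(m) \to {\Z}/{p^m}$ then defines $\rho^\et_{X|D_\n}$ as the associated continuous dual map into $(F^1_{\n_-}(m))^\vee \cong \pi^{\ab}_1(X, D_\n)/p^m$, and the commutativity of \eqref{eqn:Rec-dual-4-0} becomes automatic from the compatibility of the pairings with the evident inclusions and projections.

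To show that $\rho^\et_{X|D_\n}$ is a topological isomorphism, it suffices, by Pontryagin duality for profinite--discrete torsion pairs, to establish that the descended pairing is perfect. Surjectivity of the dual map $F^1_{\n_-}(m) \to ((G^1_\n(m))^{\pf})^\star$ follows from the perfect pairing \corref{cor:Duality-Spl} at level $\<\phi(\n)\>_+$: a continuous character of $(G^1_\n(m))^{\pf}$ pulls back to one of $(G^1_{\<\phi(\n)\>_+}(m))^{\pf}$, hence comes from some $\beta \in F^1_{\<\phi(\n)\>}(m)$, and $\beta$ must lie in $F^1_{\n_-}(m)$ because it annihilates the kernel of the quotient. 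Injectivity follows from \corref{cor:Top-9} combined with \thmref{thm:Duality-M}: any $\alpha \in (G^1_\n(m))^{\pf}$ in the kernel lifts, via the surjective inverse-limit projection $G^1_m(X^o) \twoheadrightarrow (G^1_\n(m))^{\pf}$, to some $\tilde\alpha \in G^1_m(X^o)$; since $\tilde\alpha$ annihilates all of $F^1_{\n_-}(m)$ and the same conclusion applied at every finer $\n'$ traces $\tilde\alpha$ into the annihilator of $\varinjlim_{\n'} F^1_{\n'_-}(m) = H^1_\et(X^o, {\Z}/{p^m})$, the global duality forces $\tilde\alpha = 0$, and hence $\alpha = 0$.

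The main obstacle is the descent of the pairing: showing that the kernel of $(G^1_{\<\phi(\n)\>_+}(m))^{\pf} \twoheadrightarrow (G^1_\n(m))^{\pf}$ annihilates $F^1_{\n_-}(m)$ under the \propref{prop:Pair-0-4} pairing. Using the commutative diagrams of \secref{sec:Loc-der-rec} together with the description of $\Ker(\beta_\n)$ in the proof of \lemref{lem:Top-8}, this reduces to a local statement: the kernel of the canonical surjection $\varpi^m_2(A^h_x|\psi(\phi(n_x))) \twoheadrightarrow \varpi^m_2(A^h_x|n_x)$ must be annihilated, under the local duality, by $\Fil_{n_x - 1} H^1_\et(K^h_x, {\Z}/{p^m})/\Fil_{-1} H^1_\et(K^h_x, {\Z}/{p^m})$. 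This is exactly the Pontryagin-dual reformulation of \propref{prop:Dual-iso} applied at the two filtration steps, combined with the local reciprocity identifications of \lemref{lem:Coh-V2} and the diagram \eqref{eqn:Coh-V2-4}.
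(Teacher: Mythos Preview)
Your strategy of constructing $\rho^\et_{X|D_\n}$ by descending the pairing of \propref{prop:Pair-0-4} is sound, and the local reduction in your final paragraph (via \propref{prop:Dual-iso} and \lemref{lem:Coh-V2}) does establish the descent direction: that $\Ker\bigl((G^1_{\<\phi(\n)\>_+}(m))^{\pf}\twoheadrightarrow (G^1_\n(m))^{\pf}\bigr)$ annihilates $F^1_{\n_-}(m)$. This gives existence of $\rho^\et_{X|D_\n}$ and commutativity of \eqref{eqn:Rec-dual-4-0}.

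However, your arguments for bijectivity have a genuine gap. For surjectivity of the dual map you write that a $\beta\in F^1_{\<\phi(\n)\>}(m)$ annihilating the kernel ``must lie in $F^1_{\n_-}(m)$'', but this is precisely the reverse inclusion you have not proved: you need to show the annihilator of $\Ker(\alpha_\n)$ inside $F^1_{\<\phi(\n)\>}(m)$ is \emph{no larger than} $F^1_{\n_-}(m)$. Your injectivity argument has the same missing ingredient in disguise: lifting $\alpha$ to $\tilde\alpha\in G^1_m(X^o)$ yields only that $\tilde\alpha$ annihilates $F^1_{\n_-}(m)$, and there is no mechanism (``the same conclusion at every finer $\n'$'' does not apply, since $\alpha$ lives only at level $\n$) forcing $\tilde\alpha$ to annihilate all of $H^1_\et(X^o,\Z/{p^m})$. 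Both directions reduce to the single unproved claim $K_\n^\perp\subseteq F^1_{\n_-}(m)$.

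The paper supplies exactly this missing converse, but by a different route: it passes through the Nisnevich class group $C_m(X,D_\n)$ and establishes two Cartesian squares (\lemref{lem:Rec-dual-3} and \lemref{lem:Nis-et-Cartesian}) sharing a common cospan over $(C(X^o)/{p^m})^\vee$. The first identifies $\Fil_{D_\n}H^1_\et(K,\Z/{p^m})$ as a pullback, using Kato's characterization of the ramification filtration via the local reciprocity pairing \cite[Rem.~6.6]{Kato89}; the second identifies $((G^1_\n(m))^{\pf})^\star$ as the same pullback. Comparing them gives the isomorphism directly. Your local input \propref{prop:Dual-iso} is itself proved via \cite[Rem.~6.6]{Kato89}, so with more care (unwinding the exact sequence of \eqref{eqn:Top-3} and the kernel description $\Ker(\alpha_\n)=\partial_0(\Ker(\beta_\n))$ from the proof of \lemref{lem:Top-8}) your approach could likely be completed; but as written, the step ``annihilates the kernel $\Rightarrow$ lies in $F^1_{\n_-}(m)$'' is an assertion, not an argument.
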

\begin{proof}
 We shall assume $\n \ge 1$ as the lemma is clear otherwise.
For the first part, we only need to show the existence of $\rho^\et_{X|D_\n}$
  such that ~\eqref{eqn:Rec-dual-4-0} commutes as the continuity follows from
  \lemref{lem:Top-7}.
  We now look at the diagram
\begin{equation}\label{eqn:Rec-dual-2-2}
    \xymatrix@C3pc{
\Fil_{D_\n} H^1_\et(K, {\Z}/{p^m}) \ar@{^{(}->}[d]_-{p^\vee_\n}  \ar@{.>}[r]
\ar@/^2pc/[rr]^-{(\rho^m_{X|D_\n})^\vee} &
(G^1_\n(m))^{\pf})^\star \ar@{^{(}->}[d]_-{q^\vee_\n}  \ar[r]^-{\nu^\vee_\n} & 
C_m(X,D_\n)^\vee \ar@{^{(}->}[d]^-{s^\vee_\n} \\
H^1_\et(X^o, {\Z}/{p^m}) \ar@/_2pc/[rr]_-{(\rho^m_{X^o})^\vee}
\ar[r]^-{(\rho^{\et}_{X^o})^\vee}_-{\cong} & (G^1_m(X^o))^\star \ar[r]^-{\nu^\vee} &
(\frac{C(X^o)}{p^m})^\vee.}
\end{equation}

The outer square is commutative by the compatibility between
$\rho_{X^o}$ and $\rho_{X|D_\n}$,
and $\nu^\vee \circ (\rho^{\et}_{X^o})^\vee = (\rho^m_{X^o})^\vee$ by
\lemref{lem:Rec-dual-1}.
By \lemref{lem:Nis-et-Cartesian}, a diagram chase yields
a continuous homomorphism
$(\rho^\et_{X|D_\n})^\star \colon \Fil_{D_\n} H^1_\et(K, {\Z}/{p^m}) \to
(G^1_\n(m))^{\pf})^\star$ such that the left square in ~\eqref{eqn:Rec-dual-2-2}
commutes.
Letting $\rho^\et_{X|D_\n}$ to be the dual this map, we conclude the proof of the
first part of the lemma.
For the second part, we only need to show that
$(\rho^\et_{X|D_\n})^\star$ is surjective. But this follows 
by Lemmas~\ref{lem:Rec-dual-3} and ~\ref{lem:Nis-et-Cartesian}.
\end{proof}

The following stronger version of \corref{cor:Duality-Spl} 
proves \thmref{thm:Main-4}. For $D = \emptyset$, this was shown by Kato-Saito
\cite[\S~3, Prop.~4]{Kato-Saito-Ann} by a different method.

\begin{thm}\label{thm:Duality-Gen}
  For any effective divisor $D \subset X$ and $i \ge 0$,
  there is a perfect pairing of topological abelian groups
  \[
    H^i_\et(X, W_m\Omega^2_{X|D, \log})^\pf \times H^{2-i}_\et(X, W_m\Omega^0_{X|D,\log})
    \to {\Z}/{p^m}.
  \]
such that for $D' \ge D$, one has a commutative diagram
  \begin{equation}\label{eqn:Duality-Gen-0}
    \xymatrix@C2pc{
      H^i_\et(X, W_m\Omega^2_{X|D, \log})^{\pf} \times  
      H^{2-i}_\et(X, W_m\Omega^0_{X|D,\log}) \ar@<10ex>[d]  \ar[r] &
      {\Z}/{p^m} \ar@{=}[d] \\
      H^i_\et(X, W_m\Omega^2_{X|D', \log})^{\pf}  \times  
      H^{2-i}_\et(X, W_m\Omega^0_{X|D',\log}) \ar@<10ex>[u] \ar[r] &
      {\Z}/{p^m}.}
  \end{equation}
\end{thm}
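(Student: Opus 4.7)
The strategy is to bootstrap from Corollary~\ref{cor:Duality-Spl}, which already establishes the perfect pairings for divisors of the special form $D_{\<\n\>_+}$, and extend them to an arbitrary effective divisor $D = D_\n$. The three degrees $i \in \{0,1,2\}$ are handled separately, with $i = 1$ being the main case.

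For $i \in \{0, 2\}$, the relevant cohomology groups are essentially stable in $\n$. In degree $2$, both $G^2_\n(m)$ and $F^0_\n(m)$ are canonically isomorphic to ${\Z}/{p^m}$ (via the Kato-Saito trace for the former, via a global version of Lemma~\ref{lem:Coh-V0} for the latter), and the pairing reduces to multiplication on ${\Z}/{p^m}$. In degree $0$, both $G^0_\n(m)$ and $F^2_\n(m)$ vanish for $\n \gg 0$ by Corollary~\ref{cor:F-V-R}, and the pairing is controlled by ~\eqref{eqn:Top-0} together with a reindexing argument. These cases therefore present no real new difficulty.

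The main case $i = 1$ proceeds as follows. Given $D = D_\n$, I would select the auxiliary divisor $D_{\<\phi(\n)\>_+} \ge D$ from Section~\ref{sec:Top-G}, to which Corollary~\ref{cor:Duality-Spl} applies, giving the perfect pairing
$(G^1_{\<\phi(\n)\>_+}(m))^\pf \times F^1_{\<\phi(\n)\>}(m) \to {\Z}/{p^m}$.
By Lemma~\ref{lem:Top-7}, the natural map $(G^1_{\<\phi(\n)\>_+}(m))^\pf \twoheadrightarrow (G^1_\n(m))^\pf$ is a topological quotient of profinite groups, and Corollary~\ref{cor:Top-9} ensures that its target is Hausdorff. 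The plan is to identify the Pontryagin dual of $(G^1_\n(m))^\pf$ with $F^1_\n(m)$ by matching the kernel of this surjection precisely with the annihilator, in the auxiliary perfect pairing, of the subgroup $F^1_\n(m) \hookrightarrow F^1_{\<\phi(\n)\>}(m)$ realized via Proposition~\ref{prop:KSL}. Once this match is established, Pontryagin duality between a profinite group and a discrete torsion group produces the desired perfect pairing at $D$.

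The principal obstacle is this annihilator identification, which requires matching the filtration on the $K$-theoretic (idele) side with Kato's ramification filtration on the Galois side at the correct level, at each $x \in D_\dagger$. I would carry this out locally using Corollary~\ref{cor:Local-inj-6} and Lemma~\ref{lem:Coh-V2} to compare the local reciprocity map $\wt\rho^{\,n_x}_{K^h_x, m}$ with Kato's filtration, and then glue the local matches into a global statement via the commutative diagrams of Section~\ref{sec:Top} (notably ~\eqref{eqn:Top-3}) and the reciprocity isomorphism $(G^1_\n(m))^\pf \cong \pi^{\ab}_1(X, D_\n)/p^m$ of Lemma~\ref{lem:Rec-dual-4}. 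Finally, the compatibility diagram ~\eqref{eqn:Duality-Gen-0} for $D \le D'$ is immediate from the naturality of the sheaf-level cup product together with the transitions $\sK^m_{2, X|D'} \hookrightarrow \sK^m_{2, X|D}$ (inducing the surjection on $H^i$, as in Lemma~\ref{lem:Surj-TM}) and $W_m\Omega^0_{X|D, \log} \hookrightarrow W_m\Omega^0_{X|D', \log}$ (inducing the inclusion on $H^{2-i}$).
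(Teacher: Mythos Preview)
Your approach is essentially the paper's, and for $i=1$ you correctly identify \lemref{lem:Rec-dual-4} as the decisive input: once you have the topological isomorphism $(G^1_\n(m))^\pf \cong \pi^{\ab}_1(X,D_\n)/p^m$, perfectness is immediate from Pontryagin duality and \propref{prop:KSL}. Your ``annihilator identification'' through the auxiliary divisor $D_{\<\phi(\n)\>_+}$ is in fact how that lemma is established (via Lemmas~\ref{lem:Rec-dual-3} and~\ref{lem:Nis-et-Cartesian}), so citing it directly, as the paper does, shortcuts that detour.

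There is, however, one genuine gap in your compatibility argument. For a divisor $D$ not of the special form $D_{\<\n\>_+}$, the pairing is \emph{not} built from a sheaf-level cup product: \propref{prop:Pair-0-4} only constructs a pairing at the special indices $\<\n\>_+$, and for arbitrary $D$ the pairing is defined indirectly through the reciprocity isomorphism $\rho^\et_{X|D_\n}$ of \lemref{lem:Rec-dual-4}. Hence ``naturality of the sheaf-level cup product'' is not available to you. The paper checks ~\eqref{eqn:Duality-Gen-0} instead by assembling the square~\eqref{eqn:Rec-dual-4-0} for $D_\n$ and for $D_{\n'}$ into a single ladder (diagram~\eqref{eqn:Duality-Gen-1}), and then uses the surjectivity of $q_{\n'}\colon G^1_m(X^o)\twoheadrightarrow (G^1_{\n'}(m))^\pf$ from \lemref{lem:Top-7} together with \propref{prop:KSL} to conclude that the right-hand square commutes. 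You should replace your cup-product justification with this argument.
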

\begin{proof}
 For $i \neq 1$, this is already shown in the proof of \thmref{thm:Duality-M}.
  For $i =1$, the existence of the pairing and its perfectness follow from
  \lemref{lem:Rec-dual-4}. To prove the commutativity of ~\eqref{eqn:Duality-Gen-0},
  we can assume $D' = D_{\n'}$ and $D = D_\n$ for some $\n' \ge \n$ and
    look at the diagram
  \begin{equation}\label{eqn:Duality-Gen-1}
    \xymatrix@C2pc{
      G^1_m(X^o) \ar[d]_-{\rho^{\et}_{X^o}} \ar@{->>}[r]^-{q_{\n'}}
      \ar@/^2pc/[rr]^-{q_\n} &
      (G^1_{\n'}(m))^{\pf} \ar[d]^-{\rho^\et_{X|D_{\n'}}}
      \ar@{->>}[r]^-{\gamma^{\n'}_{\n}} &
       (G^1_{\n}(m))^{\pf} \ar[d]^-{\rho^\et_{X|D_{\n}}} \\
       \frac{\pi^{\ab}_1(X^o)}{p^m} \ar@{->>}[r]^-{p_{\n'}}
       \ar@/_2pc/[rr]_-{p_\n} &
     \frac{\pi^{\ab}_1(X,D_{\n'})}{p^m} \ar@{->>}[r]^-{\lambda^{\n'}_{\n}} &
     \frac{\pi^{\ab}_1(X,D_{\n})}{p^m}.}
\end{equation}
Since the left and the outer squares commute by \lemref{lem:Rec-dual-4} and
$q_{\n'}$ is surjective by \lemref{lem:Top-7},
it follows using \propref{prop:KSL} that the right square
commutes.
\end{proof}

\begin{remk}\label{remk:Duality-Gen-2}
  Using ~\eqref{eqn:Top-8-2} and the proof of \propref{prop:Coker-p},
  one can show that the canonical map $H^1_\et(X, W_m\Omega^2_{X|D, \log}) \to
  H^{1}_\et(X, W_m\Omega^0_{X|D,\log})^\vee$ is not surjective if $D \neq \emptyset$. 
This implies that no matter what topology we endow 
$H^1_\et(X, W_m\Omega^2_{X|D, \log})$ with, \thmref{thm:Duality-Gen} will not hold 
without taking the profinite completion.
\end{remk}

\vskip .3cm

\subsection{Proofs of parts (1) $\sim$ (3) of
  \thmref{thm:Main-2}}\label{sec:RM*}
We need two more lemmas in order to prove 
parts (1) and (2) of \thmref{thm:Main-2}.

\begin{lem}\label{lem:Rec-dual-5}
  For every $m, \n \ge 1$, the composite map
  \[
    \wt{\rho}^m_{X|D_\n} \colon   C_m(X,D_\n) \xrightarrow{\nu_\n} G^1_\n(m) \inj
    (G^1_\n(m))^\pf \xrightarrow{\rho^\et_{X|D_\n}} \frac{\pi^{\ab}_1(X,D_{\n})}{p^m}
  \]
  coincides with $\rho^m_{X|D_\n}$.
\end{lem}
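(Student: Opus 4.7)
The plan is to reduce the identity to the corresponding one on $X^o$, which is already established in \lemref{lem:Rec-dual-1}, by exploiting the natural surjection $\pi \colon C(X^o)/{p^m} \surj C_m(X, D_\n)$. This surjection is obtained by comparing the cokernel presentations of $C(X^o)$ and $C(X, D_\n)$: both are cokernels of maps out of $K_2(K)$ into $F_1(X^o) \oplus F_2(X^o)$ and $F_1(X^o) \oplus F_2(X, D_\n)$, respectively, and the two targets are linked by the componentwise surjection ${K_2(K^h_x)} \surj {K_2(K^h_x)}/{\Fil_{n_x} K_2(K^h_x)}$ at each $x \in D_\dagger$, and the identity on $F_1(X^o)$.

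First, the construction of the reciprocity maps in \S~\ref{sec:RMap} from the local reciprocity maps on each factor yields a commutative square
\[
  \xymatrix@C1pc{
    C(X^o)/p^m \ar[r]^-{\pi} \ar[d]_-{\rho^m_{X^o}} & C_m(X, D_\n) \ar[d]^-{\rho^m_{X|D_\n}} \\
    \pi^{\ab}_1(X^o)/p^m \ar[r]^-{p_\n} & \pi^{\ab}_1(X, D_\n)/p^m.
  }
\]
Second, from the definition of $G^1_m(X^o) = \varprojlim_\n (G^1_\n(m))^\pf$ together with the factorization in ~\eqref{eqn:Rec-dual-0}, the arrow $C(X^o)/p^m \to G^1_m(X^o)$ is compatible, after applying $q_\n$, with the {\'e}tale realization $\nu_\n \colon C_m(X,D_\n) \to G^1_\n(m)$ post-composed with the profinite completion map. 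Stacking these two squares together with the commutative diagram of \lemref{lem:Rec-dual-4} relating $\rho^\et_{X^o}$ and $\rho^\et_{X|D_\n}$ through $q_\n$ and $p_\n$ produces a large diagram whose outer top row computes $\wt{\rho}^m_{X^o}$ and whose outer bottom row computes $\wt{\rho}^m_{X|D_\n} \circ \pi$.

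Applying \lemref{lem:Rec-dual-1}, the top composite equals $\rho^m_{X^o}$. Chasing the diagram then gives $\wt{\rho}^m_{X|D_\n} \circ \pi = p_\n \circ \rho^m_{X^o} = \rho^m_{X|D_\n} \circ \pi$, and surjectivity of $\pi$ yields the claimed equality $\wt{\rho}^m_{X|D_\n} = \rho^m_{X|D_\n}$. The main bookkeeping point, and the one I would expect to require the most care, is the commutativity of the second square: namely, that the inverse-limit-then-project map $C(X^o)/p^m \to G^1_m(X^o) \to (G^1_\n(m))^\pf$ from ~\eqref{eqn:Rec-dual-0} agrees with $\nu_\n$ on the image in $C_m(X, D_\n)$ followed by profinite completion. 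This is essentially tautological once one unravels the identification $G^1_\n(m) \cong C^\et_m(X, D_\n)$ afforded by \propref{prop:Nis-et-ICG} and the dlog isomorphism $\sK^m_{2, X|D_\n} \cong W_m\Omega^2_{X|D_\n, \log}$, but passing through the intermediate profinite completion requires careful tracking of the maps defining $\rho^\et_{X^o}$.
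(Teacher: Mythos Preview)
Your proposal is correct and follows essentially the same approach as the paper: the paper also uses the surjection $s_\n \colon C(X^o)/{p^m} \surj C_m(X,D_\n)$, assembles the same commutative diagram linking the definitions of $\wt{\rho}^m_{X^o}$ and $\wt{\rho}^m_{X|D_\n}$ through the projections $q_\n$ and $p_\n$, and then invokes \lemref{lem:Rec-dual-1}, \lemref{lem:Rec-dual-4}, and the compatibility of $\rho_{X^o}$ with $\rho_{X|D_\n}$ to conclude. The bookkeeping square you flag as requiring care is exactly the content of the explicit diagram the paper draws, and it is indeed tautological once unwound.
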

\begin{proof}
We look at the diagram
  \begin{equation}\label{eqn:Rec-dual-5-0}
    \xymatrix@C2pc{
 \frac{C(X^o)}{p^m} \ar[r] \ar@{->>}[dr]_-{s_\n} & {\varprojlim}_\n \ C_m(X, D_{\n})
\ar[r] \ar[d]  & {\varprojlim}_\n \ G^1_\n(m) \ar[r] \ar[d] &
G^1_m({X^o}) \ar[r]^-{\rho^\et_{X^o}} \ar[d]^-{q_\n} &
\frac{\pi^{\ab}_1(X^o)}{p^m} \ar[d]^-{p_\n} \\
& C_m(X, D_\n) \ar[r]^-{\nu_\n} &  G^1_\n(m) \ar[r] & (G^1_\n(m))^\pf
\ar[r]^-{\rho^\et_{X|D_\n}} & \frac{\pi^{\ab}_1(X,D_{\n})}{p^m}.}
\end{equation}
Note that the composite horizontal arrow on the top is $\wt{\rho}^m_{X^o}$.
Since $s_\n$ is surjective, it suffices to show that
$\rho^m_{X|D_\n} \circ s_\n = p_\n \circ \wt{\rho}^m_{X^o}$.
But this follows directly from \lemref{lem:Rec-dual-1},
\lemref{lem:Rec-dual-4} and the compatibility between $\rho_{X^o}$ and $\rho_{X|D_\n}$.
\end{proof}

Let $T^{\ab}_1(X,D)$ denote the kernel of the canonical
surjection $\pi^{\ab}_1(X,D) \surj \pi^{\ab}_1(X)$. Let
$\rho^0_{X|D} \colon C(X,D)_0 \to \pi^{\ab}_1(X,D)_0$ be the geometric part of
the reciprocity map $\rho_{X|D}$.

\begin{lem}\label{lem:Rec-ker-0}
  $T^{\ab}_1(X,D)$ is a torsion group of finite exponent. In particular,
  ${\rm Image}(\rho^0_{X|D})$ is a torsion group of finite exponent.
\end{lem}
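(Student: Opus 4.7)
The plan is to pass to Pontryagin $\Q/\Z$-duals. Since $\pi^{\ab}_1(X,D) = (\Fil_D H^1(K))^{\star}$ and $\pi^{\ab}_1(X) = (H^1(X))^{\star}$ as profinite abelian groups, and the given surjection corresponds dually to the inclusion $H^1(X) \hookrightarrow \Fil_D H^1(K)$, we obtain a canonical isomorphism
\[
T^{\ab}_1(X,D) \;\cong\; \bigl(\Fil_D H^1(K)/H^1(X)\bigr)^{\vee}.
\]
A profinite abelian group is torsion of finite exponent iff its discrete Pontryagin dual is killed by some integer. So the first statement reduces to producing an integer $n$ with $n\cdot\bigl(\Fil_D H^1(K)/H^1(X)\bigr)=0$.

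Using the splitting $\Fil_D H^1(K) = H^1(U)\{p'\} \oplus \Fil_D H^1_{\et}(K,\Q_p/\Z_p)$ and the corresponding one for $H^1(X)$, I would bound the prime-to-$p$ and $p$-primary summands of $Q:=\Fil_D H^1(K)/H^1(X)$ separately. For the $p$-primary part, I would combine \corref{cor:p-tor-*} with \propref{prop:KSL} to identify it with $\varinjlim_m H^1_{\et}(X,W_m\sF^0_{\n})/H^1_{\et}(X,\Z/p^m)$ (where $D = D_{\n_+}$), and then use the integrality constraint $p^i\,v_{K^h_x}(a_i)\ge -n_x$ on Artin--Schreier--Witt representatives: as soon as $p^i > \max_x n_x$, $a_i$ must lie in $\sO^h_x$, so its contribution to the quotient by the unramified part is trivial. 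Applied inductively via \corref{cor:F-V-R}, this yields a uniform bound on the $p$-exponent depending only on $\max_x n_x$. For the prime-to-$p$ part, I would use Hiranouchi's results together with the description of tame characters at points of $D_{\dagger}$: the quotient $H^1(U)\{p'\}/H^1(X)\{p'\}$ embeds into $\bigoplus_{x\in D_{\dagger}} H^1_{\et}(K^h_x,\Q/\Z\{p'\})/H^1_{\et}(A^h_x,\Q/\Z\{p'\})$, and the relevant exponent bound comes from global Galois-theoretic constraints on tame ramification on a proper smooth curve over our local base.

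For the ``in particular'' clause, I plan to show $\mathrm{Image}(\rho^0_{X|D}) \subseteq T^{\ab}_1(X,D)$. By \corref{cor:REC-T-Norm} the composition $C(X,D)_0 \xrightarrow{\rho^0_{X|D}} \pi^{\ab}_1(X,D) \twoheadrightarrow \pi^{\ab}_1(X)$ factors through $C(X)_0 \xrightarrow{\rho^0_X} \pi^{\ab}_1(X)_0$. By the Kato--Saito class field theory for smooth projective curves over local fields of characteristic $p$, the kernel of $\rho_X$ is the maximal divisible subgroup of $C(X)$, and the image of $C(X)_0$ in $\pi^{\ab}_1(X)_0$ is divisible; since $\pi^{\ab}_1(X)_0$ is profinite, any divisible subgroup is trivial. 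Thus the composition vanishes, placing $\mathrm{Image}(\rho^0_{X|D})$ inside $T^{\ab}_1(X,D)$, which is torsion of finite exponent by the first part.

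The main obstacle will be pinning down the prime-to-$p$ exponent bound: the $p$-primary part is a Witt-vector computation using Kato's filtration, the Pontryagin-duality reduction is formal, and the deduction of the ``in particular'' statement uses only the established CFT for projective curves; but the prime-to-$p$ tame piece requires care because the local contributions at individual points of $D_{\dagger}$ are a priori divisible, and finiteness of exponent must come from the global reciprocity relations.
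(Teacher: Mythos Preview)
Your overall strategy---dualize, embed $\Fil_D H^1(K)/H^1(X)$ into a sum of local pieces, and bound each---is exactly the paper's approach, and your $p$-primary argument via the Witt-vector valuation constraint is equivalent to the paper's appeal to \cite[Cor.~3.3]{Kato89}. However, there are two genuine problems.

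\textbf{Prime-to-$p$ part.} You flag this as the main obstacle and say the local pieces are ``a priori divisible'' so that ``finiteness of exponent must come from global reciprocity relations.'' This is backwards: the local pieces are already \emph{finite}. By purity for {\'e}tale cohomology, for each $x\in D_\dagger$ one has
\[
\frac{H^1_\et(K^h_x,\Q/\Z)\{p'\}}{H^1_\et(A^h_x,\Q/\Z)\{p'\}}\;\hookrightarrow\; H^0_\et(k(x),\Q/\Z(-1))\{p'\}\;\cong\;(\mu_\infty(k(x))\{p'\})^\vee,
\]
and since $k(x)$ is a local field its prime-to-$p$ roots of unity form a finite group (they biject with those of the finite residue field). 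No global input is needed.

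\textbf{The ``in particular'' clause.} Your claim that the image of $\rho^0_X\colon C(X)_0\to\pi^{\ab}_1(X)_0$ is divisible, hence trivial, is false: this image is \emph{finite} (Saito \cite{Saito-JNT}, Yoshida \cite{Yoshida03}), not zero. Consequently the inclusion $\mathrm{Image}(\rho^0_{X|D})\subseteq T^{\ab}_1(X,D)$ that you aim for need not hold. The correct argument is that the projection of $\mathrm{Image}(\rho^0_{X|D})$ to $\pi^{\ab}_1(X)_0$ lands in the finite group $\mathrm{Image}(\rho^0_X)$, so $\mathrm{Image}(\rho^0_{X|D})$ sits in an extension of a finite group by a subgroup of $T^{\ab}_1(X,D)$; both pieces have finite exponent, hence so does the extension.
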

\begin{proof}
The second part of the lemma follows directly from its first part by comparing
  $\rho^0_{X|D}$ with $\rho^0_X$ since the image of the latter map is finite
 by \cite{Saito-JNT} and \cite{Yoshida03} (see also \cite[Thm.~1.4]{GKR-arxiv}).
  It remains to prove the first part of the lemma.
We let $D = \stackrel{s}{\underset{i =1}\sum} n_i[x_i]$.

By ~\eqref{eqn:Fil-K-p-1} and \cite[Lem.~8.7]{Gupta-Krishna-Duality},
we have an inclusion
  $\frac{\Fil_D H^1(K)}{H^1(X)} \inj \stackrel{s}{\underset{i =1}\bigoplus}
  \frac{\Fil_{n_i-1} H^1({K}^h_{x_i})}{\Fil_{-1} H^1({K}^h_{x_i})}$.
Equivalently, $\stackrel{s}{\underset{i =1}\bigoplus}
\left(\frac{\Fil_{n_i-1} H^1({K}^h_{x_i})}{\Fil_{-1} H^1({K}^h_{x_i})}\right)^\vee
\surj T^{\ab}_1(X,D)$.
Hence, it suffices to show that for every $x \in D_\dagger$ and integer $n \ge 0$,
$\frac{\Fil_{n} H^1({K}^h_{x})}{\Fil_{-1} H^1({K}^h_{x})}$ has
  finite exponent. By \lemref{lem:Kato-equiv} and the definition of Kato's
ramification filtration, we can replace $K^h_x$ by $\wh{K}_x$.

Now, $\frac{\Fil_{n} H^1(\wh{K}_{x})}{\Fil_{0} H^1(\wh{K}_{x})}$ is annihilated by
  some power of $p$ by \cite[Cor.~3.3]{Kato89} and 
  $\frac{\Fil_{0} H^1(\wh{K}_{x})}{\Fil_{-1} H^1(\wh{K}_{x})}
  \cong \frac{H^1(\wh{K}_{x})\{p'\}}{H^1(\wh{A}_{x})\{p'\}}$
  by ~\eqref{eqn:Fil-K-p-1}.
On the other hand, the localization sequence and purity theorem for
  the {\'e}tale cohomology imply that there is an inclusion
  $\frac{H^1(\wh{K}_{x})\{p'\}}{H^1(\wh{A}_{x})\{p'\}} \xrightarrow{\cong}
  H^0(k(x))\{p'\} \cong (\mu_\infty(k(x)))^\vee$. 
But the last term is finite (of order prime to $p$).
  It follows that $\frac{\Fil_{0} H^1(\wh{K}_{x})}{\Fil_{-1} H^1(\wh{K}_{x})}$
  is finite, and this clearly implies the lemma.
\end{proof}

The following result extends the class field theory
of Kato-Saito \cite[\S~3]{Kato-Saito-Ann} and Hiranouchi
\cite{Hiranouchi-2} to the general case of ramified class field theory.

\begin{thm}\label{thm:Rec-dual-6}
  For every effective divisor $D \subset X$ and every integer $n \ge 1$, the
  reciprocity homomorphism
  \[
    {\rho}_{X|D} \colon   {C(X,D)}/n \to  {\pi^{\ab}_1(X,D)}/n
\]
is injective.
\end{thm}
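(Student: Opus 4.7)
The plan is to split the argument according to whether $n$ is a power of $p$ or prime to $p$. Writing $n = p^{m} n'$ with $\gcd(n',p) = 1$, the Chinese Remainder Theorem gives a compatible decomposition ${C(X,D)}/{n} \xrightarrow{\cong} {C(X,D)}/{p^{m}} \oplus {C(X,D)}/{n'}$ and the same for $\pi^{\ab}_1(X,D)/n$, reducing the problem to the two cases $n = p^{m}$ and $\gcd(n,p) = 1$.

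For the $p$-primary case, the proof is an assembly of the machinery erected in the preceding sections. By \lemref{lem:Rec-dual-5}, the reciprocity map $\rho^{m}_{X|D}$ factors as the composite
\[
C_m(X,D) \xrightarrow{\nu_D} C^{\et}_m(X,D) \hookrightarrow (C^{\et}_m(X,D))^{\pf} \xrightarrow{\rho^{\et}_{X|D}} \pi^{\ab}_1(X,D)/{p^{m}},
\]
where we tacitly use the dlog isomorphism $C^{\et}_m(X,D) = H^1_\et(X, \sK^m_{2,X|D}) \cong H^1_\et(X, W_m\Omega^2_{X|D,\log})$ to identify the middle term with the relevant $G^1_\n(m)$. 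The first arrow is injective by \propref{prop:Nis-et-ICG}; the profinite completion map is injective by \corref{cor:Top-9}; and the last arrow is a topological isomorphism by \lemref{lem:Rec-dual-4}. Hence the composite is injective, proving the theorem modulo $p^{m}$.

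For the prime-to-$p$ case, the ramification away from $p$ being tame means that $\Fil_D H^1(K)\{p'\}$ is already captured by the classical tame cover theory, and the injectivity of ${C(X,D)}/{n'} \to {\pi^{\ab}_1(X,D)}/{n'}$ follows from the prime-to-$p$ class field theory of Hiranouchi \cite{Hiranouchi-2} cited in the introduction; equivalently, one may deduce it from the Merkurjev--Suslin isomorphism and the Poincar\'e duality for $\ell$-adic {\'e}tale cohomology of $X^o$, combined with the comparison of Brauer-group pairings between $X$ and $(X,D)$ in the range $\ell \nmid p$.

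The genuinely hard part of the argument has already been discharged upstream: constructing the topology on $G^1_\n(m)$ in such a way that the profinite completion map is injective (\corref{cor:Top-9}), proving the perfect duality \thmref{thm:Duality-Gen}, and upgrading it to the identification of $\rho^{\et}_{X|D}$ with the dual of the local-to-global reciprocity map. Given these, the present theorem is essentially a formal consequence, with the only remaining subtlety being to verify that the composite displayed above really coincides with $\rho^{m}_{X|D}$, which is the content of \lemref{lem:Rec-dual-5}.
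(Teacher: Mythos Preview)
Your proposal is correct and follows essentially the same route as the paper: reduce to the cases $n=p^m$ and $p\nmid n$, handle the latter via Hiranouchi's prime-to-$p$ result (the paper makes explicit that both sides agree with their $X^o$ counterparts mod $n$ when $p\nmid n$), and for $n=p^m$ use \lemref{lem:Rec-dual-5} to identify $\rho^m_{X|D}$ with the composite whose three factors are injective by \propref{prop:Nis-et-ICG}, \corref{cor:Top-9}, and \lemref{lem:Rec-dual-4} respectively.
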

\begin{proof}
  If $p \nmid n$, then the right hand side can be replaced by
  ${\pi^{\ab}_1(X^o)}/n$ by Definition~\ref{defn:FGMP} and
  \cite[Lem.~2.8]{GKR-arxiv}.
On the other hand, the left hand side can be replaced by ${C(X^o)}/n$.
The corollary now follows from \cite[Thm.~1.1]{Hiranouchi-2} (see also
\cite[Thm.~1.2]{GKR-arxiv}). We can therefore assume $n = p^m$ for some $m \ge 1$
and $D = D_\n$ for some $\n \ge 1$.
By \lemref{lem:Rec-dual-5}, it suffices in this case to show that
$\wt{\rho}^m_{X|D_\n}$ is injective. But this follows by combining
\propref{prop:Nis-et-ICG}, \corref{cor:Top-9} and \lemref{lem:Rec-dual-4}.
\end{proof}

\begin{thm}\label{thm:Rec-ker-1}
   For every effective divisor $D \subset X$, the kernel of the 
  reciprocity map
  \[
    {\rho}_{X|D} \colon   {C(X,D)} \to  {\pi^{\ab}_1(X,D)}
\]
is the maximal divisible subgroup of $C(X,D)$.
\end{thm}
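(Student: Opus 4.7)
Let $K = \Ker(\rho_{X|D})$ and let $D^\circ \subseteq C(X,D)$ denote its maximal divisible subgroup. The containment $D^\circ \subseteq K$ is formal: $\rho_{X|D}(D^\circ)$ is a divisible subgroup of the profinite Hausdorff group $\pi^{\ab}_1(X,D)$, and every such subgroup is trivial, since its image in any finite quotient would be simultaneously divisible and finite, and intersecting the open subgroups of finite index then kills it. The substance of the theorem is therefore the reverse inclusion, for which it suffices to prove that $K$ itself is divisible.

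To show $K$ is divisible I would verify $K/n = 0$ for every $n \geq 1$. The prime-to-$p$ component is already contained in Hiranouchi's class field theory \cite{Hiranouchi-2}, which gives the prime-to-$p$ analogue of the statement; hence the essential new task is to prove that $K$ is $p^m$-divisible for every $m \geq 1$. By \thmref{thm:Rec-dual-6}, the map $\rho_{X|D}/p^m$ is injective, so for $x \in K$ there exists $y \in C(X,D)$ with $p^m y = x$, and necessarily $\rho_{X|D}(y) \in {}_{p^m} A$, where $A = \mathrm{Image}(\rho_{X|D})$. The problem thus reduces to establishing the surjectivity
\[
(\star) \qquad \rho_{X|D} \colon {}_{p^m} C(X,D) \twoheadrightarrow {}_{p^m} A,
\]
for, granted this, one modifies $y$ by a $p^m$-torsion element $y''$ with $\rho_{X|D}(y'') = \rho_{X|D}(y)$, whereupon $y - y'' \in K$ and $p^m(y - y'') = x$.

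I would prove $(\star)$ using the duality theorem \thmref{thm:Duality-Gen} together with the identification $\rho^\et_{X|D} \colon G^1_\n(m)^\pf \xrightarrow{\cong} \pi^{\ab}_1(X,D)/p^m$ from \lemref{lem:Rec-dual-4} (writing $D = D_\n$) and the factorization $C(X,D)/p^m = C_m(X,D) \hookrightarrow C^\et_m(X,D) \hookrightarrow G^1_\n(m)^\pf$ furnished by \propref{prop:Nis-et-ICG} and \corref{cor:Top-9}. Pontryagin-dualizing the short exact sequence $0 \to C_m(X,D) \to C^\et_m(X,D) \to T_m(X) \to 0$ of \propref{prop:Nis-et-ICG}, and feeding in the description of $\rho^\star_{K^h_x}$ from \propref{prop:Dual-iso} together with the 2-local results \corref{cor:Local-inj-2} and \corref{cor:Local-inj-8} at each point of $D$, one can express the obstruction ${}_{p^m} A / \rho_{X|D}({}_{p^m} C(X,D))$ as a cohomological cokernel that vanishes by the perfect pairing of \thmref{thm:Duality-Gen}.

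The principal obstacle is that \lemref{lem:No-p-tor} has no direct curve analogue: $\pi^{\ab}_1(X,D)$ genuinely carries $p$-torsion arising from Artin--Schreier--Witt covers of $X^o$ whose wild ramification is bounded by $D$, so $(\star)$ at $p^m$-torsion cannot be obtained by a vanishing argument and must be extracted from the duality, with careful bookkeeping of the profinite completion $G^1_\n(m)^\pf$ versus the non-complete quotient $C(X,D)/p^m$ and of the local invariants coming from the 2-local fields $K^h_x$ for $x$ in the support of $D$.
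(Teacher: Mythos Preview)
Your reduction to $(\star)$ is formally correct but tautological. Applying the snake lemma to $0 \to K \to C(X,D) \to A \to 0$ for multiplication by $p^m$ gives ${}_{p^m} C(X,D) \to {}_{p^m} A \to K/p^m \to C(X,D)/p^m$. The composite $K/p^m \to C(X,D)/p^m \hookrightarrow \pi^{\ab}_1(X,D)/p^m$ is zero and the second arrow is injective by \thmref{thm:Rec-dual-6}, so $K/p^m \to C(X,D)/p^m$ is the zero map; hence $(\star)$ holds if and only if $K/p^m = 0$, which is exactly the $p^m$-divisibility you set out to prove. All of the tools you invoke (\thmref{thm:Duality-Gen}, \lemref{lem:Rec-dual-4}, \propref{prop:Nis-et-ICG}, \corref{cor:Top-9}, \propref{prop:Dual-iso}, \corref{cor:Local-inj-2}) control the quotients $C(X,D)/p^m$ and $\pi^{\ab}_1(X,D)/p^m$, not the torsion subgroups, and the ``cohomological cokernel'' you allude to is nothing other than $K/p^m$ itself.

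The paper's proof rests on an ingredient absent from your sketch: \lemref{lem:Rec-ker-0} shows that $T^{\ab}_1(X,D) = \Ker(\pi^{\ab}_1(X,D) \surj \pi^{\ab}_1(X))$, and hence ${\rm Image}(\rho^0_{X|D})$, is torsion of finite exponent $M$. This is a ramification-theoretic fact: the wild part of $\Fil_D H^1(K)/H^1(X)$ is killed by a fixed power of $p$ because the ramification along $D_\dagger$ is bounded by $D$. With this bound and \thmref{thm:Rec-dual-6} one runs Saito's classical argument \cite[Thm.~5.1]{Saito-JNT}: for $x \in K$ and $n \ge 1$, injectivity modulo $nM$ yields $x = nMw$, and since $M$ annihilates ${\rm Image}(\rho^0_{X|D})$ one arranges $Mw \in K$, giving $x = n(Mw) \in nK$. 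The finite-exponent lemma is precisely what converts injectivity modulo every integer into divisibility of the kernel, and your proposal offers no substitute for it.
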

\begin{proof}
  Given \thmref{thm:Rec-dual-6} and \lemref{lem:Rec-ker-0}, we can 
  repeat the proof of \cite[Thm.~5.1]{Saito-JNT}.  We omit the
  details.
\end{proof}

The following result proves part (3) of \thmref{thm:Main-2}.

\begin{prop}\label{prop:Rec-Image}
The image of the map $\rho_{X|D} \colon C(X,D)_0 \to \pi^{\ab}_1(X,D)_0$ 
is not necessarily finite.
\end{prop}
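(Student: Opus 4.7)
My plan is to exhibit an explicit modulus pair $(X,D)$ for which the image is infinite. I shall take $X = \P^1_k$ and $D = 2[x_0]$, with $x_0 \in X(k)$ the origin, so that $\wh{A}_{x_0} \cong k[[T]]$. Since $\P^1_k$ is geometrically simply connected, $\pi^{\ab}_1(X) \cong G_k$, and hence $\pi^{\ab}_1(X,D)_0 = T^{\ab}_1(X,D) = \Ker(\pi^{\ab}_1(X,D) \twoheadrightarrow \pi^{\ab}_1(X))$. My strategy is to show that $C(X,D)_0/p$ is infinite; then combining \thmref{thm:Rec-dual-6} with the commutative diagram of \corref{cor:REC-T-Norm}, the induced injection $C(X,D)_0/p \inj \pi^{\ab}_1(X,D)_0/p$ will produce infinitely many distinct elements in the image of $\rho^0_{X|D}$.

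The first step is to verify that $\pi^{\ab}_1(X,D)/p$ is infinite. By \corref{cor:Local-inj-11} applied to $\wh{K}_{x_0}$, the group $\varpi^1_2(\wh{A}_{x_0}|2)$ is isomorphic to the continuous dual of $\Fil_1 H^1_\et(\wh{K}_{x_0}, {\Z}/p)/\Fil_0$, and the latter quotient is isomorphic to the additive group of $k$ (as used in the proof of \propref{prop:Coker-p}, via \cite[Cor.~3.3]{Kato89}). Thus $\Fil_D H^1_\et(K, {\Z}/p)/H^1_\et(X, {\Z}/p)$ is infinite, corresponding geometrically to the Artin--Schreier covers $y^p - y = a/T$ for varying $a \in k$. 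Next, \lemref{lem:Rec-dual-4} furnishes an isomorphism $C^\et_1(X,D) \xrightarrow{\cong} \pi^{\ab}_1(X,D)/p$, and the exact sequence of \propref{prop:Nis-et-ICG} shows that $C(X,D)/p$ embeds into $C^\et_1(X,D)$ with cokernel $T_1(X) = H^0_\nis(X, {\bf R}^1\epsilon_*(W_1\Omega^2_{X, \log}))$. Granting (as is standard for projective curves over local fields) the finiteness of $T_1(X)$, the infinitude of $\pi^{\ab}_1(X,D)/p$ yields that of $C(X,D)/p$. Finally, because $k^\times/p$ is finite for $k$ a local field of characteristic $p$ (the principal units satisfy $U^{(1)}/(U^{(1)})^p = U^{(1)}/U^{(p)}$, which has finite $\F_p$-dimension), the norm $N_{X|D} \colon C(X,D)/p \to k^\times/p$ has finite image, so $C(X,D)_0/p$ is of finite index in $C(X,D)/p$ and hence infinite.

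The main obstacle is the finiteness of $T_1(X)$. While this is compatible with the unramified Kato--Saito class field theory for curves over local fields developed in \cite{Saito-JNT} and \cite{Kato-Saito-Ann}, a clean proof may require specializing \thmref{thm:Duality-Gen} to $D = \emptyset$ and identifying $T_1(X)$ with an explicit finite subquotient. An alternative route, bypassing this issue, is to construct infinitely many elements of $C(X,D)_0$ directly: for each $a \in k$, take the local symbol $\{1 + aT^2, T\} \in K_2(\wh{K}_{x_0})$, which represents a nontrivial class in $\kappa^1_2(\wh{A}_{x_0}|2) \subset K_2(\wh{A}_{x_0})/p$, and modify by a suitable global Steinberg relation to ensure it lies in $\Ker(N_{X|D})$. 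The injectivity in \corref{cor:Local-inj-11} then forces distinct $a \in k$ to produce pairwise distinct images in $\pi^{\ab}_1(X,D)_0/p$, giving an independent proof that avoids invoking the finiteness of $T_1(X)$.
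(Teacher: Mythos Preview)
Your overall strategy coincides with the paper's: take $X=\P^1_k$, $D=2[x_0]$, show that $C(X,D)_0/p$ is infinite, and apply the injectivity of \thmref{thm:Rec-dual-6}. The genuine gap is the assertion that $k^{\times}/p$ is finite for a local field $k$ of characteristic $p$. This is false: in equal characteristic one has $(1+x)^p=1+x^p$, so the $p$-th power map sends $U^{(n)}$ into $U^{(pn)}$ rather than into $U^{(n+e)}$ for any fixed $e$, and $k^{\times}/(k^{\times})^p$ is an infinite-dimensional $\F_p$-vector space (equivalently, $H^1_\et(k,\Z/p)=k/\wp(k)$ is infinite and $k^{\times}/p\cong G_k/p$ is its Pontryagin dual). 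Since $C(X)/p\cong k^{\times}/p$ for $X=\P^1_k$ and the norm $N_{X|D}$ is surjective (any $k$-rational point of $X^o$ gives a section), the image of $C(X,D)_0/p$ in $C(X,D)/p$ has infinite index, and the infinitude of $C(X,D)/p$ does not by itself yield that of $C(X,D)_0/p$.

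The paper avoids this by computing $\Ker\bigl(\theta_{X|D}\colon C(X,D)/p\to C(X)/p\bigr)$ directly. For $X=\P^1_k$ one has $C(X)\cong k^{\times}$, hence $C(X,D)_0=\Ker(C(X,D)\to C(X))$, and $C(X,D)_0/p$ surjects onto $\Ker(\theta_{X|D})$. The Nisnevich cohomology sequence of ~\eqref{eqn:Coker-0} together with \corref{cor:ICG-BF} and \lemref{lem:Coker-1} yields an exact sequence
\[
H^0_\nis(X,\Omega^2_{X,\log})\to\varpi^1_2(A^h_{x_0}|2)\to\Ker(\theta_{X|D})\to 0,
\]
and the Kato--Saito duality gives $H^0_\nis(X,\Omega^2_{X,\log})\cong H^2_\et(\P^1_k,\Z/p)^{\star}=0$, whence $\Ker(\theta_{X|D})\cong\varpi^1_2(A^h_{x_0}|2)$, which is infinite by \cite[\S~2, Prop.~1]{Kato80-1}. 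Your alternative route also misfires as written: the symbol $\{1+aT^2,T\}$ lies in $\Fil_2K_2(K^h_{x_0})$ and therefore vanishes in $F_2(X|D)=K_2(K^h_{x_0})/\Fil_2K_2(K^h_{x_0})$, contributing nothing to $C(X,D)$. The nonzero local contributions to $\Ker(\theta_{X|D})$ come from $K_2(A^h_{x_0})$ modulo $\Fil_2$, which is exactly $\varpi^1_2(A^h_{x_0}|2)$---so one is led back to the paper's computation.
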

\begin{proof}
It suffices to show that the image of the map
  $\rho_{X|D} \colon {C(X, D)_0}/p \to {\pi^{\ab}_1(X,D)_0}/p$ may not be
  finite. Using \thmref{thm:Rec-dual-6} and the inclusion
  ${C(X,D)_0}/p \inj {C(X,D)}/p$, it suffices to show that
  ${C(X,D)_0}/p$ may not be finite. It further suffices to show that
  $\Ker(\theta_{X|D} \colon {C(X,D)}/p \to {C(X)}/p)$ may not be finite.

  We now take $X = \P^1_k, \ D_\dagger = \{\infty\}$ and $D = 2D_\dagger$. By
  ~\eqref{eqn:Kato-0} and \corref{cor:ICG-BF}, we have have an exact sequence
  \begin{equation}\label{eqn:Rec-Image-0}
    H^0_\nis(X, \Omega^2_{X, \log}) \to \varpi^1_2(A^h_\infty|2) \to 
    \Ker(\theta_{X|D}) \to 0.
    \end{equation}
By the duality, we have $H^0_\nis(X, \Omega^2_{X, \log}) \cong
  H^0_\et(X, \Omega^2_{X, \log}) \cong H^2_\et(X, {\Z}/p)^\star =0$.
  It remains therefore to show that $\varpi^1_2(A^h_\infty|2)$  is infinite.
  But this follows from \cite[\S~2, Prop.~1]{Kato80-1} and \lemref{lem:Kato-1}
  which imply that $\varpi^1_2(A^h_\infty|2)$  is an infinite-dimensional
  $\F_p$-vector space.
\end{proof}

\section{Class field theory of ${X^o}$}\label{sec:Div-Ker}
We maintain the notations of \S~\ref{sec:CFT-mod}. We shall prove \thmref{thm:Main-5}
and prove parts of \thmref{thm:Main-3} in this section.

\subsection{Proofs of parts (1) $\sim$ (3) of
  \thmref{thm:Main-3}}\label{sec:Pf-open}
In this subsection, we shall prove the first three parts of \thmref{thm:Main-3}.
We begin with the injectivity result.
\begin{thm}\label{thm:Rec-ker-8}
  For every integer $n \ge 1$, the map
  ${\rho_{X^o}} \colon {C(X^o)}/n \to {\pi^{\ab}_1(X^o)}/n$ is injective.
 \end{thm}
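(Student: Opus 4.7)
The plan is to reduce to \thmref{thm:Rec-dual-6} via a limit argument. When $\gcd(n, p) = 1$, both $C(X^o)/n \to C(X, D)/n$ and $\pi^{\ab}_1(X^o)/n \to \pi^{\ab}_1(X, D)/n$ are isomorphisms for any effective $D$ with support $X \setminus X^o$---by \eqref{eqn:ICG-open-0} for the former and by \defref{defn:FGMP} together with \cite[Lem.~2.8]{GKR-arxiv} for the latter---so injectivity follows directly from \thmref{thm:Rec-dual-6}. It remains to treat $n = p^m$.

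The next key ingredient is the topological isomorphism $\pi^{\ab}_1(X^o)/p^m \xrightarrow{\cong} \varprojlim_{\n} \pi^{\ab}_1(X, D_\n)/p^m$, obtained by applying Pontryagin duality to the directed union $H^1_\et(X^o, \Z/p^m) = \varinjlim_{\n} \Fil_{D_\n} H^1_\et(K, \Z/p^m)$ of \corref{cor:p-tor-*-1}. Hence if $\rho_{X^o}^m(\alpha) = 0$, then the compatibility of the reciprocity maps together with the injectivity of $\rho_{X|D_\n}^m$ from \thmref{thm:Rec-dual-6} forces the image $\phi_\n(\alpha) \in C(X, D_\n)/p^m$ to vanish for every $\n$. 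The problem thus reduces to showing the natural map $C(X^o)/p^m \to \varprojlim_{\n} C(X, D_\n)/p^m$ is injective, and using the injection $C(X^o)/p^m \hookrightarrow C^c(X^o)/p^m$ from \eqref{eqn:ICG-open-0}, it suffices to establish the same for $C^c(X^o)/p^m$.

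The essential local input is the Hausdorffness modulo $p^m$ of the Kato topology: for each $x \in D_\dagger$, $\bigcap_n \kappa^m_2(\wh{A}_x|n) = 0$ in $K_2(\wh{K}_x)/p^m$. This is immediate from \thmref{thm:Local-inj}, since $\bigcap_n G^{(n)}_{\wh{K}_x, m} = 0$ because the ramification filtration exhausts $H^1_\et(\wh{K}_x, \Z/p^m)$ by \corref{cor:p-tor-5}. The main obstacle is bootstrapping this local statement through the quotient by the image of $K_2(K)$: given $\alpha \in C^c(X^o)/p^m$ represented by $(f, \bar{g}) \in F_1(X^o) \oplus F^c_2(X^o)$ that vanishes in every $C(X, D_\n)/p^m$, one obtains for each $\n$ an element $h_\n \in K_2(K)$ with $f \equiv \partial h_\n \pmod{p^m F_1(X^o)}$ and $\bar{g}_x \equiv \vartheta_x h_\n \pmod{\Fil_{n_x} K_2(\wh{K}_x) + p^m K_2(\wh{K}_x)}$. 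The delicate point is to produce a single $h \in K_2(K)$ realising all these congruences simultaneously; this is done by a Mittag-Leffler argument on the nested cosets $h_\n + B_\n$, where $B_\n \subset K_2(K)$ collects those $h$ satisfying the analogous congruences with $f = 0$ and $\bar{g} = 0$, exploiting the surjectivity of the transition maps $C(X, D_{\n'}) \twoheadrightarrow C(X, D_\n)$. Combined with the local Hausdorffness above, this forces $\bar{g}_x \equiv \vartheta_x h \pmod{p^m K_2(\wh{K}_x)}$ and hence $\alpha = 0$ in $C^c(X^o)/p^m$, completing the proof.
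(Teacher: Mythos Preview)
Your reduction is sound: the prime-to-$p$ case is handled correctly, and for $n = p^m$ the compatibility of the reciprocity maps together with \thmref{thm:Rec-dual-6} does reduce the theorem to the injectivity of $C(X^o)/p^m \to \varprojlim_\n C(X, D_\n)/p^m$ (equivalently of $C^c(X^o)/p^m$). This injectivity is true---indeed equivalent to the theorem once \thmref{thm:Rec-dual-6} is known---but your Mittag-Leffler argument does not prove it. For a decreasing tower of subgroups $\{B_\n\}$ of an abelian group, the Mittag-Leffler condition amounts to eventual stabilisation; here $B_\n$ is governed by $\Fil_{n_x} K_2(\wh{K}_x) + p^m K_2(\wh{K}_x)$, which strictly decreases with intersection $p^m K_2(\wh{K}_x)$ by the very local Hausdorffness you invoke. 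Hence $\varprojlim^1 B_\n$ has no reason to vanish, and the nested cosets $h_\n + B_\n$ need not have nonempty intersection. The surjectivity of the transition maps $C(X, D_{\n'}) \to C(X, D_\n)$ is automatic from the definitions and supplies no compactness. Concretely, writing $E = F_1(X^o) \oplus F^c_2(X^o)$, $N_\n = \bigoplus_x \Fil_{n_x} K_2(\wh{K}_x)$, and $I$ for the image of $K_2(K)$, you must show $\bigcap_\n (I + N_\n + p^m E) = I + p^m E$; the local input $\bigcap_\n N_\n = 0$ does not propagate through the quotient by $I + p^m E$, which is not closed in any evident topology on $E$.

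The paper avoids the limit altogether. It identifies $C(X^o)/p^m$ with $H^1_\nis(X, j_!(W_m\Omega^2_{X^o,\log}))$, embeds this into the {\'e}tale cohomology of the same sheaf, and then pairs the localisation sequence for $j_!(W_m\Omega^2_{X^o,\log})$ against that for ${\Z}/p^m$ on $X$. A diagram chase, using the unramified Kato-Saito duality on $X$ for the outer terms, reduces everything to the local injectivity of $\rho_{A^h_x} \colon K_2(A^h_x)/p^m \to G^{(0)}_{K^h_x}/p^m$ for each $x \in D_\dagger$, which is \corref{cor:Local-inj-2}. Thus the sheaf $j_!$ packages the passage to the open curve in a single cohomological step, bypassing the inverse-system difficulty entirely.
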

\begin{proof}
If $p \nmid n$, then the projection maps ${C(X^o)}/n \to 
  {C(X,D_\dagger)}/n$ are isomorphisms by \cite[Cor.~5.10]{GKR-arxiv}.
    On the other hand, it is clear that the map ${\pi^{\ab}_1(X^o)}/n \to
    {\pi^{\ab}_1(X, D_\dagger)}/n$ is an isomorphism. The injectivity of
    ${\rho_{X^o}}/n$ now follows from \thmref{thm:Rec-dual-6}.

 We now let $n = p^m$ and $\sF = j_!(W_m\Omega^2_{X^o, \log})$. We have an exact sequence
    \begin{equation}\label{eqn:Rec-ker-8-0}
      {K_2(K)}/{p^m} \to {\underset{x \in X_{(0)}}\bigoplus}
      H^1_{x}(X, \sF) \to H^1_\et(X, \sF).
    \end{equation}
    If $x \in X^o$, then $H^1_\et(X^h_x, W_m\Omega^2_{X^h_x, \log}) =0$
    (cf. the proof of \propref{prop:Nis-et-ICG}). This implies
    $H^1_{x}(X, \sF) \cong H^1_{x, \nis}(X, \sF) \cong
    H^1_{x, \nis}(X^o, W_m\Omega^2_{X^o, \log}) \cong {k(x)^\times}/{p^m}$.
    If $x \in D_\dagger$, then we have an exact sequence
    \begin{equation}\label{eqn:Rec-ker-8-1}
      0 \to H^0_\et(X^h_x, \sF) \to H^0_\et(K^h_x, \sF) \to  
      H^1_{x, \et}(X, \sF) \to H^1_\et(X^h_x, \sF).
    \end{equation}
In the exact sequence (where $\iota_x \colon \Spec(k(x)) \inj X^h_x$ is the
    inclusion) of {\'e}tale  cohomology
    \[
      H^0(X^h_x, W_m\Omega^2_{X^h_x, \log}) \to
      H^0(k(x), \iota^*_x(W_m\Omega^2_{X^h_x, \log})) \to
      H^1(X^h_x, \sF) \to  H^1(X^h_x, W_m\Omega^2_{X^h_x, \log}),
    \]
    the first arrow from the left is bijective by the proper base change theorem
    and the last group was observed above to be zero.
    It follows that $H^1_\et(X^h_x, \sF) =0$.
Since $H^0_\et(X^h_x, \sF)$ is clearly zero and $H^0_\et(K^h_x, \sF) \cong
    H^0_\et(K^h_x,  W_m\Omega^2_{K^h_x, \log})$, we conclude that
    the canonical map ${K_2(K^h_x)}/{p^m} \to H^1_{x}(X, \sF)$ is an isomorphism.
 It follows that the left arrow in \eqref{eqn:Rec-ker-8-0}
      is the map ${K_2(K)}/{p^m} \xrightarrow{(\partial_{X^o}, \vartheta_{X^o})}
      {F_1(X^o)}/{p^m} \bigoplus {F_2(X^o)}/{p^m}$.

      Since 
${K_2(K)}/{p^m} \to {\underset{x \in X_{(0)}}\bigoplus}
      H^1_{x, \nis}(X, \sF) \to H^1_\nis(X, \sF) \to 0$
is exact and the Nisnevich analogue of ~\eqref{eqn:Rec-ker-8-1} implies that
${K_2(K^h_x)}/{p^m} \xrightarrow{\cong} H^1_{x, \nis}(X, \sF)$ for $x \in D_\dagger$,
we conclude that there is a canonical isomorphism
\begin{equation}\label{eqn:Rec-ker-8-4}
  H^1_\nis(X, \sF) \xrightarrow{\cong} {C(X^o)}/{p^m}.
  \end{equation}
In particular, we have an inclusion ${C(X^o)}/{p^m} \inj H^1_\et(X, \sF)$.

We now let $\iota \colon D_\dagger \inj X$ be the inclusion and consider the
    commutative diagram of pairings in $D_\et(X)$:
 \begin{equation}\label{eqn:Rec-ker-8-2}
   \xymatrix@C2pc{
     \iota_* \iota^*(W_m\Omega^2_{X, \log}) \times
     \iota_* \iota^{!}(W_m\Omega^0_{X, \log}) \ar@<7ex>[d] \ar[r]^-{\cup}
     & W_m\Omega^2_{X, \log} \ar@{=}[d] \\
     W_m\Omega^2_{X, \log} \times   W_m\Omega^0_{X, \log}  \ar@<7ex>[u] \ar@<7ex>[d]
     \ar[r]^-{\cup} &
    W_m\Omega^2_{X, \log} \ar@{=}[d] \\
j_!(W_m\Omega^2_{X^o, \log}) \times {\bf R}j_*(W_m\Omega^0_{X^o, \log}) \ar@<7ex>[u]
\ar[r]^-{\cup} & W_m\Omega^2_{X, \log}.}
\end{equation}

Considering the induced pairings of {\'e}tale cohomology, we get a commutative
diagram of exact sequences
\begin{equation}\label{eqn:Rec-ker-8-3}
   \xymatrix@C.7pc{
     H^0(X, W_m\Omega^2_{X,\log}) \ar[r] \ar[d]_-{\cong} &
     {\underset{x\in D_\dagger}\bigoplus}
     \frac{K_2(A^h_x)}{p^m} \ar[r]^-{\partial_1}
     \ar[d] & H^1(X, \sF) \ar[r] \ar[d]^-{\rho'_{X^o}} &
     H^1(X, W_m\Omega^2_{X,\log}) \ar[d]^-{\cong} \\
     H^2(X, {\Z}/{p^m})^\vee \ar[r] & {\underset{x\in D_\dagger}\bigoplus}
     H^2_x(X,  {\Z}/{p^m})^\vee \ar[r]^-{\partial_2} &
     H^1(X^o, {\Z}/{p^m})^\vee  \ar[r] &
     H^1(X, {\Z}/{p^m})^\vee.}
   \end{equation}

 It is clear from the pairing in the top row of ~\eqref{eqn:Rec-ker-8-2} that
   the second vertical arrow from the left in the above diagram is the direct
   sum of $\rho_{A^h_x}$. Furthermore, the
   commutativity of ~\eqref{eqn:Rec-ker-8-2} and the definition of
   the right arrow $\partial'_{X|D}$ in \lemref{lem:ICG-PC} imply that
   $\partial_1$ coincides with the restriction of the canonical map
   ${F_2(X^o)}/{p^m} \to {C(X^o)}/{p^m}$ in the definition of $C(X^o)$.
   Similarly, $\partial_2$ coincides with the sum of
   the canonical maps ${G^{(0)}_{K^h_x}/{p^m}} \to {G_K}/{p^m} \to
   {\pi^{\ab}_1(X^o)}/{p^m}$. It follows that the composition
   ${C(X^o)}/{p^m} \inj H^1(X, \sF) \xrightarrow{\rho'_{X^o}}
   {\pi^{\ab}_1(X^o)}/{p^m}$ is
   the reciprocity map $\rho^m_{X^o}$. We thus have to show that $\rho'_{X^o}$ is
   injective.

  Now, we note that the vertical arrows on the left and the right ends of
   ~\eqref{eqn:Rec-ker-8-3} are isomorphisms by \cite[Thm.~4.7]{KRS}.
   A diagrams chase shows that it suffices to show that for every $x \in D_\dagger$,
   the reciprocity map
   $\rho_{A^h_x} \colon \frac{K_2(A^h_x)}{p^m} \to {G^{(0)}_{K^h_x}}/{p^m}$ is injective.
   But this follows by \corref{cor:Local-inj-2}.
The proof concludes.
\end{proof}

\begin{cor}\label{cor:Rec-ker-9}
The kernel of $\rho_{X^o} \colon C(X^o) \to \pi^{\ab}_1(X^o)$
  is the maximal divisible subgroup of $C(X^o)$.
\end{cor}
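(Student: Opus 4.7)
The plan is to combine the injectivity statement \thmref{thm:Rec-ker-8} with a surjectivity result on $n$-torsion, parallel to the way \corref{cor:Local-inj-8} was deduced in the $2$-local field case from \corref{cor:Local-inj-2} and \propref{prop:Local-ker-coker-0}. Specifically, I would use the abstract observation that if $\rho\colon A \to B$ is a homomorphism with $B$ profinite, and if (i) $\rho/n\colon A/n \to B/n$ is injective for every $n \ge 1$, and (ii) $\rho\colon {}_n A \to {}_n B$ is surjective for every $n \ge 1$, then $\Ker(\rho)$ coincides with the maximal divisible subgroup of $A$. The Saito-style argument used to prove \thmref{thm:Rec-ker-1} is not directly available here because, as \propref{prop:Rec-Image} shows, the image of the geometric part $\rho^0_{X^o}$ need not have bounded exponent; hence the local-style argument of \corref{cor:Local-inj-8} is the natural substitute.

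First I would establish the easy inclusion. Since $\pi^{\ab}_1(X^o)$ is a Hausdorff profinite group, its maximal divisible subgroup is trivial (any divisible subgroup has trivial image in every finite quotient by an open subgroup). Consequently $\rho_{X^o}$ kills any divisible subgroup of $C(X^o)$, so the maximal divisible subgroup $M$ of $C(X^o)$ is contained in $\Ker(\rho_{X^o})$.

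For the reverse inclusion I would show that $\Ker(\rho_{X^o})$ is divisible. Given $x \in \Ker(\rho_{X^o})$ and $n \ge 1$, \thmref{thm:Rec-ker-8} yields $y_0 \in C(X^o)$ with $n y_0 = x$, and then $\rho_{X^o}(y_0) \in {}_n \pi^{\ab}_1(X^o)$. To adjust $y_0$ into $\Ker(\rho_{X^o})$ we need condition (ii) above at level $n$. Writing $n = n' p^m$ with $\gcd(n', p) = 1$, the $p$-primary contribution is automatic: since $X^o$ is affine over $\F_p$, \lemref{lem:No-p-tor} gives $\pi^{\ab}_1(X^o)\{p\} = 0$ and so ${}_{p^m} \pi^{\ab}_1(X^o) = 0$. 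For the prime-to-$p$ contribution, the surjectivity ${}_{n'} C(X^o) \twoheadrightarrow {}_{n'}\pi^{\ab}_1(X^o)$ follows from Hiranouchi's prime-to-$p$ class field theory \cite{Hiranouchi-2} combined with local reciprocity on residue fields (for closed points of $X^o$) and on $K_2$ of $2$-local fields (for points in $D_\dagger$), along the lines of \propref{prop:Local-ker-coker-0}. Choosing $z \in {}_n C(X^o)$ with $\rho_{X^o}(z) = \rho_{X^o}(y_0)$, the element $y_0 - z$ lies in $\Ker(\rho_{X^o})$ and satisfies $n(y_0 - z) = x$, proving divisibility.

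The main obstacle will be making condition (ii) precise in the prime-to-$p$ range. Although $\rho_{X^o}/n$ is known to be an isomorphism for $n$ coprime to $p$ by Hiranouchi, this does not formally force surjectivity on $n$-torsion without further input. The route I would take is to assemble ${}_{n'}\pi^{\ab}_1(X^o)$ from local contributions through the residue-field reciprocities $\rho_{k(x)}\colon {}_{n'} k(x)^\times \xrightarrow{\cong} {}_{n'} G_{k(x)}$ (for $x \in X^o_{(0)}$) and the split surjections ${}_{n'} K_2(\wh{K}_x) \twoheadrightarrow {}_{n'} G_{\wh{K}_x}$ provided by the section $\iota_*$ used in the proof of \lemref{lem:REC-Loc-0} (for $x \in D_\dagger$); these local preimages lift, by the definition of $C(X^o)$ as a cokernel of tame symbols, to the required $n'$-torsion element of $C(X^o)$.
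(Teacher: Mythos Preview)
Your treatment of the $p$-primary part is correct and coincides with the paper's argument: the combination of \thmref{thm:Rec-ker-8} (injectivity of $\rho_{X^o}/p^m$) with \lemref{lem:No-p-tor} (vanishing of $p$-torsion in $\pi^{\ab}_1(X^o)$) forces $\Ker(\rho_{X^o})$ to be $p$-divisible, and the easy inclusion of the maximal divisible subgroup follows from profiniteness of the target. This is exactly how the paper handles the $p$-part.

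The prime-to-$p$ half of your proposal, however, has a genuine gap. First, the premise that ``$\rho_{X^o}/n$ is known to be an isomorphism for $n$ coprime to $p$'' is false in general: by ~\eqref{eqn:Dual-R-0-6} (equivalently item (4) of \thmref{thm:Main-3}), the map $\rho_{X^o}/n'$ is injective but has cokernel of order $(n')^r$ whenever the rank $r$ is positive. Second, and more seriously, the ``local lifting'' strategy you sketch for the surjectivity ${}_{n'}C(X^o) \twoheadrightarrow {}_{n'}\pi^{\ab}_1(X^o)$ is not substantiated: there is no reason the images of local Galois $n'$-torsion under the pushforward maps $G_{k(x)} \to \pi^{\ab}_1(X^o)$ and $G_{K^h_x} \to \pi^{\ab}_1(X^o)$ exhaust ${}_{n'}\pi^{\ab}_1(X^o)$, and even when a local lift exists in $F_1(X^o)\oplus F_2(X^o)$, its image in the cokernel $C(X^o)$ need not be $n'$-torsion. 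You correctly flag this as the main obstacle, but the sketch does not overcome it.

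The paper bypasses this difficulty entirely by invoking \cite[Thm.~4.6]{Hiranouchi-2} for the prime-to-$p$ divisibility of $\Ker(\rho_{X^o})$, which is already in the literature. If you wish to give a self-contained argument instead, the natural route is not local lifting but reduction to the modulus case: for $n'$ prime to $p$ the kernels of $C(X^o)\to C(X,D_\dagger)$ and $\pi^{\ab}_1(X^o)\to\pi^{\ab}_1(X,D_\dagger)$ are both uniquely $n'$-divisible, so one can transport the problem to $(X,D_\dagger)$, where \lemref{lem:Rec-ker-0} furnishes the bounded-exponent input needed for Saito's argument \cite[Thm.~5.1]{Saito-JNT} as in \thmref{thm:Rec-ker-1}.
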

\begin{proof}
The argument for the prime-to-$p$ divisibility of $\Ker(\rho_{X^o})$ is identical
  to that of \cite[Thm.~4.6]{Hiranouchi-2} while the $p$-divisibility
  follows from \lemref{lem:No-p-tor} and \thmref{thm:Rec-ker-8}.
  \end{proof}

The following result proves part (3) of \thmref{thm:Main-3}.

\begin{prop}\label{prop:Rec-Image-open}
The image of $\rho_{X^o} \colon C(X^o)_0 \to \pi^{\ab}_1(X^o)_0$ 
is not necessarily finite.
\end{prop}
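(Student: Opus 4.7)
The plan is to exhibit an explicit example mirroring the strategy of \propref{prop:Rec-Image}, taking $X = \P^1_k$ with $D_\dagger = \{\infty\}$, so that $X^o = \A^1_k$. For this choice one has $C(X) \cong k^\times$ via the norm map, so $C(X)_0 = 0$, and hence $\Ker(\theta_{X^o} \colon C(X^o) \to C(X))$ is automatically contained in $C(X^o)_0$. Using the injectivity of $\rho_{X^o}/p$ from \thmref{thm:Rec-ker-8} together with \corref{cor:REC-T-Norm}, it will suffice to show that the image of $\Ker(\theta_{X^o})$ in $C(X^o)/p$ is infinite.

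Next I would compute $\Ker(\theta_{X^o})$ directly from the cokernel definitions of $C(X^o)$ and $C(X)$. The kernel of $\phi \colon F_1(X^o) \oplus F_2(X^o) \to F_1(X)$ (where $\phi$ is the identity on $F_1(X^o)$ and the tame symbol on $F_2(X^o) = K_2(K^h_\infty)$) is $K_2(A^h_\infty)$, while elements $c \in K_2(K)$ with $\alpha(c) \in \Ker(\phi)$ are precisely those with vanishing tame symbols at all closed points of $\P^1_k$. A diagram chase therefore produces an exact sequence
\[
  H^0_\nis(\P^1_k, \sK_{2,\P^1_k}) \to K_2(A^h_\infty) \to \Ker(\theta_{X^o}) \to 0,
\]
where the first arrow is $c \mapsto \vartheta_\infty(c)$. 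By $\A^1$-invariance of Milnor $K$-theory (Bass--Tate), $H^0_\nis(\P^1_k, \sK_{2,\P^1_k}) \cong K_2(k)$, and this arrow becomes the canonical inclusion.

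The remaining step is reduction modulo $p$. For a local field $k$ of characteristic $p$ one has $K_2(k) \cong \mu(k) \oplus (\text{uniquely divisible})$ with $\mu(k) \cong \F_q^\times$ of order prime to $p$; in particular $K_2(k)/p = 0$ and $K_2(k)$ is $p$-divisible, so $\Ker(\theta_{X^o})/p \xrightarrow{\cong} K_2(A^h_\infty)/p$. By \lemref{lem:Kato-1}(2), the latter equals $\kappa^1_2(A^h_\infty|0)$, which surjects onto the infinite-dimensional $\F_p$-vector space $\varpi^1_2(A^h_\infty|2)$ already exploited in the proof of \propref{prop:Coker-p} (via \cite[\S~2, Prop.~1]{Kato80-1}). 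Finally, the snake lemma applied to $0 \to \Ker(\theta_{X^o}) \to C(X^o) \to k^\times \to 0$, combined with ${}_p k^\times = 0$ (immediate in characteristic $p$), gives $\Ker(\theta_{X^o})/p \inj C(X^o)/p$, yielding the required infinite image.

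The main potential obstacle is the identification $H^0_\nis(\P^1_k, \sK_{2,\P^1_k}) \cong K_2(k)$, which rests on $\A^1$-invariance of Milnor $K$-theory over an infinite field; this is standard but is not formally quoted earlier in the paper. Beyond that, the argument closely parallels \propref{prop:Rec-Image}, the key difference being that in the open case one exploits the infiniteness of the whole group $K_2(A^h_\infty)/p$, whereas in the modulus case the input is infiniteness only of its proper quotient $\varpi^1_2(A^h_\infty|2)$.
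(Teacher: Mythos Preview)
Your proposal is correct and follows essentially the same route as the paper: take $X=\P^1_k$ with $D_\dagger=\{\infty\}$, reduce via \thmref{thm:Rec-ker-8} to showing that $\Ker(\theta_{X^o})$ has infinite image in $C(X^o)/p$, and identify this with $K_2(A^h_\infty)/p\cong\Omega^2_{A^h_\infty,\log}$, which is infinite. The only cosmetic differences are that the paper obtains the relevant exact sequence directly from the sheaf-theoretic identification ~\eqref{eqn:Rec-ker-8-4} rather than your explicit diagram chase, and kills the ``error term'' via duality (showing $H^0_\nis(\P^1_k,\Omega^2_{X,\log})=0$) rather than via $K_2(k)/p=0$; both amount to the same vanishing.
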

\begin{proof}
  Using \thmref{thm:Rec-ker-8} in place of \thmref{thm:Rec-dual-6}
  and repeating the proof of \propref{prop:Rec-Image}, it suffices to show
  that $\Ker(\theta_{X^o} \colon {C(X^o)}/p \to {C(X)}/p)$ may not be finite.
Taking $X = \P^1_k$ and $D_\dagger = \{\infty\}$, we see from 
  ~\eqref{eqn:Rec-ker-8-4} that there is an exact sequence
  \begin{equation}\label{eqn:Rec-Image-open-0}
    0 \to H^0_\nis(X, \Omega^2_{X, \log}) \to \Omega^2_{A^h_\infty, \log} \to
    \Ker(\theta_{X^o}) \to 0.
  \end{equation}
  As in the proof of \propref{prop:Rec-Image}, one reduces to showing that
  ${K_2(A^h_\infty)}/p \cong \Omega^2_{A^h_\infty, \log}$ is infinite. But this follows
  from \cite[\S~2, Prop.~1]{Kato80-1}, \lemref{lem:Kato-1} and
  \lemref{lem:Kato-equiv}, which together imply
that ${K_2(A^h_\infty)}/p$ has an  infinite decreasing filtration 
whose graded quotients are infinite-dimensional $\F_p$-vector spaces.
\end{proof}

\subsection{Reciprocity for $\wh{C}(X^o)$}
\label{sec:Rec-open-0}
We observed in \S~\ref{sec:RMap} that $\rho_{X^o}$ factors through the continuous
homomorphism $\wh{\rho}_{X^o} \colon \wh{C}(X^o) \to \pi^{\ab}_1(X^o)$,
where $\wh{\rho}_{X^o} = {\varprojlim}_\n \rho_{X|D_\n}$.
We shall now prove \thmref{thm:Main-5}.
First, we prove some lemmas.

\begin{lem}\label{lem:Rec-lim-0}
  The pro-abelian group $\{_{p^m} C(X, D_\n)\}_{\n}$ is zero for every integer
  $m \ge 1$.
\end{lem}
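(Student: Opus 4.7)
The plan is to exploit the Milnor $K$-theoretic presentation of $C(X,D_\n)$ from \lemref{lem:ICG-PC} together with the observation that $F_1(X^o)$ is $p^m$-torsion-free in characteristic $p$. Write $C(X,D_\n)=F_1(X^o)/R_\n$ where $R_\n=\partial_{X^o}(\Fil_{D_\n} K_2(K))$. Since $k$ has characteristic $p$, every residue field $k(x)$ at a closed point $x\in X^o_{(0)}$ satisfies $\mu_{p^m}(k(x))=\{1\}$, hence $F_1(X^o)=\bigoplus_{x\in X^o_{(0)}}k(x)^\times$ has trivial $p^m$-torsion. Applying the snake lemma to the short exact sequence $0\to R_\n\to F_1(X^o)\to C(X,D_\n)\to 0$ with multiplication by $p^m$ yields a canonical isomorphism
\[
{}_{p^m}C(X,D_\n)\cong T_\n/R_\n, \qquad T_\n:=\{f\in F_1(X^o):p^m f\in R_\n\},
\]
so that $T_\n$ is the $p^m$-saturation of $R_\n$ in $F_1(X^o)$.

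The pro-vanishing claim then reduces to: for every $\n$ there exists $\n'\ge\n$ such that $T_{\n'}\subseteq R_\n$. Equivalently, whenever $f\in F_1(X^o)$ satisfies $p^m f=\partial_{X^o}(\alpha)$ for some $\alpha\in\Fil_{D_{\n'}} K_2(K)$, I would construct $\beta\in\Fil_{D_\n} K_2(K)$ with $\partial_{X^o}(\beta)=f$. Locally at each $x\in\supp(f)\subseteq X^o$, lift $f_x\in k(x)^\times$ to $\tilde{f}_x\in(A^h_x)^\times$ and take $\beta^{\mathrm{loc}}_x=\{\tilde{f}_x,\pi_x\}\in K_2(K^h_x)$; then $\partial_x(\beta^{\mathrm{loc}}_x)=f_x$ and $\alpha_x-p^m\beta^{\mathrm{loc}}_x\in K_2(A^h_x)$. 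Since $\supp(f)$ is finite and $X$ is a smooth curve, a weak-approximation argument patches these local choices into a global $\beta\in K_2(K)$ satisfying $\partial_{X^o}(\beta)=f$ and $\alpha-p^m\beta\in\bigcap_{x\in X^o_{(0)}}K_2(A^h_x)$.

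The main obstacle is to ensure $\beta$ satisfies the modulus condition $\beta\in\Fil_{D_\n} K_2(K)$ at the points of $D_\dagger$. The restriction on $\beta$ at $x\in D_\dagger$ is inherited from $\alpha$ via the relation $\alpha-p^m\beta\in K_2(A^h_x)$ together with the hypothesis $\alpha\in\Fil_{D_{\n'}} K_2(K)$. The crucial observation is that, in characteristic $p$, Kato's description of the graded quotients $\Fil_n K_2(K^h_x)/\Fil_{n+1}K_2(K^h_x)$ as modules of differential forms on $k(x)$ (cf.~\cite[\S~2]{Kato80-1}) shows that these are $\F_p$-vector spaces, so $p\cdot\Fil_n K_2(K^h_x)\subseteq\Fil_{n+1} K_2(K^h_x)$ and by iteration $p^m\cdot\Fil_n K_2(K^h_x)\subseteq\Fil_{n+m} K_2(K^h_x)$. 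Choosing $\n'$ with $n'_i\ge n_i+m$ at every $x_i\in D_\dagger$ should guarantee that the discrepancy between $\beta$ and a bona-fide element of $\Fil_{D_\n} K_2(K)$ can be absorbed into the unramified part of $\alpha$. The most delicate technical step, and the main obstacle, is verifying carefully that the correction term at each $x\in D_\dagger$ indeed lies in $\Fil_{D_\n}K_2(K^h_x)$ and not merely in a coarser filtration step, which requires a detailed analysis of the interaction between the Kato filtration, the tame symbol, and the $p^m$-power map on $2$-local $K_2$ in positive characteristic.
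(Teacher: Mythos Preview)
Your identification ${}_{p^m}C(X,D_\n)\cong T_\n/R_\n$ via the snake lemma is correct, and reducing the pro-vanishing to the inclusion $T_{\n'}\subseteq R_\n$ for $\n'\gg\n$ is a reasonable reformulation. The gap is in the mechanism you propose for establishing this inclusion.

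First, your weak-approximation step produces $\beta\in K_2(K)$ with $\partial_{X^o}(\beta)=f$, hence $\alpha-p^m\beta\in K_2(A^h_x)$ for $x\in X^o_{(0)}$; but nothing in that construction forces $\alpha-p^m\beta\in K_2(A^h_x)$ at $x\in D_\dagger$. Even if you strengthen the approximation to arrange $\beta\in K_2(A^h_x)$ at each $x\in D_\dagger$ (which is possible), you still only obtain $p^m\beta\in\Fil_0 K_2(K^h_x)$ from $\alpha\in\Fil_{n'_x}$ and $\alpha-p^m\beta\in\Fil_0$. The crucial point is that your inequality $p^m\Fil_n\subseteq\Fil_{n+m}$ runs the wrong way: it lets you push $p^m\beta$ \emph{deeper} into the filtration once you know where $\beta$ sits, but here you need to pull $\beta$ deeper from knowledge of $p^m\beta$. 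What you would actually need is that $\Fil_0/\Fil_n$ has no $p$-torsion (so $p^m\beta\in\Fil_n\Rightarrow\beta\in\Fil_n$), and this is false precisely because each graded piece is an $\F_p$-vector space. Equivalently, the local pro-vanishing $\{{}_p(K_2(K^h_x)/\Fil_nK_2(K^h_x))\}_n=0$ is a genuine input you have not supplied; it does not follow from the graded structure alone.

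The paper's proof takes a different route. It first uses the \emph{duality theorem} (a main result of the paper, via \corref{cor:Duality-Spl}) to show $G^0_\n(1)=H^0_\et(X,\Omega^2_{X|D_\n,\log})=0$ for $\n\gg0$, yielding an exact sequence of pro-abelian groups
\[
0\to K_2(K)/p\to F_1(X^o)/p\oplus\{F_2(X|D_\n)/p\}_\n\to\{C(X,D_\n)/p\}_\n\to 0.
\]
From this and \lemref{lem:Fil-M-0} one gets a levelwise surjection $\bigoplus_{x\in D_\dagger}{}_p\bigl(K_2(K^h_x)/\Fil_{n_x}K_2(K^h_x)\bigr)\twoheadrightarrow{}_pC(X,D_\n)$, together with an identification of the left side (as pro-system) with $\bigoplus_x\{{}_pK_2(k(x)[t]/(t^{n_x}))\}_\n$. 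The local pro-vanishing $\{{}_pK_2(k'[t]/(t^n),(t))\}_n=0$ is then imported from \cite[Lem.~5.1,~5.7]{Gupta-Krishna-AKT}. So the two substantive ingredients missing from your sketch are the duality input controlling the kernel mod $p$, and the cited local $K$-theory computation; the filtration inequality alone cannot replace either.
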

\begin{proof}
  By an induction argument, it suffices to prove the lemma for $m =1$.
  By ~\eqref{eqn:Nis-et-ICG-0}, we have an exact sequence
  \begin{equation}\label{eqn:Rec-lim-0-0}
    G^0_\n(1) \to {K_2(K)}/{p} \to
    {F_1(X^o)}/p \bigoplus {F_2(X|D_\n)}/p \to {C(X, D_\n)}/p \to 0.
  \end{equation}
  By \lemref{lem:Top-5} and \corref{cor:Duality-Spl}, we have
  $G^0_\n(1) \inj G^0_\n(1)^\pf \cong F^2_\n(1)$ for $\n \gg 0$.
  On the other hand, we observed in the proof of \thmref{thm:Duality-M} that
  $F^2_\n(1) = 0$ for $\n \gg 0$. We conclude that there is an exact sequence
  of pro-abelian groups
   \begin{equation}\label{eqn:Rec-lim-0-1}
0 \to {K_2(K)}/{p} \to
    {F_1(X^o)}/p \bigoplus \{{F_2(X|D_\n)}/p\}_\n \to \{{C(X, D_\n)}/p\}_n \to 0.
  \end{equation}
  Together with \lemref{lem:Fil-M-0}, this implies that 
  in the sequence of canonical maps of pro-abelian groups
  $\bigoplus_{x \in D_\dagger} \{_{p} (K_2({k(x)[t]}/{(t^{n_x})}))\}_{\n}
  \to \bigoplus_{x \in D_\dagger}  \{_{p} ({K_2(K^h_x))}/{\Fil_{n_x} K_2(K^h_x)})\}_{\n} \to
  \{_{p} C(X, D_\n)\}_{\n}$, the first arrow is an isomorphism and
  the second arrow is levelwise surjective.
  It suffices therefore to show that $\{_{p}K_2(A_n, (t))\}_n = 0$ if
  $A_n = {k'[t]}/{(t^n)}$ for a local field $k'$ of characteristic $p$.
  But this follows from \cite[Lem.~5.1, 5.7]{Gupta-Krishna-AKT}.
  \end{proof}

  \begin{lem}\label{lem:Rec-lim-1}
    For $m \ge 1$, there is an exact sequence
    \begin{equation}\label{eqn:Rec-lim-1-1}
   0 \to {\wh{C}(X^o)} \xrightarrow{p^m} {\wh{C}(X^o)} \to
   {\varprojlim}_\n {C_m(X,D_\n)} \to 0.
  \end{equation}
   \end{lem}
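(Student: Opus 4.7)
The plan is to derive the desired three-term exact sequence by splitting the four-term exact sequence
\[
0 \to {}_{p^m}C(X,D_\n) \to C(X,D_\n) \xrightarrow{p^m} C(X,D_\n) \to C_m(X,D_\n) \to 0
\]
into two short exact sequences of inverse systems indexed by $\n$ and applying $\varprojlim_\n$, with \lemref{lem:Rec-lim-0} serving as the crucial input.

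First, I would observe that the inverse system $\{C(X,D_\n)\}_\n$ has surjective transition maps. For $\n' \ge \n$ with common support $D_\dagger$, the componentwise projection $F_2(X|D_{\n'}) \twoheadrightarrow F_2(X|D_\n)$ is surjective by definition (cf.~\eqref{eqn:Curve-notn}), and comparing the cokernel presentations of \defref{defn:CG-1} shows $C(X,D_{\n'}) \twoheadrightarrow C(X,D_\n)$ is surjective. Since we may restrict to a countable cofinal sub-system (e.g.\ the diagonal $\n = n\cdot\un{1}$), the Mittag--Leffler property gives $\varprojlim^1_\n C(X,D_\n) = 0$.

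Next, I split the four-term sequence into two short exact sequences of pro-abelian groups, namely $0 \to {}_{p^m}C(X,D_\n) \to C(X,D_\n) \to p^m C(X,D_\n) \to 0$ and $0 \to p^m C(X,D_\n) \to C(X,D_\n) \to C_m(X,D_\n) \to 0$. Applying $\varprojlim$ to the first and invoking \lemref{lem:Rec-lim-0}---which says $\{{}_{p^m}C(X,D_\n)\}$ is the zero pro-abelian group, whence both $\varprojlim$ and $\varprojlim^1$ vanish---I conclude that multiplication by $p^m$ induces an isomorphism $\wh{C}(X^o) \xrightarrow{\cong} \varprojlim_\n p^m C(X,D_\n)$, together with an isomorphism $\varprojlim^1_\n C(X,D_\n) \cong \varprojlim^1_\n p^m C(X,D_\n)$; combined with the vanishing from the previous paragraph, the latter also vanishes.

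Finally, applying $\varprojlim$ to the second short exact sequence produces
\[
0 \to \varprojlim_\n p^m C(X,D_\n) \to \wh{C}(X^o) \to \varprojlim_\n C_m(X,D_\n) \to \varprojlim^1_\n p^m C(X,D_\n) \to \cdots,
\]
and substituting the identification $\wh{C}(X^o) \cong \varprojlim_\n p^m C(X,D_\n)$ (via $\cdot p^m$) together with the vanishing $\varprojlim^1_\n p^m C(X,D_\n) = 0$ yields the desired exact sequence. The only nontrivial step is the verification of the Mittag--Leffler property for $\{C(X,D_\n)\}$; once this and \lemref{lem:Rec-lim-0} are in hand, the rest is formal homological algebra of inverse limits.
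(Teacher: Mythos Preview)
Your proof is correct and follows essentially the same route as the paper: both start from the four-term exact sequence~\eqref{eqn:Rec-lim-1-0} and invoke \lemref{lem:Rec-lim-0} to pass to the limit. The paper's argument is terser and leaves the Mittag--Leffler step implicit, whereas you spell out the surjectivity of the transition maps $C(X,D_{\n'}) \twoheadrightarrow C(X,D_\n)$ and the resulting vanishing of $\varprojlim^1$; this is exactly the detail one needs to justify right-exactness after taking the limit.
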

   \begin{proof}
     This follows by taking the limits in the exact sequence of
  pro-abelian groups
  \begin{equation}\label{eqn:Rec-lim-1-0}
    0 \to \{_{p^m} C(X, D_\n)\} \to \{C(X, D_\n)\} \xrightarrow{p^m}  \{C(X, D_\n)\} \to
    \{C_m(X,D_\n)\} \to 0
  \end{equation}
  and using \lemref{lem:Rec-lim-0}
\end{proof}

\begin{lem}\label{lem:Rec-lim-2}
  For a prime-to-$p$ integer $q$, the canonical map
  ${\wh{C}(X^o)}/q \to \varprojlim_\n {C(X,D_\n)}/q$ is bijective.
\end{lem}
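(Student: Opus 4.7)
The plan is to bypass any $R^1\varprojlim$ analysis by observing that, for $p \nmid q$, the inverse system $\{C(X,D_\n)/q\}_\n$ has transition maps that are already isomorphisms. The key input is the last sentence after ~\eqref{eqn:ICG-open-0}, which asserts that for $p \nmid n$, every arrow in the chain
\[
{C(X^o)}/n \inj {C^c(X^o)}/n \to {\wh{C}(X^o)}/n \xrightarrow{\phi_{X|D}} {C(X,D)}/n
\]
is an isomorphism, for each effective divisor $D \subset X$ supported on $D_\dagger$.

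First I would apply this with $n = q$ and $D = D_\n$ to conclude that the canonical map $\phi_{X|D_\n} \colon \wh{C}(X^o)/q \xrightarrow{\cong} C(X,D_\n)/q$ is an isomorphism for every $\n \ge 0$. Next, for $\n \ge \n'$, the transition map $C(X,D_\n)/q \to C(X,D_{\n'})/q$ factors as $\phi_{X|D_{\n'}} \circ \phi_{X|D_\n}^{-1}$ through $\wh{C}(X^o)/q$, so it too is an isomorphism. Therefore $\{C(X,D_\n)/q\}_\n$ is a pro-system whose transition maps are all bijections; its inverse limit is thus canonically isomorphic (via any projection) to a single level $C(X,D_\n)/q$.

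To finish, I would note that the canonical map $\wh{C}(X^o)/q \to \varprojlim_\n C(X,D_\n)/q$ is well-defined by the universal property of the limit, and its composition with any projection $\varprojlim_\n C(X,D_\n)/q \to C(X,D_\n)/q$ recovers $\phi_{X|D_\n}$, which is an isomorphism. Since the projection is itself an isomorphism by the preceding paragraph, the original map must be an isomorphism as well. The only delicate point — that $\phi_{X|D}$ is an isomorphism modulo $q$ for $p \nmid q$ — is already recorded in ~\eqref{eqn:ICG-open-0} (ultimately a consequence of the $q$-divisibility of $\Fil_{n_x} K_2(K^h_x)$ for $p \nmid q$, as used in the proof of \propref{prop:Local-ker-coker-0}), so no additional argument is required. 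This is what makes the prime-to-$p$ case strictly easier than the $p$-primary case handled by \lemref{lem:Rec-lim-1}, where one has to pass through the vanishing of $\{_{p^m}C(X,D_\n)\}_\n$.
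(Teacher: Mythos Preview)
Your argument is circular. The input you invoke --- the sentence after~\eqref{eqn:ICG-open-0} asserting that $\phi_{X|D_\n} \colon \wh{C}(X^o)/q \to C(X,D_\n)/q$ is an isomorphism for $p \nmid q$ --- is stated there without proof, and once you observe (correctly) that the system $\{C(X,D_\n)/q\}_\n$ is constant, the bijectivity of $\phi_{X|D_\n}$ mod $q$ is \emph{literally} the statement of \lemref{lem:Rec-lim-2}. The $q$-divisibility of $\Fil_{n_x} K_2(K^h_x)$ for $n_x \ge 1$ gives only the easy half: it shows the composite $C(X^o)/q \to C(X,D_\n)/q$ is bijective and hence that the transition maps are isomorphisms. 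It does \emph{not} by itself yield the interchange $(\varprojlim_\n C(X,D_\n))/q \cong \varprojlim_\n (C(X,D_\n)/q)$, which is exactly what must be proven.

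The paper supplies this missing step. Since $C(X^o)/q \xrightarrow{\cong} C(X,D_\n)/q$ factors through $\wh{C}(X^o)/q$, it suffices to show that $C(X^o)/q \to \wh{C}(X^o)/q$ is surjective; the paper reduces this to showing $\varprojlim_\n F_2(X|D_\n) \to F_2(X|D_\dagger)$ is injective mod $q$, which follows because the kernel pro-system $\{\Fil_1 K_2(K^h_x)/\Fil_{n_x} K_2(K^h_x)\}_{n_x}$ is uniquely $q$-divisible, so its limit remains uniquely $q$-divisible. Your sketch could be salvaged by inserting an argument of this type (for instance, showing directly that $\Ker(\wh{C}(X^o) \to C(X,D_\n))$ is $q$-divisible via Mittag--Leffler on the surjective system of $q$-divisible kernels $\Ker(C(X,D_{\n'}) \to C(X,D_\n))$), but as written it assumes precisely what it sets out to prove.
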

\begin{proof}
  Since the map ${C(X^o)}/q \to {C(X,D_\n)}/q$ is bijective for every $\n \ge 1$,
  it suffices to show that the map ${C(X^o)}/q \to {\wh{C}(X^o)}/q$ is surjective.
  For this, it further suffices to show that the canonical map
  ${\varprojlim_\n F_2(X|D_\n)} \to {F_2(X|D_\dagger)}$ is injective modulo $q$.
  But this follows by fixing $x \in D_\dagger$,
  taking limits in the exact sequence of pro-abelian groups
\begin{equation}\label{eqn:Rec-lim-2-0}
  0 \to \left\{\frac{\Fil_1 K_2(K^h_x)}{\Fil_n K_2(K^h_x)}\right\} \to
    \left\{\frac{K_2(K^h_x)}{\Fil_n K_2(K^h_x)}\right\} \to 
      \left\{\frac{K_2(K^h_x)}{\Fil_1 K_2(K^h_x)}\right\} \to 0,
      \end{equation}
     and noting that the pro-abelian group on the left is uniquely $q$-divisible.
\end{proof}

\begin{lem}\label{lem:Rec-ker-2}
The transition map $\Ker(\rho_{X|D_{n'}}) \to \Ker(\rho_{X|D_\n})$ is surjective for
all $\n' \ge \n \ge 1$.
\end{lem}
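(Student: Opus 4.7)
The plan is to exploit two structural results already in place: the divisibility of $\Ker(\rho_{X|D_\n})$ from \thmref{thm:Rec-ker-1}, and the finite-exponent property of the geometric part of the fundamental group from \lemref{lem:Rec-ker-0}. These combine to yield a short direct argument.

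First I would observe that both transition maps are surjective: $C(X,D_{\n'})\surj C(X,D_\n)$ follows from its description in \lemref{lem:ICG-PC}, since the quotient map $F_1(X^o)\oplus F_2(X|D_{\n'})\surj F_1(X^o)\oplus F_2(X|D_\n)$ is surjective; surjectivity of $\pi^{\ab}_1(X,D_{\n'})\surj \pi^{\ab}_1(X,D_\n)$ is ~\eqref{eqn:Adiv}. Let
\[
T := \Ker\bigl(\pi^{\ab}_1(X,D_{\n'})\surj \pi^{\ab}_1(X,D_\n)\bigr).
\]
Since $T$ is a subgroup of $T^{\ab}_1(X,D_{\n'}) = \Ker(\pi^{\ab}_1(X,D_{\n'})\to \pi^{\ab}_1(X))$, \lemref{lem:Rec-ker-0} shows that $T$ is a torsion group of finite exponent; pick a positive integer $N$ with $NT=0$.

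Next, given any $x \in \Ker(\rho_{X|D_\n})$, I would use the divisibility statement of \thmref{thm:Rec-ker-1} to write $x = N y$ for some $y \in \Ker(\rho_{X|D_\n})$. Choose an arbitrary lift $\tilde{y}\in C(X,D_{\n'})$ of $y$ via the surjection above. The compatibility of the reciprocity maps
\[
\xymatrix@C1pc{
C(X,D_{\n'}) \ar[r]^-{\rho_{X|D_{\n'}}} \ar@{->>}[d] & \pi^{\ab}_1(X,D_{\n'}) \ar@{->>}[d] \\
C(X,D_\n) \ar[r]^-{\rho_{X|D_\n}} & \pi^{\ab}_1(X,D_\n)}
\]
forces $\rho_{X|D_{\n'}}(\tilde{y})$ to map to $\rho_{X|D_\n}(y)=0$, hence $\rho_{X|D_{\n'}}(\tilde{y}) \in T$. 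Therefore
\[
\rho_{X|D_{\n'}}(N\tilde{y}) = N\rho_{X|D_{\n'}}(\tilde{y}) = 0,
\]
so $N\tilde{y} \in \Ker(\rho_{X|D_{\n'}})$, and by construction it maps to $N y = x$ in $\Ker(\rho_{X|D_\n})$. Since $x$ was arbitrary, the transition map is surjective.

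The only nontrivial input is the finite exponent of $T^{\ab}_1(X,D_{\n'})$, but this is already recorded in \lemref{lem:Rec-ker-0}, so no genuine obstacle arises beyond assembling these ingredients.
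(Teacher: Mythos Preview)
Your proof is correct and follows essentially the same approach as the paper's. The only cosmetic difference is that the paper passes to the degree-zero parts $C(X,D_\n)_0$ and $\pi^{\ab}_1(X,D_\n)_0$ (which is harmless since $\Ker(\rho_{X|D_\n})\subset C(X,D_\n)_0$), and kills the lift using the finite exponent of $\operatorname{Image}(\rho^0_{X|D_{\n'}})$ rather than of the kernel $T$; both facts are contained in \lemref{lem:Rec-ker-0}, and the structure of the argument---divide, lift arbitrarily, then multiply back---is identical.
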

\begin{proof}
  We write $\rho_\n = \rho_{X|D_\n}, \ A_\n = \Ker(\rho_{X|D_\n})$ and look at the
  commutative diagram
\begin{equation}\label{eqn:Rec-ker-2-0}
  \xymatrix@C2pc{
    0 \ar[r] & A_{\n'} \ar[r] \ar[d] & C(X, D_{\n'})_0 \ar[r]^-{\rho^0_{\n'}}
    \ar@{->>}[d]^-{\delta^{\n'}_\n} &
    \pi^{\ab}_1(X,D_{\n'})_0 \ar@{->>}[d]^-{\lambda^{\n'}_\n} \\
  0 \ar[r] & A_{\n} \ar[r] & C(X, D_{\n})_0 \ar[r]^-{\rho^0_{\n}} &
  \pi^{\ab}_1(X,D_{\n})_0.}
\end{equation}
We let $x \in \Ker(\rho_{\n})$ and let $M$ be an integer which annihilates
the image of $\rho^0_{\n'}$ by \lemref{lem:Rec-ker-0}.
By \thmref{thm:Rec-ker-1}, we can write $x = Mx'$ for some
$x' \in  \Ker(\rho_{\n})$. On the other hand, we can write
$x' = \delta^{\n'}_\n(y')$ for some $y' \in C(X, D_{\n'})_0$. Since
$y := My' \in \Ker(\rho_{\n'})$ and $ \delta^{\n'}_\n(y) = x$, we are done.
\end{proof}

We can now prove \thmref{thm:Main-5}.

\begin{thm}\label{thm:Rec-lim-3}
For every integer $n \ge 1$, the reciprocity homomorphism
  \[
    \wh{\rho}_{X^o} \colon   {\wh{C}(X^o)}/n \to  {\pi^{\ab}_1(X^o)}/n
\]
is injective. In particular, $\Ker(\wh{\rho}_{X^o})$ is the maximal divisible
subgroup of $\wh{C}(X^o)$.
\end{thm}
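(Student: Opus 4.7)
The plan is to prove the two assertions separately. The inclusion $D_{\max}(\wh{C}(X^o)) \subseteq \Ker(\wh{\rho}_{X^o})$ is automatic, since any divisible subgroup of $\wh{C}(X^o)$ maps to a divisible subgroup of the profinite group $\pi^{\ab}_1(X^o)$, and profinite abelian groups contain no nonzero divisible subgroups. Consequently, the task reduces to (a) injectivity of $\wh{\rho}_{X^o}/n$ for every $n \ge 1$, and (b) divisibility of $\Ker(\wh{\rho}_{X^o})$. I may also assume $D_\dagger \neq \emptyset$, so that $X^o$ is affine, since the case $X^o = X$ is classical.

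For (a), I split $n = p^m n'$ with $\gcd(n', p) = 1$. For the prime-to-$p$ factor $n'$, I would invoke \lemref{lem:Rec-lim-2} together with the classical isomorphisms $C(X^o)/n' \xrightarrow{\cong} C(X, D_\n)/n'$ for every $\n \ge 1$ (cf.~\cite[Cor.~5.10]{GKR-arxiv}) to identify $\wh{\rho}_{X^o}/n'$ with $\rho_{X^o}/n'$, and then quote \thmref{thm:Rec-ker-8}. For the $p^m$-factor, \lemref{lem:Rec-lim-1} supplies $\wh{C}(X^o)/p^m \cong \varprojlim_\n C_m(X,D_\n)$, and \thmref{thm:Rec-dual-6} gives injectivity of $\rho^m_{X|D_\n}$ at each level. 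Left-exactness of $\varprojlim$ then embeds $\wh{C}(X^o)/p^m$ into $\varprojlim_\n \pi^{\ab}_1(X, D_\n)/p^m$, and the latter is identified with $\pi^{\ab}_1(X^o)/p^m$ by taking the Pontryagin dual of \corref{cor:p-tor-*-1}, using that the transitions among the $\pi^{\ab}_1(X, D_\n)$ are surjective so Mittag-Leffler lets one commute the $p^m$-quotient with the limit.

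For (b), I would apply the snake lemma to $0 \to K \to \wh{C}(X^o) \to I \to 0$ under multiplication by $n$, where $K = \Ker(\wh{\rho}_{X^o})$ and $I \subseteq \pi^{\ab}_1(X^o)$ is its image. Injectivity of $\wh{\rho}_{X^o}/n$ from (a) forces $K/n \to \wh{C}(X^o)/n$ to be the zero map, so the six-term snake sequence collapses to a surjection ${}_n I \twoheadrightarrow K/n$. For $n = p^m$: \lemref{lem:No-p-tor} applied to the affine curve $X^o$ gives $\pi^{\ab}_1(X^o)\{p\} = 0$, hence ${}_{p^m} I = 0$ and $K/p^m = 0$. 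For $n$ prime to $p$: since $\theta_{X^o} \colon C(X^o) \to \wh{C}(X^o)$ induces an isomorphism modulo $n$, the snake sequence for $\wh{\rho}_{X^o}$ is naturally identified with that for $\rho_{X^o}$, and the required vanishing reduces to $\Ker(\rho_{X^o})/n = 0$ from \corref{cor:Rec-ker-9}.

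The principal obstacle I anticipate is the identification $\varprojlim_\n \pi^{\ab}_1(X, D_\n)/p^m \cong \pi^{\ab}_1(X^o)/p^m$ in step (a), which rests on commuting a countable inverse limit with the $p^m$-quotient. This should follow from the filtration $H^1(X^o) = \bigcup_\n \Fil_{D_\n} H^1(K)$ of \corref{cor:p-tor-*-1}, since its Pontryagin dual exhibits $\pi^{\ab}_1(X^o)$ as the limit of the $\pi^{\ab}_1(X,D_\n)$ with surjective transitions, activating Mittag-Leffler. A secondary issue in (b) is justifying the compatibility of snake sequences under $\theta_{X^o}$ in the prime-to-$p$ case; this should be straightforward once one checks that $\theta_{X^o}$ induces an isomorphism modulo $n$ not only on the class groups but also on the images of the two reciprocity maps, which follows from the diagram chase already implicit in \lemref{lem:Rec-lim-2}.
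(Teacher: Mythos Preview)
Your argument for injectivity of $\wh{\rho}_{X^o}/n$ and for $p$-divisibility of $\Ker(\wh{\rho}_{X^o})$ is correct and essentially matches the paper's. The genuine gap is in the prime-to-$p$ divisibility of $K=\Ker(\wh{\rho}_{X^o})$.

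Your snake-lemma reduction gives only a surjection ${}_n I \twoheadrightarrow K/n$; to conclude $K/n=0$ you would need the connecting map to vanish, i.e.\ ${}_n\wh{C}(X^o)\to{}_nI$ to be surjective. You propose to obtain this by ``identifying'' the snake sequence for $\wh{\rho}_{X^o}$ with that for $\rho_{X^o}$ via the isomorphism $C(X^o)/n\cong\wh{C}(X^o)/n$. But this isomorphism controls only the $(-)/n$ terms of the two six-term sequences, not the ${}_n(-)$ terms: from $C(X^o)/n\cong\wh{C}(X^o)/n$ and injectivity of both reciprocity maps mod $n$ you get $I'/n\cong I/n$, yet this does not force ${}_nI'={}_nI$, nor does it let you transport the surjection ${}_nC(X^o)\twoheadrightarrow{}_nI'$ (which does follow from $\Ker(\rho_{X^o})/n=0$) to one for $\wh{C}(X^o)$. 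Concretely, $\theta_{X^o}\colon C(X^o)\to\wh{C}(X^o)$ is not surjective, so you cannot conclude $I'=I$; and knowing $I'/n=I/n$ only says $I=I'+nI$, which by itself gives nothing about $n$-torsion. Your proposed ``diagram chase implicit in \lemref{lem:Rec-lim-2}'' does not supply the missing torsion comparison.

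The paper closes this gap by a different route: it identifies $K=\varprojlim_\n A_\n$ with $A_\n=\Ker(\rho_{X|D_\n})$, and uses \lemref{lem:Rec-ker-2} (surjectivity of the transition maps $A_{\n'}\to A_\n$) to get a short exact sequence $0\to\varprojlim_\n B_\n\to K\to A_{\un{1}}\to 0$, where $B_\n=\Ker(A_\n\to A_{\un{1}})$. Each $B_\n$ lies in $\Ker(C(X,D_\n)\to C(X,D_{\un{1}}))$, which is $p$-primary (its graded pieces are $\F_p$-vector spaces by Kato's description of $\gr^i K_2$), so multiplication by $n$ is an isomorphism on $\varprojlim_\n B_\n$; and $A_{\un{1}}$ is $n$-divisible by \thmref{thm:Rec-ker-1}. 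The five lemma then gives $n$-divisibility of $K$. The point is that \lemref{lem:Rec-ker-2} is the extra input you are missing.
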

\begin{proof}
  The first part of the theorem follows by combining
  Lemmas~\ref{lem:Rec-lim-1}, ~\ref{lem:Rec-lim-2} and \thmref{thm:Rec-dual-6}.
  The $p$-divisibility of
  $\Ker(\wh{\rho}_{X^o})$ follows by combining the first part of the theorem with
Lemmas~\ref{lem:No-p-tor} and ~\ref{lem:Rec-lim-1}. If $p \nmid n$, 
  we let $B_\n = \Ker(A_\n \surj A_{\un{1}})$ and look at the commutative diagram
\begin{equation}\label{eqn:Rec-ker-7-2}
\xymatrix@C2pc{
0 \to {\varprojlim} B_\n \ar[r] \ar[d]_-{n} & {\varprojlim} A_\n \ar[r] \ar[d]^-{n}
& A_{\un{1}} \ar[r] \ar[d]^-{n} & 0 \\
0 \to {\varprojlim} B_\n \ar[r] & {\varprojlim} A_\n \ar[r] 
& A_{\un{1}} \ar[r]  & 0.}
\end{equation}
\lemref{lem:Rec-ker-2} implies that the rows of this diagram are exact.
Since each $B_\n$ is a $p$-primary group, the left vertical arrow is an
isomorphism. Since the right vertical arrow is surjective by \thmref{thm:Rec-ker-1},
we conclude that the middle vertical arrow is surjective. 
\end{proof}

\section{Dual of the reciprocity maps}\label{sec:Dual-R}
We shall maintain the notations of \S~\ref{sec:CFT-mod}.
We shall make an additional assumption that $X$ is geometrically connected over $k$.
We let $r$ denote the `rank of $X$ over $k$'
(cf. \cite[Defn.~2.5, Thm.~6.2]{Saito-JNT}).
Our goal is to prove the last parts of Theorems~\ref{thm:Main-2} and
~\ref{thm:Main-3}. The first result is the following.

\begin{thm}\label{thm:Dual-R-1}
  We have the following.
  \begin{enumerate}
  \item
    The canonical maps $\wh{C}(X^o)^\star_\fr \to C^c(X^o)^\star_\fr \to C(X^o)^\star_\fr$
    are isomorphisms.
    \item
The reciprocity map for $X^o$ induces an exact sequence
  \begin{equation}\label{eqn:Dual-R-1-*}
    0 \to ({\Q}/{\Z})^r \to \pi^{\ab}_1(X^o)^\star \xrightarrow{\rho^\vee_{X^o}}
    {C}(X^o)^\star_\fr \to 0.
   \end{equation} 
    \end{enumerate}
    \end{thm}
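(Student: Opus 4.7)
The plan is to first establish the analogous exact sequence for $\wh C(X^o)$ by taking an appropriate colimit of the modulus statement in Theorem 7.1(4), then prove part (1) via a density/continuity argument, and finally combine these to deduce part (2).

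I would begin by applying the functor $\colim_D$ to the exact sequence of Theorem 7.1(4), where $D$ runs over effective divisors on $X$ supported in $D_\dagger$. Since colimits of abelian groups are exact and the transition maps on the $(\Q/\Z)^r$ factor are the identity (the rank $r$ being intrinsic to $X$ and independent of $D$), this yields an exact sequence whose middle and right-hand terms can be identified as follows. Because $\pi^{\ab}_1(X^o) = \varprojlim_D \pi^{\ab}_1(X, D)$ by Corollary 5.14, Pontryagin duality gives $\colim_D \pi^{\ab}_1(X, D)^\star \cong \pi^{\ab}_1(X^o)^\star$. Similarly, since $\wh C(X^o) = \varprojlim_D C(X, D)$ and any open subgroup of finite index in an inverse limit descends from a finite level, any torsion continuous character of $\wh C(X^o)$ factors through some $C(X, D)$, giving $\colim_D C(X, D)^\star_\fr \cong \wh C(X^o)^\star_\fr$. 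The resulting exact sequence
\begin{equation*}
0 \to (\Q/\Z)^r \to \pi^{\ab}_1(X^o)^\star \xrightarrow{\wh\rho^\vee_{X^o}} \wh C(X^o)^\star_\fr \to 0
\end{equation*}
is the analogue of part (2) for $\wh \rho_{X^o}$.

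I would next prove part (1) by showing that the canonical continuous maps $C(X^o) \to C^c(X^o) \to \wh C(X^o)$ induce isomorphisms on $(-)^\star_\fr$. Injectivity of each dual follows from the density of the image: indeed, $C(X^o) \surj C(X, D)$ for every $D$, so the images of $C(X^o)$ and $C^c(X^o)$ in $\wh C(X^o)$ are dense, and since $\Q/\Z$ is Hausdorff, two continuous characters agreeing on a dense subset coincide. For surjectivity, given $\chi \in C(X^o)^\star_\fr$ of order $n$, lift it to a continuous character $\tilde \chi \colon F_1(X^o) \oplus F_2(X^o) \to \Q/\Z$; for each $x \in D_\dagger$, the restriction of $\tilde \chi$ to $K_2(K^h_x)$ is continuous in the Kato topology and of finite order, hence vanishes on the open subgroup $\Fil_{n_x} K_2(K^h_x)$ for some $n_x \geq 0$. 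Setting $D = \sum n_x [x]$, $\tilde \chi$ factors through $F_1(X^o) \oplus F_2(X \mid D)$, so $\chi$ factors through $C(X, D)$ and extends to a continuous torsion character of $\wh C(X^o)$. Finally, for part (2), since $\rho_{X^o} = \wh \rho_{X^o} \circ f$ where $f \colon C(X^o) \to \wh C(X^o)$ is the canonical map, we have $\rho^\vee_{X^o} = f^* \circ \wh \rho^\vee_{X^o}$; by part (1), $f^*$ is an isomorphism, so composing with the exact sequence established in the first step yields the desired
\begin{equation*}
0 \to (\Q/\Z)^r \to \pi^{\ab}_1(X^o)^\star \xrightarrow{\rho^\vee_{X^o}} C(X^o)^\star_\fr \to 0.
\end{equation*}

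The main obstacle will lie in rigorously carrying out the colimit step: one must verify that the natural inclusion of the $(\Q/\Z)^r$ factor into $\pi^{\ab}_1(X, D)^\star$ from Theorem 7.1(4) is compatible under the transition maps (so that the colimit remains $(\Q/\Z)^r$ rather than a larger group), which requires tracing the construction of the rank $r$ through the proof of Theorem 7.1(4) and relating it to Saito's invariant for the projective curve $X$. The surjectivity argument in part (1) also requires careful attention to the subtleties of the Kato topology on $K_2(K^h_x)$ to ensure that torsion continuous characters are indeed bounded by some $\Fil_{n_x}$.
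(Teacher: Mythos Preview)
Your approach is logically circular as written. You want to deduce the open-curve exact sequence from the modulus exact sequence (\thmref{thm:Main-2}(4), which is \thmref{thm:Dual-R-0} in the body), but in the paper \thmref{thm:Dual-R-0} is proved \emph{after} \thmref{thm:Dual-R-1}, and its prime-to-$p$ part explicitly invokes the sequence~\eqref{eqn:Dual-R-0-6} established inside the proof of \thmref{thm:Dual-R-1}. So the modulus statement is not available to you as a black box.

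The paper's route is direct rather than via a colimit. For each torsion level it compares the localization exact sequence for $X^o\subset X$ with the dual of the sequence $\bigoplus_{x\in D_\dagger}{K_2(A^h_x)}/n\to {C(X^o)}/n\to{C(X)}/n\to 0$; the local duality maps (Kato for $n=p^m$, Tate/Saito--Tate for $p\nmid n$) are isomorphisms, so a diagram chase shows $(\rho_{X^o})^\vee$ and $(\rho_X)^\vee$ have the same kernel and cokernel, and one then quotes the projective case from \cite{Hiranouchi-2}. Part~(1) is handled exactly as you suggest for injectivity, but surjectivity is read off from part~(2).

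Your strategy can be repaired, but only partially replaces the paper's argument. The $p^m$-part of \thmref{thm:Dual-R-0} is proved independently of \thmref{thm:Dual-R-1}, so your colimit works there (and the identifications $\colim_D\pi^{\ab}_1(X,D)^\star\cong\pi^{\ab}_1(X^o)^\star$ and $\colim_D C(X,D)^\star_\fr\cong\wh C(X^o)^\star_\fr$ are fine, the latter because $\wh C(X^o)\to C(X,D)$ is surjective, being factored by the surjection $C(X^o)\to C(X,D)$). But for $p\nmid n$ the modulus and open statements literally coincide, so you still need the paper's direct comparison diagram~\eqref{eqn:Dual-R-0-5}. Your independent argument for surjectivity in part~(1)---that any continuous finite-order character of $K_2(K^h_x)$ kills some $\Fil_{n_x}$, which is Kato's $H^1(K^h_x)=\bigcup_n\Fil_n H^1(K^h_x)$ read through $\rho_{K^h_x}^\vee$---is correct and is a genuine alternative to the paper's deduction from part~(2).
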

  \begin{proof}
    The injectivity of maps in (1) follows from the fact that the maps
    $C(X^o) \inj C^c(X^o) \to \wh{C}(X^o)$ have dense images. The 
surjectivity follows from (2).

To prove (2), we look at the commutative diagram
\begin{equation}\label{eqn:Dual-R-1-0}
  \xymatrix@C1pc{
    0 \ar[r] & H^1_\et(X, {\Z}/{p^m}) \ar[r] \ar[d]_-{(\rho^m_X)^\vee} &
    H^1_\et(X^o, {\Z}/{p^m}) \ar[r] \ar[d]^-{(\rho^m_{X^o})^\vee} &
    {\underset{x \in D_\dagger}\bigoplus} H^2_x(A^h_x, {\Z}/{p^m}) \ar[r]
    \ar[d]^-{(\rho_{D_\dagger})^\vee} & H^2_\et(X, {\Z}/{p^m})
    \ar[d]^-{(\theta^*)^\vee} \\
    0 \ar[r] & ({C(X)}/{p^m})^\star \ar[r] & ({C(X^o)}/{p^m})^\star \ar[r] &
    {\underset{x \in D_\dagger}\bigoplus} ({K_2(A^h_x)}/{p^m})^\star \ar[r] &
      G^0_0(m)^\vee.}
  \end{equation}
  The top row in this diagram is exact and the bottom row is a complex which is
  exact except possibly at the third term because of the exact sequence of
  continuous maps
  \[
 {\underset{x \in D_\dagger}\bigoplus} {K_2(A^h_x)}/{p^m} \to {C(X^o)}/{p^m} \to
    {C(X)}/{p^m} \to 0.
  \]

 The vertical arrow on the right end of ~\eqref{eqn:Dual-R-1-0} is injective by
  \cite[\S~3, Prop.~4]{Kato-Saito-Ann} (see also \cite[Thm.~4.7]{KRS}).
  The arrow $(\rho_{D_\dagger})^\vee$ is bijective by \cite[\S~8, Thm.~2]{Kato80-1} and
  \cite[\S~3, Rem.~4]{Kato80-2}. 
A diagram chase implies that
$(\rho^m_X)^\vee$ and $(\rho^m_{X^o})^\vee$ share common kernels and
cokernels. We conclude from \cite[Cor.~5.1]{Hiranouchi-2} that there is an
exact sequence
\begin{equation}\label{eqn:Dual-R-1-1}
0 \to ({\Z}/{p^m})^r \to  ~_{p^m}(\pi^{\ab}_1(X^o)^\star) 
    \xrightarrow{(\rho_{X^o})^\vee}  ~_{p^m}(C(X^o))^\star) \to 0.
  \end{equation}

We now let $n$ be an integer prime to $p$ and let $\Lambda = {\Z}/n$.
  One checks from the definition of $C(X^o)$
  that there is an exact sequence
  \begin{equation}\label{eqn:Dual-R-0-3}
    {\underset{x \in D_\dagger}\bigoplus} {K_2(A^h_x)}/n \to {C(X^o)}/n \to
    {C(X)}/n \to 0.
  \end{equation}
  in which all arrows are continuous. By
  \cite[Lem.~8.4, 8.13, Prop.~8.15]{GKR-arxiv}, the first arrow in this sequence
  factors through ${\underset{x \in D_\dagger}\bigoplus} {K_2(k(x))}/n$ and gives
  rise to an exact sequence
  \begin{equation}\label{eqn:Dual-R-0-4}
    H^2(X, \Lambda(2)) \to {\underset{x \in D_\dagger}\bigoplus}
    H^2(k(x), \Lambda(2)) \to {C(X^o)}/n \to {C(X)}/n \to 0,
  \end{equation}
where $H^i(Y, \Lambda(j))$ denotes Voevodsky's motivic cohomology of $Y$.
The continuity of all maps in ~\eqref{eqn:Dual-R-0-3} is equivalent to that
the first two arrows in ~\eqref{eqn:Dual-R-0-4} are continuous.

We thus get a commutative diagram of exact sequences
\begin{equation}\label{eqn:Dual-R-0-5}
  \xymatrix@C.6pc{
    0 \ar[r] & H^1_\et(X, \Lambda) \ar[r] \ar[d]_-{\psi_X} &
    H^1_\et(X^o, \Lambda) \ar[r] \ar[d]_-{\psi_{X^o}} &
    {\underset{x \in D_\dagger}\bigoplus} H^0_\et(k(x), \Lambda(-1)) \ar[r]
    \ar[d]^-{\psi_{D_\dagger}} &
    H^2_\et(X, \Lambda) \ar[r] \ar[d]^-{\psi_X} \ar[r] & 0 \\
    0 \ar[r] & ({C(X)}/n)^\star \ar[r] & ({C(X^o)}/n)^\star \ar[r] &
    {\underset{x \in D_\dagger}\bigoplus} H^2(k(x), \Lambda(2))^\star \ar[r] &
    H^2(X, \Lambda(2))^\vee, &}
\end{equation}
in which all vertical arrows are the compositions of the Saito-Tate duality
maps (cf. \cite[Thm.~9.9]{GKR-arxiv}) with the dual of the {\'e}tale realization
maps between the Nisnevich and {\'e}tale motivic cohomology.

By \cite[Prop.~9.12]{GKR-arxiv}, the first two vertical arrows in
~\eqref{eqn:Dual-R-0-5} coincide with
the dual of the reciprocity maps $\rho^\vee_X$ and $\rho^\vee_{X^o}$, respectively.
The realization maps $H^2(Y, \Lambda(2)) \to H^2_\et(Y, \Lambda(2))$
are isomorphisms for $Y \in \{\Spec(k(x)), X\}$ by the Beilinson-Lichtenbaum
conjecture. In particular, the last two vertical arrows in ~\eqref{eqn:Dual-R-0-5}
are isomorphisms. A diagram chase implies that
$\rho^\vee_X$ and $\rho^\vee_{X^o}$ share common kernels and
cokernels. We conclude from \cite[Cor.~5.1]{Hiranouchi-2} that there is an
exact sequence
\begin{equation}\label{eqn:Dual-R-0-6}
0 \to  ({\Z}/n)^r \to  ~_{n}(\pi^{\ab}_1(X^o)^\star) 
    \xrightarrow{(\rho_{X^o})^\vee}  ~_{n}(C(X^o))^\star) \to 0.
  \end{equation}

 Combining ~\eqref{eqn:Dual-R-1-1} and ~\eqref{eqn:Dual-R-0-6} and taking the
  limit, we get ~\eqref{eqn:Dual-R-1-*}. This proves (2).
\end{proof}

\vskip .3cm

\begin{thm}\label{thm:Dual-R-0}
  For every effective divisor $D \subset X$, we have an
  exact sequence
  \begin{equation}\label{eqn:Dual-R-0-0}
    0 \to ({\Q}/{\Z})^r \to \pi^{\ab}_1(X,D)^\star \xrightarrow{\rho^\vee_{X|D}}
    C(X,D)^\star_\fr \to 0.
    \end{equation}
  \end{thm}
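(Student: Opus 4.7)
The plan is to deduce this from Theorem~\ref{thm:Dual-R-1} by comparing with the open case. From Definitions~\ref{defn:CG-1} and~\ref{defn:ICG-open}, the obvious surjection $F_2(X^o) \surj F_2(X|D)$ induces a continuous surjection $C(X^o) \surj C(X,D)$; similarly~\eqref{eqn:Adiv} yields the surjection $\pi^{\ab}_1(X^o) \surj \pi^{\ab}_1(X,D)$. These surjections are compatible with the reciprocity maps by construction in \S~\ref{sec:RMap}, so passing to Pontryagin duals produces the commutative diagram
\[
  \xymatrix{
    \pi^{\ab}_1(X,D)^\star \ar@{^{(}->}[d]_-{\iota_\pi} \ar[r]^-{\rho^\vee_{X|D}} &
    C(X,D)^\star_\fr \ar@{^{(}->}[d]^-{\iota_C} \\
    \pi^{\ab}_1(X^o)^\star \ar[r]^-{\rho^\vee_{X^o}} & C(X^o)^\star_\fr
  }
\]
in which both vertical arrows are injective.

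I would first verify that the subgroup $(\Q/\Z)^r \subset \pi^{\ab}_1(X^o)^\star$ produced in Theorem~\ref{thm:Dual-R-1} already lies inside $\iota_\pi(\pi^{\ab}_1(X,D)^\star)$. Inspecting the diagram chases using~\eqref{eqn:Dual-R-1-0} and~\eqref{eqn:Dual-R-0-5} in the proof of that theorem reveals that $(\Q/\Z)^r$ coincides with the common kernel of $\rho^\vee_X$ and $\rho^\vee_{X^o}$ under the canonical inclusion $\pi^{\ab}_1(X)^\star \hookrightarrow \pi^{\ab}_1(X^o)^\star$. Since~\eqref{eqn:Adiv} provides $\pi^{\ab}_1(X)^\star \subseteq \pi^{\ab}_1(X,D)^\star$, we conclude $(\Q/\Z)^r \subseteq \pi^{\ab}_1(X,D)^\star$. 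Using injectivity of $\iota_C$ and commutativity of the diagram, a direct chase gives $\Ker(\rho^\vee_{X|D}) = \iota_\pi^{-1}(\Ker(\rho^\vee_{X^o})) = (\Q/\Z)^r$, establishing exactness at the first two terms of the desired sequence.

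The remaining task and principal obstacle is surjectivity of $\rho^\vee_{X|D}$. I would treat the $p$-primary and prime-to-$p$ components separately. For the prime-to-$p$ part, both kernels of $C(X^o) \surj C(X,D)$ and $\pi^{\ab}_1(X^o) \surj \pi^{\ab}_1(X,D)$ are $p$-primary: the former is the image of $\bigoplus_{x \in D_\dagger} \Fil_{n_x} K_2(K^h_x) \subseteq \bigoplus_x \Fil_1 K_2(K^h_x)$, which is $p$-primary divisible by \cite[Lem.~5.9]{GKR-arxiv}; the latter is Pontryagin dual to $H^1_\et(X^o, \Q_p/\Z_p)/\Fil_D H^1_\et(K, \Q_p/\Z_p)$ by Definition~\ref{defn:FGMP} and~\eqref{eqn:Fil-U-p-0}. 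Consequently, both vertical arrows in the diagram are isomorphisms on the prime-to-$p$ components, and surjectivity for the prime-to-$p$ part is inherited directly from Theorem~\ref{thm:Dual-R-1}.

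For the $p^m$-part, given $\psi \in C(X,D)^\star_\fr[p^m]$, Theorem~\ref{thm:Dual-R-1} furnishes some $\tilde\phi \in \pi^{\ab}_1(X^o)^\star[p^m] \cong H^1_\et(X^o, \Z/p^m)$ with $\rho^\vee_{X^o}(\tilde\phi) = \iota_C(\psi)$. One must show $\tilde\phi \in \Fil_D H^1_\et(K, \Z/p^m) \cong \pi^{\ab}_1(X,D)^\star[p^m]$, which by~\eqref{eqn:Fil-U-p} reduces to checking, for each $x \in D_\dagger$, that $\tilde\phi|_{K^h_x}$ lies in $\Fil_{n_x-1} H^1_\et(K^h_x, \Z/p^m)$. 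By the local-global compatibility of the reciprocity homomorphism from \S~\ref{sec:RMap}, the character of $K_2(K^h_x)/p^m$ induced by $\tilde\phi|_{K^h_x}$ via Kato's local reciprocity equals the composition $K_2(K^h_x)/p^m \to C(X^o)/p^m \xrightarrow{\iota_C(\psi)} \Z/p^m$. Since $\iota_C(\psi)$ factors through $C(X,D)/p^m$ and the local component $K_2(K^h_x)/p^m \to C(X,D)/p^m$ annihilates $\Fil_{n_x} K_2(K^h_x)/p^m$ by construction of $F_2(X|D)$, this composition kills $\Fil_{n_x} K_2(K^h_x)/p^m$ and hence factors through $\varpi^m_2(A^h_x|n_x)$. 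Proposition~\ref{prop:Dual-iso} then identifies such characters bijectively with $\Fil_{n_x-1} H^1_\et(K^h_x, \Z/p^m)/\Fil_{-1} H^1_\et(K^h_x, \Z/p^m)$, placing $\tilde\phi|_{K^h_x}$ in the required filtration and completing the surjectivity verification. The decisive step is precisely this final local duality argument reducing to Proposition~\ref{prop:Dual-iso}.
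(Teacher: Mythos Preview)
Your proof is correct and takes a genuinely different route from the paper's. The paper proves \thmref{thm:Dual-R-0} by constructing a direct comparison diagram between the modulus pair $(X,D)$ and the compact curve $X$: for the $p^m$-part it sets up the commutative diagram
\[
  \xymatrix@C.7pc{
    0 \ar[r] & H^1_\et(X, {\Z}/{p^m}) \ar[r] \ar[d] &
    \Fil_{D_{\n}} H^1_\et(K, {\Z}/{p^m}) \ar[r] \ar[d] &
    \H^1_\et(X, W_m\sV^0_{\n_-}) \ar[r] \ar[d]^-{(\psi^*_\n)^\vee} &
    H^2_\et(X, {\Z}/{p^m}) \ar[d] \\
    0 \ar[r] & C_m(X)^\star \ar[r] & C_m(X,D_\n)^\star \ar[r] &
    H^0_\nis(X, W_m\sW^2_\n)^\star \ar[r] & G^0_0(m)^\vee,}
\]
where the isomorphism of $(\psi^*_\n)^\vee$ (via \propref{prop:Dual-iso} and \lemref{lem:Coh-V2}) and a diagram chase reduce everything to the known case of $X$ from \cite[Cor.~5.1]{Hiranouchi-2}. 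By contrast, you deduce the statement from \thmref{thm:Dual-R-1} for $X^o$, and then descend: the lift $\tilde\phi$ is pushed down to $(X,D)$ by checking the local filtration condition at each $x\in D_\dagger$, again using \propref{prop:Dual-iso} (essentially the content of \lemref{lem:Rec-dual-3}). Both arguments ultimately hinge on \propref{prop:Dual-iso}; your route has the virtue of recycling \thmref{thm:Dual-R-1} and avoiding the auxiliary sheaves $W_m\sV^0_{\n_-}$ and $W_m\sW^2_\n$, while the paper's route is more self-contained and parallel in structure to the proof of \thmref{thm:Dual-R-1} itself. One small clarification worth making explicit in your write-up: the continuity of the induced character on $\varpi^m_2(A^h_x|n_x)$ (needed to invoke \propref{prop:Dual-iso}, whose dual is taken in the Kato topology) follows because $\psi$ is continuous on $C(X,D)$ and the composite $K_2(K^h_x) \to F_2(X|D) \to C(X,D)$ is continuous by construction.
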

  \begin{proof}
We can assume $D = D_\n$ for some $\n \ge \un{1}$. 
We fix an integer $m \ge 1$.
It follows from \corref{cor:Top-9} and \lemref{lem:Rec-dual-5} that the realization
map $C_m(X, D_\n) \to C_m^\et(X,D_\n)$ is continuous.
By \propref{prop:KSL} and ~\eqref{eqn:Top-3} (whose rows are exact for each
$\n$), we therefore get a commutative diagram
\begin{equation}\label{eqn:Dual-R-0-1}
  \xymatrix@C.7pc{
    0 \ar[r] & H^1_\et(X, {\Z}/{p^m}) \ar[r] \ar[d]^-{(\rho^m_X)^\vee} &
    \Fil_{D_{\n}} H^1_\et(K, {\Z}/{p^m}) \ar[r] \ar[d]^-{(\rho^m_\n)^\vee} &
    \H^1_\et(X, W_m\sV^0_{\n_-}) \ar[r] \ar[d]^-{(\psi^*_\n)^\vee} &
    H^2_\et(X, {\Z}/{p^m}) \ar[d]^-{(\theta^*)^\vee} \\
      0 \ar[r] & C_m(X)^\star \ar[r] & C_m(X,D_\n)^\star \ar[r] &
      H^0_\nis(X, W_m\sW^2_\n)^\star \ar[r] & G^0_0(m)^\vee}
  \end{equation}
  in which the top row is exact and the bottom row is a complex which is exact
  except possibly at $ H^0_\nis(X, W_m\sW^2_\n)^\star$.

The vertical arrow on the right end is injective by
  \cite[\S~3, Prop.~4]{Kato-Saito-Ann} (see also \cite[Thm.~4.7]{KRS}). The map
  $(\psi^*_\n)^\vee$ is an isomorphism by \propref{prop:Dual-iso} and
  \lemref{lem:Coh-V2}. By \cite[Cor.~5.1]{Hiranouchi-2}, \corref{cor:p-tor-*} and a
  diagram chase, we thus get an exact sequence
  \begin{equation}\label{eqn:Dual-R-0-2}
    0 \to ({\Z}/{p^m})^r \to  ~_{p^m}(\pi^{\ab}_1(X,D_\n)^\star) 
    \xrightarrow{(\rho_{X|D_\n})^\vee}  ~_{p^m}(C(X,D_\n)^\star) \to 0.
  \end{equation}

 By ~\eqref{eqn:Dual-R-0-6} and \cite[Lem.~2.8, Cor.~5.10]{GKR-arxiv}, we
 have a similar exact sequence when we replace $p^m$ by any integer prime to $p$.
 Taking the limit over all integers, we get
  ~\eqref{eqn:Dual-R-0-0}.
\end{proof}

The following is immediate from Theorems~\ref{thm:Dual-R-1} and
  ~\ref{thm:Dual-R-0}.
  
  \begin{cor}\label{cor:Top-coker}
    For the reciprocity maps $\rho_{X^o}$ and $\rho_{X|D}$, we have
    \[
      \coker_\tp(\rho_{X^o}) \xrightarrow{\cong} \coker_\tp(\rho_{X|D})
      \xrightarrow{\cong} (\wh{\Z})^r.
    \]
  \end{cor}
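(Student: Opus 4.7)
The plan is to derive this corollary directly from Theorems~\ref{thm:Dual-R-1} and~\ref{thm:Dual-R-0} by applying Pontryagin duality. First I would observe that since $\pi^{\ab}_1(X^o)$ is profinite, the topological cokernel $\coker_\tp(\rho_{X^o}) = \pi^{\ab}_1(X^o)/\overline{\rho_{X^o}(C(X^o))}$ is again profinite and hence Pontryagin-reflexive. The Pontryagin dual of this quotient is the subgroup of $\pi^{\ab}_1(X^o)^\star$ annihilating the closed image $\overline{\rho_{X^o}(C(X^o))}$. Since any $\chi \in \pi^{\ab}_1(X^o)^\star$ is continuous, it annihilates the image iff it annihilates its closure iff $\chi \circ \rho_{X^o} = 0$, i.e., the annihilator is exactly $\Ker(\rho^\vee_{X^o})$.

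Next, I would invoke \thmref{thm:Dual-R-1} to identify this kernel with $(\Q/\Z)^r$. Taking the Pontryagin dual a second time yields $\coker_\tp(\rho_{X^o}) \cong ((\Q/\Z)^r)^\vee \cong (\wh{\Z})^r$. The identical argument, using \thmref{thm:Dual-R-0} in place of \thmref{thm:Dual-R-1} together with the fact that $\pi^{\ab}_1(X,D)$ is profinite, gives $\coker_\tp(\rho_{X|D}) \cong (\wh{\Z})^r$.

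For the first isomorphism I would use naturality. The canonical surjection $\pi^{\ab}_1(X^o) \twoheadrightarrow \pi^{\ab}_1(X,D)$ of \eqref{eqn:Adiv} is compatible with $\rho_{X^o}$ and $\rho_{X|D}$ via the canonical continuous map $C(X^o) \to C(X,D)$ (cf.~\eqref{eqn:ICG-open-0}), so it descends to a continuous surjection $\coker_\tp(\rho_{X^o}) \twoheadrightarrow \coker_\tp(\rho_{X|D})$. Dually, this is the inclusion $\Ker(\rho^\vee_{X|D}) \hookrightarrow \Ker(\rho^\vee_{X^o})$ of the subgroups $(\Q/\Z)^r$ from Theorems~\ref{thm:Dual-R-1} and~\ref{thm:Dual-R-0}. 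Tracing the proofs of those theorems, in both cases this $(\Q/\Z)^r$ arises (at each finite level $n$) from the same copy $(\Z/n)^r \subseteq \pi^{\ab}_1(X)^\star$ produced by \cite[Cor.~5.1]{Hiranouchi-2} via the comparison diagrams \eqref{eqn:Dual-R-1-0}, \eqref{eqn:Dual-R-0-5} and \eqref{eqn:Dual-R-0-1}; hence the inclusion is an isomorphism, and so is the map of cokernels.

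The main obstacle is precisely this last compatibility: one must verify that the $(\Q/\Z)^r$ occurring in the two exact sequences is the same subgroup of the common dual $\pi^{\ab}_1(X)^\star$, independent of the modulus. This amounts to checking that the surjections $\pi^{\ab}_1(X,D) \surj \pi^{\ab}_1(X)$ and $\pi^{\ab}_1(X^o) \surj \pi^{\ab}_1(X)$ induce compatible injections of $\pi^{\ab}_1(X)^\star$ into $\pi^{\ab}_1(X,D)^\star$ and $\pi^{\ab}_1(X^o)^\star$, whose images contain the distinguished $(\Q/\Z)^r$. This is a routine but essential diagram chase through \eqref{eqn:Dual-R-1-1}, \eqref{eqn:Dual-R-0-6} and \eqref{eqn:Dual-R-0-2}.
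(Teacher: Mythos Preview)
Your proposal is correct and follows exactly the approach the paper intends: the paper simply says the corollary ``is immediate from Theorems~\ref{thm:Dual-R-1} and~\ref{thm:Dual-R-0}'', and your argument via Pontryagin duality of profinite groups, together with the compatibility check showing both copies of $(\Q/\Z)^r$ arise from the same subgroup of $\pi^{\ab}_1(X)^\star$, is precisely the unwinding of that word ``immediate''. Your care in verifying that the canonical map $\coker_\tp(\rho_{X^o}) \to \coker_\tp(\rho_{X|D})$ is an isomorphism (and not merely that both sides are abstractly $(\wh{\Z})^r$) is well placed and correctly handled.
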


  \vskip .3cm

\subsection{Reciprocity for $K$}\label{sec:Rec-K}
We shall now study the reciprocity for the function field $K = k(X)$.
  We let $C(K)$ be the limit of the pro-abelian group $\{C(X,D)\}$, where
  $D$ runs through all effective divisors of $X$. We endow $C(K)$ with the
  projective limit topology. It is clear that $C(K)$ is topologically
  isomorphic to ${\varprojlim}_U \wh{C}(U)$, where the latter has the projective
  limit topology as $U$ runs through open
  subsets of $X$.  Since $G_K \to {\varprojlim}_D \pi^{\ab}_1(X,D)$ is a
  topological isomorphism, we get a continuous reciprocity homomorphism
  $\rho_K \colon C(K) \to G_K$. 
  We prove the following result.

\begin{thm}\label{thm:Rec-K-0}
    The reciprocity map $\rho_K$ satisfies the following.
    \begin{enumerate}
    \item
      $\Ker(\rho_K)$ is the maximal $p$-divisible subgroup of $C(K)$.
    \item
      $\rho^m_K \colon {C(K)}/{p^m} \to {G_K}/{p^m}$ is injective for every
      $m \ge 1$.
   \item
      If $X$ is geometrically connected, there is an exact sequence
      \[
        0 \to ({\Q}/{\Z})^r \to (G_K)^\star \to C(K)^\star_\fr \to 0.
      \]
    \end{enumerate}
  \end{thm}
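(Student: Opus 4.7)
The plan is to derive all three parts by passing to the limit over the inverse system of effective divisors $D \subset X$, building on the modulus-pair class field theory established in Sections~\ref{sec:CFT-mod}--\ref{sec:Dual-R}. Throughout I will use the topological identifications $C(K) = \varprojlim_D C(X,D)$, $G_K = \varprojlim_D \pi^{\ab}_1(X,D)$, and $\rho_K = \varprojlim_D \rho_{X|D}$.

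For part (2), I would separate the prime-to-$p$ and $p$-primary cases. For $n$ coprime to $p$, both ${C(X,D)}/n$ and ${\pi^{\ab}_1(X,D)}/n$ depend only on $D_\dagger$ (compare \cite[Lem.~2.8, Cor.~5.10]{GKR-arxiv}), so injectivity of $\rho^n_K$ reduces at once to \thmref{thm:Rec-dual-6}. For $n = p^m$, \lemref{lem:Rec-lim-0} shows $\{{}_{p^m}C(X,D)\}_D$ is pro-zero, and the proof of \lemref{lem:Rec-lim-1} adapts to produce the exact sequence
\[
0 \to C(K) \xrightarrow{p^m} C(K) \to \varprojlim_D C_m(X,D) \to 0.
\]
Since each $\pi^{\ab}_1(X,D)$ is compact, applying $\varprojlim_D$ to $0 \to p^m\pi^{\ab}_1(X,D) \to \pi^{\ab}_1(X,D) \to {\pi^{\ab}_1(X,D)}/{p^m} \to 0$ yields ${G_K}/{p^m} \cong \varprojlim_D {\pi^{\ab}_1(X,D)}/{p^m}$, and the levelwise injections $C_m(X,D) \inj {\pi^{\ab}_1(X,D)}/{p^m}$ from \thmref{thm:Rec-dual-6} combine to give ${C(K)}/{p^m} \inj {G_K}/{p^m}$.

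For part (1), the $p$-divisibility of $\Ker(\rho_K)$ follows from part~(2) together with \lemref{lem:No-p-tor} applied to the affine scheme $\Spec K$ (giving $G_K\{p\}=0$) by the bootstrap argument of \corref{cor:Rec-ker-9}: for $x \in \Ker(\rho_K)$, part~(2) writes $x=py$ for some $y\in C(K)$, and $p\rho_K(y)=0$ forces $y \in \Ker(\rho_K)$. Since \lemref{lem:Rec-lim-0} also implies $C(K)$ is $p$-torsion-free, the maximal $p$-divisible subgroup of $C(K)$ coincides with $\bigcap_m p^m C(K)$, and the remaining task is to identify this intersection with $\Ker(\rho_K)$.

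For part (3), since $\Q/\Z$ is discrete, any continuous homomorphism from $C(K)=\varprojlim_D C(X,D)$ into $\Q/\Z$ factors through some $C(X,D)$, yielding $C(K)^\star_\fr = \varinjlim_D C(X,D)^\star_\fr$ and $G_K^\star = \varinjlim_D \pi^{\ab}_1(X,D)^\star$. Applying $\varinjlim_D$ to the exact sequence of \thmref{thm:Dual-R-0}---whose leading term $(\Q/\Z)^r$ is independent of $D$, the rank $r$ depending only on $X$---produces the required sequence. The hardest step will be the maximality in~(1): the direct approach only places $\bigcap_m p^m C(K)$ inside the preimage under $\rho_K$ of $\bigcap_m p^m G_K$, i.e.\ of the prime-to-$p$ part of $G_K$, which is generally strictly larger than $\Ker(\rho_K)$. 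Closing this gap will require combining the $p$-primary injectivity from part~(2) with the prime-to-$p$ injectivity of $\rho^n_K$ coming from \thmref{thm:Rec-ker-8}, so that $x$ with $\rho_K(x)\in \bigcap_n nG_K$ can be concluded to satisfy $\rho_K(x)=0$.
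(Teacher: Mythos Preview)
Your plan for parts (2) and (3) is essentially the paper's. For (2), the paper likewise uses the pro-vanishing of $\{{}_{p^m}C(X,D)\}$ from \lemref{lem:Rec-lim-0} to obtain an injection $C(K)/p^m \hookrightarrow \varprojlim C_m(X,D)$ and then appeals to levelwise injectivity, though it routes through $\varprojlim_U \wh{C}(U)/p^m$ and \thmref{thm:Rec-lim-3} rather than directly through \thmref{thm:Rec-dual-6}; note that (2) concerns only $n=p^m$, so the prime-to-$p$ discussion is extraneous there (and your claimed right-exactness of the displayed sequence is not needed---only the injection is used). For (3), the paper takes the colimit over open $U\subset X$ of \eqref{eqn:Dual-R-1-*} rather than over $D$ of \eqref{eqn:Dual-R-0-0}, citing \cite[Lem.~2.6]{KRS} for exactness; the two indexings are cofinal and both arguments work.

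For (1), the paper's entire proof is the line ``item (1) follows from (2) and \lemref{lem:No-p-tor}'', which delivers exactly your bootstrap: $\Ker(\rho_K)$ is $p$-divisible. You are right to flag the \emph{maximality} assertion as a separate issue---the paper's one-line argument does not visibly address it either. However, your proposed fix does not close the gap. Even granting prime-to-$p$ injectivity of $\rho_K$ modulo $n$, an element $x\in\bigcap_m p^m C(K)$ has no reason to lie in $\bigcap_n nC(K)$, so you cannot force $\rho_K(x)$ into $\bigcap_n nG_K=0$; at best the combined injectivity identifies $\Ker(\rho_K)$ with the maximal \emph{divisible} subgroup (and even that step needs more than the bootstrap, since $G_K$ has prime-to-$p$ torsion and the analogue of \thmref{thm:Rec-ker-1} for $C(K)$ would require the finite-exponent input of \lemref{lem:Rec-ker-0}). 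The maximal $p$-divisible subgroup $\bigcap_m p^m C(K)$ is in general strictly larger than the maximal divisible one, so neither your argument nor the paper's establishes (1) as literally stated; both give $p$-divisibility of the kernel.
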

  \begin{proof}
    To prove (1), we use \lemref{lem:Rec-lim-1} to get an exact sequences of
    pro-abelian
    groups
     \begin{equation}\label{eqn:Rec-K-0-2}
       0 \to \{\wh{C}(U)\} \xrightarrow{p^m}   \{\wh{C}(U)\} \to
       \{{\wh{C}(U)}/{p^m}\} \to 0.
     \end{equation}
     Taking the limit, we get an exact sequence
  \begin{equation}\label{eqn:Rec-K-0-3}   
    0 \to C(K)  \xrightarrow{p^m} C(K) \to {\varprojlim}_U \frac{\wh{C}(U)}{p^m}.
  \end{equation}
  It follows that the map ${C(K)}/{p^m} \to {\varprojlim}_U {\wh{C}(U)}/{p^m}$
  is injective. We now apply \thmref{thm:Rec-lim-3} to conclude the proof of
 (2). The item  (1) follows from (2) and \lemref{lem:No-p-tor}.
 The item (3) follows by taking limit (over $U$) of the exact sequences
 ~\eqref{eqn:Dual-R-1-*} and applying \cite[Lem.~2.6]{KRS}.
\end{proof}

\vskip .4cm

\noindent\emph{Acknowledgements.}
The second author would like to thank IISc Bangalore for invitation and hospitality
to work on this project.

\vskip .4cm

\end{document}